\theoremstyle{plain}
\newtheorem{theorem}[equation]{Theorem}
\newtheorem{lemma}[equation]{Lemma}
\newtheorem{corollary}[equation]{Corollary}
\newtheorem{proposition}[equation]{Proposition}
\theoremstyle{definition}
\newtheorem{definition}[equation]{Definition}
\newtheorem{condition}[equation]{Condition}
\theoremstyle{remark}
\newtheorem{remark}[equation]{Remark}
\numberwithin{equation}{section}
\newcommand{\eps}{\varepsilon}
\newcommand{\dist}{\operatorname{dist}}
\newcommand{\ds}{\displaystyle}
\newcommand{\teps}{{\tilde{\eps}}}
\newcommand{\re}{\mathbb{R}}
\newcommand{\rn}{\mathbb{R}^n}
\newcommand{\ree}{\mathbb{R}^{n+1}}
\newcommand{\La}{\Lambda}
\newcommand{\dd}{\mathbb{D}}
\newcommand{\om}{\Omega}
\newcommand{\cB}{\mathcal{B}}
\newcommand{\cG}{\mathcal{G}}
\newcommand{\F}{\mathcal{F}}
\newcommand{\cH}{\mathcal{H}}
\newcommand{\LL}{\mathcal{L}}
\newcommand{\nn}{\mathcal{N}}
\newcommand{\W}{\mathcal{W}}
\newcommand{\B}{\mathcal{B}}
\newcommand{\sbf}{{\bf S}}
\newcommand{\G}{\mathcal{G}}
\newcommand{\pom}{\partial\Omega}
\newcommand{\hm}{\omega}
\newcommand{\sem}{\setminus}
\newcommand{\ba}{{\bf H}}
\newcommand{\bb}{{\bf K}}
\newcommand{\ch}{{\tt ch}}
\renewcommand{\P}{\mathcal{P}}
\renewcommand{\emptyset}{\text{\textup{\O}}}
\DeclareMathOperator{\supp}{supp}
\DeclareMathOperator{\diam}{diam}
\DeclareMathOperator{\interior}{\tt int}
\newcommand{\vertiii}[1]{{\left\vert\kern-0.15ex\left\vert\kern-0.15ex\left\vert #1
		\right\vert\kern-0.15ex\right\vert\kern-0.15ex\right\vert}}
\def\Xint#1{\mathchoice
{\XXint\displaystyle\textstyle{#1}}%
{\XXint\textstyle\scriptstyle{#1}}%
{\XXint\scriptstyle\scriptscriptstyle{#1}}%
{\XXint\scriptscriptstyle%
\scriptscriptstyle{#1}}%
\!\int}
\def\XXint#1#2#3{{\setbox0=\hbox{$#1{#2#3}{%
\int}$ }
\vcenter{\hbox{$#2#3$ }}\kern-.6\wd0}}
\def\barint{\,\Xint -} 
\def\bariint{\barint_{} \kern-.4em \barint}
\def\bariiint{\bariint_{} \kern-.4em \barint}
\renewcommand{\iint}{\int_{}\kern-.34em \int} 
\renewcommand{\iiint}{\iint_{}\kern-.34em \int} 
\renewcommand{\d}{d}
\begin{document}
\allowdisplaybreaks

\title[Solvability of the $L^p$ Dirichlet problem implies parabolic uniform rectifiability]{Solvability of the $L^p$ Dirichlet problem for the heat equation implies parabolic uniform rectifiability}

\author{S. Bortz}
\address{Department of Mathematics
\\
University of Alabama
\\
Tuscaloosa, AL, 35487, USA}
\email{sbortz@ua.edu}

\author{S. Hofmann}
\address{
Department of Mathematics
\\
University of Missouri
\\
Columbia, MO 65211, USA}
\email{hofmanns@missouri.edu}

\author{J.M. Martell}
\address{Instituto de Ciencias Matematicas CSIC-UAM-UC3M-UCM, E-28049 Madrid, Spain}
\email{chema.martell@icmat.es}

\author{K. Nystr\"om}
\address{Department of Mathematics, Uppsala University, S-751 06 Uppsala, Sweden}
\email{kaj.nystrom@math.uu.se}

\thanks{S.B. was supported by the Simons Foundation’s Travel Support for Mathematicians program and an AWI-CONSERVE Fellowship. S.H. is supported by NSF grant DMS-2349846.
J.M.M. acknowledges financial support from MCIN/AEI/10.13039/501100011033 grants CEX2023-001347-S and PID2022-141354NB-I00.
	K.N. was partially supported by grant  2022-03106  from the Swedish research council (VR)}

\subjclass[2020]{28A75, 35K05, 35K20, 35R35, 42B25, 42B37,
43A85.}

\date{\today}


\begin{abstract}  
 Let $\Omega \subset \mathbb{R}^{n+1}$ be an open set in space-time with boundary $\Sigma = \partial \Omega$. Under minimal and natural background assumptions - namely, that $\Sigma$ is time-symmetrically parabolic Ahlfors--David regular and that $\Omega$ satisfies an interior corkscrew condition - we treat a one-phase parabolic free boundary problem which establishes the necessity of parabolic uniform rectifiability for $L^p(d\sigma)$ solvability of the Dirichlet problem for the heat equation. More precisely, we prove that if the caloric measure associated with $\Omega$ satisfies a weak-$A_\infty$ condition with respect to the surface measure $\sigma = \mathcal{H}_{\mathrm{par}}^{n+1}\!\lfloor_{\Sigma}$, then $\Sigma$ is parabolically uniformly rectifiable, hence equivalently, that
 solvability of the Dirichlet problem for the heat (or adjoint heat) equation in $\Omega$ with boundary data in $L^p(d\sigma)$, for some $p \in (1,\infty)$, implies parabolic uniform rectifiability. Our main theorem thus 
 identifies parabolic uniform rectifiability as the correct geometric framework for boundary regularity, and $L^p$ solvability, in the parabolic setting.

\end{abstract}

\maketitle
\setcounter{tocdepth}{1}
\tableofcontents


\section{Introduction and statement of the main result}

{
In this paper, we treat a 1-phase parabolic free boundary problem
which in turn is equivalent to proving, under natural and
essentially minimal background hypotheses
(see Theorem \ref{main.thrm} just below), that if
$\Omega \subset \ree$ is an open set in space and time,
then the solvability of the Dirichlet problem for the heat equation in $\Omega$ with
data in $L^p(\d\sigma)$, for some
$p\in (1,\infty)$,  implies that $\Sigma=\pom$ is
parabolic uniformly rectifiable. Here, $\sigma$ is the ``surface measure" on $\Sigma$,
i.e., the restriction to $\Sigma$ of $\cH_{\text{par}}^{n+1}$, the
parabolic Hausdorff measure of homogeneous dimension $n+1$ (see \eqref{sigdef}
and Definition \ref{parHausdef} below).

Solvability of the $L^p(\d\sigma)$ Dirichlet problem is fundamentally connected to quantitative absolute continuity of caloric measure with respect to surface measure $\sigma$. In fact (see \cite{GH1}), this solvability is equivalent to the weak-$A_\infty$  property of caloric measure with respect to $\sigma$,
and it is through this connection that we shall obtain our result, and through
which the connection with free boundary problems is realized. The following is our main theorem. The terminology and notions used will be defined in the sequel. }

\begin{theorem}\label{main.thrm}
Assume that $\Omega \subset \ree$ is an open set. Assume that $\Sigma=\partial\Omega$ is time symmetric Ahlfors-David regular in the sense of  Definition \ref{ADR.def}
with constant $M$, that $\Omega$ satisfies the corkscrew condition
in the sense of Definition \ref{CS.def}  with constant $\gamma$, and
that caloric measure satisfies a local weak-$A_\infty$ condition with respect to
$\sigma=\cH_{\text{\em par}}^{n+1}|_\Sigma$
in the sense of Definition \ref{defweakAinfty.def} with constants $p$ and $C$.
Then $\Sigma$ is parabolic uniformly rectifiable in the sense of Definition \ref{def1.UR} with
constants $(M,c)$ where $c$ only depends on $n,M,\gamma,p$ and $C$.
\end{theorem}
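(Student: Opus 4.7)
The plan is to implement a parabolic analogue of the Hofmann--Martell--Uriarte-Tuero extrapolation strategy: deduce from the weak-$A_\infty$ hypothesis a Corona decomposition of the parabolic dyadic grid on $\Sigma$ into ``coherent trees'', construct an associated family of sawtooth subdomains of $\Omega$ on which the caloric measure behaves like full $A_\infty$ with uniform constants, and use this to extract at every scale and location a ``big piece'' of $\Sigma$ that is well-approximated by a parabolic regular Lip$(1,1/2)$ graph. The final step is to feed this into a characterization of parabolic uniform rectifiability (Definition \ref{def1.UR}) in terms of big pieces or bilateral weak approximation.

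\textbf{Step 1: Corona decomposition from weak-$A_\infty$.} Starting from any fixed surface parabolic cube $Q_0$, I would run a standard stopping-time argument on the dyadic descendants of $Q_0$: stop at a cube $Q$ as soon as (i) the local density $\sigma(Q)/\ell(Q)^{n+1}$ or the caloric measure density $\omega(Q)/\omega(Q_0)$ leaves a prescribed corridor, or (ii) the weak-$A_\infty$ $L^p$ reverse Hölder inequality fails with enlarged constants. The time-symmetric ADR and corkscrew assumptions, combined with a standard extrapolation/John--Nirenberg lemma for Carleson measures in the parabolic setting, should yield that the collection of ``stopping'' cubes satisfies a Carleson packing condition. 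On each coherent tree $\mathcal{F}\subset\mathbb{D}(Q_0)$, the caloric measure $\omega$ lies in the standard $A_\infty$ class relative to $\sigma$, with constants depending only on $n,M,\gamma,p,C$.

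\textbf{Step 2: Sawtooth domains and their geometry.} To each coherent tree I would associate a sawtooth subdomain $\Omega_{\mathcal{F},Q_0}\subset \Omega$ built from interior parabolic Whitney cubes, whose boundary is composed of a ``big piece'' of $\Sigma$ near $Q_0$ (coming from the cubes not stopped by $\mathcal{F}$) together with controlled interior faces. The corkscrew hypothesis for $\Omega$ transfers to an interior corkscrew condition for $\Omega_{\mathcal{F},Q_0}$, and the ADR of $\Sigma$ gives ADR of $\partial\Omega_{\mathcal{F},Q_0}$ in the parabolic sense. On this sawtooth, the $A_\infty$ behaviour inherited from Step~1, together with maximum principle and PDE-comparison arguments for the heat equation, let one transfer estimates: the caloric measure of $\Omega_{\mathcal{F},Q_0}$ is quantitatively absolutely continuous with respect to the sawtooth surface measure, and the Green function admits an $\varepsilon$-approximator in the parabolic Carleson sense.

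\textbf{Step 3: From sawtooth regularity to parabolic flatness.} This is where the main PDE input is used and, I expect, is the principal obstacle. The goal is to show that on a definite fraction of scales and locations inside each coherent tree, the boundary $\Sigma$ is bilaterally well-approximated by a time-symmetric Lip$(1,1/2)$ graph with the extra half-order time regularity characteristic of \emph{regular} parabolic Lipschitz graphs (i.e.\ with the half-derivative in time in parabolic BMO). The elliptic model is to exploit the Riesz transform/Rellich identity to bound a $\beta_2$-type number by the caloric counterpart of a square function on the sawtooth; in the parabolic case, the time asymmetry of the heat operator and the need for the adjoint heat equation (reflected in the time-symmetric hypothesis on $\Sigma$) complicate this. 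The strategy is to use the adjoint heat equation on $\Omega_{\mathcal{F},Q_0}$ to control the time half-derivative separately, exploiting precisely the symmetrized assumption on $\Sigma$ so that both forward and backward caloric measures can be compared with $\sigma$.

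\textbf{Step 4: Conclusion.} Combining Steps 1--3 gives, at every scale and location, a big piece of $\Sigma$ well approximated by a regular parabolic Lipschitz graph with uniform constants; a standard Carleson packing/extrapolation argument then upgrades this pointwise big-piece property to the bilateral weak geometric lemma, which by the Hofmann--Lewis--Nyström theory is equivalent to parabolic uniform rectifiability in the sense of Definition \ref{def1.UR}, with constants depending only on $n, M, \gamma, p, C$. The main obstacles I anticipate are (a) the construction of sawtooth domains compatible with parabolic Whitney decompositions and with both heat and adjoint-heat caloric measure estimates, and (b) controlling the half-time derivative of the approximating graphs, which is the genuinely parabolic feature absent from the elliptic analogue.
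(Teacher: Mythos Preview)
Your high-level architecture (preliminary Corona from stopping time on caloric/surface measure ratios, sawtooth subregions, graph approximation, then conclude UR) matches the paper's, and your Step~1 is essentially Lemma~\ref{initialcorona1.lem}. But the heart of the argument---your Step~3---is where the proposal diverges from what actually works, and the gap is real.

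The paper does \emph{not} obtain flatness via a Rellich/Riesz-transform bound on $\beta_2$. Instead it shows that on each tree $\sbf$ the normalized Green function $\widehat u$ has $|\nabla_X\widehat u|\approx 1$ at a reference point (Lemma~\ref{refp++}), and then uses a square-function estimate (Lemma~\ref{squarefunction}) to pack the cubes where $\nabla_X\widehat u$ oscillates too much. On the remaining cubes $\widehat u$ is approximately \emph{affine in the space variable only}, and this approximate linearity yields the parabolic WHSA condition (Definition~\ref{def2.14}) with $t$-independent approximating planes---this is the content of Sections~\ref{Sec5}--\ref{Sec6} and Proposition~\ref{WHSAplorient.lem}. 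Your Rellich-identity mechanism does not produce $t$-independent planes, and without that one cannot build the approximating Lip(1,1/2) graph.

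More seriously, your Step~4 assumes that ``big pieces'' or a bilateral weak geometric lemma implies parabolic UR. The paper explicitly flags (see the discussion after Theorem~\ref{WHSA.thrm} and \cite[Observation~4.19]{BHHLN-BP}) that \emph{parabolic} WHSA does \emph{not} imply parabolic UR; this is exactly the obstruction absent in the elliptic case. One must instead produce a Corona decomposition by \emph{regular} Lip(1,1/2) graphs (Theorem~\ref{coronaisUR.thrm}), which requires three further nontrivial steps: (i) refine the trees and glue the WHSA planes into explicit Lip(1,1/2) graphs $\Gamma_{\sbf^*}$ (Section~\ref{sec: graph}, following \cite{DS1,BHHLN-Corona}); (ii) push the weak-$A_\infty$ condition from $\Omega$ to the graph domain $\Omega_{\sbf^*}$ via the maximum principle and then invoke the graph-case theorem \cite{BHMN} to get the half-order time regularity $D^t_{1/2}\psi_{\sbf^*}\in\mathrm{BMO}$ (Section~\ref{sec: reggraph}); (iii) prove Carleson packing of the maximal cubes $Q(\sbf^*)$ via an $\varepsilon$-approximability-type argument using the nontangential/square-function estimates now available on the regular graph (Section~\ref{sec:pack}, Lemma~\ref{keylemma1}). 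Your proposal to ``control the time half-derivative via the adjoint heat equation'' directly on $\Sigma$ is not how this is done and it is unclear it could be made to work; the regularity comes \emph{after} the graph is built, by transferring the PDE estimate and quoting the already-proved graph case.
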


\begin{remark}
In the previous theorem, as will be apparent from the
definitions to be given in Section \ref{Sec2},
we implicitly assume that $\Omega$ and $\Sigma$ are both
unbounded, and are not contained in any ``time-limited" strip $\{(X,t): a<t<b\}$ with
either $a$ or $b$ finite.  We do this only for the sake of simplicity, and in the sequel
(see Remark \ref{boundeddomain}), we shall briefly indicate the routine changes that
one would make to treat the case of time-limited or bounded domains.
\end{remark}

Our background assumptions, namely, that $\Omega$ satisfies an (interior)
corkscrew condition, and that its boundary is time symmetric Ahlfors-David
regular, are both natural. In the absence of the former assumption, one may
construct an example for which the main hypothesis (the weak-$A_\infty$ property)
holds, but the conclusion (uniform rectifiability) fails, even in the elliptic setting:
see \cite[Appendix A]{AHMMT}. The time symmetric Ahlfors-David
regularity condition provides, first of all, an underlying doubling surface measure
on the boundary, and moreover, the time symmetry itself provides sufficient
thermal capacity, both forwards and backwards in time, to yield local H\"older
continuity up to the boundary for both solutions and adjoint solutions (we use both)
vanishing on a surface ball (see Lemmas \ref{Bourgain} and \ref{continuity} below).



The notions of parabolic uniform rectifiability, and  parabolic uniformly rectifiable sets, were originally introduced by the second and fourth author together with J. Lewis \cite{HLN1, HLN2}, in the context of free boundary problems for the heat equation. The notion gives structure to time-varying boundaries/sets which lack differentiability, and which are locally not necessarily given by graphs. Instead, {geometry is controlled by a local geometric square function and a Carleson measure, based on which key geometric information and structure can be extracted: this is captured in the notion of parabolic uniform rectifiability.  The local geometric square function
quantifies, on each scale,
how the underlying set deviates from time-independent
hyperplanes in a local, scale invariant $L^2$-sense (mean square sense); thus, it is
the parabolic analogue of the $\beta$-numbers of P.~Jones \cite{J}.
Parabolic uniform rectifiability is the dynamic counterpart of the uniform rectifiability studied in the  monumental works of G. David and S. Semmes \cite{DS1, DS2}.

In the context of graphs, parabolic uniform rectifiability captures and extracts the geometrical theoretical essence of the regular parabolic Lipschitz graphs introduced and studied in \cite{LS, LewMur, H95, H, HL}. By definition, a parabolic Lipschitz graph, or
a  Lip(1,1/2) graph, is the graph of a function which is Lipschitz with respect to the parabolic metric. A parabolic Lipschitz graph is a {\em regular} parabolic Lipschitz graph, or a
{\em regular} Lip(1,1/2) graph, if, in addition, the half-order time derivative of the function defining the graph is in (parabolic) BMO (see Definition \ref{goodgraph.def}). Combining the results of Section 2 in \cite{HLN1,HLN2} with the developments in \cite{LewMur,H,HL96},
it follows that
a Lip(1,1/2) graph is parabolic uniformly rectifiable in the sense of  \cite{HLN1, HLN2} if and only if it is a regular Lip(1,1/2) graph.
As proved in \cite{LS, KWu}, the class of regular Lip(1,1/2) graphs is strictly smaller than the class of  Lip(1,1/2) graphs.

For the heat equation in time-varying graph domains, {\em regularity}
of a Lip(1,1/2) graph (equivalently, its parabolic uniform rectifiability) is
deeply rooted in the study of the Dirichlet problem,
as it represents, in the graph setting,
{\it a necessary and sufficient geometric condition
for the $L^p(\d\sigma)$ solvability of the Dirichlet problem for the
heat/adjoint heat equation}, and equivalently and consequently,
for the $A_\infty$ property of parabolic/caloric measures.
As regards sufficiency, while it was
 shown in \cite{KWu} that there are Lip(1,1/2) graph domains for which caloric measure and surface measure are mutually singular, nonetheless in \cite{LewMur} it is proved that for {\em regular} Lip(1,1/2) graph domains, the caloric measure and the surface measure are quantitatively related in the sense that they are mutually absolutely continuous, and the associated parabolic Poisson kernel satisfies a scale-invariant reverse Hölder inequality in $L^q(\d\sigma)$ for some $q\in (1,\infty)$.  Thus, the authors obtained solvability of the Dirichlet problem for the heat equation with
data in $L^p(\d\sigma)$, for $p = q/(q-1)$ perhaps large and unspecified, i.e.,
they showed that  in the context of Lip(1,1/2) graph domains, the regular
Lip(1,1/2) condition is sufficient for the $L^p(\d\sigma)$ solvability (for some finite $p \geq 1$)
 of the Dirichlet problem for the heat equation.
Furthermore, the importance and relevance of regular Lip(1,1/2) graph domains, from the perspective of parabolic singular integrals, layer potentials,
boundary value problems and inverse problems, is made clear in
\cite{LS, LewMur, H95, H, HL96, HL}.
In particular, in \cite{HL96}  the solvability of the $L^2(\d\sigma)$ Dirichlet
problem (and of the $L^2(\d\sigma)$ Neumann and regularity problems) was proved  for the heat equation, by the way of layer potentials, in domains defined as regions above regular Lip(1,1/2) graphs. In \cite{HL96} the $L^2$ results were established under the restriction that the half-order time derivative (measured in BMO) of the function defining the graph is small. For solvability with data in $L^2$ (as opposed to $L^p$ with $p<\infty$ possibly large),
this smallness is sharp in the
sense that in \cite{HL96}, it is proved that there are regular Lip(1,1/2) graph domains for which the $L^2(\d\sigma)$ Dirichlet problem is not solvable.

Given the result of \cite{LewMur} outlined above, which
established the sufficiency of parabolic uniform rectifiability in the
context of parabolic graphs, and in light of  the example of \cite{KWu},
the question of necessity naturally arose:
was {\it parabolic uniform rectifiability
also necessary for the $L^p(\d\sigma)$ solvability of the Dirichlet problem for the heat equation}. In \cite{BHMN},
 we recently resolved  this long-standing open problem in the affirmative,
 for domains defined as regions above graphs of Lip(1,1/2) functions.
 In particular, for such domains,
 combining the results of \cite{LewMur,BHMN}, we
 can now conclude that $L^p(\d\sigma)$ solvability, for some
finite exponent $p$, is equivalent to parabolic uniform rectifiability of the
boundary, i.e., equivalent to
the property that for the function $\psi$ whose graph defines the boundary, the
half-order time derivative of $\psi$ lies in the space of (parabolic) bounded mean oscillation.

{
 The main result of this paper, Theorem \ref{main.thrm}, together with the techniques developed to prove it, represent a
 far-reaching extension of \cite{BHMN} and its analysis, to domains not necessarily given as graphs. In particular, Theorem \ref{main.thrm} provides, under essentially minimal background assumptions on $\Omega$ and $\Sigma$, the final step in establishing the necessity of parabolic uniform rectifiability for the $L^p(d\sigma)$ solvability of the Dirichlet problem for the heat/adjoint heat equation, thereby identifying parabolic uniform rectifiability as the natural geometric framework for boundary regularity in the parabolic setting. }

Concerning sufficient conditions for the $L^p(\d\sigma)$ solvability of the Dirichlet problem for the heat/adjoint heat equation in domains not necessarily given by graphs, we note that in \cite{HLN1, HLN2}, parts of the analysis on domains above regular Lip(1,1/2) graphs  mentioned above, was extended beyond the  setting of graphs. In \cite{HLN1} the existence of big pieces of regular Lip(1,1/2) graphs, under the  assumptions that $\Sigma$ is parabolic uniform rectifiable and Reifenberg flat in the parabolic sense, is established. The results in \cite{HLN1, HLN2}, were the first of their kind in the context of parabolic problems, and the studies in \cite{HLN1, HLN2}, were motivated by the study of parabolic/caloric measure in rough time-varying domains. However, other than \cite{HLN1, HLN2}, only a few notable works in rougher parabolic settings have appeared \cite{BHHLN1,E,GH1,GH2,N2006,NS,MP}. Notably, in
\cite{BHHLN1} the first, second and fourth author, together with J. Hoffman and J. Garcia-Luna, recently obtained,  by expanding on \cite{NS}, a flexible parabolic analogue of the work of David and Jerison \cite{DJ}. As a consequence, under appropriate connectivity assumptions (parabolic NTA), the local $L^p(\d\sigma)$ solvability of the Dirichlet problem is established. Without connectivity, instead assuming other conditions, the same conclusion is proved in \cite{GH1} in settings complementing \cite{BHHLN1,NS}. It is fair to say that \cite{BHHLN1,GH1,NS} represent the current state of the art concerning sufficient conditions for the solvability of the  $L^p(\d\sigma)$ Dirichlet problem for the heat equation in
domains whose boundary is 
not necessarily given (locally) as a graph.

More generally, finding alternative and equivalent characterization of
parabolic uniform rectifiability, 
along the lines of the
corresponding characterizations of uniform rectifiability in
\cite{DS1}, \cite{DS2}, is a central issue.
Up to recently this has also been unchartered territory, but through
the works of the first, second and fourth author, again together with J. Hoffman and J. Garcia-Luna, see \cite{BHHLN-BP,BHHLN-Corona,BHHLN-CME}, a theory is emerging.  Indeed,  combining the results in \cite{BHHLN-BP, BHHLN-Corona}, one can conclude that if $\Sigma \subset \ree$ is  parabolic Ahlfors-David regular (see Definition \ref{ADR.def}), then the following statements are equivalent:
\begin{list}{$(\theenumi)$}{\usecounter{enumi}\leftmargin=1cm \labelwidth=1cm \itemsep=0.2cm \topsep=.2cm \renewcommand{\theenumi}{\alph{enumi}}}
\item $\Sigma$ is parabolic uniformly rectifiable.

\item $\Sigma$ admits a Corona decomposition with respect to regular Lip(1,1/2) graphs.

\item $\Sigma$ has big pieces squared of regular Lip(1,1/2) graphs.
\end{list}
In addition, as mentioned above, in the case of a Lip(1,1/2) graph, by \cite{LewMur,BHMN}, $(a)$ is also equivalent to the $L^p(\d\sigma)$ solvability of the Dirichlet problem for the heat equation.


Let us observe also that our results can be viewed in the context of
evolutionary free boundary problems, i.e., the field of free boundary
problems in which the free boundary is moving in both space and time.
To be more precise, consider a
1-phase caloric free boundary problem, in which one is given a
solution $u$ of the heat equation (or adjoint heat equation),
and a domain $\om$ such that
\begin{equation}\label{upos}
\om = \{ u>0\}\,,
\end{equation}
so that $u$ vanishes on $\Sigma = \pom$ (the free boundary).  One further
assumes some knowledge of $|\nabla u|$ restricted to $\Sigma$
(thus, the boundary conditions are over-determined), and the goal is to then
deduce geometric information about the free boundary.
Our work treats a local, scale-invariant version of this problem,
in which $u$ is a Green function with some fixed pole,
and in that case, our hypothesis that
caloric measure belongs to weak-$A_\infty$ with
respect to $\sigma$, can be used to obtain
some quantitative, scale invariant
control on the oscillations of $|\nabla u|$ (thus $u$ is in some sense
``close-to-linear").  In turn,
the latter control allows us to show that $\Sigma$ is
parabolic uniformly rectifiable, which itself can be viewed
as a quantitative, scale-invariant sort of flatness of the free boundary.

We mention that the idea of using some version of
almost linearity of the Green function to deduce
geometric flatness has a long history in the theory of free boundary problems,
beginning with \cite{AC} in the elliptic ``small constant" setting,
in which connection see also \cite{Jerison-Calpha, KT1,KT2,KT3},
and in the elliptic ``large constant" setting
(the analogue of our results here), for which see \cite{LV,HLMN}; see also \cite{DM}.
Previous parabolic work in the small constant setting also used this idea, see
\cite{HLN1,HLN2,E}
(these works present parabolic versions of
\cite{AC,Jerison-Calpha, KT1,KT2,KT3}).
We refer the reader to the introduction to
\cite{BHMN} for a more detailed review of this history.

As regards \cite{E},
we point out that the {\em conclusion} in the large constant case treated
in the present paper
(namely that $\pom$ is uniformly rectifiable),
is a {\em hypothesis} in \cite{E},
and for that matter, in all the other works, mentioned above,
treating the small constant case in either the elliptic\footnote{In the
small constant elliptic case,
the hypothesis of uniform rectifiability is imposed implicitly: it is
a consequence of the fact that a
Reifenberg flat domain with Ahlfors regular boundary is,
in particular, a chord-arc domain by the results of \cite{DJ},
and thus has a uniformly
rectifiable boundary.}
or parabolic setting.
The assumption of parabolic uniform rectifiability
is made explicitly in \cite{HLN2,E}, to rule out
the case of a non-regular Lip(1,1/2) graph with
vanishing constant (see the example in \cite[p 384]{HLN1}).
Our results here therefore sharpen those of \cite{E}
and \cite{HLN2}, since one now sees that the
hypothesis of parabolic uniform rectifiability in those works is redundant:  indeed,
parabolic uniform rectifiability is now a consequence of
our Theorem \ref{main.thrm}, since
the assumptions made in \cite{HLN2,E} imply in particular the hypotheses
of our theorem (at least at all sufficiently small scales).
For related developments in the elliptic
(in particular, the harmonic) case,
we refer to \cite{HLMN,MT,AHMMT} and the references therein.

{
In light of the outline given above, the results of this paper provide a definitive contribution to the theory. Under the
essentially minimal background hypotheses that the domain satisfies an interior corkscrew condition, and has a time-symmetric parabolic Ahlfors-David regular boundary, we solve a 1-phase caloric free boundary problem that in turn
is equivalent to
showing that solvability of the Dirichlet problem for the heat equation in $\Omega$ with boundary data in $L^p(d\sigma)$, for some $p \in (1,\infty)$, implies that
$\Sigma $ is parabolically uniformly rectifiable. }

\subsection{Outline of the proof} The proof of Theorem \ref{main.thrm} is rather intricate,
so to assist the reader we shall now provide a brief outline of the argument, which
proceeds in several steps:  a preliminary Step 0, and then the main Steps 1-4.  It is
worth mentioning that Steps 1-4 already give a more self-contained
approach to the corresponding elliptic
result, for the Laplace operator, treated in \cite{HLMN}, as our argument in the present paper
circumvents the use of any of the ``Carleson set" geometric characterizations of uniform rectifiability
developed in \cite{DS2}.
More important, in contrast to the elliptic setting, the
(parabolic) weak half-space approximation condition, WHSA,
which we establish as Step 1, need {\em not} imply
parabolic uniform rectifiability of the boundary,
see \cite[Observation 4.19]{BHHLN-BP}. This obstruction is
one of the main reasons that the parabolic case is significantly
more challenging than the elliptic one.

\smallskip

\noindent
{\bf Step 0 - The Corona decomposition implies parabolic uniform rectifiability.}
In this preliminary step, we observe that the conclusion
of Theorem \ref{main.thrm} can be reduced to establishing the existence of a
(semi-coherent) Corona
decomposition in terms of {\em regular} Lip(1,1/2) graphs; indeed this is
precisely the content of Theorem \ref{coronaisUR.thrm}.
Thus, the goal in the main steps 1-4 is to construct such a Corona decomposition.

\noindent
{\bf Step 1 - The parabolic weak half-space approximation condition.} In the first step,
summarized as Theorem \ref{WHSA.thrm} below,
we prove that the hypotheses of Theorem \ref{main.thrm} imply that $\Sigma=\pom$ satisfies the parabolic {\em weak half-space approximation} (WHSA)
condition (see Definition \ref{def2.14});
in particular we thus prove a parabolic version of the main conclusion in \cite{HLMN}
(see Theorem \ref{WHSA.thrm}). In the elliptic case treated in \cite{HLMN},
we were effectively done at this point, since (as observed
in \cite{HLMN}) 
elliptic WHSA already implies uniform rectifiability of
the boundary.  As noted in \cite[Observation 4.19]{BHHLN-BP} (and above),
the latter implication fails in the
parabolic setting, and this failure thus forces
the extensive additional analysis in Steps 2-5.

To carry out Step 1, we first construct a preliminary Corona decomposition of
$\dd = \dd(\Sigma)$ (the dyadic cubes on $\Sigma$), see Lemma \ref{initialcorona1.lem},
in which we produce a disjoint decomposition $\dd = \dd(\Sigma)=\cG\cup\cB$ such that the cubes in $\cB$ (bad cubes) satisfy a Carleson packing condition. Furthermore, the collection
$\cG$ (good cubes) can be subdivided into a collection
of disjoint stopping time trees, $\{\sbf\}$, each having a
maximal cube $Q(\sbf)$, and such that the cubes $\{Q(\sbf)\}$ also
satisfy a Carleson packing condition. By construction, each
tree $\sbf$ is semi-coherent (see Definition \ref{d3.11}) and has the property that
\[
\mu(Q)\approx {\sigma(Q)},\quad  \text{for all $Q \in \sbf$},
\]
where $\mu$ is a normalized caloric measure.
Moreover, the spatial gradient of the normalized Green function is
non-degenerate at certain corkscrew/reference point associated
to $Q$. For each stopping time tree $\sbf$, we then
split the cubes in $\sbf$ 
into subclasses, which either pack in the Carleson sense, or which enjoy the property that
the spatial gradient of the normalized Green function is non-degenerate, and
does not oscillate too much, and hence that the Green function is roughly a linear function in the spatial variable in (a component) of the Whitney region associated to a cube $Q$.
This approximate linearity then yields the geometric flatness of the boundary, modulo packing, that is entailed by the WHSA.  We remark that Step 1 is significantly more delicate than its
elliptic analogue in \cite{HLMN}, due  to the time-directedness of estimates.

The next three steps are jointly
summarized in Proposition \ref{finalprop} (see Remark \ref{prop8.2proofoutline}),
from which the conclusion of Theorem \ref{main.thrm} follows almost immediately.

\noindent
{\bf Step 2 - Construction of Lip(1,1/2) graphs for the Corona decomposition.}
In the second step, we will
 show that, modulo a set of bad cubes $\mathcal{B}_{\sbf}$
 enjoying a Carleson packing condition, we can
sub-divide each $\sbf$ from Step 1 into
a collection of disjoint
stopping time trees $\{\sbf^*\}$ such that for each $\sbf^*$,
there is a Lip(1,1/2) graph $\Gamma_{\sbf^*}$ with certain
properties. 
In particular (see Lemma \ref{graphlip.lem} and Lemma \ref{graphclosecrna.lem} below),
$\Gamma_{\sbf^*}$ is the graph of a Lip(1,1/2)
function $\psi_{\sbf^*}$ with small constant, and
$\dist(Q,\Gamma_{\sbf^*})\lesssim \diam(Q)$ for each $Q\in\sbf^*$.
Moreover, a localized portion of the region above the graph of $\psi_{\sbf^*}$ does
not meet $\Sigma$, is above $\Sigma$ in the coordinate system constructed,
and is contained in one component of $\Omega$.

The construction in this step is somewhat similar to that in \cite{BHHLN-Corona},
which in itself is modelled on \cite{DS1}, and in our setting relies heavily on
the fact that all cubes in $\sbf^*$ satisfy the WHSA condition.

\noindent
{\bf Step 3 -  The Lip(1,1/2) graph $\Gamma_{\sbf^*}$ is a
regular Lip(1,1/2) graph.} In the third step,
we prove that the  Lip(1,1/2) graph $\Gamma_{\sbf^*}$, constructed in Step 2,
is in fact a {\em regular} Lip(1,1/2) graph (recall that in the context of Lip(1,1/2) graphs,
 this means that the graph is parabolic
uniformly rectifiable). We achieve this by first
`pushing' the (weak)-$A_\infty$ estimates
for the caloric measure for $\Omega$ 
onto the caloric measure $\hm_{\sbf^*}$ associated to
the domain above the approximating graph $\Gamma_{\sbf^*}$.
Since caloric measure for the
graph domain is doubling, we thus
obtain the full $A_\infty$ property for $\hm_{\sbf^*}$, and may
 then invoke the main result of \cite{BHMN}.

\noindent
{\bf Step 4 - The Corona decomposition: verifying the Carleson packing condition.}
In the fourth step, for each $\sbf$ as in Step 1, we prove that
a Carleson packing condition holds for
the set of bad cubes $\B_\sbf$, and also for
the collection $\{Q(\sbf^*)\}_{\sbf^*\subset \sbf}$
of  maximal cubes associated to the
stopping time trees $\{\sbf^*\}$.
This step is quite technical.  The proof will exploit the fact
that, by our results in Step 3, the non-tangential/square function
estimates of \cite{B89} are available to us
 in the regions above $\Gamma_{\sbf^*}$, and our argument is inspired by
the proof of the so-called ``$\eps$-approximability" property
 (see \cite[Chapter VIII, Section 6]{G}).  On the other hand, we are
 forced to overcome a significant technical obstacle, namely that we do not
 know that the corkscrew/reference points alluded to in Step 1, corresponding to
 different cubes, can be connected
 in a nice enough way, and this adds considerable complication to the
 $\eps$-approximability argument.
Matters would be significantly easier
 if we already knew that $\Sigma$ was uniformly rectifiable
  (see \cite[Section 5]{HMM}), but of
 course, that is what we are trying to prove.

\subsection{Organization of the paper} In Section \ref{Sec2}, which is of
a preliminary nature, we introduce some basic notation, terminology, and definitions
used throughout the paper, e.g.,
parabolic Ahlfors-David regular sets, the corkscrew condition,
the Dirichlet problem, and caloric measure.
Furthermore (and this is particularly important due to the technical complexity
of the paper), at the end of the section we outline our conventions concerning
floating constants. In Section \ref{Sec3}, we introduce further 
notation and definitions pertaining to dyadic cubes, Whitney regions, and chain
regions, and we introduce the (parabolic) weak half-space approximation
(WHSA) condition.  We note that here we use only the term ``chain
region", in contrast to \cite{HLMN} where the term ``Harnack chain region" is used.
The reason is that the term Harnack chain region, in the parabolic setting,
typically entails time directedness, whereas for our purposes
in the context of the parabolic WHSA property, the notion of (not necessarily time directed)
chain regions is the correct one.
In particular, the parabolic Harnack inequality will not be used in these
regions to make estimates.

In Section \ref{Sec4}, we define regular
Lip(1,1/2) graphs and parabolic uniform rectifiability, and we
define and discuss Corona decompositions.
We also state Theorem \ref{coronaisUR.thrm}, which provides the foundation for the
proof of Theorem \ref{main.thrm}.
In Section \ref{Sec4.5}, we establish a number of estimates for
the Green function, in domains having parabolic Ahlfors-David regular
boundary, to be used in the sequel.
In Section \ref{Sec5} we start the argument aiming at
establishing that $\Sigma$ has the parabolic weak half-space approximation condition, WHSA, in particular we start the argument towards completing Step 1 and the proof of Theorem \ref{WHSA.thrm}. Step 1 discussed above is carried out in Section \ref{Sec5} and Section \ref{Sec6}.  In Section \ref{Sec7}, we set up the structure for the proof of Theorem \ref{main.thrm} after having proved Theorem \ref{WHSA.thrm}. We here define the stopping rules based on which the stopping time trees $\{\sbf^*\}$ are constructed, and a number of preliminary reductions are carried out here. In particular, in Section \ref{Sec7} we
observe that  Theorem \ref{main.thrm} can be
reduced to  Proposition \ref{finalprop}, via Theorem \ref{coronaisUR.thrm}.
In the remaining sections we then
complete the proof of Proposition \ref{finalprop} along the following scheme.
First, in Section \ref{sec: graph}, proceeding along the lines of \cite{DS1, BHHLN-Corona},
we construct Lip(1,1/2) graphs for {every} stopping
time tree $\sbf^*$,
using that the $(\teps,K_0)$-WHSA condition (with $\teps=10\eps$) is
satisfied for every $Q\in\sbf^*$.
Thus, Step 2 outlined above is completed.
Second, in Section \ref{sec: reggraph}, using the weak-$A_\infty$ property and the main
result in \cite{BHMN}, we prove that the Lip(1,1/2) graphs constructed
are in fact regular Lip(1,1/2) graphs, thereby completing Step 3.
Third, in Section \ref{sec:pack}, we carry out Step 4, by proving
the cube packing property \eqref{graphclosepack}, which in turn
completes the proof of Proposition \ref{finalprop}, and hence also that of
Theorem \ref{main.thrm}.

\section{Notation and Definitions}\label{Sec2}

  Points in  Euclidean space-time
$ \mathbb R^{n+1}$ are denoted by $(X,t)$.
 When we wish to distinguish a particular
``vertical" direction in space, we shall use the notation $(X,t) = ( x_0,
 \dots,  x_{n-1},t)$,  where $ X = ( x_0, \dots,
x_{n-1} ) \in \mathbb R^{n } $, $n\geq 1$, and $t$ represents the time-coordinate.
We let  $\overline{E}$ and $\partial E$ be the closure and boundary of the set $ E \subset
\mathbb R^{n+1}$. $  \langle \cdot ,  \cdot  \rangle $  denotes  the standard inner
product on $ \mathbb R^{n} $ and we let  $  | X | = \langle X, X \rangle^{1/2} $ be
the  Euclidean norm of $ X\in\re^n$.  We let $\|(X,t)\|:=|X|+|t|^{1/2}$ denote the
parabolic length
of a space-time vector $(X,t)$. Given $(X,t), (Y,s)\in\mathbb R^{n+1}$
we let $$\dist(X,t,Y,s):= \|(X-Y,t-s)\| =
|X-Y|+|t-s|^{1/2},$$ and, more generally, we let
\[\dist(E_1,E_2) := \inf_{(X,t) \in E_1, (Y,s) \in E_2} \dist(X,t,Y,s),\]
denote the parabolic distance between  $E_1$ and $E_2$, where $E_1,E_2 \subseteq \mathbb R^{n+1}$. Also,
\[
\dist( X,t, E ) :=\inf_{(Y,s) \in E} \dist(X,t,Y,s)
\]
 is defined to equal the parabolic distance from  $  (X,t) \in \mathbb R^{n+1} $ to $ E$. We let $\diam(E)$ denote the parabolic diameter of $E$, i.e., the diameter of $E$ as measured using the parabolic distance function.

 Given $X\in\mathbb R^n$ we let $B(X,r)$ denote the open ball in $\mathbb R^n$, centered at $X$ and of radius $r$, and we let
\[ C( X, t,r ) \, := \, \{ ( Y, s )\in \mathbb R^{ n + 1 } :  Y\in B(X,r),\  | t - s | < r^2  \},\]
 whenever $(X,t)\in
\mathbb R^{n+1}$, $r>0$. We call $C(X,t,r) $ a parabolic cylinder ``of size $r$".
We also introduce the open time-forward and time-backward halves of $C(X,t,r)$,
\begin{align}\label{Cpm}
 C^+( X, t,r )&:= C( X, t,r )\cap \{ ( Y, s )\in \mathbb R^{ n + 1 } :s>t\},\notag\\
 C^-( X, t,r ) &:= C( X, t,r )\cap \{ ( Y, s )\in \mathbb R^{ n + 1 } : s<t\}.
 \end{align}
 We will also sometimes simply write $C_r$ (and $C^+_r, C^-_r$)
to denote a parabolic cylinder of size $r$ (and its time forward and backward halves)
whose center has been left implicit.

We let $ \d X $ denote  Lebesgue $n$-measure on  $\mathbb R^{n}$ and we
let $ \d t $ denote  Lebesgue $1$-measure on  $\mathbb R$.

\begin{definition}{\bf (Parabolic Hausdorff measure and the parabolic homogeneous
dimension).}
\label{parHausdef}
Given $\alpha \geq 0$, we let $\cH^\alpha$ denote the
 standard (Euclidean) $\alpha$-dimensional Hausdorff measure.
  We also define a {parabolic} Hausdorff measure of {homogeneous}
  dimension $\alpha$, denoted
  $\cH_{\text{par}}^\alpha$, in the same way that one defines standard (Euclidean) Hausdorff measure, but instead using the {parabolic} distance. That is,, for $\delta>0$, and for $E\subset \re^{n+1}$, we set
  \[ \cH_{\text{par},\delta}^\alpha(E):= \inf \sum_k (\diam(E_k))^\alpha,
  \]
  where the infimum runs over all countable such coverings of $E$, $\{E_k\}_k$, with $\diam(E_k)\leq \delta$ for all $k$. We then define
  \[
  \cH_{\text{par}}^\alpha (E) := \lim_{\delta\to 0^+} \cH_{\text{par},\delta}^\alpha(E)\,.
  \]
As is the case of classical Hausdorff measure, $ \cH_{\text{par}}^\alpha$ is a Borel regular measure.
We refer the reader to \cite[Chapter 2]{EG} for a discussion of the basic properties of standard
Hausdorff measure.  The arguments in \cite{EG} adapt readily to $  \cH_{\text{par}}^\alpha$.
In particular, one obtains a measure equivalent to   $\cH_{\text{par}}^\alpha$ if one defines
$\cH_{\text{par},\delta}^\alpha$ in terms of coverings by parabolic cylinders, rather than
arbitrary sets of parabolic diameter at most $\delta$.  As in the classical setting, we define the parabolic homogeneous dimension of a set
$A\subset \re^{n+1}$ by
\[\cH_{\text{par}, \text{dim}}(A):= \inf\left\{ 0\leq \alpha<\infty: \,\cH_{\text{par}}^\alpha(A)=0\right\}.
\]
We observe that $\cH_{\text{par}, \text{dim}}(\ree)=n+2$.
\end{definition}

For $\Sigma$, a fixed closed subset of $\ree$ of  homogeneous dimension
$\cH_{\text{par}, \text{dim}}(\Sigma)=n+1$, and
$ (X, t ) \in \Sigma $ and $r>0$, we define ``surface balls" (aka surface cylinders),
$$\Delta=\Delta(X,t,r):=\Sigma\cap C(X,t,r),$$
and we introduce its time-forward and time-backward halves
\begin{align*}
 \Delta^+=\Delta^+(X,t,r)&:= \Delta(X,t,r))\cap \{ ( Y, s )\in \mathbb R^{ n + 1 } :s> t\},\notag\\
\Delta^-=\Delta^-(X,t,r) &:= \Delta(X,t,r))\cap \{ ( Y, s )\in \mathbb R^{ n + 1 } :s<t\}.
 \end{align*}
As in the case of a cylinder $C_r=C(X,t,r)$, we will sometimes simply write
$\Delta_r,\Delta^+_r, \Delta^-_r$ when the center has been left implicit.

Given a closed set $\Sigma \subset \mathbb R^{n+1}$ we
define  a surface measure on
$\Sigma$ as the restriction of $\cH_{\text{par}}^{n+1}$ to $\Sigma$, i.e.,
\begin{equation}\label{sigdef}
\sigma = \sigma_\Sigma:=  \cH_{\text{par}}^{n+1}|_\Sigma\,.
\end{equation}

\begin{definition}[Parabolic Ahlfors-David regular]\label{ADR.def}
We say that a closed set $\Sigma\subset \rn\times \mathbb{R}$
is parabolic Ahlfors-David regular, parabolic ADR or simply ADR for short, with constant $M \ge 1$, if
\[M^{-1} r^{n+1} \le \sigma(\Delta(X,t,r)) \le  M r^{n+1},
\quad \forall (X,t) \in \Sigma, \,\, 0<r<\diam(\Sigma). \]
Assume now\footnote{This assumption suffices
for our purposes, but can be relaxed:
see Remark \ref{boundeddomain} and \cite[Definition 1.22]{GH1}.}
 that $\diam(\Sigma) =\infty$.  We
say that $\Sigma$ is {time-backward ADR} (or TBADR) if $\Sigma$ is ADR  and there exists a constant $M \ge 1$, such that
\[\sigma(\Delta^-(X,t,r)) \ge M^{-1} r^{n+1},
\quad \forall (X,t) \in \Sigma, \,\, 0<r< \infty.\]
We say that $\Sigma$ is {time-forward ADR} (or TFADR) if $\Sigma$ is ADR  and there exists a constant $M \ge 1$, such that
\[\sigma(\Delta^+(X,t,r)) \ge M^{-1} r^{n+1},  \quad
\forall (X,t) \in \Sigma,\,\, 0<r< \infty.\]
We say that $\Sigma$ is {time-symmetric ADR} (or TSADR) if $\Sigma$ is ADR, TBADR and TFADR. In this case, we will let $M$ be the maximum of the ADR, TBADR and TFADR constants (the constants in their definition), and we call this $M$ the time-symmetric ADR constant.
\end{definition}

Given a closed set $\Sigma\subset\mathbb R^{n+1}$ and $(Y,s)\in\mathbb R^{n+1}\setminus\Sigma$, we let $\delta(Y,s)$ denote the
\begin{equation}\label{dist}
\mbox{smallest $d$ such that }\Sigma\cap \overline{C(Y,s,d)}\neq\emptyset.
\end{equation}
Obviously $\delta(Y,s)\sim\dist(Y,s,\Sigma)$ with uniform constants for all $(Y,s)\in\mathbb R^{n+1}\setminus\Sigma$. When considering
the distance from $(Y,s)\in\mathbb R^{n+1}\setminus\Sigma$ to $\Sigma$,
we will use $\delta(Y,s)$ as our notion of parabolic distance from $(Y,s)$ to $\Sigma$.
 As may be seen by the following elementary lemma, this choice
has important implications when combined with the time-symmetric ADR assumption.
\begin{lemma}\label{touchp} Assume that $\Sigma\subset\mathbb R^{n+1}$ is time-symmetric ADR. Consider $(Y,s)\in\mathbb R^{n+1}\setminus\Sigma$ and let
$d:=\delta(Y,s)$ be defined as in \eqref{dist}. Assume that $(X,t)\in\Sigma$ is a point realizing $d$, i.e., $(X,t)\in\Sigma\cap \overline{C(Y,s,d)}$. Then
$(X,t)\in \partial B(Y,d)\times [s-d^2,s+d^2]$.
\end{lemma}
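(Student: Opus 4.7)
The plan is to prove the only nontrivial assertion, namely that $|X-Y|=d$, since $t\in[s-d^2,s+d^2]$ follows immediately from $(X,t)\in\overline{C(Y,s,d)}$. I will argue by contradiction, assuming $|X-Y|<d$, and produce some $d'<d$ for which $\Sigma\cap\overline{C(Y,s,d')}\neq\emptyset$, contradicting the minimality of $d=\delta(Y,s)$ built into the definition \eqref{dist}.

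The argument naturally splits according to how the time coordinate relates to $d^2$. If $|t-s|<d^2$ as well, then the choice
\[
d':=\max\bigl(|X-Y|,\,|t-s|^{1/2}\bigr)
\]
is strictly less than $d$, and $(X,t)$ itself lies in $\Sigma\cap\overline{C(Y,s,d')}$, a direct contradiction. The genuinely delicate case is when the time coordinate saturates the bound, say $t=s+d^2$. Here one invokes the time-backward ADR property: for every $r>0$,
\[
\sigma\bigl(\Delta^-(X,t,r)\bigr)\geq M^{-1}r^{n+1}>0,
\]
so $\Delta^-(X,t,r)$ contains some $(X',t')\in\Sigma$ with $|X'-X|<r$ and $t-r^2<t'<t$. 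Choosing $r>0$ small enough that $r<d-|X-Y|$, the triangle inequality yields $|X'-Y|<d$, while $s<t'<s+d^2$ forces $|t'-s|<d^2$. Setting
\[
d'':=\max\bigl(|X'-Y|,\,|t'-s|^{1/2}\bigr)<d,
\]
we obtain $(X',t')\in\Sigma\cap\overline{C(Y,s,d'')}$, once again contradicting the minimality of $d$. The mirror situation $t=s-d^2$ is handled identically, invoking instead the time-forward ADR condition to perturb strictly into the future.

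The main (and essentially only) obstacle is this second case: purely spatial reasoning cannot rule out a boundary contact that occurs only through a parabolic time-face of $\overline{C(Y,s,d)}$, since a parabolic cylinder is not a ball. This is precisely where the full time-symmetric ADR hypothesis (and not merely one-sided ADR) becomes indispensable, as it furnishes positive surface mass on both sides of the time level $t$, thereby enabling a one-sided temporal perturbation $(X,t)\rightsquigarrow(X',t')$ that strictly tightens both the spatial and the temporal constraints simultaneously.
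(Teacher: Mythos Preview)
Your proof is correct and follows essentially the same approach as the paper: both rule out the possibility that $(X,t)$ lies on a time-face of $\overline{C(Y,s,d)}$ by invoking time-backward (resp.\ time-forward) ADR at $(X,t)$ to produce nearby points of $\Sigma$ strictly inside a smaller cylinder. The paper phrases this slightly more compactly by noting that for $(X,t)$ on the bottom face $B(Y,d)\times\{s-d^2\}$, the entire half-cylinder $C^+(X,t,\tilde\eps)$ lies in the open $C(Y,s,d)$ (hence misses $\Sigma$) for small $\tilde\eps$, directly contradicting TFADR; your explicit Case~1 is subsumed there by the observation that minimality forces $(X,t)\in\partial C(Y,s,d)$.
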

\begin{proof} $\partial C(Y,s,d)=\Gamma_1\cup\Gamma_2\cup\Gamma_3$ where $\Gamma_1:=\partial B(Y,d)\times [s-d^2,s+d^2]$, $\Gamma_2:=B(Y,d)\times \{s-d^2\}$, and
$\Gamma_3:=B(Y,d)\times \{s+d^2\}$. As $(X,t)\in \partial C(Y,s,d)$ we want to exclude the possibilities that $(X,t)\in\Gamma_2$ or $(X,t)\in\Gamma_3$. Assume that
$(X,t)\in\Gamma_2$. Then, by the construction of $C(Y,s,d)$, and the fact that $B(Y,s)$ is open,  we see that there exists $\eps=\eps(X,t)>0$ such that
$C^+(X,t,\tilde\eps)\cap\Sigma=\emptyset$ for all $0<\tilde\eps<\eps$. In particular, if $(X,t)\in\Gamma_2$ then $\Sigma$ can not be time-forward  ADR at $(X,t)$ and this conclusion violates one of the assumptions stated in the lemma. Similarly, if $(X,t)\in\Gamma_3$ then we conclude that $\Sigma$ can not be time-backward  ADR at $(X,t)$.
\end{proof}

\begin{definition}[Corkscrew condition]\label{CS.def}
Given an open set $\Omega \subset \ree$, we say that $\Omega$ satisfies the (interior) corkscrew condition if there exists a constant $\gamma \ge 1$ such that if $(X,t) \in \Sigma$,
$0<r<\infty$, then there exists a parabolic cylinder  $C(X_1,t_1,\rho)$, contained in $C(X,t,r)$, such that $C(X_1,t_1,\rho)\subset\Omega$  and with
\[\gamma^{-1} r \leq \rho < r.\]
\end{definition}

\begin{remark}\label{cp}
Given  $(X,t) \in \Sigma$,  $r>0$, we refer to the point $(X_1,t_1)$ in Definition \ref{CS.def} as a corkscrew point associated to $(X,t)$ and at scale $r$. Naturally, $(X_1,t_1)$ is not unique.
\end{remark}

Given the open set $\Omega\subset \ree$, we define its (forward) {parabolic
boundary} $\partial_p^+\Omega$ as
$$\partial_p^+\Omega:=\left\{(X,t)\in\partial\Omega: \forall r>0 , \,C^-(X,t,r)\,
\text{ meets }\, \ree\setminus \Omega\right\}.
$$
Similarly, we define its adjoint (or backward) {parabolic
boundary} $\partial_p^-\Omega$ as
$$\partial_p^-\Omega:=\left\{(X,t)\in\partial\Omega: \forall r>0 , \,C^+(X,t,r)\,
\text{ meets }\, \ree\setminus \Omega\right\}.$$
Note that if $\Sigma = \partial \Omega$ is TSADR,  then
$\partial_p^+\Omega=\Sigma=\partial_p^-\Omega$. In the following
we consistently assume that $\Sigma = \partial \Omega$ is TSADR (unless we explicitly say otherwise).

\begin{remark}\label{boundeddomain}
As just noted, $\Sigma =\pom=\partial_p^+\Omega=\partial_p^-\Omega$,
since we assume TSADR at all scales $0<r<\infty$.
Furthermore, the latter assumption implies that
$\diam(\Sigma) =\diam(\Omega) = \infty$, and $\Sigma$ is not contained
in any time-limited strip $\{(X,t):a<t<b\}$ with either $a$ or $b$ finite.
On the other hand, we have made this assumption only for the sake of simplicity, and in fact
our results can be extended to the case that $\Omega$ is
contained in a time-limited strip, 
and in particular to the case that
$\Omega$ is a bounded space-time domain.
In this case, we define the ``bottom boundary" $\partial_{b}^+\Omega$
and ``adjoint (or time reversed) bottom boundary" $\partial_{b}^-\Omega$ (aka the
``abnormal" boundary) by
 \[
 \partial_{b}^{\pm}\Omega:=
 \left\{(x,t)\in\partial_p^{\pm}\Omega: \exists \,\eps>0 \,  \text{ such that } \,C_\eps^{\pm}(x,t)\,
\subset \Omega\right\}\,.
\]
Observe that $\partial_p^{\pm}\Omega =\pom\setminus  \partial_{b}^{\mp}\Omega$.
The ``lateral boundary" is then defined to be
\begin{equation}\label{lateral}
\partial_{\ell}\Omega := \partial_p^{+}\Omega\setminus  \partial_{b}^{+}\Omega
= \partial_p^{-}\Omega\setminus  \partial_{b}^{-}\Omega\,,
\end{equation}
and in this setting, we set $\Sigma= \partial_{\ell}\Omega$.  Let $T_{min}$ (resp. $T_{max}$) denote the supremum
(resp. infimum) of the numbers $a$ (resp. $b$) such that
$\Sigma \subset \{(X,t):a<t<b\}$.  We say that a surface cylinder
$\Delta=\Delta(X_0,t_0,r)$ is ``admissible" if
$r<\min(\sqrt{t_0-T_{Min}},\sqrt{T_{Max}-t_0})/8$, and we
then assume that TSADR, and the weak-$A_\infty$ condition
(Definition \ref{defweakAinfty.def} below) both hold for all
admissible\footnote{see \cite[Definition 1.22]{GH1}
for a precise definition of TBADR, etc., in this setting}
$\Delta$ (with uniform constants).
The conclusion that we draw is that
$\Sigma$ is locally parabolic uniformly rectifiable\footnote{see \eqref{eq1.sfll}}
 (again with uniform constants),
on all admissible $\Delta$.
We leave the details to the interested reader.
We remark that in this setting, if either $\diam(\Sigma)$ is finite, or
$T_{min} = -\infty$ and $\diam(\Sigma)=\infty$, then
the weak-$A_\infty$ condition on admissible $\Delta$
is equivalent to $L^p$ solvability of the initial-Dirichlet problem, with
vanishing initial data $u(X,T_0)=0$,
on subdomains $\Omega\cap \{t>T_0\}$, for $T_0>T_{min}$
(with constants depending on the ratio $(T_0-T_{min})/\diam(\Sigma)$
in the case that $\diam(\Sigma)$ is finite); see \cite[Theorem 2.10]{GH1}.

Finally, we mention that in the TSADR setting (although not necessarily in general),
the lateral boundary defined in
in \eqref{lateral} coincides with the ``quasi-lateral" boundary defined in
\cite{GH1}, as follows from \cite[Remark 1.26]{GH1} and the time reversibility of
the TSADR condition.
\end{remark}

\begin{definition}[Continuous Dirichlet problem]
Given $f\in C_c(\Sigma)$ (continuous with compact support in $\Sigma$),
we say that $u$ solves the continuous Dirichlet problem for the heat equation in $\Omega$ with boundary data $f$, if $(\partial_t-\Delta)u=0$ in $\Omega$, if
$u$ is continuous on the closure of $\Omega$, and if
\begin{align}\label{bc}
u|_{\Sigma}=f\mbox{ and }\lim_{\|(X,t)\|\to\infty}
u(X,t)=0.
\end{align}
The statement $u|_{\Sigma}=f$ means that
 \[\lim_{(X,t)\to (Y,s)} u(X,t) = f(Y,s),\]
 whenever $(Y,s) \in \Sigma$ and it is understood the limits above are taken within $\Omega$. Similarly, given $f\in C_c(\Sigma)$, we say that $u$ 
 solves the continuous Dirichlet problem for the adjoint heat equation in
 $\Omega$ with boundary data $f$,
 if $(\partial_t+\Delta)u=0$ in $\Omega$, if $u$ 
 is continuous on the closure of $\Omega$, and if
 \eqref{bc} holds. 
\end{definition}

Given $\Omega:=\mathbb R^{n+1}\setminus\Sigma$ as above, let $u=u(X,t)$ be the
Perron-Wiener-Brelot (PWB) solution (see \cite{W1}, \cite[Chapter 8]{W2})
of the Dirichlet problem  for the heat equation, with data $f\in C_c(\Sigma)$. By the Perron construction,
for each point $(X,t) \in \Omega$,
the mapping
$f \mapsto u(X,t)$ is  bounded, and by
the resolutivity of functions $f\in C_c(\Sigma)$ (see \cite[Theorem 8.26]{W2}), 
it is also linear. The caloric 
measure with pole $(X,t)$ is the unique probability measure $\omega^{X,t}(\cdot)$ on $\Sigma$, given by
the Riesz representation
theorem, such that
\begin{equation}\label{parmeasuredef}
u(X,t)=\iint_{\Sigma}f(Y,s)\, \d\omega^{X,t}(Y,s).
\end{equation}
Similarly, the adjoint caloric 
measure with pole $(X,t)$ is the unique probability measure $\widehat\omega^{X,t}(\cdot)$ on $\Sigma$, given by
the Riesz representation
theorem, such that
\begin{equation*}
u(X,t)=\iint_{\Sigma}f(Y,s)\, \d\widehat\omega^{X,t}(Y,s)
\end{equation*}
is the PWB solution to the adjoint Dirichlet problem in $\Omega$.

The following lemma may be deduced by using the Wiener test proved in \cite{EG2}.
\begin{lemma}\label{Dir} Let $\Omega\subset\mathbb R^{n+1}$ be an open set. Assume that
$\Sigma = \partial \Omega$ is {time-backward ADR}. Then the continuous Dirichlet problem for the heat equation is uniquely solvable in $\Omega$, and given $f\in C_c(\Sigma)$,
the unique solution at $(X,t)\in\Omega$ is given as
\begin{equation*}
u(X,t)=\iint_{\Sigma}f(Y,s)\, \d\omega^{X,t}(Y,s).
\end{equation*}
Similarly, if $\Sigma$ is {time-forward ADR}, then the continuous Dirichlet problem for the adjoint heat equation is uniquely solvable in $\Omega$, and given $f\in C_c(\Sigma)$, the unique solution at $(X,t)\in\Omega$ is given as
\begin{equation*}
u(X,t)=\iint_{\Sigma}f(Y,s)\, \d\widehat{\omega}^{X,t}(Y,s).
\end{equation*}
\end{lemma}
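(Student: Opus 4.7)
The plan is to verify that under the TBADR hypothesis every boundary point of $\Omega$ is regular for the heat equation in the sense of the parabolic Wiener test of \cite{EG2}, and then to string together Perron's construction, regularity, the maximum principle, and the Riesz representation theorem in the standard way. The adjoint case follows by time reversal, using that TFADR for $\Sigma$ is the same as TBADR for the time-reflected set.

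First I would recall that for any $f\in C_c(\Sigma)$ the Perron-Wiener-Brelot method produces an upper solution $\overline{u}_f$ and a lower solution $\underline{u}_f$, both of which are caloric in $\Omega$ and bounded (by $\|f\|_\infty$); by the parabolic resolutivity of continuous boundary data recorded in \cite[Theorem 8.26]{W2}, these coincide and define a caloric function $u=u_f$ on $\Omega$ for which $f\mapsto u(X,t)$ is a bounded positive linear functional on $C_c(\Sigma)$ at each fixed $(X,t)\in\Omega$. The Riesz representation theorem then yields the probability measures $\omega^{X,t}$ on $\Sigma$ satisfying \eqref{parmeasuredef}. The decay condition $u(X,t)\to 0$ as $\|(X,t)\|\to\infty$ is a routine consequence of the compact support of $f$, the boundedness of $\omega^{X,t}$, and the fact that, for $(X,t)$ far from $\supp f$, one may compare $\omega^{X,t}(\supp f)$ to the thermal mass of a suitable Gaussian majorant.

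The crux is to show that
\[
\lim_{(X,t)\to (X_0,t_0),\,(X,t)\in\Omega} u(X,t)=f(X_0,t_0)\qquad \text{for every } (X_0,t_0)\in\Sigma,
\]
i.e., that every boundary point is regular. For the heat equation the parabolic Wiener criterion of \cite{EG2} characterizes regularity at $(X_0,t_0)$ by a divergence condition on the thermal capacity of $\Sigma$ inside cylinders that lie in the past of $(X_0,t_0)$ (this is the reason the relevant directional hypothesis is time-\emph{backward}). The TBADR hypothesis gives the scale-invariant density estimate
\[
\sigma\bigl(\Delta^-(X_0,t_0,r)\bigr)\ge M^{-1} r^{n+1},\qquad 0<r<\infty,
\]
and since $\sigma=\mathcal H^{n+1}_{\text{par}}\!\lfloor_{\Sigma}$ is comparable, up to dimensional constants, to a lower bound on parabolic/thermal capacity of the same set (the capacity of a compact set of positive $\mathcal H^{n+1}_{\text{par}}$-measure in a past cylinder of size $r$ is $\gtrsim r^{n}$ on the capacity scale, which is the exact scaling entering the Wiener test), the Wiener series diverges at every scale, uniformly in $(X_0,t_0)$. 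Hence every boundary point is regular and the PWB solution attains its data continuously, verifying \eqref{bc}.

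Uniqueness is then immediate from the parabolic maximum principle: if $u_1,u_2$ both solve the continuous Dirichlet problem with the same data, the difference $v=u_1-u_2$ is caloric in $\Omega$, continuous on $\overline{\Omega}$, vanishes on $\Sigma$, and tends to $0$ at infinity, so applying the maximum principle on an exhaustion of $\Omega$ by bounded space-time subdomains forces $v\equiv 0$. The adjoint statement is obtained by applying the argument verbatim to $\widetilde{u}(X,t):=u(X,-t)$ in the reflected domain $\widetilde\Omega=\{(X,t):(X,-t)\in\Omega\}$, whose boundary is TBADR precisely because $\Sigma$ is TFADR. The one step I expect to be the main obstacle is the quantitative passage from the measure lower bound $\sigma(\Delta^-(X_0,t_0,r))\gtrsim r^{n+1}$ to a thermal-capacity lower bound on $\Sigma\cap C^-(X_0,t_0,r)$ of the form required by the Wiener test of \cite{EG2}; the time-directedness is essential here, since the heat operator only ``sees'' the past, and it is exactly the role played by the TBADR assumption (respectively TFADR for the adjoint equation).
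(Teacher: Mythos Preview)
Your proposal is correct and follows precisely the approach the paper indicates: the paper's ``proof'' is simply the one-line remark that the lemma may be deduced from the parabolic Wiener criterion of \cite{EG2}, and your sketch fills in exactly those details (PWB construction, regularity via the Wiener test using the TBADR density bound, uniqueness by the maximum principle, and the adjoint case by time reversal). The one point you flag as the main obstacle---passing from the $\sigma$-lower bound to a thermal capacity lower bound---is indeed the substantive step, and it is handled in \cite{EG2} (see also the discussion in \cite[Lemma~2.2]{GH1}).
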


We here define the hypothesis on caloric measure  stated in our main result. We let $\Omega\subset\mathbb R^{n+1}$ be an open set with time-symmetric ADR boundary $\Sigma = \partial \Omega$.

\begin{definition}[Weak-$A_\infty$]\label{defweakAinfty.def}
 We say the caloric measure $\hm$ is in weak-$A_\infty$ with respect to $\sigma$ if there exists constants $q > 1$, $C_1 \ge 1$, such that the following holds. For every $(X_0, t_0) \in \Sigma$, $0<r<\infty$, and $(Y_0, s_0) \in \Omega \setminus C(X_0,t_0, 4r)$, it holds that
 $$
 \hm^{Y_0,s_0}\big|_{\Sigma \cap C(X_0,t_0,r)} \ll \sigma,
 $$ and $k^{Y_0,s_0} := {\d\hm^{Y_0,s_0}}/{\d\sigma}$ satisfies,
\begin{equation}\label{wrhpdefeq.eq}
\left(\bariint_{\Delta(X,t, \rho)} (k^{Y_0,s_0})^{q} \, \d\sigma \right)^{1/q} \le
C_1 \bariint_{\Delta(X,t, 2\rho)} k^{Y_0,s_0} \, \d\sigma,
\end{equation}
whenever $(X,t) \in \Sigma$ and $C(X,t,2\rho) \subset C(X_0, t_0, r)$. If \eqref{wrhpdefeq.eq} holds, then we say that $k=k^{Y_0,s_0}$ satisfies a (local) weak reverse Hölder inequality (with exponent $q$).
\end{definition}

\subsection{Conventions concerning floating constants} Throughout the paper we will use the following conventions. We will write $c$ to denote, if not
otherwise stated, a constant satisfying $1\leq c <\infty$. We refer to $n$, the constant $M$ of Definition \ref{ADR.def}, the constant $\gamma$ of
 Definition \ref{CS.def}, and the constants $p$ and $C_1$ appearing in Definition \ref{defweakAinfty.def},  as {\it allowable parameters}. For all constants $A,B\in \mathbb R_+$, the notation $A\lesssim B$  means, unless otherwise stated, that $A/B$ is bounded from above by a positive constant depending at most on the allowable parameters. $A\gtrsim B$ should be interpreted similarly. We write $A\approx B$  if $A\lesssim B$ and  $B\lesssim A$. The notation $$A\lesssim_{\xi_1,...,\xi_k} B$$  means, unless otherwise stated, that $A/B$ is bounded from above by a positive constant depending at most on the allowable parameters and the constants $\xi_1$,..., $\xi_k$.  Finally, we note the we shall sometimes use $N$ to
 denote a large positive constant (at least 1),
 fixed for the duration of the proof of some lemma or proposition, but with a
possibly different value when used in a different proof.   Similarly, $\zeta$ will denote a small positive constant, with the same convention.
Specific constants greater than or equal to 1 will
sometimes be denoted as $N_0,N_1,N_2$, etc.~(or $M_0,M_1,M_2$, etc.).

\section{Dyadic cubes and associated geometrical notions}\label{Sec3}
In this section we introduce  dyadic cubes and associated notation to be used. We also define Whitney and chain type regions, as well as the
(parabolic) {weak  half-space approximation} condition.

\subsection{Dyadic cubes} We will work with
dyadic decompositions of the boundary $\Sigma$,
which in particular is a space of homogenenous type,
by virtue of the parabolic ADR property.
Such constructions are now ubiquitous in harmonic analysis,
and for the following lemma we refer to \cite{Ch,DS1,DS2,HK}.

\begin{lemma}\label{cubes}  Assume that $\Sigma  \subset \mathbb R^{n+1}$ is (parabolic) ADR  in the sense of Definition \ref{ADR.def} with constant $M$. Then $\Sigma$ admits a parabolic dyadic decomposition in the sense that there exist positive, finite
constants $c_0$, $\zeta$, and $c_*\,$, dpending only on dimension
and ADR, such that the following holds. For each $k \in \mathbb{Z}$,
there exists a collection of Borel sets, $\mathbb{D}_k$,  which we will call (dyadic) cubes, such that
$$
\mathbb{D}_k:=\{Q_{j}^k\subset \Sigma: j\in \mathfrak{I}_k\},$$ where
$\mathfrak{I}_k$ denotes some (countable)  index set depending on $k$, and such that the following hold:
\begin{list}{$(\theenumi)$}{\usecounter{enumi}\leftmargin=1cm \labelwidth=1cm \itemsep=0.2cm \topsep=.2cm \renewcommand{\theenumi}{\roman{enumi}}}
\item $\Sigma=\cup_{j}Q_{j}^k\,$ for each $k\in{\mathbb Z}$.

\item If $m\geq k$ then either $Q_{i}^{m}\subseteq Q_{j}^{k}$ or $Q_{i}^{m}\cap Q_{j}^{k}=\emptyset$.

\item For each $(j,k)$ and each $m<k$, there is a unique $i$ such that $Q_{j}^k\subset Q_{i}^m$.
\item $\diam\big(Q_{j}^k\big)\leq c_* 2^{-k}$.
\item Each $Q_{j}^k$ contains $\Sigma\cap \Delta(Z^k_{j},t^k_j, c_02^{-k})$ for some $(Z^k_{j},t^k_j)\in \Sigma$.
\item $\sigma(\{(Z,t)\in Q^k_j: \dist(Z,t,\Sigma\setminus Q^k_j)\leq \varrho \,2^{-k}\big\})\leq c_*\,\varrho^\zeta\,\sigma(Q^k_j),$
for all $k,j$, and all $\varrho\in (0,1)$.
\end{list}
\end{lemma}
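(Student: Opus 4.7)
The plan is to recognize this as an instance of Christ's dyadic cube construction on a space of homogeneous type, specialized to the parabolic-dyadic scale. Indeed, once the underlying analytic structure on $\Sigma$ is identified, everything in (i)--(vi) is a direct consequence of the constructions in \cite{Ch}, \cite[Appendix A]{DS1, DS2}, or the more axiomatic version in \cite{HK}; the only task is to verify the hypotheses in the parabolic setting.

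First I would note that $(\Sigma, d_{\mathrm{par}}, \sigma)$ is a metric measure space with doubling measure. The parabolic distance $d_{\mathrm{par}}((X,t),(Y,s)) := |X-Y| + |t-s|^{1/2}$ is an \emph{honest} metric on $\ree$ (not merely a quasi-metric), since subadditivity of the square root on $[0,\infty)$ yields the triangle inequality directly; in particular the surface cylinders $\Delta(X,t,r) = \Sigma \cap C(X,t,r)$ are (up to fixed comparability constants) the metric balls of $(\Sigma, d_{\mathrm{par}})$. The ADR hypothesis on $\Sigma$ then gives a doubling estimate $\sigma(\Delta(X,t,2r)) \leq 2^{n+1}M^2\, \sigma(\Delta(X,t,r))$ for all $0 < r < \diam(\Sigma)$. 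With these two properties in hand, $(\Sigma, d_{\mathrm{par}}, \sigma)$ is a standard space of homogeneous type.

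Next I would invoke Christ's construction with dyadic parameter $\delta = 1/2$ to produce, for each $k \in \mathbb{Z}$, a partition $\mathbb{D}_k$ of $\Sigma$ into Borel sets $Q_j^k$ satisfying the covering and nesting properties (i)--(iii); in the construction each $Q_j^k$ admits a distinguished ``center'' $(Z_j^k, t_j^k) \in \Sigma$ together with the two-sided inclusion
\[
\Sigma \cap \Delta(Z_j^k, t_j^k, c_0 2^{-k}) \subset Q_j^k \subset \Sigma \cap \Delta(Z_j^k, t_j^k, c_* 2^{-k}),
\]
which yields (iv) and (v). Here $c_0$ and $c_*$ depend only on the doubling constant of $\sigma$, which in turn depends only on $n$ and $M$. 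The only point requiring care is to translate Christ's abstract radius $\delta^k$ into the parabolic scale $2^{-k}$, but this is purely cosmetic: one can either take $\delta = 1/2$ from the start or relabel indices.

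The last item, the small-boundary property (vi), is the main subtlety of Christ's construction and is where the doubling hypothesis really gets used. The argument proceeds by a pigeonhole/iteration on the dyadic tree: for each $Q_j^k$, and for $\varrho \in (0,1)$, one estimates the $\sigma$-mass of the ``boundary layer''
\[
\partial_\varrho Q_j^k := \{(Z,t) \in Q_j^k : d_{\mathrm{par}}(Z,t,\Sigma \setminus Q_j^k) \leq \varrho\, 2^{-k}\}
\]
by summing over descendants in $\mathbb{D}_{k+N}$ (with $N \sim \log_2(1/\varrho)$) that meet this layer, and comparing to cubes one step up in the tree using doubling. Iteration of this step yields a geometric decay $\varrho^\zeta$, where the exponent $\zeta > 0$ depends only on the doubling constant. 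I expect this to be the one step that deserves the most care to check line-by-line, but as it relies purely on doubling and the two-sided inclusion already established, it transplants verbatim from the Euclidean/metric case to the parabolic one. Since \cite{Ch, HK} carry out exactly this argument in the abstract setting, no essentially new work is required.
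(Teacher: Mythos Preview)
Your proposal is correct and matches the paper's approach exactly: the paper does not prove this lemma but simply refers to \cite{Ch,DS1,DS2,HK}, noting that such constructions are ubiquitous for spaces of homogeneous type. Your write-up in fact goes a bit further than the paper by explicitly verifying that $(\Sigma, d_{\mathrm{par}}, \sigma)$ is a genuine metric space with doubling measure, which is the only thing one needs to check before invoking those references.
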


\begin{remark}\label{remarkddcubes} We denote by
$\mathbb{D}=\mathbb{D}(\Sigma)$ the
collection of all $Q^k_j$, i.e.,
$$\mathbb{D} := \cup_{k} \mathbb{D}_k.$$ For a dyadic cube
$Q\in \mathbb{D}_k$, we set $\ell(Q) := 2^{-k}$, and we will
refer to this quantity as the size
of $Q$.  Evidently, by ADR,  $\ell(Q)\sim\diam(Q)$ with constants
of comparison depending at most on $n$ and $M$. Furthermore,
given $Q\in \mathbb{D}$ and $k\in\mathbb Z_+$, we let
$Q'\in \mathbb{D}_k(Q)$ if $Q'\subset Q$ and $\ell(Q') := 2^{-k}\ell(Q)$,
and we set $\dd(Q):=\{Q'\in\dd(\Sigma): Q'\subset Q\}$.
\end{remark}

\begin{remark}\label{remarkscube}
Set
$$M(Q):=\max\big(\ell(Q),\diam (Q)\big), \,\text{ and }\,
m(Q):=\min\big(\ell(Q),\diam (Q)\big)\,.$$
Note that $(iv)$ and $(v)$ of Lemma \ref{cubes}, and the fact that
$\ell(Q)\sim\diam(Q)$,
imply that there exists, for each cube $Q\in\mathbb{D}_k$, a point $(X_Q,t_Q)\in \Sigma$, a radius $r_Q>0$, and  a cylinder $C(X_Q,t_Q,r_Q)$, such that
\begin{equation}\label{cube-ball}
M(Q)\lesssim r_Q \leq m(Q), \text{ and } \Sigma\cap C(X_Q,t_Q,r_Q)\subset Q \subset \Sigma\cap C(X_Q,t_Q,cr_Q),\end{equation}
for some uniform constant $c$. We introduce the associated surface ball/cylinder
\begin{equation}\label{cube-ball2}
\Delta_Q:= \Sigma\cap C(X_Q,t_Q,r_Q),\end{equation}
and we will refer to the point $(X_Q,t_Q)$ as the ``center" of $Q$.
Given a cube $Q \in \mathbb{D}$ and $\lambda > 1$ we use the notation
\begin{equation}\label{dilatecube}
\lambda Q := \{(X,t) \in \Sigma : \dist(X,t, Q) < (\lambda - 1) \diam(Q)\}.
\end{equation}
\end{remark}

\begin{remark} For any $Q\in \mathbb{D}(\Sigma)$, we set
$\mathbb{D}_Q:= \{Q'\in \mathbb{D}:\,Q'\subseteq Q\}\,.$ Given $Q_0\in \mathbb{D}(\Sigma)$ and a family $\mathcal{F}=\{Q_j\}\subset \mathbb{D}$ of pairwise disjoint cubes, we set
\begin{equation}\label{eq:def-sawt}
\mathbb{D}_{\mathcal{F},Q_0}: =\mathbb{D}_{Q_0}\!\setminus\Big(\bigcup_{Q_j\in\mathcal{F}}\mathbb{D}_{Q_j}\Big).
\end{equation}
\end{remark}

Following \cite{DS2}, we will work with coherent and semi-coherent collections of dyadic cubes. Let $\Sigma  \subset \mathbb R^{n+1}$ be parabolic ADR,  in the sense of Definition \ref{ADR.def}, with dyadic cubes $\mathbb D(\Sigma )$.

\begin{definition}[Coherent and semi-coherent stopping time trees]\label{d3.11}  Let $\sbf\subset \mathbb D(\Sigma )$. We say that $\sbf$ is coherent if the following three conditions hold.
\begin{list}{$(\theenumi)$}{\usecounter{enumi}\leftmargin=1cm \labelwidth=1cm \itemsep=0.2cm \topsep=.2cm \renewcommand{\theenumi}{\alph{enumi}}}
\item  $\sbf$ contains a unique maximal element $Q(\sbf)$ which contains all other elements of $\sbf$ as subsets.
\item  If $Q$  belongs to $\sbf$, and if $Q\subset \widetilde{Q}\subset Q(\sbf)$, then $\widetilde{Q}\in {\bf S}$.
\item  Given a cube $Q\in \sbf$, either all of its children belong to $\sbf$, or none of them do.
\end{list}
We say that $\sbf$ is semi-coherent if only conditions $(a)$ and $(b)$ hold.
\end{definition}

\subsection{Whitney regions}\label{cubes.sect+} In the following we let $\mathcal{W}=\mathcal{W}(\Omega)$ denote a collection
of (closed) dyadic (parabolic)  Whitney cubes of $\Omega$, constructed so that the cubes in $\mathcal{W}$, denoted $\{I\}$,
form a covering of $\Omega$ with non-overlapping interiors, and  such that
\begin{equation}\label{eqWh1} 4\, {\rm{diam}}\,(I)\leq \dist(4 I,\pom) \leq  \dist(I,\pom) \leq 40 \, {\rm{diam}}\,(I),\end{equation}
and
\begin{equation}\label{eqWh2}\diam(I_1)\approx\diam(I_2), \mbox{ whenever $I_1$ and $I_2$ meet.}
\end{equation}
In \eqref{eqWh2}, the implicit constants only depend on $n$. Given a small, positive parameter $\tau$, and given $I\in\mathcal{W}$, we let
\begin{equation}\label{eq2.3*}I^* =I^*(\tau) := (1+2\tau)I
\end{equation}
denote the corresponding fattened Whitney cube.
We fix $\tau$ sufficiently small depending only on $n$, so that the cubes $\{I^*\}$  retain the usual properties of Whitney cubes,
in particular that
\begin{equation}\label{eq2.3*g}\diam(I) \approx\diam(I^*) \approx\dist(I^*,\Sigma) \approx\dist(I,\Sigma),
\end{equation}
where, strictly speaking, the implicit constants now also depend on $\tau$. In fact, by taking $\tau$ smaller, we can ensure that
\begin{equation}\label{eq2.3*g+}
\mbox{\eqref{eq2.3*g} holds with  $(1+ 100\tau)I$ in place of $I^*$.}
\end{equation}
Note also that taking $\tau$ small enough we can guarantee that
\begin{equation}\label{Whitneytouch}
(1+ 100\tau)I\cap (1+ 100\tau)J\neq\emptyset \ \Longleftrightarrow\ \partial I\cap\partial J\neq\emptyset.
\end{equation}

We next associate Whitney regions to the dyadic cubes on the boundary. Let $\mathcal{W}=\mathcal{W}(\Omega)$ denote the collection
of (closed) dyadic Whitney cubes of $\Omega$  introduced above, see \eqref{eqWh1} and \eqref{eqWh2}. Assuming that $\Sigma=\pom$ is parabolic ADR  and given $Q\in \mathbb{D}(\Sigma)$, we introduce, for a given parameter $K_0\gg 1$,
\begin{equation}\label{eq2.1}
\mathcal{W}_Q=\mathcal{W}_Q^{K_0}:= \left\{I\in \mathcal{W}:\,K_0^{-1} \ell(Q)\leq \ell(I)
\leq K_0\,\ell(Q),\, {\rm and}\, \dist(I,Q)\leq K_0\, \ell(Q)\right\}.
\end{equation}
Given a small, positive parameter $\tau$, and given $I\in\mathcal{W}$, the fattened Whitney cube $I^* =I^*(\tau)$ was introduced in \eqref{eq2.3*} and the cubes $\{I^*\}$  retain the usual properties of Whitney cubes stated in \eqref{eq2.3*g}. We will consistently assume that the construction is done so that
also \eqref{eq2.3*g+} holds. Given $I\in\W$ with $\ell(I)\lesssim \diam(\Sigma)$,  pick $(Y_I,s_I) \in \Sigma$ such that $\dist(Y_I,s_I,I) = \dist(I, \Sigma)$.
Let ${Q}_I\in\dd$ be such that $\ell(Q_I) = \ell(I)$ and $Q_I\ni(Y_I,s_I)$. Assuming that $K_0$ is large enough we can guarantee that $I\in \W_{Q_I}$ and also  that this property holds for any other possible choice of  $(Y_I,s_I)$ with $\dist(Y_I,s_I,I) = \dist(I, \Sigma)$.

We introduce Whitney regions
with respect to $Q$ by setting
\begin{equation}\label{eq2.3}
U_Q:= \bigcup_{I\in \mathcal{W}_Q}I^*\,.
\end{equation}
We observe that the Whitney region $U_Q$ may have more than one connected component,
but that the number of distinct components is uniformly bounded, depending only upon $n$, $K_0$ and $\tau$. We enumerate the components of $U_Q$
as $\{U_Q^i\}_i$. Note that by construction
\begin{equation}\label{cnpclsuq.eq}
\dist((Y,s), Q) \le c'_nK_0\ell(Q), \quad \forall (Y,s) \in U_Q
\end{equation}
for a purely dimensional constant $c'_n$.

\subsection{Chain regions}\label{cubes.sect++} Given $U_Q$ as in \eqref{eq2.3} we here introduce additional and  enlarged  Whitney regions. In the following $\eps>0$ is a small but fixed parameter.  Given $\eps>0$,
and given $Q\in\mathbb{D}(\Sigma)$,
we write
\begin{equation}\label{epschain}
(X,t)\sim_{\eps,Q} (Y,s)
\end{equation}
if $(X,t)$ may be connected to $(Y,s)$ by a chain of
at most $\eps^{-1}$ parabolic ``Whitney cylinders", i.e., cylinders of the form
$C_k:=C(Y^k,s^k,\delta(Y^k,s^k)/2)$, with
$1\leq k \leq N\leq \eps^{-1}$, such that
$C_k$ meets $C_{k+1}$, $1\leq k\leq N-1$, and
$$\eps^3\ell(Q)\leq\delta(Y^k,s^k)\leq \eps^{-3}\ell(Q).$$
\begin{definition}\label{def2.11a} Given $Q\in\mathbb{D}(\Sigma)$ and $\eps>0$ small, we  let
\begin{equation}\label{eq2.3a}
\tilde U^i_Q=\tilde U^{\eps,i}_Q:= \left\{(X,t) \in\Omega:\, (X,t)\sim_{\eps,Q} (Y,s)\  {\rm for\, some\,} (Y,s)\in U^{i}_Q\right\},
\end{equation}
and
$$\tilde U_Q=\bigcup_i\tilde U^i_Q.$$
\end{definition}

\begin{remark}\label{r2.5}
Since $\tilde  U^i_Q$ is
an enlarged version of the connected component $U_Q^i$, it may be that
there are some $i\neq j$ for which
$\tilde  U^i_Q$ meets $\tilde  U^j_Q$.  However, this overlap will be harmless.
\end{remark}

Note that in the parabolic context it would be misleading to
refer to the chain regions introduced above as Harnack chain regions.
Indeed, in Definition \ref{def2.11a}, the chains and the parabolic cylinders
do not take the direction of time into account, and the parabolic
context only enters through the use of parabolic cylinders.
By contrast, true parabolic Harnack chains are time directed.
Thus, our chains are not the usual ones considered
in the context of parabolic problems,
but for our purposes, the present notion of ``chain region"
will be the correct one. In particular, the parabolic Harnack inequality will
not be used to compare values of non-negative solutions (or adjoint solutions)
at different points globally within the chain region.

\subsection{The (parabolic) weak half-space approximation (WHSA) condition}\label{secWHSA} Let us consider  $Q\in \mathbb{D}(\Sigma)$ and recall that there is a point
$(X_Q,t_Q)\in Q$, and a parabolic cylinder $C_Q:=C(X_Q,t_Q,r_Q)$ such that
\eqref{cube-ball} holds, i.e.,
\begin{equation*}
r_Q\approx\diam (Q), \text{ and }\Sigma\cap C_Q\subset Q \subset \Sigma\cap cC_Q,\end{equation*}
for some uniform constant $c>1$, where $cC_Q:=C(X_Q,t_Q,cr_Q)$. Given $\eps>0$ small, we define a parabolic
dilate of $C_Q$,
\begin{equation}\label{eq2.bstarstar}
C_Q^{*}=C_{Q,\eps}^{*}:= \eps^{-2} C_Q = C(X_Q,t_Q,\eps^{-2}r_Q).
\end{equation}

\begin{definition}\label{def2.13} Given $\eps>0$ and $K_0\ge 1$,
we say that $Q\in\mathbb{D}(\Sigma)$ satisfies the $(\eps,K_0)$-{local weak half-space approximation} condition, the $(\eps,K_0)$-{local WHSA} condition for short, or
the $\eps$-local WHSA with parameter $K_0$, if there is a half-space
$ H_Q$ and a hyperplane  $P_Q =\partial H_Q$, containing a line parallel to the time axis, such that
\begin{list}{$(\theenumi)$}{\usecounter{enumi}\leftmargin=1cm \labelwidth=1cm \itemsep=0.2cm \topsep=.2cm \renewcommand{\theenumi}{\roman{enumi}}}
\item $\dist(Z,\tau,\Sigma)\leq\eps\ell(Q)$, for every $(Z,\tau)\in P_Q\cap C_{Q,\eps}^{*}$

\item $\dist(Q,P_Q)\leq c_n K_0 \ell(Q)$,

\item $H_Q\cap C_{Q,\eps}^{*}\cap \Sigma=\emptyset$.
\end{list}
The constant $c_n$ in item $(ii)$ is purely dimensional, and will be specified
momentarily  (see Remark \ref{cnchoic.rmk}).
\end{definition}

Given $K_0$, we will consistently
assume that $\eps>0$ is chosen small enough that
$ K_0 \ll \eps^{-1}$.
In this case we note that   Definition \ref{def2.13} part $(ii)$
states that 
the hyperplane $P_Q$ has an ample intersection with the cylinder $C_{Q}^{*}=C_{Q,\eps}^{*}$ in the sense that
\begin{equation}\label{intersect-WHSA}
\dist(X_Q,t_Q,P_Q) \lesssim  K_0 \,\ell(Q) 
\,\ll\eps^{-2}\ell(Q).
\end{equation}
\begin{remark}\label{cnchoic.rmk}
Let us give an explicit description of $c_n$. Let $(Y,s) \in U_Q$ and let $(X,t) \in \Sigma$ be the point realizing $\delta(Y,s)$ then using  \eqref{cnpclsuq.eq} it holds that
\[\dist(X,t,Q) \le c_n'' K_0 \ell(Q),\]
where $c_n''$ depends only on dimension.
We then choose $c_n := 10(c_n' + c''_n)$ so that any point $(Z,\tau)$ along the line segment from $(Y,s)$ to $(X,t)$ satisfies
\[\dist(Z,\tau, Q) \le c_n K_0 \ell(Q).\]
\end{remark}

\begin{definition} \label{def2.14} Given $\eps>0$ and $K_0>0$,
we say that  $\Sigma\subset \ree$
satisfies the (parabolic) $(\eps,K_0)$-{weak half-space approximation} condition, the $(\eps,K_0)$-WHSA for short, if the set $\cB$, defined as the set of cubes  in $\mathbb{D}(\Sigma)$ for which the
$(\eps,K_0)$-local  WHSA condition fails, satisfies the packing condition
\begin{equation}\label{eq2.pack2}
\sum_{Q\in\cB,\, Q\subseteq R} \sigma(Q)  \lesssim_{\eps,K_0} \,\sigma(R),\quad \forall\, R\in \mathbb{D}(\Sigma)\,.
\end{equation}
\end{definition}

Note that in the definition of the $(\eps,K_0)$-local  WHSA condition, we insist that the plane $P_Q$ should contain a line parallel to the time axis. We will refer to such planes as
``\textbf{$t$-independent}" planes (in the sense that the equation of
such a plane depends on spatial variables only, in
any coordinate system that leaves the $t$-axis fixed).

\section{Parabolic uniform rectifiability} \label{Sec4}
Let $\Sigma \subset \mathbb R^{n+1}$ be a closed set.
For the moment, we give a general definition, in which one may
allow $\Sigma$ to be bounded.
Assume that $\Sigma$ is {parabolic Ahlfors-David regular}
with constant $M\geq 1$.
We introduce the parabolic version of the ``beta"-numbers of P. Jones:
$$
\beta( Z, \tau, r  ):=\beta_\Sigma( Z, \tau, r  ):=
\inf_{P \in \mathcal{P}}  \biggl ( \, \bariint_{  \Delta ( Z, \tau,r) }
\, \biggl (\frac {\dist ( Y,s, P )}{r}\biggr )^2  \d \sigma (Y, s )\biggr )^{1/2},
$$
whenever $(Z,\tau)\in \Sigma $, $0<r<\diam(\Sigma)$,
and $\mathcal{P}$  is the set of all $ n $-dimensional
hyperplanes $ P $ containing a line
parallel to the $ t $ axis (that is, $t$-independent planes).
We also introduce the measure on $\Sigma\times(0,\diam(\Sigma))$
$$\d \nu( Z, \tau,
r  ) :=\d \nu_\Sigma ( Z, \tau,
r  )  = \big(\beta_\Sigma  ( Z, \tau, r)\big)^2 \, \d \sigma ( Z, \tau) \, r^{ - 1 }
\d r.
$$

\begin{definition}\label{def1.UR}
Assume that $\Sigma  \subset \mathbb R^{n+1}$ is parabolic ADR with constant $M$. Let $\nu=\nu_\Sigma$ be defined as above. Then
 $\Sigma$ is parabolic uniformly rectifiable with constants $(M,c)$ if
\begin{align}\label{eq1.sf}
\sup_{(X,t)\in\Sigma,\ 0<\rho<\diam(\Sigma) }  \rho^{ -(n+1) }\nu ( \Delta(X,t,\rho) \times ( 0, \rho) ) \leq
c.
\end{align}
Furthermore, if $(X_0,t_0)\in \Sigma$, $r_0>0$, then we say that $\Sigma\cap C(X_0,t_0,r_0)$ is locally parabolic uniform rectifiable with constants $(M,c)$, if
\begin{align}\label{eq1.sfll}
\sup_{(X,t)\in\Sigma,\ C(X,t,\rho)\subset C(X_0,t_0,r_0)}  \rho^{ -(n+1) }\nu ( \Delta(X,t,\rho) \times ( 0, \rho) ) \leq
c.
\end{align}
\end{definition}

\subsection{Lip(1,1/2) and Regular Lip(1,1/2) graph domains}\label{ssRLip}

A function $\psi:\mathbb R^{n-1}\times\mathbb R\to \mathbb R$ is called Lip(1,1/2) with constant $b_1$, if
\begin{align}\label{1.1}
|\psi(x,t)-\psi(y,s)|\leq b_1(|x-y|+|t-s|^{1/2}),
\end{align}
whenever $(x,t)\in\mathbb R^{n}$, $(y,s)\in\mathbb R^{n}$.
If $$\Sigma = \{(\psi(x,t), x,t): (x,t) \in \mathbb{R}^{n-1} \times \mathbb{R}\},$$
in the coordinates $P^\perp \times P$,
for some $t$-independent hyperplane
$P \in \mathcal{P}$ and for some Lip(1,1/2) function $\psi$,
then we say that $\Sigma$ is a Lip(1,1/2) graph. An open set $\Omega\subset\mathbb R^{n+1}$ is said to be a
(unbounded) Lip(1,1/2)   graph
domain, with constant $b_1$, if
\begin{align}\label{1.1a}
\Omega=\Omega_\psi=\{(x_0,x,t)\in\mathbb
R^{n-1}\times\mathbb R\times\mathbb R:x_n>\psi(x,t)\},
\end{align} for some Lip(1,1/2)  function $\psi$ having Lip(1,1/2)  constant bounded by $b_1$.

 Given a function $\psi:\mathbb R^{n-1}\times\mathbb R\to \mathbb R$,  we let $D_{1/2}^t \psi  (x, t) $ denote
the half derivative in $ t $ of $ \psi ( x, \cdot )$ with  $x $ fixed.
This half derivative in time can be defined by way of the Fourier
transform using the multiplier $|\tau|^{1/2}$, or by
\begin{align} \label{1.8}
 D_{1/2}^t  \psi (x, t):= \widehat c \int_{ \mathbb R }
\, \frac{ \psi ( x, s ) - \psi ( x, t ) }{ | s - t |^{3/2} } \, \d s,
\end{align} for $ \widehat c$  properly chosen. We let $ \| \cdot \|_* $ denote the
semi-norm in parabolic $\text{BMO}(\mathbb R^{n})$ (replace standard cubes by parabolic cubes/cylinders in the definition of $\text{BMO}$).

\begin{definition}\label{goodgraph.def}
 We say that $ \psi = \psi ( x, t ):\mathbb R^{n-1}\times\mathbb R\to \mathbb R$ is a {regular} Lip(1,1/2)  function
with parameters $b_1$ and $b_2$,
if $\psi$ satisfies \eqref{1.1} and if
 \begin{align} \label{1.7}
 D_{1/2}^t\psi\in \text{BMO}(\mathbb R^n), \ \ \|D_{1/2}^t\psi\|_*\leq b_2<\infty.
\end{align}
  If $\Sigma = \{(\psi(x,t),x, t): (x,t) \in \mathbb{R}^{n-1} \times \mathbb{R}\}$
 in the coordinates  $P^\perp \times P$,
  for some $t$-independent
  plane $P\in \mathcal{P}$ and for some {regular} Lip(1,1/2) function $\psi$,
  then we say that $\Sigma$ is a {regular} Lip(1,1/2) graph.
\end{definition}

\subsection{Corona decompositions and  big pieces squared} As mentioned in the introduction,  in  \cite{BHHLN-BP,BHHLN-Corona,BHHLN-CME} a theory concerning equivalent characterizations of parabolic uniform rectifiability is emerging. Indeed, combining \cite{BHHLN-BP}, \cite{BHHLN-Corona}, we can conclude that if $\Sigma \subset \ree$ is  parabolic Ahlfors-David regular, then $\Sigma$ is parabolic uniformly rectifiable if and only if $\Sigma$ has a Corona decomposition with respect to regular Lip(1,1/2) graphs (see Definition \ref{unilateralcorona.def}), and equivalently $\Sigma$ has big pieces squared of regular Lip(1,1/2) graphs.
To prove Theorem  \ref{main.thrm}, we will construct a
Corona decomposition of $\Sigma$ with respect to regular Lip(1,1/2) graphs, thus,
equivalently, $\Sigma$ has big pieces squared of regular Lip(1,1/2) graphs, and
hence is parabolic uniformly rectifiable.

Let us define these terms.

\begin{definition}\label{unilateralcorona.def}
Suppose that $\Sigma \subset \ree$ is parabolic ADR with constant $M$. We say that $\Sigma$ has a Corona decomposition by regular Lip(1,1/2) graphs if there exist constants
$\eta > 0$ and  $K > 1$, and a disjoint decomposition
$\mathbb D(\Sigma) = \cG\cup\cB$, such that the following hold:
\begin{list}{$(\theenumi)$}{\usecounter{enumi}\leftmargin=1cm \labelwidth=1cm \itemsep=0.2cm \topsep=.2cm \renewcommand{\theenumi}{\roman{enumi}}}

\item  The collection $\cG$ can be subdivided into a collection of disjoint stopping time re\-gimes, $\mathcal{S}=\{\sbf\}$,  $$\cG = \bigcup\limits_{\sbf \in \mathcal{S}} \sbf,$$ such that each such tree $\sbf$ is coherent.

\item The maximal cubes $\{Q({\sbf})\}$ and the cubes in $\cB$ satisfy a Carleson
packing condition,
$$\sum_{{\sbf}: Q({\sbf})\subseteq R}\sigma\big(Q({\sbf})\big)+\sum_{Q\in\cB,\, Q\subseteq R} \sigma(Q)\leq\, c(n,M,\eta,K)\, \sigma(R),
\quad \forall R\in \mathbb D(\Sigma).$$

\item For each ${\sbf}$, there exists a coordinate system, and a
regular  Lip(1,1/2)  function $ \psi_{\sbf}:=\psi = \psi ( x, t ) : \mathbb R^{n-1}\times\mathbb R\to \mathbb R$, with
 parameters $b_1\leq\eta $ and $b_2$, with $b_2$ depending at most on $n$, $M$, $\eta$, and $K$, such if we define
 $\Gamma_{\sbf}:=\{(\psi_{\sbf}(x,t),x,t): (x,t)\in \mathbb R^{n-1}\times\mathbb R\}$, then
\begin{equation}\label{eq2.2a}
\sup_{(X,t) \in KQ} \dist(X,t,\Gamma_{{\sbf}} )\leq\eta\,\diam(Q),
\end{equation}
 for every $Q\in {\sbf}$.
\end{list}
\end{definition}

\begin{definition}\label{semiCorona}
If $(i)$--$(iii)$ of Definition \ref{unilateralcorona.def} hold, then we say that $\Sigma$
has an $(\eta,K)$-Corona decomposition by regular Lip(1,1/2) graphs. If
in $(i)$ of Definition \ref{unilateralcorona.def}, each stopping time
tree $\sbf$ is only semi-coherent, then we say  that
$\Sigma$ has a semi-coherent $(\eta,K)$-Corona decomposition by regular Lip(1,1/2) graphs.
\end{definition}

We observe that Definition \ref{unilateralcorona.def} gives a notion of Corona
decompositions which differs in a subtle way
from the one used in \cite{BHHLN-Corona}:
\begin{theorem}[{\cite[Theorem 3.1]{BHHLN-Corona}}]\label{main.bil.thrm}  Suppose that
 $\Sigma\subset\mathbb R^{n+1}$ is parabolic uniformly rectifiable
 with constants $(M,c_0)$. Let $\delta\ll 1$
and $\kappa\gg 1$ be two given  positive constants. Then there exists a disjoint decomposition
$\dd(\Sigma) = \tilde\G\cup \tilde\B$, satisfying the following properties.
\begin{list}{$(\theenumi)$}{\usecounter{enumi}\leftmargin=1cm \labelwidth=1cm \itemsep=0.2cm \topsep=.2cm \renewcommand{\theenumi}{\alph{enumi}}}
\item  The collection $\tilde\cG$ can be subdivided into a
collection of disjoint stopping time trees, $\tilde{\mathcal{S}}:=\{\tilde{\sbf}\}$,  $$\tilde\cG = \bigcup\limits_{\tilde{\sbf} \in \tilde{\mathcal{S}}} \tilde{\sbf},$$ such that each such stopping time tree $\tilde{\sbf}$ is coherent.

\item The maximal cubes $\{Q({\tilde{\sbf}})\}$, and the cubes in $\tilde\cB$, satisfy a Carleson
packing condition,
$$\sum_{{\tilde{\sbf}}: Q({\tilde{\sbf}})\subseteq R}\sigma\big(Q({\tilde{\sbf}})\big)+\sum_{Q\in\tilde\cB,\, Q\subseteq R} \sigma(Q)\leq\, c(n,M,c_0,\delta,\kappa)\, \sigma(R),
\quad \forall R\in \mathbb D(\Sigma).$$

\item For each ${\tilde{\sbf}}$, there exists a coordinate system, and a
regular  Lip(1,1/2)  function $ \psi_{\tilde{\sbf}}:=\psi = \psi ( x, t ) : \mathbb R^{n-1}\times\mathbb R\to \mathbb R$, with
 parameters $b_1=c(n,M)\cdot\delta$ and $b_2=b_2(n,M,c_0)$, such if we define
 $\Gamma_{\tilde{\sbf}}:=\{(\psi_{\tilde{\sbf}}(x,t),x,t): (x,t)\in \mathbb R^{n-1}\times\mathbb R\}$, then
 \begin{equation}\label{graphapp}
\sup_{(X,t) \in \kappa Q} \dist(X,t,\Gamma_{{\tilde{\sbf}}} )\leq\delta\,\diam(Q),
\end{equation}
 for every $Q\in {\tilde{\sbf}}$.
\end{list}
\end{theorem}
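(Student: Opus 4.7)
The plan is to adapt the classical David--Semmes corona construction to the parabolic setting, taking as the sole quantitative input the parabolic $\beta^2$-Carleson measure condition furnished by Definition \ref{def1.UR}.

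\emph{Step 1 (stopping time).} Fix $\delta\ll 1$ and $\kappa\gg 1$. For any top cube $R\in\dd(\Sigma)$, select a $t$-independent plane $P_R$ nearly realizing $\beta_\Sigma$ on a suitably enlarged cylinder around $R$, and initiate a tree $\tsbf$ with $Q(\tsbf)=R$. Descend into $R$ and keep $Q\subset R$ in $\tsbf$ until one of two rules fires at $Q$: (i) \emph{bilateral flatness at scale $\kappa\ell(Q)$ fails}, meaning either $\beta_\Sigma(X_Q,t_Q,\kappa\ell(Q))>\delta$, or the optimal plane there lies farther than $\delta\ell(Q)$ from $\Sigma$ somewhere in the cylinder; (ii) the best $t$-independent plane at $Q$ makes an angle with $P_R$ exceeding a small multiple of $\delta$. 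Restart the procedure from the minimal stopping cubes; type-(i) stopping cubes together with any residue comprise $\tilde\B$, and the rest comprise $\tilde\cG$ together with its stopping-time trees $\{\tsbf\}$.

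\emph{Step 2 (packing).} Cubes stopped by (i) contribute $\gtrsim \delta^2\sigma(Q)$ to the parabolic $\nu_\Sigma$-measure on $\Delta_Q\times[\ell(Q),2\ell(Q)]$, and hence pack by \eqref{eq1.sf}. Cubes stopped by (ii) pack via a telescoping argument: along any chain of nested cubes through which the optimal planes rotate by a cumulative angle $\gtrsim\delta$, the $\beta^2$-Carleson mass accumulated inside the corresponding Carleson region is $\gtrsim\delta^2\sigma(Q_{\mathrm{top}})$, again absorbed by \eqref{eq1.sf}. Together these yield (b).

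\emph{Step 3 (construction of $\psi_{\tsbf}$; properties (a) and (c)).} Inside $\tsbf$ all optimal planes lie within angle $O(\delta)$ of $P_{Q(\tsbf)}$. Rotate coordinates so that $P_{Q(\tsbf)}=\{x_0=0\}$, and define $\psi_{\tsbf}(x,t)$ as a smooth partition-of-unity average of the $x_0$-coordinates of nearby points of $\Sigma$, on a parabolic Whitney scale tied to the stopping distance. Coherence of $\tsbf$ is built into the construction and gives (a). Propagation of the bilateral $\beta$-estimate yields $\dist((X,t),\graph\psi_{\tsbf})\lesssim\delta\,\diam(Q)$ on $\kappa Q$ for every $Q\in\tsbf$, which is \eqref{graphapp}. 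A standard $\beta$-to-Lip(1,1/2) argument, combined with the angle constraint of Step 1, produces the Lip(1,1/2) parameter $b_1\leq c(n,M)\cdot\delta$.

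\emph{Step 4 (regularity: $D_{1/2}^t\psi_{\tsbf}\in\BMO$; the main obstacle).} The bound $\|D^t_{1/2}\psi_{\tsbf}\|_*\leq b_2(n,M,c_0)$, with $b_2$ independent of $\delta$ and $\kappa$, is the heart of the argument. For any parabolic cube $C_r\subset\mathbb R^n$ centered at some $Q\in\tsbf$ of scale $r$ one seeks
\[
\bariint_{C_r}\bigl|D_{1/2}^t\psi_{\tsbf}-\langle D_{1/2}^t\psi_{\tsbf}\rangle_{C_r}\bigr|^2 dx\,dt\;\lesssim\;\frac{1}{\sigma(Q)}\sum_{\substack{Q'\in\tsbf\\ Q'\subset Q}}\beta_\Sigma\bigl(X_{Q'},t_{Q'},\ell(Q')\bigr)^2\,\sigma(Q'),
\]
via a parabolic Littlewood--Paley / tent-space decomposition of $\psi_{\tsbf}$ attuned to the half-order time derivative \eqref{1.8}. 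The right-hand side is uniformly bounded by \eqref{eq1.sf} applied to all of $\Sigma$, independently of $\delta$ and $\kappa$. This step is the hardest: the geometric $\beta$-Carleson condition only directly controls flatness and the parabolic Lip(1,1/2) norm of $\psi_{\tsbf}$, and extracting genuine \textit{time} cancellation at the level of the half-derivative requires a bona fide parabolic square-function identity rather than any elliptic surrogate.
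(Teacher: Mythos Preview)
The paper does not prove this theorem: it is quoted verbatim as \cite[Theorem 3.1]{BHHLN-Corona} and used as a known input (in fact only the converse direction, Theorem \ref{coronaisUR.thrm}, is needed for the main result). So there is no ``paper's own proof'' to compare against.

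That said, your outline is the right shape for the argument carried out in \cite{BHHLN-Corona}: a David--Semmes style stopping-time construction driven by the parabolic $\beta^2$-Carleson condition, with Step 4 (the BMO bound on $D_{1/2}^t\psi_{\tsbf}$) as the genuinely new parabolic content. Two comments. First, in Step 2 your packing argument for type-(i) cubes only handles the case $\beta_\Sigma>\delta$; the other half of condition (i) --- the optimal plane sits far from $\Sigma$ while $\beta_\Sigma$ is small --- does not directly produce $\beta^2$-mass, and you need a separate lemma (via ADR and a pigeonhole on nearby scales) to feed those cubes back into \eqref{eq1.sf}. Second, your Step 4 is really a statement of the target inequality rather than a proof: the displayed bound is exactly what one wants, but the ``parabolic Littlewood--Paley / tent-space decomposition attuned to the half-order time derivative'' is doing all the work, and making that precise (in particular, controlling the nonlocal kernel in \eqref{1.8} by the local $\beta$-data on the tree) is the substance of \cite{BHHLN-Corona}. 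Your sketch correctly flags this as the crux but does not supply the mechanism.
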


Note that in Theorem \ref{main.bil.thrm}  we have the freedom to choose $\delta\ll 1$
and $\kappa\gg 1$ in the construction, while Definition \ref{unilateralcorona.def} simply states that there should exist $\eta > 0$ and  $K > 1$
such that $(i)$--$(iii)$ of Definition \ref{unilateralcorona.def} hold. In particular, in Definition \ref{unilateralcorona.def} the constant $\eta$ \textbf{does not have to be small}. Based on Theorem \ref{main.bil.thrm}, we see that if $\Sigma\subset\mathbb R^{n+1}$ is parabolic uniformly rectifiable, then $\Sigma$ has  a Corona decomposition in the sense of  Definition \ref{unilateralcorona.def}. The important fact for our purposes is that the existence of an
$(\eta,K)$-Corona decomposition by regular Lip(1,1/2) graphs in the sense of Definition \ref{unilateralcorona.def}, for some $\eta>0$ and $K>1$, is sufficient for parabolic uniformly rectifiability. The following result is a special case of  \cite[Theorem 1.2 ]{BHHLN-BP},  we refer to that reference for the precise definition of the notion of big pieces squared.

\begin{theorem}[{\cite[Theorem 1.2 ]{BHHLN-BP}}]\label{coronaisUR.thrm-}
Suppose that $\Sigma \subset \ree$ is parabolic ADR.
Assume that $\Sigma$ has an
$(\eta,K)$-Corona decomposition in the sense of
Definition \ref{unilateralcorona.def} by regular Lip(1,1/2) graphs, for some $\eta>0$ and $K>1$. Then $\Sigma$ has big pieces squared of regular Lip(1,1/2) graphs.
\end{theorem}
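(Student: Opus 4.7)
The plan is to invoke \cite[Theorem 1.2]{BHHLN-BP} essentially as a black box, after verifying that the hypothesis of Definition \ref{unilateralcorona.def} matches the Corona-type input required by that reference. The substantive content of such an invocation is that properties (i)--(iii) of Definition \ref{unilateralcorona.def} already encode a ``big piece'' extraction inside every cube $R \in \dd(\Sigma)$: a positive $\sigma$-fraction of $R$ will be shown to lie on a regular Lip(1,1/2) graph with controlled parameters $b_1,b_2$, and then a ``second'' big piece is free because every regular Lip(1,1/2) graph trivially has big pieces of regular Lip(1,1/2) graphs (of itself).

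Concretely, for arbitrary $R \in \dd(\Sigma)$, the Carleson packing in (ii), combined with a standard stopping-time argument, produces a coherent tree $\sbf$ in the decomposition (i) whose top cube $Q(\sbf)$ has $\ell(Q(\sbf))\approx \ell(R)$ and $\sigma(Q(\sbf)) \geq c_1\sigma(R)$; the alternative would be that all tops landing inside $R$ are small, which contradicts the packing inequality summed across scales. For this $\sbf$, condition (iii) furnishes a regular Lip(1,1/2) graph $\Gamma_{\sbf}$ with
\[
\sup_{(X,t) \in K Q(\sbf)} \dist(X,t, \Gamma_{\sbf}) \leq \eta\,\diam(Q(\sbf)).
\]
Projecting $Q(\sbf)$ onto $\Gamma_{\sbf}$ along the spatial direction normal to its defining $t$-independent plane, and using parabolic ADR for both $\Sigma$ and $\Gamma_{\sbf}$, one extracts $E \subset R$ with $\sigma(E) \geq c_2\sigma(R)$ that lies on $\Gamma_{\sbf}$. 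This realizes the ``outer'' big piece; $\Gamma_{\sbf}$ being a regular Lip(1,1/2) graph then provides the ``inner'' big piece tautologically.

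The main obstacle I expect is the projection step. Proximity to within $\eta\,\diam(Q(\sbf))$ alone does not ensure that a quantitative portion of $Q(\sbf)$ projects injectively onto $\Gamma_{\sbf}$; in particular, Definition \ref{unilateralcorona.def} does not require $\eta$ to be small, so the naive implicit-function-type argument is not available. The resolution, already internal to \cite[Theorem 1.2]{BHHLN-BP} and modelled on the techniques of \cite{BHHLN-Corona}, is to exploit property (iii) at \emph{every} $Q \in \sbf$ (not just at the top) and to run a secondary stopping-time refinement along $\sbf$, stopping at cubes where the projection onto $\Gamma_{\sbf}$ ceases to be bi-Lipschitz with uniform constants. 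The packing of these stopping cubes is controlled by (ii), the bi-Lipschitz image of the remaining ``coherent skeleton'' supplies the quantitative subset $E \subset \Gamma_{\sbf} \cap R$, and the big pieces squared property follows; once the input of Definition \ref{unilateralcorona.def} has been recognized as the appropriate unilateral Corona decomposition, the desired conclusion is a direct citation.
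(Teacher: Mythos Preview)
The paper does not prove this statement at all: it is stated as a direct citation of \cite[Theorem~1.2]{BHHLN-BP} and used as a black box, with the reader referred to that reference even for the precise definition of ``big pieces squared.'' Your plan to invoke it as a black box therefore matches the paper exactly; the heuristic sketch you add (including the correct identification of the $\eta$-not-small obstacle and its resolution via persistence of the approximation across all $Q\in\sbf$) is extra content not present here, though it is consistent with the paper's own one-line remark that large $\eta$ is ``compensated by the fact that the approximation in the Corona decomposition persists for many generations.''
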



We emphasize once more that to
apply this result, the parameter $\eta$
in the  $(\eta,K)$-Corona decomposition  of Definition \ref{unilateralcorona.def} need
not be small. To explain this heuristically, one can see that if  $\eta$ is `large', then this is compensated by the fact that the approximation in the Corona decomposition persist for many generations.
Furthermore, as stated in \cite[Theorem 4.16 ]{BHHLN-BP}, if  $\Sigma$ has big pieces squared of regular Lip(1,1/2) graphs, then $\Sigma$ is parabolic uniformly rectifiable. In particular, the following holds:
\begin{theorem}\label{coronaisUR.thrm--}
Suppose that $\Sigma \subset \ree$ is parabolic ADR with constant $M$. Assume that $\Sigma$ has, for some $\eta>0$ and $K>1$, an  $(\eta,K)$-Corona decomposition in the sense of Definition \ref{unilateralcorona.def} by regular Lip(1,1/2) graphs with constants
$b_1\leq\eta$ and $b_2$
(see Subsection \ref{ssRLip} and Definition \ref{goodgraph.def}).
Then $\Sigma$ is parabolic uniformly rectifiable with constants $(M,c)$ for some $c=c(n,M,\eta,K,b_1,b_2)$.
\end{theorem}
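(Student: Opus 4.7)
The proof I would write is essentially a direct chaining of two results already cited in the excerpt, namely Theorem \ref{coronaisUR.thrm-} and \cite[Theorem 4.16]{BHHLN-BP}, so the ``proof'' is really a matter of verifying that the hypotheses align and that the constants can be tracked in the required way.

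My plan is as follows. First, I would invoke Theorem \ref{coronaisUR.thrm-} directly: the standing hypothesis is that $\Sigma$ is parabolic ADR and admits an $(\eta,K)$-Corona decomposition by regular Lip(1,1/2) graphs in the sense of Definition \ref{unilateralcorona.def}, with graph parameters $b_1\leq\eta$ and $b_2$. This is exactly the input for Theorem \ref{coronaisUR.thrm-}, which immediately supplies that $\Sigma$ has big pieces squared of regular Lip(1,1/2) graphs, with quantitative constants depending on $n$, $M$, $\eta$, $K$, $b_1$ and $b_2$.

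Second, I would apply \cite[Theorem 4.16]{BHHLN-BP} to this conclusion: big pieces squared of regular Lip(1,1/2) graphs, together with parabolic ADR, implies that $\Sigma$ is parabolic uniformly rectifiable in the sense of Definition \ref{def1.UR}, and the uniform rectifiability constant $c$ depends only on the big-pieces-squared constants and the ADR constant $M$. Tracing the dependencies through the two steps, one obtains $c = c(n,M,\eta,K,b_1,b_2)$, which is precisely the dependence claimed.

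The only ``nontrivial'' bookkeeping is to confirm that the version of Corona decomposition in Definition \ref{unilateralcorona.def} (which requires only coherent trees and the one-sided approximation \eqref{eq2.2a}) is the same notion used as an input in \cite[Theorem 1.2]{BHHLN-BP} (our Theorem \ref{coronaisUR.thrm-}); the excerpt explicitly asserts that this is so. No small-constant hypothesis is needed: as pointed out after Theorem \ref{coronaisUR.thrm-}, the parameter $\eta$ may be large, with the loss absorbed into the implicit constants. Hence the only potential ``obstacle'' is purely notational, ensuring that the semi-coherence in Definition \ref{semiCorona} is not needed here (we are in the fully coherent setting of Definition \ref{unilateralcorona.def}), and that the graph parameters $b_1,b_2$ in the statement match those used by the two cited theorems. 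Once this is verified, the proof is a one-line composition.
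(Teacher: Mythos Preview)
Your proposal is correct and matches the paper's approach exactly: the paper also obtains Theorem \ref{coronaisUR.thrm--} by combining Theorem \ref{coronaisUR.thrm-} with \cite[Theorem 4.16]{BHHLN-BP}, and in fact does not give a separate proof, simply stating the theorem as an immediate consequence of these two cited results.
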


Let us note that in Theorem \ref{coronaisUR.thrm--},  
we assume that each
stopping time tree $\sbf$ is coherent (see Definition \ref{unilateralcorona.def}).
In particular, the results of \cite{BHHLN-BP} are
derived under this assumption.
However, in our proof of Theorem \ref{main.thrm} we
will establish directly only that $\Sigma$ has a
{\em semi-coherent} $(\eta,K)$-Corona
decomposition by regular Lip(1,1/2) graphs
in the sense of Definition \ref{semiCorona}, i.e.,
the stopping trees $\{\sbf\}$ we construct will be merely semi-coherent,
and not necessarily coherent.
Fortunately, this is not a serious obstacle:  indeed, in
\cite[p. 56]{DS2}, it is observed that any semi-coherent
Corona decomposition can be transformed into a coherent one, with the same values of
$\eta$ and $K$ (albeit with larger, but still uniform, packing constants, depending
only on allowable parameters).
Thus, we immediately obtain the following improvement of Theorem \ref{coronaisUR.thrm--}.

\begin{theorem}\label{coronaisUR.thrm}
Suppose that $\Sigma \subset \ree$ is parabolic ADR with constant $M$.
Assume that $\Sigma$ has, for some $\eta>0$ and $K>1$, a {\em semi-coherent}
$(\eta,K)$-Corona decomposition in the sense of
Definition \ref{semiCorona}, by regular Lip(1,1/2) graphs with constants
$b_1\leq\eta$ and $b_2$. Then $\Sigma$ is parabolic uniformly rectifiable with constants $(M,c)$ for some $c=c(n,M,\eta,K,b_1,b_2)$.
\end{theorem}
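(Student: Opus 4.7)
The plan is to reduce the claim to Theorem \ref{coronaisUR.thrm--} by refining each semi-coherent stopping-time tree of the given decomposition into coherent subtrees, while preserving all relevant quantitative data. This is precisely the combinatorial procedure outlined on \cite[p.~56]{DS2}, and it adapts verbatim to the parabolic dyadic setting since it uses only ADR and the tree structure of $\dd(\Sigma)$.

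Fix a semi-coherent tree $\sbf$ with maximal cube $Q(\sbf)$, and call $P\in\sbf$ a \emph{split cube of $\sbf$} if some, but not all, dyadic children of $P$ lie in $\sbf$. I would extract a maximal coherent subtree by top-down descent: let $\sbf^{\#}$ consist of all $Q\in\sbf$ such that every strict dyadic ancestor of $Q$ inside $\dd_{Q(\sbf)}$ has all its children in $\sbf$. Then $\sbf^{\#}$ is coherent with maximal cube $Q(\sbf)$, and the cubes in $\sbf\setminus\sbf^{\#}$ are exactly those lying under some child $Q'\in\sbf$ of a split cube of $\sbf^{\#}$. Applying the same procedure recursively starting from each such $Q'$ yields a disjoint decomposition $\sbf=\bigsqcup_j\sbf_j$ into coherent subtrees.

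The main (mild) obstacle is the bookkeeping for the packing bound. Every new maximal cube $Q(\sbf_j)$ other than $Q(\sbf)$ is a child of some split cube $P_j\in\sbf$, and conversely each split $P$ is the parent of at most a dimensional number of such $Q(\sbf_j)$'s. Moreover, each split $P$ has at least one child $c(P)$ \emph{not} in $\sbf$; by the global partition $\dd(\Sigma)=\cG\cup\cB$ and the semi-coherence of every original tree, $c(P)$ either lies in $\cB$ or coincides with the top cube $Q(\sbf')$ of some other tree $\sbf'\neq\sbf$ (if $c(P)\in\sbf'$ with $c(P)\subsetneq Q(\sbf')$, then closure of $\sbf'$ under ancestors inside $Q(\sbf')$ would force the parent $P\in\sbf'$, contradicting $P\in\sbf$). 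The map $P\mapsto c(P)$ is injective, and parabolic ADR gives $\sigma(P)\approx\sigma(c(P))$. A short dyadic case analysis for fixed $R\in\dd(\Sigma)$ (separating the cases $P_j\subseteq R$ and $P_j\supsetneq R$, the latter forcing $R=Q(\sbf_j)$ by dyadic scale matching) then produces
\[
\sum_{j:\,Q(\sbf_j)\subseteq R}\sigma\bigl(Q(\sbf_j)\bigr)\,\lesssim\sum_{\substack{R'\in\cB\\ R'\subseteq R}}\sigma(R')+\sum_{\sbf':\,Q(\sbf')\subseteq R}\!\!\sigma\bigl(Q(\sbf')\bigr)+\sigma(R),
\]
where the left-hand sum ranges over all new subtrees $\sbf_j$ produced from all original trees. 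Invoking Definition \ref{unilateralcorona.def}$(ii)$ on $R$ then gives the desired Carleson packing for $\{Q(\sbf_j)\}\cup\cB$, with constant depending only on $n,M,\eta,K$.

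Finally, since each $\sbf_j\subset\sbf$, the regular Lip(1,1/2) graph $\Gamma_\sbf$, its defining function $\psi_\sbf$, the coordinate system, and the approximation \eqref{eq2.2a} on $KQ$ for $Q\in\sbf_j$ are all inherited unchanged, with the same parameters $\eta,K,b_1,b_2$. Hence $\Sigma$ admits a (genuinely coherent) $(\eta,K)$-Corona decomposition by regular Lip(1,1/2) graphs in the sense of Definition \ref{unilateralcorona.def}, and Theorem \ref{coronaisUR.thrm--} yields parabolic uniform rectifiability of $\Sigma$ with constant $c=c(n,M,\eta,K,b_1,b_2)$. No analytic input beyond Theorem \ref{coronaisUR.thrm--} and the David--Semmes refinement is required.
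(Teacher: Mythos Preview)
Your proposal is correct and follows exactly the paper's approach: both reduce to Theorem~\ref{coronaisUR.thrm--} by invoking the David--Semmes observation \cite[p.~56]{DS2} that a semi-coherent Corona decomposition can be refined into a coherent one with the same $(\eta,K)$ and controlled packing. The paper simply cites this fact without elaboration, whereas you have carefully spelled out the split-cube mechanism and the packing bookkeeping; your argument is sound.
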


\section{Estimates for caloric measure and the Green function}\label{Sec4.5}

Let $\Omega \subset \ree$ be an open set with time-symmetric ADR boundary $\Sigma = \partial \Omega$. Recall that in Section \ref{Sec2} we introduced the Dirichlet problem and the associated caloric and adjoint caloric measures $\omega$ and $\widehat{\omega}$, respectively. Using $\omega^{X,t}$ we define the  Green function for $(\partial_t-\Delta)$ associated to $\Omega$, with pole at $(Y,s)\in \Omega$, as
\begin{align}\label{ghh1-}
G (X,t,Y,s)=\Gamma(X,t, Y,s)-\iint_{\Sigma}
\Gamma(\tilde X,\tilde t, Y,s) \, \d\omega^{X,t}(\tilde X,\tilde t),
\end{align}
where $\Gamma$ is the heat kernel. If we instead consider $(X,t)\in \Omega$ as fixed, then, for $(Y,s)\in
\Omega$,
\begin{align}\label{ghh1---}
G (X,t,Y,s)=\Gamma(X,t, Y,s)-\iint_{\Sigma}
\Gamma(X,t, \tilde X,\tilde t) \, \d\widehat\omega^{Y,s}(\tilde X,\tilde t).
\end{align}
Note that $G(X,t,Y,s)$ is caloric, i.e., a solution to the heat equation,
as a function of the variables $(X,t)$,
in the subdomain $\Omega \setminus \{(Y,s)\}$,
and $G(X,t,Y,s)$ is adjoint-caloric, i.e., a
solution to the adjoint heat equation, in the variables $(Y,s)$, in $\Omega \setminus \{(X,t)\}$.

In the following all implicit constants will depend only on $n$ and
the ADR constant $M$ unless otherwise stated.

\begin{lemma}[{\cite[Lemma 2.2]{GH1}}]\label{Bourgain} Assume that $\Omega\subset\mathbb R^{n+1}$ is such that
$\Sigma = \partial \Omega$ is time-backward ADR with constant $M$. Let $(X_0,t_0)\in\Sigma$, and let $0<r<\infty$.
Then there exists a constant
$c$, $1\leq c<\infty$, depending only on $n$ and $M$, such that
\begin{equation*}
\omega^{X,t}(\Delta(X_0,t_0,r))\gtrsim 1,
\end{equation*}
whenever $(X,t)\in \Omega\cap C(X_0,t_0,r/c)$. Similarly, if $\Sigma$ is time-forward  ADR with constant $M$, then the same conclusion
holds with $\omega$ replaced by $\widehat \omega$.
\end{lemma}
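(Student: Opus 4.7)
The plan is to prove Lemma~\ref{Bourgain} by the parabolic analog of Bourgain's classical lemma for harmonic measure. By Lemma~\ref{Dir}, caloric measure represents the PWB solution in $\Omega$, so it suffices to lower-bound a suitable solution: pick $f \in C_c(\Sigma)$ with $\mathbf{1}_{\Delta(X_0,t_0,r/2)} \le f \le \mathbf{1}_{\Delta(X_0,t_0,r)}$, and observe that $u(X,t):=\iint f\,d\omega^{X,t}$ satisfies $u(X,t)\le \omega^{X,t}(\Delta(X_0,t_0,r))$. The task is thus reduced to showing $u(X,t)\gtrsim 1$ uniformly for $(X,t)\in \Omega\cap C(X_0,t_0,r/c)$.

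The core of the construction is a caloric comparison function in $\Omega$, built as a heat potential against surface measure on a piece of $\Sigma$ placed strictly in the time-backward past of the reference cylinder. Concretely, set
\[
 v(X,t)\;:=\;\kappa\,r^{-1}\iint_{E}\Gamma(X,t,Y,s)\,d\sigma(Y,s),
\]
where $\Gamma$ is the heat kernel on $\ree$, $\kappa$ is a small universal constant, and $E\subset \Sigma$ is taken, for concreteness, to be $E:=\Delta^-(X_0,t_0-r^2/4,r/8)$. The backward shift ensures that $E\subset \{s<t\}$ uniformly for every pole $(X,t)\in C(X_0,t_0,r/c)$ once $c$ is large enough, while $E\subset \Delta(X_0,t_0,r/2)$. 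Since the poles of $\Gamma$ lie on $\Sigma$, $v$ is caloric in $\Omega$ and decays to $0$ at parabolic infinity.

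The argument then rests on two complementary estimates. For the \emph{interior lower bound}, restrict the integral defining $v$ to the subset of $E$ on which $(t-s)\sim r^2$ and $|X-Y|\lesssim r$; there the Gaussian lower bound gives $\Gamma\gtrsim r^{-n}$, and the TBADR property gives $\sigma(E)\gtrsim r^{n+1}$, yielding $v(X,t)\gtrsim \kappa$ for every $(X,t)\in \Omega\cap C(X_0,t_0,r/c)$. For the \emph{boundary upper bound}, decompose $\Sigma$ into parabolic dyadic annuli around $(X_0,t_0)$ and sum the Gaussian off-diagonal decay of $\Gamma$ against the ADR upper bound for $\sigma$ on each annulus, producing $v(X,t)\lesssim 1$ uniformly on $\Sigma$. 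With $\kappa$ chosen small enough that $v\le 1=f$ on $\Delta(X_0,t_0,r/2)$, a maximum-principle argument on expanding parabolic cylinders of $\Omega$ — absorbing the non-zero boundary values of $v$ outside $\Delta(X_0,t_0,r)$ into a convergent geometric sum of $\omega^{X,t}(A_k)$ over dyadic annuli $A_k$, using the Gaussian decay to produce summable coefficients — produces $v\lesssim u$ in $\Omega\cap C(X_0,t_0,r/c)$. Combined with the interior lower bound, this gives $u(X,t)\gtrsim 1$, proving the lemma for $\omega$. The adjoint statement follows by a standard time-reversal: the adjoint heat equation in $\Omega$ is the forward heat equation in the time-reflected domain, under which the TFADR hypothesis for $\Sigma$ becomes TBADR for the reflected $\Sigma$, and the same proof applies.

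The main technical obstacle is the boundary upper bound on $v$ and its subsequent comparison with $u$: the Gaussian decay of $\Gamma$ in the spatial direction is only just enough to dominate the $r^{n+1}$ growth of $\sigma$ on large parabolic balls, so the dyadic-annulus summation has to be carried out with care, and the backward shift of $E$ is essential in order that the singular region of the integrand be confined strictly inside $\Delta(X_0,t_0,r/2)$, where $f=1$, thereby avoiding any conflict with the requirement $v\le Cf$ on $\Sigma$. A secondary subtlety is that $(X,t)$ may be located either time-forward or time-backward of $(X_0,t_0)$; this is handled uniformly by the same backward shift of $E$, together with the time-symmetric ADR structure, which ensures that $E$ remains in the past of $(X,t)$ and has comparable $\sigma$-mass regardless of the temporal orientation of the pole.
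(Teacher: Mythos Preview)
The paper does not give its own proof of this lemma; it simply quotes \cite[Lemma~2.2]{GH1}. Your outline follows the standard Bourgain-type potential argument that underlies such results, and the overall strategy --- build a heat potential $v$ over a piece of $\Sigma$ lying strictly in the past, compare via the maximum principle --- is correct. Two points, however, need repair.

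First, a technicality: your set $E=\Delta^-(X_0,t_0-r^2/4,r/8)$ is centred at $(X_0,t_0-r^2/4)$, which need not lie on $\Sigma$, so the TBADR lower bound does not apply to it as written. One instead takes $E=\Delta^-(X_0,t_0,\eta r)\cap\{s\le t_0-\rho^2\}$ and uses a covering of the thin slab $\{t_0-\rho^2<s<t_0\}$ by $O((\eta r/\rho)^n)$ parabolic cylinders of size $\rho$, together with the ADR \emph{upper} bound, to show that for $\rho=c(n,M)\,\eta r$ one still has $\sigma(E)\gtrsim(\eta r)^{n+1}$.

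Second, and this is the substantive gap: the step ``produces $v\lesssim u$'' does not follow. What your annulus decomposition actually yields is
\[
v(X,t)\ \le\ C_0\kappa\,\omega^{X,t}\big(\Delta(X_0,t_0,r)\big)\ +\ \sum_{k\ge 0}c_k,
\qquad c_k\ \lesssim\ M\kappa\,2^{-kn},
\]
while your interior lower bound reads $v(X,t)\ge c_1\kappa$ with $c_1\approx M^{-1}$ (the $M^{-1}$ coming from $\sigma(E)\ge M^{-1}r^{n+1}$). The tail $\sum_k c_k$ is therefore of order $M\kappa$, and for large ADR constant $M$ the inequality $c_1\kappa\le C_0\kappa\,\omega^{X,t}(\Delta)+O(M\kappa)$ gives no information. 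The Gaussian decay you invoke does make the $c_k$ summable, but it does not make the sum small compared to the peak. The remedy is the extra parameter $\eta$ above: with $E$ at scale $\eta r$ one has $\sigma(E)\lesssim M(\eta r)^{n+1}$, so the tail picks up an additional factor $\eta^{n}$ relative to the peak, and choosing $\eta=\eta(n,M)$ small (and correspondingly $c\gtrsim\eta^{-1}$) closes the argument.
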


\begin{lemma}[{\cite[Lemma 2.5]{GH1}}]\label{continuity}
Assume that $\Omega\subset\mathbb R^{n+1}$ is such that
$\Sigma = \partial \Omega$ is time-backward ADR with
constant $M$. Let $(X_0,t_0)\in\Sigma$, let $0<r<\infty$, and for any $\kappa>0$, set
$C_{\kappa r}:= C(X_0,t_0,\kappa r)$. Assume that
$u$ is a bounded, non-negative solution to the
heat equation in $\Omega \cap C_{2r}\,$, vanishing continuously on
$\Delta(X_0,t_0,2r)$. Then
\begin{equation}\label{boundaryholder}
u(Y,s)\lesssim \left(\dfrac{\delta(Y,s)}{r}\right)^{\! \alpha}\sup_{\Omega\cap C_{2r}} u\,,
\qquad  (Y,s)\in \Omega\cap C_r\,,
\end{equation}
where the implicit constant and the exponent $\alpha>0$ depend only on
$n$ and $M$.
 The same conclusion remains true if $\Sigma$ is time-forward ADR with constant $M$,
and $u$ is a bounded non-negative solution to the
adjoint heat equation in $\Omega \cap C_{2r}\,$, vanishing continuously on
$\Delta(X_0,t_0,2r)$.
\end{lemma}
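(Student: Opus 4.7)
The plan is to establish this standard boundary Hölder estimate by the classical oscillation-decay iteration, with Lemma \ref{Bourgain} supplying the quantitative substitute for a barrier. The main step is a one-scale decay of the form
\begin{equation*}
\sup_{\Omega\cap C(X_0,t_0,\theta\rho)} u \;\le\; (1-\eta)\sup_{\Omega\cap C(X_0,t_0,\rho)} u,\qquad 0<\rho\le r,
\end{equation*}
for constants $\theta\in(0,1/2)$ and $\eta\in(0,1)$ depending only on $n$ and $M$. Iterating this over the dyadic scales $\rho_k=\theta^k r$ yields \eqref{boundaryholder} with Hölder exponent $\alpha=\log(1/(1-\eta))/\log(1/\theta)$: given $(Y,s)\in \Omega\cap C_r$, choose the integer $k$ with $\theta^{k+1}r<\delta(Y,s)\le \theta^k r$ and use the trivial bound $u(Y,s)\le \sup_{\Omega\cap C_{\rho_k}} u$ at the terminal scale.

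For the one-scale decay, set $M:=\sup_{\Omega\cap C_\rho} u$ and consider the subdomain $\Omega':=\Omega\cap C_{\rho/2}$. Define the auxiliary function
$\varphi(X,t):=M\bigl(1-\omega_{\Omega'}^{X,t}(\Sigma\cap C_{\rho/2})\bigr)$.
Since $u$ vanishes continuously on $\Sigma\cap C_{\rho/2}\subset\Delta(X_0,t_0,2r)$ and satisfies $u\le M$ on the remaining boundary $\Omega\cap\partial C_{\rho/2}$, the comparison principle gives $u\le \varphi$ throughout $\Omega'$. The task therefore reduces to exhibiting a uniform lower bound $\omega_{\Omega'}^{Y,s}(\Sigma\cap C_{\rho/2})\ge \eta$ for $(Y,s)\in \Omega\cap C_{\theta\rho}$, with $\theta$ suitably small.

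This last bound is a local Bourgain-type estimate. The proof of Lemma \ref{Bourgain} proceeds via a barrier whose sole quantitative input at a boundary point $(Z,\tau)$ is the time-backward ADR property at $(Z,\tau)$, and this input is entirely local. Thus, re-running that barrier construction inside $\Omega'$ at a point $(Z,\tau)\in\Sigma$ realizing $\delta(Y,s)$ yields the required lower bound, provided the barrier is localized away from the artificial boundary $\Omega\cap\partial C_{\rho/2}$. One arranges this by taking $\theta$ small enough that $\partial C_{\rho/2}$ sits at parabolic distance $\gtrsim\rho$ from every point of $C_{\theta\rho}$, well beyond the support of the barrier. The adjoint-equation case is handled by the same argument, with $\omega$ replaced by $\widehat\omega$ and TBADR replaced by TFADR.

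The main obstacle is precisely this transfer from the global statement of Lemma \ref{Bourgain} (which concerns $\omega$, the caloric measure of $\Omega$) to a local statement valid on the truncated subdomain $\Omega'$, whose boundary contains the non-ADR piece $\Omega\cap\partial C_{\rho/2}$. An alternative route that bypasses this transfer altogether is to apply Lemma \ref{Bourgain} directly in $\Omega$ at the scale of $d:=\delta(Y,s)$: with $(Z,\tau)$ a nearest boundary point and $\Sigma\cap C(Z,\tau,Kd)\subset\Delta(X_0,t_0,2r)$ for $K$ universal, $u$ vanishes on this surface ball, and a suitable cutoff/Perron-type extension of $u$ turns Lemma \ref{Bourgain} into the one-scale decay between cylinders of radii $d$ and $Kd$; one then telescopes from scale $d$ up to scale $r$. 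Either route produces \eqref{boundaryholder}; the bookkeeping for the first is lighter, while the second avoids any local caloric-measure construction.
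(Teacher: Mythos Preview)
The paper does not prove this lemma; it merely cites \cite[Lemma 2.5]{GH1}. Your sketch is the standard argument (Bourgain lower bound on caloric measure, fed into an oscillation-decay iteration), and is essentially what one finds in \cite{GH1}. Your handling of the ``obstacle'' is fine: the proof of Lemma~\ref{Bourgain} uses only the thermal capacity of the complement in backward half-cylinders, and since $\mathbb{R}^{n+1}\setminus(\Omega\cap C_{\rho/2})\supset \mathbb{R}^{n+1}\setminus\Omega$, the estimate transfers immediately to the subdomain.

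There is one genuine slip in your iteration bookkeeping. You iterate the one-scale decay in cylinders $C_{\theta^k r}$ centered at $(X_0,t_0)$, and then for $(Y,s)\in\Omega\cap C_r$ you choose $k$ with $\theta^{k+1}r<\delta(Y,s)\le\theta^k r$ and invoke $u(Y,s)\le\sup_{\Omega\cap C_{\theta^k r}}u$. But $\delta(Y,s)\le\theta^k r$ does not place $(Y,s)$ inside $C(X_0,t_0,\theta^k r)$; the point $(Y,s)$ could sit near $\partial C_r$, far from $(X_0,t_0)$ yet close to $\Sigma$. The standard fix is to center the iteration at a nearest boundary point $(Z,\tau)\in\Sigma$ to $(Y,s)$: since $(Y,s)\in C_r$ implies $(Z,\tau)\in C_{3r/2}$, the one-scale decay (which holds centered at any point of $\Sigma$, with cylinders contained in $C_{2r}$) gives
\[
u(Y,s)\le \sup_{\Omega\cap C(Z,\tau,\theta^k\rho_0)}u\le (1-\eta)^k\sup_{\Omega\cap C_{2r}}u,
\]
with $\rho_0\approx r/4$ and $k$ chosen so that $\theta^k\rho_0\approx\delta(Y,s)$. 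This yields \eqref{boundaryholder} as you intended.
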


\begin{lemma}\label{CFMS1} Assume that $\Omega\subset\mathbb R^{n+1}$ is such that
$\Sigma = \partial \Omega$  is time-symmetric ADR with constant $M$.
Let $(Y,s)\in \Omega$ and let $(X_0,t_0)\in\Sigma\cap\overline{C(Y,s,d)}$ where $d=\delta(Y,s)$. Then there exists $c$, $1\leq c<\infty$, depending only on $n$ and $M$, such that if
$(X,t)\in \Omega\setminus C(Y,s,d/2)$, 
then
\begin{align*}
d^{n}G (X,t,Y,s)\lesssim\omega^{X,t}( \Delta(X_0,t_0,cd)).
\end{align*}
\end{lemma}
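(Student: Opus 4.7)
The idea is a standard Caffarelli--Fabes--Mortola--Salsa argument: I will compare the normalized Green function $d^n G(\cdot,\cdot,Y,s)$ with the caloric measure $\omega^{\cdot,\cdot}(\Delta(X_0,t_0,cd))$ on the exterior of the small parabolic cylinder $C(Y,s,d/2)$ via the parabolic maximum principle. The first step is to show that $G(Z,\tau,Y,s)\lesssim d^{-n}$ uniformly for $(Z,\tau)\in\Omega\setminus C(Y,s,d/2)$. The representation \eqref{ghh1-} and the positivity of the caloric measure integral give $G\le\Gamma$. For $\tau>s$, the condition $(Z,\tau)\notin C(Y,s,d/2)$ forces either $|Z-Y|\ge d/2$ or $\tau-s\ge d^2/4$, and in both regimes the standard Gaussian upper bound on $\Gamma$ yields $\Gamma(Z,\tau,Y,s)\lesssim d^{-n}$. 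For $\tau\le s$ one has $\Gamma(Z,\tau,Y,s)=0$ and, since $\omega^{Z,\tau}$ is supported in $\Sigma\cap\{\tilde t\le\tau\}$, the integral term in \eqref{ghh1-} also vanishes, so $G(Z,\tau,Y,s)=0$.

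The second step is to fix $c$ large, depending only on $n$ and $M$, such that $\omega^{Z,\tau}(\Delta(X_0,t_0,cd))\gtrsim 1$ for every $(Z,\tau)\in\partial C(Y,s,d/2)\cap\Omega$. Since $(X_0,t_0)\in\overline{C(Y,s,d)}$, the triangle inequality for the parabolic distance $\dist$ gives $\dist(Z,\tau,X_0,t_0)\lesssim d$ for each such $(Z,\tau)$; taking $c$ larger than the constant supplied by Lemma \ref{Bourgain} then places $(Z,\tau)$ within the Bourgain region associated to $\Delta(X_0,t_0,cd)$, and the desired lower bound is a direct application of Lemma \ref{Bourgain}.

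For the final step, I would apply the parabolic maximum principle in $\Omega':=\Omega\setminus \overline{C(Y,s,d/2)}$. Selecting $A$ large enough (depending only on $n$ and $M$) that Steps 1--2 yield $d^n G(Z,\tau,Y,s)\le A\,\omega^{Z,\tau}(\Delta(X_0,t_0,cd))$ on $\partial C(Y,s,d/2)\cap\Omega$, and observing that on $\Sigma\setminus\overline{C(Y,s,d/2)}$ the left-hand side vanishes in the PWB sense while the right-hand side is non-negative, the difference $A\,\omega^{X,t}(\Delta(X_0,t_0,cd))-d^n G(X,t,Y,s)$ is a bounded caloric function on $\Omega'$ that is non-negative on the parabolic boundary of $\Omega'$, and the parabolic maximum principle then furnishes the stated inequality throughout $\Omega'$. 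I expect the main technical subtlety here to be justifying this maximum principle in the unbounded, possibly irregular $\Omega'$, where boundary values are attained only in the generalized (PWB) sense and points of $\Sigma$ need not be Wiener-regular. I would handle this via a standard exhaustion of $\Omega'$ from inside by smoother subdomains, appealing to Lemma \ref{Dir} and the Wiener criterion of \cite{EG2} for boundary regularity at the relevant points, and passing to the limit using the uniform boundedness from Step 1.
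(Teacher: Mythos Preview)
Your proposal is correct and follows essentially the same approach as the paper: bound $d^n G$ by a constant on $\partial C(Y,s,d/2)$ via $G\le\Gamma$, bound $\omega^{X,t}(\Delta(X_0,t_0,cd))$ below by a constant there via Lemma~\ref{Bourgain}, and conclude by the maximum principle in $\Omega\setminus\overline{C(Y,s,d/2)}$. The paper is slightly terser---it simply restricts to the region $\{t>s\}$ (where $G$ is nontrivial) and invokes the maximum principle there without further comment---whereas you spell out the $\tau\le s$ case and flag the exhaustion argument needed to justify the maximum principle in the unbounded irregular domain; this extra care is reasonable but not a genuine difference in strategy.
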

\begin{proof} Let $(Y,s)\in \Omega$ and let $\widehat\Omega:=\Omega\setminus \overline{C(Y,s,d/2)}$. Note that $G (X,t,Y,s)\equiv 0$ if $(X,t)\in \widehat\Omega$ and $t\leq s$. Furthermore,
$d^{n}G (X,t,Y,s)\lesssim 1$ whenever $(X,t)\in \partial C(Y,s,d/2)$ and $t\geq s$. This follows from the fact that $G (X,t,Y,s)\leq \Gamma(X,t,Y,s)$ and elementary estimates. Furthermore, using Lemma \ref{Bourgain} we see that there exists $c$, $1\leq c<\infty$, depending only on $n$ and the time-symmetric ADR constant $M$, such that
$\omega^{X,t}( \Delta(X_0,t_0,cd))\gtrsim 1$ for all $(X,t)\in \partial C(Y,s,d/2)$. The desired estimate now follows from the maximum principle in
$\widehat\Omega\cap\{(X,t): t>s\}$.
\end{proof}

\begin{remark}\label{remarkDYs}
We set some convenient notation.  Given $(Y,s)\in \Omega$,
let $(X_0,t_0)\in\Sigma\cap\overline{C(Y,s,d)}$, where $d=\delta(Y,s)$
(if there is more than one such $(X_0,t_0)$ we just pick one), and set
\begin{equation}\label{DYsdef}
\Delta_{Y,s} :=  \Delta(X_0,t_0,cd)\,,
\end{equation}
where $c$ is the constant in Lemma \ref{CFMS1}.  Thus, by the ADR property, the
conclusion of the lemma can be written as
\begin{equation}\label{CFMSDYs}
\frac{G (X,t,Y,s)}{\delta(Y,s)}\,\lesssim\,\frac{\omega^{X,t}( \Delta_{Y,s})}{\sigma(\Delta_{Y,s})}\,.
\end{equation}
\end{remark}

The proof of the next lemma is adapted from \cite{GHMN}.

\begin{lemma}[Caccioppoli at the boundary for Green's function]
\label{gensalsa-aEN}
Assume that $\Sigma$ is time-symmetric ADR.
Let $C_r$ be a parabolic cylinder centered on $\Sigma$, and let
$C_{2r}$ denote its concentric
double (in the parabolic sense).
Let $u(Y,s):= G(X,t,Y,s)$, with $(X,t) \in \Omega\setminus C_{4r}$
Then
\[ \iint_{\Omega_{r}} |\nabla u|^2 \,\lesssim \, r^{-2} \iint_{\Omega_{2r}} |u|^2\,. \]
\end{lemma}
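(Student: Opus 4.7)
My plan is a standard Caccioppoli estimate for the adjoint heat equation, combined with a truncation of $u$ that renders the test function admissible up to the boundary. I would proceed in three steps.

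First, I set up the regularity. Since $(X,t)\in\Omega\setminus C_{4r}$ lies at parabolic distance $\gtrsim r$ from $\overline{C_{2r}}$, the function $u(Y,s):=G(X,t,Y,s)$ is smooth and satisfies the adjoint heat equation $(\partial_{s}+\Delta_{Y})u=0$ in $\Omega\cap C_{3r}$, and is bounded there (e.g.\ by the maximum-principle comparison $G\le \Gamma$ and standard heat-kernel bounds on points at parabolic distance $\sim r$). The time-forward ADR hypothesis, which is part of the time-symmetric ADR assumption, together with the Wiener criterion, ensures that $u$ vanishes continuously on $\Sigma\cap C_{3r}$, and Lemma \ref{continuity} provides a quantitative H\"older decay up to the boundary. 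In particular $u\in H^{1}_{\mathrm{loc}}(\Omega\cap C_{3r})\cap C(\overline{\Omega\cap C_{2r}})$ with $u\equiv 0$ on $\Sigma\cap C_{2r}$.

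Second, I would carry out the energy estimate using a truncation that sidesteps the delicate question of zero trace. Fix a cutoff $\phi\in C_{c}^{\infty}(C_{2r})$ with $\phi\equiv 1$ on $C_{r}$, $|\nabla\phi|\lesssim r^{-1}$, and $|\partial_{s}\phi|\lesssim r^{-2}$, and for $\eta>0$ set $u_{\eta}:=(u-\eta)_{+}$. Because $u\ge 0$ vanishes continuously on $\Sigma\cap C_{2r}$, the set $\{u>\eta\}\cap C_{2r}$ is at positive distance from $\Sigma$, so $u_{\eta}\phi^{2}$ is a Lipschitz function with compact support in $\Omega\cap C_{2r}$ and is an admissible test function for the weak adjoint-caloric equation. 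Using the a.e.\ identities $u_{\eta}\,\partial_{s}u=\tfrac{1}{2}\partial_{s}(u_{\eta}^{2})$ and $u_{\eta}\nabla u=u_{\eta}\nabla u_{\eta}$, testing the equation against $u_{\eta}\phi^{2}$ and integrating by parts yields
\begin{equation*}
\iint |\nabla u_{\eta}|^{2}\phi^{2} \;=\; -\iint u_{\eta}^{2}\,\phi\,\partial_{s}\phi \;-\; 2\iint u_{\eta}\,\phi\,\nabla u_{\eta}\cdot\nabla\phi,
\end{equation*}
with no boundary contributions, since $\phi$ has compact support and $u_{\eta}\phi^{2}$ vanishes in a neighborhood of $\Sigma$. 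A Cauchy--Schwarz absorption of the cross term, together with $u_{\eta}\le u$, produces
\begin{equation*}
\iint |\nabla u_{\eta}|^{2}\phi^{2} \;\lesssim\; \iint u^{2}\bigl(|\nabla\phi|^{2}+|\partial_{s}\phi|\bigr) \;\lesssim\; r^{-2}\iint_{\Omega_{2r}} u^{2}.
\end{equation*}
Letting $\eta\to 0$ and invoking Fatou's lemma on $|\nabla u_{\eta}|^{2}\phi^{2}\to|\nabla u|^{2}\phi^{2}$ (a.e.\ on $\{u>0\}$, while $\nabla u=0$ a.e.\ on $\{u=0\}$) concludes the claimed bound, since $\phi\equiv 1$ on $C_{r}$.

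The only step requiring genuine care is the admissibility of $u_{\eta}\phi^{2}$, which hinges on the continuous vanishing of $u$ on $\Sigma\cap C_{2r}$; this in turn relies crucially on the time-symmetric ADR hypothesis via the time-forward half needed for the adjoint solution $u$ in Lemma \ref{continuity}. Without such boundary regularity, the level sets $\{u>\eta\}$ could fail to be compactly contained in $\Omega$, and the classical cancellation of the lateral boundary contribution in the integration by parts would no longer be automatic. Once the continuous vanishing is in hand, the remainder of the argument is entirely classical, so no further obstacle arises.
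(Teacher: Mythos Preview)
Your proof is correct, and it takes a genuinely different route from the paper's. Both arguments must manufacture an admissible test function that vanishes near $\Sigma$; the difference is in how this is done.

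The paper introduces a geometric cutoff $\Phi_\eps$ depending on $\delta(\cdot,\cdot)$, so the test function is $u(\eta\Phi_\eps)^2$. The error terms produced by $\partial_s\Phi_\eps$ and $\nabla\Phi_\eps$ live in a boundary strip $\{\eps/4\le\delta\le\eps\}$ and are shown to vanish as $\eps\to 0$ by combining the pointwise boundary H\"older decay of $u$ with the CFMS-type bound of Lemma~\ref{CFMS1} (i.e., $u/\delta\lesssim \omega^{X,t}(\Delta_{Y,s})/\sigma(\Delta_{Y,s})$), a Whitney decomposition of the strip, and the bounded overlap of the associated surface balls to sum to $\omega^{X,t}(\Delta_{3r})\le 1$. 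Your approach instead truncates by the \emph{value} of $u$, via $u_\eta=(u-\eta)_+$, and uses only the qualitative continuous vanishing of $u$ on $\Sigma\cap C_{2r}$ to ensure that $u_\eta\phi^2$ is compactly supported in $\Omega$; the identities $u_\eta\partial_s u=\tfrac12\partial_s(u_\eta^2)$ and $u_\eta\nabla u=u_\eta\nabla u_\eta$ then allow the standard Caccioppoli computation with no boundary terms, and Fatou handles $\eta\to 0$.

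Your method is more elementary: it avoids the CFMS estimate, the Whitney covering, and the caloric-measure summation entirely. The paper's approach, while heavier, is closer in spirit to the surrounding potential-theoretic machinery and makes the role of the quantitative $\eps^\alpha$ decay explicit, which is perhaps why the authors chose it.
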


\begin{proof}
Let $\eta\in C_0^\infty (C_{2r})$, with $0\leq \eta\leq 1$, and $\eta \equiv 1$ on $C_r$, such
 that $|\nabla \eta| \lesssim r^{-1}$, and $|\partial_t \eta|\lesssim r^{-2}$.
 Given $0<\eps\ll1$, let $\Phi=\Phi_\eps\in C^\infty (\Omega)$, with $0\leq \Phi\leq 1$,
 $\Phi(X,t) \equiv 1$ if $\delta(X,t)\geq \eps$, $\Phi(X,t) \equiv 0$ if $\delta(X,t)\leq  \eps/4$, satisfying
$|\nabla \Phi| \lesssim \eps^{-1}$, and $|\partial_t \Phi|\lesssim \eps^{-2}$.

Then
\[ \iint_{\Omega_{r}} |\nabla u|^2 =\lim_{\eps\to 0}  \iint_{\Omega_{r}} |\nabla u|^2 \Phi_\eps^2\,.
\]
In turn, by ellipticity and the definition of $\eta$, for fixed $\eps>0$,
\begin{multline*}\iint_{\Omega_{r}} |\nabla u|^2 \Phi_\eps^2 \lesssim
\iint_\Omega \nabla u\cdot \nabla u \left(\eta \Phi_\eps\right)^2\\[4pt]
= \iint_\Omega \nabla u\cdot \nabla \left( u \left(\eta \Phi_\eps\right)^2\right) -
 \iint_\Omega \nabla u\cdot \nabla (\eta^2)u  \Phi_\eps^2
 -  \iint_\Omega \nabla u\cdot \nabla ( \Phi_\eps^2)  u \eta^2\\[4pt]
=:\,  \mathbf I(\eps)+ \mathbf{II}(\eps)+ \mathbf{III}(\eps)\,.
\end{multline*}
We may handle term $\mathbf{II}(\eps)$ exactly as in the usual proof of Caccioppoli's inequality,
using Cauchy's inequality to hide a small term on the left hand side of the inequality, and then eventually letting
$\eps\to 0$.
For term $\mathbf I(\eps)$, we have
\[\mathbf I(\eps) = \iint_\Omega \partial_t u \,  u \left(\eta \Phi_\eps\right)^2
= - \iint_\Omega u^2\, \eta\, \Phi_\eps^2\, \partial_t \eta \, - \, \iint_\Omega u^2 \,\eta^2\, \Phi_\eps \,\partial_t \Phi_\eps
=: \mathbf I' (\eps)+ \mathbf I''(\eps)\,.
\]
By construction of $\eta$, the term $\mathbf I'(\eps)$
satisfies the desired bound, and we may let $\eps\to 0$.
To handle term $\mathbf I''(\eps)$, we let $\W$ denote a fixed collection of (parabolic) Whitney cubes of $\Omega$,
let $S_{\!\eps}:= \{(Y,s)\in \Omega:\, \eps/4 \leq \delta(Y,s)\leq\eps\}$, and we set
\[\W_\eps:=\left\{I \in \W:\, I\cap S_\eps \cap C_{2r}\neq \emptyset\right\}\,.\]
Note that by the adjoint version of \eqref{boundaryholder}
with $2r$ in place of $r$, and pointwise bounds for the Green function
\begin{equation}\label{eq.ubound}
u(Y,s) = G(X,t,Y,s) \lesssim \eps^\alpha r^{-n-\alpha}\,,
\end{equation}
for $(Y,s) \in C_{2r}\cap S_\eps$, and $(X,t) \in \Omega \setminus C_{4r}$.
Given $I \in \W_\eps$, and for each $(Y,s)\in I$, define $\Delta_{Y,s}$ as in
Remark \ref{remarkDYs}.  Let $(Y_I,s_I)$ be the center of $I$, and
set $\Delta_I:=  \kappa \Delta_{Y_I,s_I}$
with $\kappa$ chosen large enough that
$\Delta_{Y,s}\subset \kappa \Delta_{Y_I,s_I}$ for all $(Y,s)\in I$.
By construction of $\eta$ and $\Phi_\eps$, the definition of $u$, and \eqref{CFMSDYs}, \eqref{eq.ubound},
we then have
\begin{equation*}
\mathbf I''(\eps) \,\lesssim \,\eps^{-2+\alpha}r^{-n-\alpha} \sum_{I\in \W_\eps} \iint_I \hm^{X,t} (\Delta_I) \,\eps^{-n}
\approx\, \eps^\alpha r^{-n-\alpha}  \sum_{Q\in \W_\eps}  \hm^{X,t} (\Delta_I) \,\lesssim \, \eps^\alpha r^{-n-\alpha}
\hm^{X,t}(\Delta_{3r}) \,,
\end{equation*}
where we have used that $\ell(I) \approx \dist(I,\Sigma) \approx \eps$, and $|I|\approx \eps^{n+2}$, for $I\in \W_\eps$,
and that the sets $\Delta_I$ have bounded overlaps as $I$ ranges over $\W_\eps$.  Letting  $\eps\to 0$, we see that
$\mathbf{I}''(\eps) \to 0$.  Finally, we may handle term $\mathbf{III}(\eps)$ by using Cauchy's inequality to hide a small term on the left hand side of the inequality, plus another term (involving $|\nabla\Phi_\eps|^2$), which may be handle exactly like term
$\mathbf{I}''(\eps)$.  We omit the details.
\end{proof}

The proof of the next lemma is implicit in \cite{FGS}, and we follow their idea.

\begin{lemma}[Riesz Formula] \label{Representa}
Let $\varphi\in C_c^\infty(\ree)$. Then for all $(X,t)\in\Omega\,$,
\begin{equation}   \label{eqriesz}
\iint_{\Omega} \big(\nabla G(X,t,Y,s)\cdot\nabla \varphi(Y,s)
\,+\,G(X,t,Y,s) \partial_s\varphi(Y,s)\big)\,dYds
= \, \varphi(X,t)\, - \iint_{\Sigma} \varphi\, d\omega^{X,t} \,,
 \end{equation}
and for all $(Y,s)\in\Omega\,$,
  \begin{equation}  \label{eqriesz2}
\iint_{\Omega} \big(\nabla G(X,t,Y,s)\cdot\nabla \varphi(X,t)
- G(X,t,Y,s) \partial_t\varphi(X,t)\big)\,dXdt
= \, \varphi(Y,s)\, - \iint_{\Sigma} \varphi\, d\widehat{\omega}^{Y,s} \,.
\end{equation}
\end{lemma}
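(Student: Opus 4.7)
The plan is to prove \eqref{eqriesz}; \eqref{eqriesz2} then follows by the symmetric argument, regarding the Green function as a function of $(X,t)\in\Omega\setminus\{(Y,s)\}$, replacing $L=\partial_t-\Delta$ by $L^*=-\partial_t-\Delta$, and $\omega$ by $\widehat\omega$. Fix $(X,t)\in\Omega$ and set $u(Y,s):=G(X,t,Y,s)$. From \eqref{ghh1-} and the fact that $(Y,s)\mapsto\iint_\Sigma\Gamma(\widetilde X,\widetilde t,Y,s)\,d\omega^{X,t}(\widetilde X,\widetilde t)$ is adjoint-caloric in $\Omega$, one sees that $L^*_{Y,s}u=\delta_{(X,t)}$ distributionally in $\Omega$; moreover $u$ vanishes continuously on $\Sigma$ by the adjoint version of Lemma~\ref{continuity}. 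As a preliminary reduction, invoke Lemma~\ref{Dir} to produce the PWB solution $h$ of $Lh=0$ in $\Omega$ with $h|_\Sigma=\varphi|_\Sigma$; then $h(X,t)=\iint_\Sigma\varphi\,d\omega^{X,t}$, and $g:=\varphi-h\in C^\infty(\Omega)\cap C(\overline{\Omega})$ satisfies $g|_\Sigma=0$, $Lg=L\varphi$ in $\Omega$, vanishes at infinity, and $g(X,t)=\varphi(X,t)-\iint_\Sigma\varphi\,d\omega^{X,t}$. Hence \eqref{eqriesz} reduces to proving
\[
\iint_\Omega\!\bigl[\nabla_Y u\cdot\nabla_Y\varphi+u\,\partial_s\varphi\bigr]\,dY\,ds \;\stackrel{\mathrm{(A)}}{=}\; \iint_\Omega u\,L\varphi\,dY\,ds \;\stackrel{\mathrm{(B)}}{=}\; g(X,t).
\]

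For (A), a spatial integration by parts exploiting $u|_\Sigma=0$, I would use the pointwise identity $\nabla_Y u\cdot\nabla_Y\varphi+u\,\partial_s\varphi=\nabla_Y\!\cdot\!(u\,\nabla_Y\varphi)+u\,L\varphi$ (valid classically off the pole) together with a cutoff $\Phi_\eps\in C^\infty(\ree)$ equal to $1$ where $\delta(\cdot)\ge\eps$, to $0$ where $\delta(\cdot)\le\eps/4$, and satisfying $|\nabla\Phi_\eps|\lesssim\eps^{-1}$. No cutoff at the singularity is required: for each fixed $s\ne t$, $u(\cdot,s)$ is smooth in $Y$, and the parabolic fundamental-solution bounds $|u(Y,s)|\lesssim\dist(Y,s,X,t)^{-n}$ and $|\nabla_Y u(Y,s)|\lesssim\dist(Y,s,X,t)^{-n-1}$ make the full integrands absolutely integrable against smooth $\varphi$. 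Applying the spatial divergence theorem in each time slice (with no contribution from $\Sigma$, since $\Phi_\eps\equiv 0$ there) produces the error $-\iint_\Omega u\,\nabla_Y\varphi\cdot\nabla_Y\Phi_\eps\,dY\,ds$, supported in $\{\eps/4<\delta(\cdot)<\eps\}\cap\supp\varphi$. By Lemma~\ref{continuity}, $|u|\lesssim\eps^\alpha$ uniformly on that strip once $\eps$ is small enough that the pole is separated from it; the strip has Lebesgue measure $\lesssim\eps$ by ADR and a Whitney decomposition; and $|\nabla\Phi_\eps|\lesssim\eps^{-1}$. Thus the error is $\lesssim\eps^\alpha\to 0$, and dominated convergence in the remaining integrals yields (A).

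For (B), define $\widetilde g(X',t'):=\iint_\Omega G(X',t',Y,s)\,L\varphi(Y,s)\,dY\,ds$ for $(X',t')\in\Omega$. Using that, for each fixed $(Y,s)\in\Omega$, the function $(X',t')\mapsto G(X',t',Y,s)$ satisfies $L_{X',t'}G=\delta_{(Y,s)}$ distributionally in $\Omega$, together with Fubini (justified by the integrable parabolic singularity of $G$ and the compact support of $L\varphi$), one checks $L\widetilde g=L\varphi$ distributionally in $\Omega$. For the boundary behavior, fix $(X_0,t_0)\in\Sigma$ and split $\Omega$ into a small parabolic neighborhood of $(X_0,t_0)$ and its complement: on the complement, $G(X',t',Y,s)\to 0$ as $(X',t')\to(X_0,t_0)$ for each fixed $(Y,s)$ by Lemma~\ref{continuity}, with dominated convergence enabled by Lemma~\ref{CFMS1} in the form \eqref{CFMSDYs}; on the neighborhood, $G\le\Gamma$ and the parabolic fundamental-solution size provides an $O(\rho^2)$ tail. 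Analogous arguments show $\widetilde g\to 0$ at infinity. Hence $g-\widetilde g$ is caloric in $\Omega$ with zero boundary values and zero behavior at infinity, so the uniqueness part of Lemma~\ref{Dir} forces $g=\widetilde g$, establishing (B) and completing the proof. The main obstacle is controlling the cutoff error in (A), where the time-symmetric ADR hypothesis is essential: both time-forward and time-backward ADR are needed to apply the adjoint boundary Hölder continuity of $u$ (Lemma~\ref{continuity}) uniformly on the transition strip.
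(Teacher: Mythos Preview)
Your proof is correct, but it takes a considerably different route than the paper's. The paper exploits the explicit decomposition $G(X,t,Y,s)=\Gamma(X,t,Y,s)-V_{Y,s}(X,t)$ with $V_{Y,s}(X,t)=\iint_\Sigma\Gamma(Z,\tau,Y,s)\,d\omega^{X,t}(Z,\tau)$, extends $G(X,t,\cdot,\cdot)$ by zero to $\ree\setminus\overline\Omega$ (noting that the formula for $V$ persists there with $V=\Gamma$), and then computes the left-hand side as an integral over all of $\ree$. The $\Gamma$ contribution yields $\varphi(X,t)$ by the standard fundamental-solution identity, and after Fubini the $V$ contribution yields $\iint_\Sigma\varphi\,d\omega^{X,t}$ by the same identity applied at points of $\Sigma$. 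No cutoffs, no error terms, no uniqueness argument. Your approach is the more classical PDE one: integrate by parts with a boundary cutoff (step~(A)) and then identify the resulting volume integral via a Green representation plus Dirichlet uniqueness (step~(B)). This works, but it leans on substantially more machinery---the adjoint boundary H\"older estimate of Lemma~\ref{continuity} for the error in~(A), and Lemmas~\ref{CFMS1} and~\ref{Dir} for~(B)---whereas the paper's argument is essentially self-contained once one has the definition~\eqref{ghh1-} of $G$. What your approach buys is a template that would adapt to situations where an explicit fundamental-solution formula for $G$ is unavailable; what the paper's approach buys is brevity and avoidance of all limiting arguments near $\Sigma$.
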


\begin{proof} We shall prove only \eqref{eqriesz}; the proof of \eqref{eqriesz2}
is analogous.

Recall that for $(X,t,Y,s)\in\Omega\times\Omega \setminus \{(X,t)=(Y,s)\}$,
we define $G$ by the formula
\[G(X,t,Y,s) := \Gamma(X,t,Y,s) - V_{Y,s}(X,t)\,,\]
where $\Gamma$ is the global fundamental solution, and
\begin{equation}\label{Vdef}
V_{Y,s}(X,t):= \iint_{\Sigma} \Gamma(Z,\tau,Y,s) \, d\hm^{X,t}(Z,\tau)\,.
\end{equation}
Define $u(Y,s):= G(X,t,Y,s)$ for all
$(Y,s) \in \ree\setminus\{(X,t)\}$
by setting $G(X,t,\cdot,\cdot) \equiv 0$ in $\ree\setminus\overline{\Omega}$.
This extension by zero then means that
$V_{Y,s}(X,t) =\Gamma(X,t,Y,s)$ for $(Y,s)\in \ree\setminus\overline{\Omega}$, and in that case
the formula \eqref{Vdef} remains valid for all $(X,t)\in\Omega$, and for all
$(Y,s)\in \ree\setminus \Sigma$.
The left hand side of \eqref{eqriesz} then equals
\begin{multline*}
 \iint_{\ree} \big(\nabla G(X,t,Y,s)\cdot\nabla \varphi(Y,s)\,+\,G(X,t,Y,s) \partial_s\varphi(Y,s)\big)\,dYds\\[4pt]
 = \,
 \iint_{\ree} \big(\nabla \Gamma(X,t,Y,s)\cdot\nabla \varphi(Y,s)\,+\,\Gamma(X,t,Y,s) \partial_s\varphi(Y,s)\big)\,dYds
 \\[4pt]
 -\, \iint_{\ree} \big(\nabla V_{Y,s}(X,t)\cdot\nabla \varphi(Y,s)
 \,+\,V_{Y,s}(X,t) \partial_s\varphi(Y,s)\big)\,dYds
 \\[4pt] =:\,\ba(X,t) -\bb(X,t)\,.
\end{multline*}
By standard properties of the global fundamental solution,
\begin{equation}\label{eqH}
\ba (X,t)
= \varphi(X,t)\,,\qquad (X,t)\in\ree\,.
\end{equation}
Also, by the validity of \eqref{Vdef} for all $(Y,s)\in\ree\setminus\Sigma$,
\begin{multline*}
\bb(X,t) = 
\iint_{\Sigma} \iint_{\ree} \big(\nabla \Gamma(Z,\tau,Y,s)\cdot\nabla \varphi(Y,s)\,+\,\Gamma(Z,\tau,Y,s) \partial_t\varphi(Y,s)\big)\,dYds\, d\omega^{X,t}(Z,\tau)
\\[4pt]
=\, \iint_{\Sigma} \ba(Z,\tau)\, d\omega^{X,t}(Z,\tau)\, =\, \iint_{\Sigma} \varphi(Z,\tau) \,d\omega^{X,t}(Z,\tau) \,,
\end{multline*}
where in the last step we have used \eqref{eqH} with $(Z,\tau)$ in place of $(X,t)$.
This proves \eqref{eqriesz}.
\end{proof}

The following is an immediate corollary of the previous results, combined with
\cite[Lemma 2.5]{GH1} (see Remark \ref{remarkcorollary} below).

\begin{corollary} [{\cite[Lemma 2.5]{GH1}}]
\label{continuity2}
Let $\Omega\subset\mathbb R^{n+1}$, and suppose that
$\Sigma = \partial \Omega$ is time-forward ADR with
constant $M$. Let $(X_0,t_0)\in\Sigma$, let $0<r<\infty$, and for any $\kappa>0$, set
$C_{\kappa r}:= C(X_0,t_0,\kappa r)$.
Let $u(Y,s)= G(X,t,Y,s)$, the Green function with pole
$(X,t)\in \Omega \setminus \overline{C_{2r}}$, with $t> t_0+ 4r^2$.  Then
\begin{equation}\label{boundaryholder2}
u(Y,s)\lesssim\left(\dfrac{\delta(Y,s)}{r}\right)^{\alpha}\bariiint_{\Omega\cap C_{2r}}
 u(Z,\tau)\, \d Z \d \tau\,,\qquad  (Y,s)\in \Omega\cap C_r\,.
\end{equation}
\end{corollary}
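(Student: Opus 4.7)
The plan is to combine the adjoint version of Lemma \ref{continuity} (boundary H\"older estimate with $\sup$ norm on the right) with a Moser $L^\infty$--$L^1$ upgrade that is powered by the Caccioppoli bound of Lemma \ref{gensalsa-aEN}. Viewed as a function of $(Y,s)$, the function $u(Y,s)=G(X,t,Y,s)$ solves the adjoint heat equation in $\Omega\setminus\{(X,t)\}$. The hypothesis $t>t_0+4r^2$ places the pole strictly above $C_{2r}$ in time, so $u$ is bounded on $\Omega\cap C_{2r}$ by standard bounds on the heat kernel; moreover, by Lemma \ref{Dir} and continuous vanishing of the PWB solution at Wiener-regular boundary points (guaranteed in the TSADR setting through Lemma \ref{Bourgain}), $u$ vanishes continuously on $\Sigma\cap C_{2r}$.

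For the first step, I fix $(Y,s)\in\Omega\cap C_r$ and pick $(Y_0,s_0)\in\Sigma$ realizing $\delta(Y,s)$. The case $\delta(Y,s)\geq r/8$ is trivial because the prefactor $(\delta(Y,s)/r)^\alpha$ is of order $1$, so $u(Y,s)\leq \sup_{\Omega\cap C_{2r}}u$ already suffices. Otherwise $(Y_0,s_0)\in C_{5r/4}$, and applying Lemma \ref{continuity} (the adjoint version, enabled by the TFADR hypothesis) in the cylinder $C(Y_0,s_0,r/2)$ yields
\[
u(Y,s)\,\lesssim\,\left(\frac{\delta(Y,s)}{r}\right)^{\!\alpha}\sup_{\Omega\cap C_{7r/4}}u\,.
\]
For the second step, Lemma \ref{gensalsa-aEN} applies at any scale $\rho\leq r/2$, since the condition $(X,t)\notin\overline{C_{2r}}$ implies $(X,t)\notin C_{4\rho}$; iterating this Caccioppoli inequality with standard parabolic Sobolev--Poincar\'e inequalities on sub-cylinders of $C_{2r}$ yields the $L^\infty$--$L^1$ bound
\[
\sup_{\Omega\cap C_{7r/4}}u\,\lesssim\,\frac{1}{|C_{2r}|}\iiint_{\Omega\cap C_{2r}}u\,\leq\,\bariiint_{\Omega\cap C_{2r}}u\,,
\]
where $u$ is extended by zero across $\Sigma$ (preserving the subsolution property because of its continuous vanishing) and the final inequality uses $|\Omega\cap C_{2r}|\leq|C_{2r}|$. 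Combining the two steps gives the claim.

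The main obstacle is to justify the boundary-Moser iteration for the adjoint heat equation on a rough boundary: the extension by zero of $u$ across $\Sigma$ must remain an adjoint-subsolution, so that cutoff functions in the Caccioppoli bound need not avoid $\Sigma$. This is a standard consequence of continuous vanishing on a Wiener-regular boundary, but here one must explicitly invoke Lemma \ref{Bourgain} to guarantee regularity of every boundary point in the TSADR setting. Once this is in hand, the remainder of the argument is a routine application of classical parabolic $L^\infty$--$L^1$ estimates of De Giorgi--Moser type.
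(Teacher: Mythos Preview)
Your overall architecture matches the paper's: combine the adjoint boundary H\"older estimate of Lemma~\ref{continuity} with a boundary $L^\infty$--$L^1$ estimate obtained by treating the zero extension of $u$ as an adjoint subsolution in $C_{2r}$. The discrepancy is in how you justify that the zero extension is an adjoint subsolution.

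You assert that this is ``a standard consequence of continuous vanishing on a Wiener-regular boundary''. But the paper explicitly flags precisely this point in Remark~\ref{remarkcorollary}: in the geometric generality considered here, it is \emph{not known} whether an arbitrary non-negative adjoint solution vanishing continuously on a surface ball extends by zero to an adjoint subsolution (equivalently, whether its boundary Riesz measure is non-negative). Continuous vanishing plus Wiener regularity does not obviously suffice in the parabolic setting with merely TSADR boundary; the comparison argument one would run in $\Omega\cap C'$ requires a parabolic maximum principle whose applicability on such rough intersections is not ``standard''. So your justification is a genuine gap, not a routine omission.

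The fix is exactly what the paper does: since $u$ here is specifically the Green function, invoke the Riesz formula (Lemma~\ref{Representa}). For any $\varphi\in C_c^\infty(C_{2r})$ with $\varphi\ge 0$, the pole $(X,t)$ lies outside the support of $\varphi$, so \eqref{eqriesz} gives
\[
\iint_{\Omega}\big(\nabla u\cdot\nabla\varphi + u\,\partial_s\varphi\big)\,dYds \;=\; -\iint_{\Sigma}\varphi\,d\omega^{X,t}\;\le\;0,
\]
which is precisely the weak adjoint-subcaloric property of the zero extension in $C_{2r}$. With this in hand, your Caccioppoli-plus-Moser route and the paper's reference to the extended argument in \cite{GH1} amount to the same thing.
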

}

\begin{remark}\label{remarkcorollary}
 Lemma \ref{continuity}
(estimate \eqref{boundaryholder}) is proved in \cite[Appendix B]{GH1}.
The proof there is further extended to yield \eqref{boundaryholder2}, provided that
the extension of $u$ by zero in $C_{2r}\setminus \Omega$ is a subsolution (or adjoint subsolution)
in $C_{2r}$.  In the special case that $u$ is the Green function, one may verify the adjoint
subcaloric property in $C_{2r}$,
using the Riesz formula (Lemma \ref{Representa}).
In the rather general geometric context considered here, we do not know if the analogue of the
Riesz formula holds (that is, we do not know if there is a non-negative boundary Riesz measure)
for general non-negative solutions or adjoint solutions vanishing on a surface ball (i.e., for solutions other than Green functions).  On the other hand, for our purposes in this paper, it will be enough to know that \eqref{boundaryholder2}
holds for the Green function.
\end{remark}

\begin{lemma}\label{lemma:G-aver}  Assume that $\Omega\subset\mathbb R^{n+1}$ is such that
$\Sigma = \partial \Omega$  is time-symmetric ADR with constant $M$.   Let $(X_0,t_0)\in\Sigma$, $0<r<\infty$, consider $(X,t)\in \Omega$, and set
$u(Y,s):=G(X,t,Y,s)$.
For any $\kappa>0$, set $C_{\kappa r}:=C(X_0,t_0,\kappa r)$.
Then there exists $c$, $1\leq c<\infty$, depending only on $n$ and $M$, such that if $t\geq t_0+4cr^2$, then
\begin{equation}\label{eqn:aver-B}
\sup_{\Omega\cap C_r} u \lesssim
\bariiint_{\Omega\cap C_{2r}}  u(Y,s)\, \d Y\d s \lesssim r^{-n}{\omega^{X,t}(\Delta(X_0,t_0,cr))}.
\end{equation}
Furthermore, given $\xi \in (0,4]$ we have
\begin{equation}\label{eqn:aver-B2}
r^{-n-2}\iiint_{\Omega\cap C_{2r}\cap \{(Y,s):\,\delta(Y,s)<\xi r\}}  u(Y,s)\, \d Y\d s\lesssim \xi^2r^{-n}{\omega^{X,t}(\Delta(X_0,t_0,cr))}.
\end{equation}
\end{lemma}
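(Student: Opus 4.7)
The first inequality in \eqref{eqn:aver-B} is immediate from Corollary~\ref{continuity2}: the hypothesis $t\ge t_0+4cr^2$ with $c\ge 1$ guarantees that $(X,t)\in\Omega\setminus\overline{C_{2r}}$ and $t>t_0+4r^2$, so the corollary applies to $u(Y,s)=G(X,t,Y,s)$. For $(Y,s)\in\Omega\cap C_r$ we have $\delta(Y,s)\le\|(Y,s)-(X_0,t_0)\|\le r$, whence $(\delta(Y,s)/r)^\alpha\le 1$ and the first bound follows.

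For the second inequality, my plan is to decompose $\Omega\cap C_{2r}$ using the Whitney cubes $\W$ and bound $u$ scale by scale with Lemma~\ref{CFMS1}. By choosing $c$ large enough in the hypothesis on $t$, one ensures that for every $(Y,s)\in\Omega\cap C_{2r}$, with $d:=\delta(Y,s)\lesssim r$, we have $t-s\ge 4(c-1)r^2>(d/2)^2$, so $(X,t)\notin C(Y,s,d/2)$ and Lemma~\ref{CFMS1} applies, giving $u(Y,s)\lesssim d^{-n}\omega^{X,t}(\Delta_{Y,s})$ with $\Delta_{Y,s}$ as in \eqref{DYsdef}. For a Whitney cube $I\in\W$ meeting $C_{2r}$, write $\rho_I:=\ell(I)$; then $\delta(Y,s)\approx\rho_I$ on $I$, $|I|\approx\rho_I^{n+2}$, and we may pick a surface ball $\Delta_I$ of radius $\approx\rho_I$ (near a nearest boundary point to the center of $I$) such that $\Delta_{Y,s}\subset \Delta_I$ for all $(Y,s)\in I$. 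Hence
\[
\iiint_I u\,\lesssim\,|I|\,\rho_I^{-n}\,\omega^{X,t}(\Delta_I)\,\approx\,\rho_I^{2}\,\omega^{X,t}(\Delta_I).
\]
Grouping by scale $\rho_I\sim 2^{-k}$, the balls $\{\Delta_I\}_{I\in\W_k}$ at a fixed scale $k$ have bounded overlap (a standard Whitney property), and since every Whitney cube meeting $C_{2r}$ has $\rho_I\lesssim r$ and a shadow within parabolic distance $\lesssim r$ of $(X_0,t_0)$, we have $\bigcup_{I\in\W_k}\Delta_I\subset\Delta(X_0,t_0,c'r)$ for a uniform $c'$. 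Summing yields
\[
\iiint_{\Omega\cap C_{2r}} u\,\lesssim\,\sum_{k:\,2^{-k}\lesssim r}2^{-2k}\,\omega^{X,t}\!\big(\Delta(X_0,t_0,c'r)\big)\,\lesssim\,r^{2}\,\omega^{X,t}\!\big(\Delta(X_0,t_0,c'r)\big),
\]
and dividing by $r^{n+2}$ produces the second bound in \eqref{eqn:aver-B}.

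For \eqref{eqn:aver-B2} the argument is identical, except the restriction $\delta(Y,s)<\xi r$ forces $\rho_I\lesssim\xi r$, so the geometric sum is truncated at $2^{-k}\lesssim\xi r$ and produces the factor $(\xi r)^2$ in place of $r^2$; dividing by $r^{n+2}$ yields the claimed $\xi^2 r^{-n}\omega^{X,t}(\Delta(X_0,t_0,cr))$ bound.

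The only nontrivial bookkeeping is the bounded overlap of the balls $\{\Delta_I\}$ within a fixed scale and their uniform containment in one dilate of $\Delta(X_0,t_0,r)$; both follow from standard Whitney properties together with the parabolic ADR bound on $\sigma$. I do not expect any real obstacle beyond ensuring that $c$ is chosen sufficiently large to place the pole $(X,t)$ outside all the relevant sub-cylinders $C(Y,s,\delta(Y,s)/2)$ so that Lemma~\ref{CFMS1} is applicable uniformly.
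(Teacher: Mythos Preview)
Your proposal is correct and follows essentially the same approach as the paper: the first inequality via Corollary~\ref{continuity2}, and the remaining estimates by a Whitney decomposition combined with Lemma~\ref{CFMS1}, grouping by scale and using bounded overlap plus containment in a fixed dilate of $\Delta(X_0,t_0,r)$. The only cosmetic difference is that the paper first observes that the second inequality in \eqref{eqn:aver-B} is the special case $\xi=4$ of \eqref{eqn:aver-B2} and then proves \eqref{eqn:aver-B2} directly, whereas you argue the two in parallel.
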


\begin{proof} Let $(X,t)\in \mathbb R^{n+1}\setminus\Sigma$ and set
$ u(Y,s)=G(X,t,Y,s)$. The first inequality in \eqref{eqn:aver-B} follows immediately
from Corollary \ref{continuity2}, and
the second inequality in \eqref{eqn:aver-B} follows immediately from the case
$\xi=4$ of \eqref{eqn:aver-B2}.


We therefore turn to the proof of \eqref{eqn:aver-B2}.
To begin, we let
$\mathcal{W}_r^\xi= \lbrace I\rbrace$ be the set of all Whitney cubes which have a nonempty intersection with the set
$$C_{2r}\cap \{(Y,s):\,\delta(Y,s)<\xi r\}.$$
Let $(X_I,t_I)\in\Sigma$ be a point on $\Sigma$ closest to $I$ as measured by $\delta(\cdot,\cdot)$.  For $(Y,s)\in I$, by Lemma \ref{CFMS1}, we have
\[
 u(Y,s) \lesssim \ell(I)^{-n}\omega^{X,t}(\Delta(X_I,t_I,c\ell(I))).
\]
Consequently, since $|I| =\ell(I)^{n+2}$, we have
\begin{multline*}
r^{-n-2}\iiint_{\Omega\cap C_{2r}\cap \{(Y,s):\,\delta(Y,s)<\xi r\}}  u(Y,s)\, \d Y\d s
\\
\le\, r^{-n-2}\sum_{I\in \mathcal{W}_r^\xi}
\iiint_{I} u(Y,s)\, \d Y\d s
\,\lesssim \, r^{-n-2}\sum_{I\in \mathcal{W}_r^\xi}\ell(I)^{2}
\omega^{X,t}\big(\Delta(X_I,t_I,c\ell(Q))\big)
\\
\approx \,r^{-n-2}\sum_{k:2^{-k}\lesssim \xi r}2^{-2\,k}\!\!\!
\sum_{I\in  \mathcal{W}_r^\xi: \ell(I)=2^{-k}}
\!\! \omega^{X,t}\big(\Delta(X_I,t_I,c\ell(I))\big)
\,\,\lesssim \,\,\xi^2r^{-n}{\omega^{X,t}\big(\Delta(X_0,t_0,cr)\big)},
\end{multline*}
where in the last step we have used that for each fixed $k$, the sets
$\{\Delta(X_I,t_I,c\ell(I))\}$ with $\ell(I)=2^{-k}$
have uniformly bounded overlaps, and are all contained in $\Delta(X_0,t_0,cr)$,
for $c$ chosen large enough.
\end{proof}

\section{Proof of Theorem \ref{main.thrm}: the (parabolic) WHSA condition} \label{Sec5}

The (parabolic) weak half-space approximation (WHSA) condition was introduced and discussed in Subsection \ref{secWHSA}. Given $\eps>0$ and $K_0>0$,  $Q\in \mathbb{D}(\Sigma)$ satisfies the $(\eps,K_0)$-{local WHSA} condition if there is a half-space
$H_Q$ and a hyperplane  $P_Q =\partial H_Q$, containing a line parallel to the time axis, such that $(i)$--$(iii)$ of Definition \ref{def2.13} are satisfied. Given $\eps>0$ we will in the following consistently assume that $ K_0 \ll \eps^{-2}$.

The purpose of this section and the subsequent section is to prove the following theorem.

\begin{theorem}\label{WHSA.thrm}
Assume that $\Omega \subset \ree$ is an open set, that $\Sigma=\partial\Omega$ is time symmetric Ahlfors-David regular with constant $M$, that $\Omega$ satisfies the corkscrew condition  with constant $\gamma$, and
that caloric measure satisfies a local weak-$A_\infty$ condition with respect to
$\sigma=\cH_{\text{\em par}}^{n+1}|_\Sigma$ with constants $p$ and $C_1$. Then there exists $K_0>1$ and $\eps_0:=K_0^{-100}\ll 1$, depending only on $n,M,\gamma,p$ and $C_1$, such that $\Sigma$ satisfies the $(\eps,K_0)$-WHSA condition in the sense of Definition \ref{def2.14}, for all $\eps\leq\eps_0$.
\end{theorem}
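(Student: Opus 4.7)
The plan is to produce a decomposition $\dd(\Sigma)=\cG\cup\cB$, where $\cB$ satisfies the Carleson packing \eqref{eq2.pack2} and every $Q\in\cG$ satisfies the $(\eps,K_0)$-local WHSA of Definition \ref{def2.13}. For each $Q\in\cG$, the approximating $t$-independent hyperplane $P_Q$ will be the zero set, in the spatial variables, of an affine function obtained by linearizing a suitably normalized Green function $u_\sbf=G(\cdot,A_\sbf)/(\ell(Q(\sbf))\hm^{A_\sbf}(\Delta_{Q(\sbf)}))$ whose pole $A_\sbf$ is placed well forward of $Q(\sbf)$ in time. The broad scheme is the parabolic analogue of \cite{HLMN}; the differences are that we must extract a plane depending only on the spatial gradient $\nabla_X u_\sbf$ (to be $t$-independent), and we must completely avoid invoking the (time-directed) parabolic Harnack inequality within the (non time-directed) chain regions of Definition \ref{def2.11a}.

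\textbf{Initial Corona decomposition (Step 1a).} Using the local weak-$A_\infty$ hypothesis and the time-symmetric ADR structure, one produces (essentially via a Christ-style stopping time, as in the elliptic case) a disjoint decomposition $\dd(\Sigma)=\cG_0\cup\cB_0$ with $\cB_0$ packing, and a partition of $\cG_0$ into semi-coherent stopping-time trees $\{\sbf\}$ whose maximal cubes $\{Q(\sbf)\}$ also pack. The trees are built so that, after normalization, $\mu(Q)\approx\sigma(Q)/\sigma(Q(\sbf))$ for every $Q\in\sbf$, where $\mu:=\hm^{A_\sbf}/\hm^{A_\sbf}(\Delta_{Q(\sbf)})$. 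Combining this with the CFMS-type bound of Lemma \ref{CFMS1}, the averaged bounds of Lemma \ref{lemma:G-aver}, and the boundary Caccioppoli inequality of Lemma \ref{gensalsa-aEN}, one obtains at a distinguished (time-forward) corkscrew point $A_Q\in U_Q^i$ the non-degeneracy
\[
\bariiint_{U_Q^i}|\nabla_X u_\sbf(Y,s)|\,dYds\,\gtrsim\,\ell(Q)^{-1},\qquad \text{matched by}\qquad \sup_{U_Q}|\nabla_X u_\sbf|\lesssim \ell(Q)^{-1}.
\]
The lower bound here is the essential driver of everything that follows.

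\textbf{Secondary stopping and packing (Step 1b).} Within each $\sbf$, declare $Q\in\sbf$ \emph{oscillation-bad} if either the mean oscillation of $\nabla_X u_\sbf$ on the chain region $\tilde U_Q^i$ of Definition \ref{def2.11a} exceeds a threshold $\eta=\eta(\eps)\ll 1$, or the lower bound on $|\nabla_X u_\sbf|$ at the distinguished corkscrew degrades below $\eta\,\ell(Q)^{-1}$. The collection of such cubes, summed over all $\sbf$, satisfies a Carleson packing bound by an $L^2$ square-function estimate for $\nabla(\nabla_X u_\sbf)$ with weight $\delta(Y,s)^2$, proved by summing Caccioppoli across fattened Whitney cubes (Lemma \ref{gensalsa-aEN}) and then using the equivalence $\mu(Q)\approx\sigma(Q)/\sigma(Q(\sbf))$ together with the upper bound from Lemma \ref{lemma:G-aver} to absorb the local $L^2$ energies into $\sigma$-masses. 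Adding these cubes to $\cB_0$ and to $\{Q(\sbf)\}$, we obtain the final $\cB$.

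\textbf{From a stable spatial gradient to WHSA (Step 1c).} For $Q$ oscillation-good, the preceding step yields a fixed vector $a_Q\in\re^n$, with $|a_Q|\approx\ell(Q)^{-1}$, such that $\nabla_X u_\sbf\approx a_Q$ uniformly on a component of $\tilde U_Q^i$ that extends through the scale $C_{Q,\eps}^*$. Integrating, $u_\sbf$ differs from the spatially affine function $\ell_Q(X):=a_Q\cdot(X-X_Q)+c_Q$ by $O(\eps)\ell(Q)^{-1}\cdot\|{\cdot - X_Q,\cdot-t_Q}\|$ on a $C_{Q,\eps}^*$-neighborhood. Since $u_\sbf\equiv 0$ on $\Sigma$, the $t$-independent hyperplane $P_Q:=\{\ell_Q=0\}$ then satisfies $\dist(Z,\tau,\Sigma)\leq\eps\ell(Q)$ on $P_Q\cap C_{Q,\eps}^*$, giving (i) of Definition \ref{def2.13}; property (ii) follows from $u_\sbf(A_Q)\approx 1$, which forces $P_Q$ to pass within $cK_0\ell(Q)$ of $Q$ (and any $t$-independent plane through $C_{Q,\eps}^*$ lying at distance $\lesssim K_0\ell(Q)$ from $Q$ may be chosen); and (iii) follows from the one-sided sign $u_\sbf>0$ in $\Omega$, which places $\Sigma\cap C_{Q,\eps}^*$ on the non-positive side of $\ell_Q$, so the open half-space $H_Q:=\{\ell_Q>0\}$ meets $C_{Q,\eps}^*$ only inside $\Omega$.

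\textbf{Main obstacle.} The genuinely new difficulty, absent in the elliptic setting of \cite{HLMN}, is that one cannot propagate the non-degeneracy of $|\nabla_X u_\sbf|$ from point to point inside the chain region $\tilde U_Q^i$ by Harnack chaining: in the parabolic setting, chains are not time-directed and the parabolic Harnack inequality does not apply. This forces the packing of oscillation-bad cubes in Step 1b to proceed through a purely integral, square-function argument with delicate choice of weights, and forces the final linearization in Step 1c to work with the spatial gradient alone so as to produce a $t$-independent plane. Organizing these two constraints simultaneously — in particular, getting the correct scaling in the Caccioppoli sums so that the square-function integrand packs against $\sigma$ with the tree-dependent normalization $\mu$ — is the technical heart of Step 1, and the reason a whole additional section (Section \ref{Sec6}) is devoted to it.
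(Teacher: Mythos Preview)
Your overall strategy coincides with the paper's: a preliminary Corona decomposition driven by weak-$A_\infty$ (Lemma \ref{initialcorona1.lem}), pointwise non-degeneracy $|\nabla_X\widehat u|\approx 1$ at a reference point in each $U_Q$ (Lemmas \ref{refp}--\ref{refp+++}), a square-function estimate (Lemma \ref{squarefunction}) to pack the cubes where the Green function misbehaves, and extraction of the WHSA plane from the affine approximation on the remaining cubes. Two steps in your sketch, however, are genuinely incomplete.

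First, in Step 1c you assert that stability of $\nabla_X u_\sbf$ on the chain region forces $u_\sbf(Y,s)\approx a_Q\cdot(Y-X_Q)+c_Q$ with error $O(\eps)$ in the \emph{parabolic} norm. Small oscillation of the spatial gradient does not by itself control the time variation of $u_\sbf$: one needs $\partial_s u_\sbf=-\Delta u_\sbf$ small, i.e.\ control on \emph{second} spatial derivatives, and interior estimates on a chain region with $\delta\approx\eps^{3}\ell(Q)$ lose badly. The paper therefore introduces a separate bad class, ``Case 2'' (Subsection \ref{subq2}): cubes where $\nabla_X\widehat u$ is stable but the full Taylor remainder $|\widehat u(Z,\tau)-\widehat u(X,t)-\nabla_X\widehat u(X,t)\cdot(Z-X)|$ exceeds $\eps^{2L}\ell(Q)$. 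For such $Q$ one locates a point where $|\nabla_X^2\widehat u|\,\ell(Q)\gtrsim_{\eps}1$, and these cubes pack by the same square-function bound. Only in ``Case 3'', where both conditions hold, does the chain-integration of Lemma \ref{claim6.27} yield the spatially affine, $t$-independent approximation you need. Your single ``oscillation-bad'' criterion misses this case.

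Second, your verification of WHSA $(iii)$ assumes $u_\sbf\approx\ell_Q$ throughout $H_Q\cap C_{Q,\eps}^*$, but a priori the approximation is only known on the chain region $\tilde U_Q^i$, and you have not shown $H_Q\cap C_{Q,\eps}^*\subset\tilde U_Q^i$. The paper fills this with Lemma \ref{claim5.40}: an iterative chaining argument proving that every point of $\mathcal{O}=C(X_Q,t_Q,2\eps^{-5/2}\ell(Q))\cap\{Z_\perp>\eps^2\ell(Q)\}$ can be joined to $(Y_Q^*,s_Q^*)$ by an admissible chain, using the already-established flatness \eqref{eq5.35} at each step to stay away from $\Sigma$. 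This bootstrap, not merely the sign of $u_\sbf$, is what forces $H_Q\cap C_{Q,\eps}^*\cap\Sigma=\emptyset$.
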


Consequently, by proving Theorem \ref{WHSA.thrm} we prove that the assumptions of Theorem \ref{main.thrm} imply that $\Sigma$ satisfies the $(\eps,K_0)$-WHSA condition for all $\eps\leq\eps_0$. In this section we develop a number of important technical preliminaries for the proof of
Theorem \ref{WHSA.thrm} before the theorem is finally proved in Section \ref{Sec6}. From now on all implicit
constants will depend at most on the allowable parameters, i.e., on $n,M,\gamma,p$ and $C_1$,  unless otherwise stated.

\subsection{The construction of sawtooths based on the weak $A_\infty$ condition}

Recall that for any integer $N\geq 1$, and any cube $R\in\dd(\Sigma)$, we
set $\dd_N(R):=\{Q\in\dd: \ell(Q) =2^{-N} \ell(R)\}$.

\begin{lemma}\label{nondegcubelem.lem}
Consider $R \in \dd(\Sigma)$. There exist constants $N_0, N_1$, depending only on $n$, $M$ and $\gamma$,  and a cube $Q_0 \in \mathbb{D}_{N_0}(R)$ and a point $A^+_{Q_0}$, with the following properties.

\begin{list}{$(\theenumi)$}{\usecounter{enumi}\leftmargin=1cm \labelwidth=1cm \itemsep=0.2cm \topsep=.2cm \renewcommand{\theenumi}{\alph{enumi}}}
\item The point $A^+_{Q_0} =(Y,s)$ satisfies
\[
s - \tau \ge (500 \diam(Q_0))^2, \quad \forall (Z,\tau) \in Q_0,
\]
and
\[
N_1 \diam(Q_0)\geq \delta(Y,s) \ge 100 \diam(Q_0)\,.
\]

\item The caloric measure with pole at $A^+_{Q_0}$ satisfies
\[\hm^{A_{Q_0}^+}(Q_0) \gtrsim 1,\]
with implicit constants depending only on $n$, $M$ and $\gamma$.
\end{list}
\end{lemma}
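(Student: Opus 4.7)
The plan is to construct $A^+_{Q_0}$ directly as an interior corkscrew point at a carefully chosen intermediate scale, then identify $Q_0\subset R$ by positioning its center spatially close to $A^+_{Q_0}$ and temporally below it by the required amount. Condition (a) will be verified by inspection, and condition (b) will be obtained by combining Lemma~\ref{CFMS1} with a Green function lower bound proved via a time-forward parabolic Harnack chain.

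Concretely, set $r:=\ell(R)$, let $(X_R,t_R)$ denote the center of $R$, choose $N_0$ as a large constant depending only on $n,M,\gamma$ (to be fixed below), and define the intermediate scale $\rho:=100\gamma\cdot 2^{-N_0}r$. The interior corkscrew condition applied at $(X_R,t_R)$ at scale $\rho$ yields a parabolic subcylinder $C(A,\rho/\gamma)\subset\Omega\cap C(X_R,t_R,\rho)$; declare $A^+_{Q_0}:=A=(Y_A,s_A)$.  Then
\[
100\diam(Q_0)\;\le\;\delta(A^+_{Q_0})\;\le\;\rho\,=\,100\gamma\diam(Q_0),
\]
giving $N_1:=100\gamma$.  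Next select $Q_0\in\mathbb{D}_{N_0}(R)$ whose center $(X_{Q_0},t_{Q_0})$ satisfies $|X_{Q_0}-Y_A|\lesssim\diam(Q_0)$ and $s_A-t_{Q_0}\in[(700\diam(Q_0))^2,(1000\diam(Q_0))^2]$.  Such a cube exists because the generation-$N_0$ cubes tile $R$ with spatial resolution $\diam(Q_0)$ and temporal resolution $\diam(Q_0)^2$, and $|Y_A-X_R|<\rho$ together with $|s_A-t_R|<\rho^2$ forces the target location safely inside $R$ once $N_0$ is taken larger than a dimensional constant plus $\log\gamma$. Condition~(a) follows by a direct computation using that $\diam(\tau-t_{Q_0})\le\diam(Q_0)^2$ for $(Z,\tau)\in Q_0$.

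For (b), apply the corkscrew condition at $(X_{Q_0},t_{Q_0})$ at scale $\diam(Q_0)/4$ to produce a reference point $(Y^0,s^0)\in\Omega$ with $\delta(Y^0,s^0)\sim\diam(Q_0)$, so that, by Lemma~\ref{cubes}(v), the associated surface ball $\Delta_{Y^0,s^0}$ from Remark~\ref{remarkDYs} lies inside $Q_0$ (up to choosing the auxiliary scale small enough). Lemma~\ref{CFMS1} (cf.~\eqref{CFMSDYs}) then yields
\[
\omega^{A^+_{Q_0}}(Q_0)\;\gtrsim\;\diam(Q_0)^{n}\,G(A^+_{Q_0},Y^0,s^0).
\]
The free-space heat kernel satisfies $\Gamma(A^+_{Q_0},Y^0,s^0)\asymp\diam(Q_0)^{-n}$ with purely dimensional implicit constants, because $t_A-s^0\asymp\diam(Q_0)^2$ and $|Y_A-Y^0|\lesssim\diam(Q_0)$.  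To convert this into a lower bound for $G$, one constructs a time-forward chain of Whitney parabolic cylinders inside $\Omega$ connecting $(Y^0,s^0)$ to a Whitney corkscrew of $A^+_{Q_0}$ (whose length is bounded in terms of $n,M,\gamma$) and iterates the parabolic Harnack inequality applied to the caloric function $(X,t)\mapsto G(X,t,Y^0,s^0)$, which is caloric in its first two arguments away from $(Y^0,s^0)$.

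I expect the chief obstacle to be this last Harnack-chain construction. In the elliptic analogue of \cite{HLMN} one merely connects corkscrews by straight lines, whereas here the chain must be time-ordered so that each successive cylinder supports the forward-in-time parabolic Harnack inequality, and a priori the time-forward path from $(Y^0,s^0)$ to $A^+_{Q_0}$ may exit $\Omega$.  Producing such a chain using only the corkscrew condition and time-symmetric ADR (without any connectivity hypothesis) requires a careful, scale-by-scale application of corkscrews to remain inside $\Omega$ while monotonically advancing in time.  As a fall-back one can instead estimate $G(A^+_{Q_0},Y^0,s^0)$ directly via the representation \eqref{ghh1-}, bounding the subtracted boundary integral by combining Bourgain's lemma with the Gaussian decay of $\Gamma$ to show that the correction is a small fraction of $\Gamma$.
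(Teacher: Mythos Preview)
Your approach has two substantive gaps, and the paper's argument avoids both by a cleaner route.

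\textbf{Selection of $Q_0$.} You assert that a cube $Q_0\in\mathbb{D}_{N_0}(R)$ can be found with center satisfying $|X_{Q_0}-Y_A|\lesssim\diam(Q_0)$ and $s_A-t_{Q_0}$ in a prescribed interval, because ``the generation-$N_0$ cubes tile $R$.'' But the dyadic cubes tile $\Sigma$, not a Euclidean box: nothing forces $\Sigma$ to have points in an arbitrary space-time slab. (In fact, since $\delta(A)\ge 100\diam(Q_0)$, the touch point of $A$ alone already sits at spatial distance $\ge 100\diam(Q_0)$ from $Y_A$, so the constraint $|X_{Q_0}-Y_A|\lesssim\diam(Q_0)$ is unattainable with a small implicit constant.) One might try to repair this via TBADR and upper ADR to locate $\Sigma$-points in a specified time slab, but this requires a careful argument you have not given.

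\textbf{Green function lower bound.} This is the more serious obstacle, as you acknowledge. Under only the corkscrew and TSADR hypotheses, $\Omega$ may be disconnected, and then your reference point $(Y^0,s^0)$ (a corkscrew near $Q_0$) and the pole $A^+_{Q_0}$ (a corkscrew at the intermediate scale $\rho$) may lie in different components, forcing $G(A^+_{Q_0},Y^0,s^0)=0$ and collapsing the argument entirely. The Harnack-chain route thus requires connectivity you do not have. Your fallback via the representation \eqref{ghh1-} does not work either: the boundary integral $\int_\Sigma \Gamma(\tilde X,\tilde t,Y^0,s^0)\,d\omega^{A^+_{Q_0}}$ is only bounded above by $\sup_\Sigma\Gamma(\cdot,Y^0,s^0)\lesssim \delta(Y^0,s^0)^{-n}$, which is comparable to (not a small fraction of) $\Gamma(A^+_{Q_0},Y^0,s^0)$; Bourgain's lemma gives lower bounds on $\omega$, not the upper bounds needed here.

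\textbf{The paper's argument.} The paper never needs to locate $Q_0$ geometrically or compare Green functions across unknown regions. Instead, it applies the corkscrew condition at the \emph{full} scale of $R$ to obtain $(Y,s')$, invokes Lemma~\ref{Bourgain} directly to get $\omega^{(Y,s')}(R)\gtrsim 1$, and then pigeonholes over $\mathbb{D}_{N_0}(R)$ to find $Q_0$ with $\omega^{(Y,s')}(Q_0)\gtrsim 2^{-(n+1)N_0}$. Positivity of this measure forces $Q_0$ to contain a point with time $\le s'$, which handles the temporal positioning in~(a). Finally, the pole is shifted forward in time \emph{within the same corkscrew cylinder} $C(Y,s',\tilde c\,r_R)\subset\Omega$, so a single application of the parabolic Harnack inequality (trivially justified) transfers the estimate to $A^+_{Q_0}=(Y,s'+(\tilde c\,r_R/100)^2)$. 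No connectivity between distinct corkscrews is ever needed.
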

\begin{proof} Given $R \in \dd(\Sigma)$, let $C(X_{R},t_{R},r_{R})$, with
$r_{R} \approx\ell(R)$, denote
the corresponding cylinder introduced in Remark \ref{remarkscube}.
Let
$$
\Delta_{R} := \Sigma\cap
C(X_{R},t_{R},r_{R})
$$
 be the associated surface ball/cylinder contained in $R$. Let $(Y,s')$ be
the corkscrew point associated to $(X_{R},t_{R})$ and at scale $r_{R}/c_0$, see Remark \ref{cp},
where $c_0$ is the constant $c$ in
Lemma \ref{Bourgain}. Using Lemma \ref{Bourgain},
\begin{equation}\label{nondegcubelemeq1.eq}
\omega^{(Y,s')}(R) \ge \omega^{(Y,s')}(\Delta_{R}) \gtrsim 1.
\end{equation}
Let $N_0 \in \mathbb{N}$ be an integer to be chosen.
By the properties of the dyadic cubes we have
that $\#\mathbb{D}_{N_0}(R) \lesssim 2^{(n+1)N_0}$ with an implicit constant depending only on $n$ and $M$. Thus, using \eqref{nondegcubelemeq1.eq} there exists $Q_0 \in \mathbb{D}_{N_0}(R)$ with the property that
\begin{equation}\label{nondegcubelemeq1.eq+}\omega^{(Y,s')}(Q_0) \gtrsim 2^{-(n+1)N_0}.
\end{equation}
As $\omega^{(Y,s')}(Q_0) > 0$ it must be the case that $Q_0$ contains a point $(Z,\tau)$ with $\tau \le s'$. Thus, as $\diam(Q_0) \approx 2^{-N_0} \diam(R)$ we have that
\begin{equation}\label{nondegcubelemeq2.eq}
\begin{split}
(Z', \tau') \in Q_0 \implies \tau'& \le \tau +  (\diam(Q_0))^2\le s' + (\diam(Q_0))^2\le s' + c2^{-2N_0}r_{R}^2,
\end{split}
\end{equation}
for a constant $c$ depending only on $n$ and $M$. By definition of the corkscrew point there exists a constant $\tilde{c}< 1$, depending on $\gamma$, such that $C(Y,s', \tilde{c}r_{R}) \subset \Omega$. Thus, by the parabolic Harnack inequality and \eqref{nondegcubelemeq1.eq+}, we see that if
\[A_{Q_0}^+:=(Y,s):= (Y, s' + (\tilde{c}r_{R}/100)^2),\]
then
\begin{equation}\label{nondegcubelemeq3.eq}
 \omega^{A_{Q_0}^+}(Q_0) \gtrsim 2^{-(n+1)N_0}.
 \end{equation}
This proves $(b)$ provided our final choice of $N_0$ only depends on $n$, $\gamma$ and $M$. Furthermore, as $s = s' + (\tilde{c}r_{R}/100)^2$ we have by \eqref{nondegcubelemeq2.eq} that if  $N_0$ is sufficiently large, then
\begin{align}\label{No1}
s - \tau' &\ge  (\tilde{c}r_{R}/200)^2 \gtrsim 2^{2N_0} \diam(Q_0)^2\ge  (500^2)\diam(Q_0)^2, \quad \forall (Z',\tau') \in Q_0.
\end{align}
Moreover, since $C(Y,s', \tilde{c}r_{R}) \subset \Omega$, $s = s' + (\tilde{c}r_{R}/100)^2$, and $r_{R} \approx 2^{N_0}\diam(Q_0)$, it follows  that
\begin{align}\label{No2}\delta(Y,s) \approx \delta(Y,s') \approx \diam(R) \approx 2^{N_0}\diam(Q_0),
\end{align} provided $N_0$ is large enough. We now note that we can choose  $N_0$ large, but only depending on $n$, $M$ and $\gamma$, so that
\eqref{No1} and \eqref{No2} hold. This verifies $(a)$.
\end{proof}

Let  $\mathcal{M}$ denote the parabolic (Hardy-Littlewood) maximal function on $\Sigma$, defined with respect to $\sigma$.  Recall the notion of a fattened cube
$\lambda Q$ with $\lambda>1$, defined in \eqref{dilatecube}.

\begin{lemma}\label{l4.4}
Let $Q_0 \in \mathbb{D}(\Sigma)$ and assume that $(Y,s) \in \Omega \setminus C(X_{Q_0}, t_{Q_0}, 50 \diam(Q_0))$ is such that
\[\hm^{(Y,s)}(Q_0) \ge a_0 > 0.\]
Assume that $\hm$ is in weak-$A_\infty$ and let
\[\mu: = \frac{\sigma(Q_0)}{\hm^{(Y,s)}(Q_0)} \hm^{(Y,s)}.\]
Then there exist $a_1, a_2 > 0$, and a family, $\mathcal{F} = \{Q_j\}$ of non-overlapping dyadic subcubes of $Q_0$, with $a_1$, $a_2$, depending only on $n$, $M$, the weak-$A_\infty$ constants of $\hm$, and $a_0$,  such that
\begin{equation}\label{preeq4.8}
\sigma(Q_0 \setminus \cup_{\mathcal{F}} Q_j) \ge a_1 \sigma(Q_0),
\end{equation}
and such that
\begin{equation}\label{preeq4.9}
\frac12 \le \frac{\mu(Q)}{\sigma(Q)} \le \bariint_Q \mathcal{M}(\mu\big|_{2Q_0})(X,t) \, \d\sigma(X,t) \le a_2, \quad \forall Q \in \mathbb{D}_{\mathcal{F},Q_0},
\end{equation}
\end{lemma}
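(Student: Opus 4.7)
The plan is to apply the weak-$A_\infty$ hypothesis a single time, on $Q_0$ with pole $(Y,s)$, and then run a standard Calder\'on--Zygmund type stopping-time argument. The separation $(Y,s) \in \Omega \setminus C(X_{Q_0}, t_{Q_0}, 50\diam(Q_0))$ is tailor-made so that one may take the ``outer'' cylinder $C(X_0, t_0, r):=C(X_{Q_0}, t_{Q_0}, 10\diam(Q_0))$ in Definition \ref{defweakAinfty.def}: indeed $(Y,s) \notin C(X_0, t_0, 4r)$, and the definition yields a density $k^{(Y,s)} = d\hm^{(Y,s)}/d\sigma$ on $Q_0$ satisfying the weak reverse H\"older inequality of exponent $q$ on any parabolic surface cylinder of radius $\lesssim \diam(Q_0)$ whose double sits inside $C(X_0, t_0, r)$, which includes cylinders comparable to $2Q_0$. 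Writing $k := (\sigma(Q_0)/\hm^{(Y,s)}(Q_0))\,k^{(Y,s)} = d\mu/d\sigma$, the normalization gives $\bariint_{Q_0} k\, d\sigma = 1$, and since $\hm^{(Y,s)}(2Q_0)\le 1$, $\hm^{(Y,s)}(Q_0)\ge a_0$, and $\sigma$ is doubling by ADR, also $\bariint_{2Q_0} k\, d\sigma \lesssim 1/a_0$. The reverse H\"older then produces the single a priori bound $\int_{Q_0} k^q\, d\sigma \lesssim \sigma(Q_0)/a_0^q$ that drives the remainder of the proof.

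Next I would carry out the stopping-time construction. Fixing a constant $a_2 \ge 1$ (to be chosen large at the end), let $\mathcal{F} = \{Q_j\}$ be the collection of maximal dyadic subcubes of $Q_0$ for which at least one of
\[
\text{(i) } \mu(Q_j)/\sigma(Q_j) < 1/2, \qquad \text{(ii) } \bariint_{Q_j} \mathcal{M}(\mu|_{2Q_0})\, d\sigma > a_2
\]
holds, and partition $\mathcal{F} = \mathcal{F}_1 \cup \mathcal{F}_2$ according to which trigger fires. By maximality, any $Q \in \mathbb{D}_{\mathcal{F},Q_0}$ violates both (i) and (ii), so the outer two inequalities in \eqref{preeq4.9} hold. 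The middle inequality follows from the elementary fact that $\mathcal{M}(\mu|_{2Q_0})(x,t) \gtrsim \mu(Q)/\sigma(Q)$ for every $(x,t)\in Q\subset 2Q_0$, the implicit ADR constant being absorbed into $a_2$.

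The heart of the argument is the measure estimate \eqref{preeq4.8}. Writing $F_i = \cup\mathcal{F}_i$ and $G = Q_0\setminus(F_1\cup F_2)$, decompose
\[
\sigma(Q_0) = \mu(Q_0) = \mu(F_1) + \mu(F_2\setminus F_1) + \mu(G).
\]
Condition (i) gives directly $\mu(F_1) \le \tfrac12 \sigma(F_1) \le \tfrac12 \sigma(Q_0)$. On $G$, Lebesgue differentiation through the dyadic filtration yields $k(x,t) \le \mathcal{M}(\mu|_{2Q_0})(x,t) \le a_2$ at a.e.\ $(x,t)\in G$, whence $\mu(G) \le a_2\sigma(G)$. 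For $F_2$, summing condition (ii) over the disjoint $Q_j\in\mathcal{F}_2$ and invoking the $L^q$-boundedness of $\mathcal{M}$ together with the a priori $L^q$ bound from Step~1 gives $\sigma(F_2)\lesssim (a_0 a_2)^{-q}\sigma(Q_0)$; one further application of H\"older and the reverse H\"older yields $\mu(F_2) \le C(a_0,q,C_1)\,a_2^{-q/q'}\sigma(Q_0)$, which I force below $\tfrac{1}{10}\sigma(Q_0)$ by choosing $a_2$ large enough. Substituting back,
\[
\big(1-\tfrac12-\tfrac{1}{10}\big)\sigma(Q_0) \le a_2\sigma(G),
\]
so $\sigma(G)\ge a_1\sigma(Q_0)$ with $a_1 := 2/(5a_2) > 0$, as claimed.

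The only nontrivial technical point is the opening application: one must verify that the outer cylinder and pole are admissible for the reverse H\"older inequality not only on $Q_0$ itself but on a surface cylinder slightly larger than $2Q_0$ (the latter being needed to control $\bariint_{2Q_0} k\, d\sigma$ and to deduce the $L^q$ bound on $\int_{2Q_0} k^q\,d\sigma$ used in the maximal function estimate). The threshold $50$ in the hypothesis is precisely what supplies the required slack; once the reverse H\"older bound on $k$ is in hand, the remainder is essentially standard Calder\'on--Zygmund/Corona-type bookkeeping and does not further interact with the caloric structure.
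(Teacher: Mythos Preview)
Your proof is correct and follows essentially the same stopping-time scheme as the paper: same two stopping criteria, same use of the weak reverse H\"older bound to control $\sigma(F_2)$ and then $\mu(F_2)$, same treatment of $\mu(F_1)$. Two small remarks. First, your justification of the middle inequality in \eqref{preeq4.9} is slightly off: the claimed pointwise bound $\mathcal{M}(\mu|_{2Q_0})(x,t)\gtrsim \mu(Q)/\sigma(Q)$ carries an ADR constant that cannot be ``absorbed into $a_2$'' since the inequality is stated without a constant; the clean argument is simply $\mu(Q)/\sigma(Q)=\bariint_Q k\,d\sigma\le \bariint_Q \mathcal{M}(\mu|_{2Q_0})\,d\sigma$, using $k\le \mathcal{M}(\mu|_{2Q_0})$ pointwise a.e.\ on $Q\subset 2Q_0$. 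Second, your endgame differs slightly from the paper's: you bound $\mu(G)$ via the pointwise estimate $k\le a_2$ on $G$ (obtained by Lebesgue differentiation of the failed stopping condition (ii)), whereas the paper instead applies the H\"older/reverse H\"older $A_\infty$-type estimate $\mu(E)\lesssim a_0^{-1}(\sigma(E)/\sigma(Q_0))^{1/q'}\sigma(Q_0)$ directly with $E=G$. Your route is arguably more elementary and gives $a_1=2/(5a_2)$; the paper's gives $a_1=(a_0/4c)^{q'}$. Both are valid.
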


\begin{proof} Set $r_0:= \diam(Q_0)$, and for any $\kappa >0$, set
$C_{\kappa r_0} := C(X_{Q_0}, t_{Q_0}, \kappa r_0)$. Let
$\Delta:= \Sigma \cap C_{r_0}$, and more generally,
set $\kappa \Delta:= \Delta \cap C_{\kappa r_0}$.
Let $(Y,s) \in \Omega \setminus C_{50r_0}$.
Then, since $\hm$ is in weak-$A_\infty$ (see Definition \ref{defweakAinfty.def}),
it follows that
$\hm^{Y,s}\big|_{12\Delta} \ll \sigma$,
and there exists $q\in (1,\infty)$ such that
\begin{equation}\label{wrhpdefeq.eqrep}
\left(\bariint_{\Delta'} (k^{Y,s})^{q} \, \d\sigma \right)^{1/q} \lesssim \bariint_{2\Delta'} k^{Y,s} \, \d\sigma,
\end{equation}
for every $\Delta'=\Delta(X,t,\rho)$ with $(X,t)\in\Sigma$ and
$C(X,t, 2\rho)\subset C_{12r_0}$,
where $k^{Y,s}=\d\hm^{Y,s}/\d\sigma$. Using that $\mu\ll \sigma$ in $4Q_0$,
we write $k=\d\mu/\d\sigma$ and
\[
k(X,t):= \frac{\sigma(Q_0)}{\hm^{(Y,s)}(Q_0)} k^{Y,s}(X,t), \quad (X,t) \in 4Q_0.
\]
Then, using the $\mathrm{L}^p$ boundedness of the parabolic maximal function, Minkowski's inequality, and the fact that $2Q_0\subset 3\Delta$, and have comparable diameters, we obtain
\begin{multline}\label{eafvfrv}
\bariint_{Q_0} \mathcal{M}(k 1_{2Q_0}) \, \d\sigma
\le \left(\bariint_{Q_0} \mathcal{M}(k 1_{2Q_0})^q \, \d\sigma \right)^{1/q}
\lesssim
\left(\bariint_{2Q_0} k^q \, \d\sigma\right)^{1/q}
\\
\lesssim \left(\bariint_{3 \Delta} k^q \, \d\sigma\right)^{1/q}
\lesssim \bariint_{6\Delta} k \, \d\sigma
\lesssim \frac{\omega^{Y,s} (6\Delta)}{\hm^{(Y,s)}(Q_0)}\lesssim \tilde a,
\end{multline}
where $\tilde{a}>1$ depends only on $(n,M,C_1,p)$
and $a_0$.   Thus, by \eqref{eafvfrv} and
construction, \eqref{preeq4.9} holds for $Q = Q_0$, with
$c\tilde{a}$ in place of $a_2$.

We now perform a stopping time argument and extract a disjoint family $\mathcal{F} = \{Q_j\}\subset \dd(Q_0)\setminus \{Q_0\}$ which are maximal with respect to the property that either
\begin{equation}\label{stoptimebad1.eq}
\frac{\mu(Q_j)}{\sigma(Q_j)}<\frac12,
\end{equation}
or
\begin{equation}\label{stoptimebad2.eq}
\bariint_{Q_j} \mathcal{M}(\mu\big|_{2Q_0}) \, \d\sigma > \Upsilon\tilde{a}\,, 
\end{equation}
where $\Upsilon\gg 1$ is to be chosen.
Setting $a_2 := \Upsilon\tilde{a}$, we need only 
prove that  \eqref{preeq4.8} holds provided $\Upsilon$ is chosen large enough depending only on the allowable parameters.
To this end, we say that a cube $Q_j$ is of type 1 if \eqref{stoptimebad1.eq} holds, and
of type 2 if \eqref{stoptimebad2.eq} holds (if both hold, we arbitrarily assign the cube to be of type 1). Let $\mathcal{F}_1$ be the
collection of type 1 cubes, and let $\mathcal{F}_2$ be the collection of type 2 cubes, so that
$\mathcal{F}:=\mathcal{F}_1\cup \mathcal{F}_2$.

Set
\[
A_1:= \cup_{\mathcal{F}_1} Q_j\,,\quad A_2:= \cup_{\mathcal{F}_2} Q_j\,,
\quad E:= Q_0 \setminus \cup_{\mathcal{F}} Q_j\,,
\]
and note that therefore, by our normalization for $\mu$,
\begin{equation}\label{EAsplit}
1\,=\,\frac{\mu(Q_0)}{\sigma(Q_0) }
\,=\,\frac1{\sigma(Q_0) }\Big( \mu(E) +\mu(A_1) +\mu(A_2)\Big)\,.
\end{equation}
By \eqref{stoptimebad1.eq},
\begin{align}\label{stoptimebad1controlpre.eq}
\mu(A_1) &= \sum_{Q_j \in \mathcal{F}_1} \mu(Q_j)\le \frac{1}{2} \sum_{Q_j \in \mathcal{F}_1} \sigma(Q_j)\le \frac{1}{2}\sigma(Q_0)\,. 
\end{align}
Note also that by \eqref{stoptimebad2.eq} and \eqref{eafvfrv}, since $a_2 = \Upsilon\tilde{a}$,
\begin{multline}\label{stoptimebad2control.eq}
\sigma(\cup_{\mathcal{F}_2} Q_j) = \sum_{Q_j \in \mathcal{F}_2} \sigma(Q_j)
 \le
 \sum_{Q_j \in \mathcal{F}_2} \frac{1}{a_2}  \iint_{Q_j} \mathcal{M}(\mu\big|_{2Q_0}) \, \d\sigma
\\
\le \frac{1}{a_2} \iint_{Q_0} \mathcal{M}(\mu\big|_{2Q_0}) \, \d\sigma \le \frac{c\sigma(Q_0)}{\Upsilon}\,.
\end{multline}
Using that $\hm^{(Y,s)}(Q_0) \ge a_0$ and that $\hm^{(Y,s)}(\Sigma)  \le 1$, we deduce
\[\mu(\Sigma) = \frac{\sigma(Q_0)}{\hm^{(Y,s)}(Q_0)} \hm^{(Y,s)}(\Sigma) \le a_0^{-1}\sigma(Q_0) = a_0^{-1} \mu(Q_0).\]
In particular,
\begin{align}\label{aa1}\mu(2Q_0) \le a_0^{-1} \mu(Q_0).
\end{align}
Consequently, a standard argument, using Hölder's inequality
and the reverse Hölder condition for $k$, shows that for any Borel subset $A$ of $Q_0$,
\begin{align}
\label{aa2}
\mu(A) \le c \left(\frac{\sigma(A)}{\sigma(Q_0)}\right)^{1/q'} \mu(2Q_0)
\le \frac{c}{a_0} \left(\frac{\sigma(A)}{\sigma(Q_0)}\right)^{1/q'} \mu(Q_0),
\qquad
A\subset Q_0,
\end{align}
where $c\geq 1$ depends on $n$, ADR, and the reverse H\"older constant.
In particular,
setting $A= A_2$, we can use \eqref{stoptimebad2control.eq} and the normalization for $\mu$
to deduce that
\begin{align} \label{A2bound}
\mu(A_2)
\leq c \Upsilon^{\,-1/q'} \mu(Q_0) = c \Upsilon^{\,-1/q'} \sigma(Q_0) \leq \frac14 \sigma(Q_0)\,,
\end{align}
for $\Upsilon$ chosen large enough.

Furthermore,
setting $A = E=Q_0 \setminus \cup_{\mathcal{F}} Q_j$
in \eqref{aa2}, and using the normalization, we have
\begin{align}\label{muEest}
\mu(E)
\le ca_0^{-1} \left(\frac{\sigma(E)}{\sigma(Q_0)}\right)^{1/q'}  \sigma(Q_0).
\end{align}
Thus, combining \eqref{EAsplit}, \eqref{stoptimebad1controlpre.eq},
\eqref{A2bound}, and \eqref{muEest}, and hiding the two small terms $\frac12+\frac14$,
we see that
\[
\frac14 \,\leq\, ca_0^{-1} \left(\frac{\sigma(E)}{\sigma(Q_0)}\right)^{1/q'} \,.
\]
Setting $a_1 := (a_0/4c)^{q'}$, we therefore conclude that
\begin{equation}\label{stoptimebad1control.eq}
\sigma(E)\ge a_1 \sigma(Q_0),
\end{equation}
as desired.
\end{proof}

\subsection{A preliminary Corona decomposition}

\begin{lemma}\label{initialcorona1.lem}
There exist constants $a_0, a_2, N,  N_1>0$,  depending only on  $n$, $M$, $\gamma$, and  the weak-$A_\infty$ constants of $\hm$;
and a disjoint decomposition of $\dd = \dd(\Sigma)=\cG\cup\cB$, such that the following is true:

\begin{list}{$(\theenumi)$}{\usecounter{enumi}\leftmargin=1cm \labelwidth=1cm \itemsep=0.2cm \topsep=.2cm \renewcommand{\theenumi}{\alph{enumi}}}

\item The collection $\cG$ can be subdivided into a collection
of disjoint stopping time trees, $\mathcal{S}=\{\sbf\}$,
$$
\cG = \bigcup\limits_{\sbf \in \mathcal{S}} \sbf,
$$ such that each such tree $\sbf$ is semi-coherent.
\item The maximal cubes $\{Q(\sbf)\}$ and the cubes in $\cB$ satisfy a Carleson packing condition
\[
\sum_{Q(\sbf): Q(\sbf) \subseteq Q} \sigma(Q(\sbf))
\,+\,\sum_{Q' \in \mathcal{B}: Q' \subseteq Q } \sigma(Q')
\,\leq \,N \sigma(Q), \quad \forall Q \in \dd.
\]

\item For each stopping time tree $\sbf$ there exists a
time-forward corkscrew point $(Y_\sbf,s_\sbf)$ relative to the
maximal cube $Q(\sbf)$, such that the following conditions hold:
\begin{list}{$(\theenumi.\theenumii)$}{\usecounter{enumii}\leftmargin=1cm \labelwidth=1cm \itemsep=0.2cm \topsep=.2cm \renewcommand{\theenumii}{\arabic{enumii}}}

\item $s_\sbf - t \ge (500 \diam(Q(\sbf)))^2$, $\forall (Z,t) \in Q(\sbf)$, and
\[
N_1\diam(Q(\sbf))\ge\delta(Y_\sbf,s_\sbf) \ge 100 \diam(Q(\sbf))\,.
\]

\item $\hm^{(Y_\sbf,s_\sbf)}(Q(\sbf)) \ge a_0$.

\item The measure
\[\mu:=  \frac{\sigma(Q(\sbf))}{\hm^{(Y_\sbf,s_\sbf)}(Q(\sbf))} \hm^{(Y_\sbf,s_\sbf)} \]
satisfies
\begin{equation}\label{eq4.8}
1/2 \le \frac{\mu(Q)}{\sigma(Q)} \le \bariint_Q \mathcal{M}(\mu\big|_{2Q(\sbf)})(X,t) \, \d\sigma(X,t) \le a_2,  \quad \forall Q \in \sbf.
\end{equation}

\end{list}



\end{list}
\end{lemma}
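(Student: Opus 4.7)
The plan is to build the decomposition via a recursive, greedy stopping-time argument that stitches together Lemmas \ref{nondegcubelem.lem} and \ref{l4.4}. The two building blocks are already in place: for any cube $R$, Lemma \ref{nondegcubelem.lem} descends $N_0$ generations to a subcube $Q_0\in\dd_{N_0}(R)$ and produces a time-forward corkscrew pole $A_{Q_0}^+$ that automatically realizes (c.1) and (c.2). With this pole, Lemma \ref{l4.4} then yields a stopping family $\mathcal F(Q_0)\subset\dd(Q_0)$ for which property (c.3), i.e.\ \eqref{eq4.8}, holds on the semi-coherent collection $\dd_{\mathcal F(Q_0),Q_0}$. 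The hypothesis $A_{Q_0}^+\in\Omega\setminus C(X_{Q_0},t_{Q_0},50\diam(Q_0))$ demanded by Lemma \ref{l4.4} follows from the forward-in-time separation in Lemma \ref{nondegcubelem.lem}(a), which gives $|s-t_{Q_0}|^{1/2}\geq 500\diam(Q_0)$.

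Concretely, I would iterate as follows. Initialize a queue with a disjoint exhaustion of $\Sigma$ at some very large scale (available under the running unboundedness convention). While the queue is nonempty, extract a cube $R$, apply Lemma \ref{nondegcubelem.lem} to obtain $Q_0^R$ and $A_{Q_0^R}^+$, apply Lemma \ref{l4.4} with this pole to produce $\mathcal F(R)$, and declare
\[
\sbf_R:=\dd_{\mathcal F(R),Q_0^R},\qquad Q(\sbf_R):=Q_0^R,\qquad (Y_{\sbf_R},s_{\sbf_R}):=A_{Q_0^R}^+.
\]
Place into $\cB$ the cube $R$ together with the $N_0-1$ intermediate ancestors of $Q_0^R$ inside $R$, and enqueue for further processing every cube in $\mathcal F(R)\cup\mathcal S(R)$, where $\mathcal S(R)$ is the family of maximal dyadic subcubes of $R$ that are not contained in $Q_0^R$ (the off-chain children along $R\supsetneq\cdots\supsetneq Q_0^R$). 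By construction each $\sbf_R$ is semi-coherent with maximal element $Q_0^R$, distinct trees are nested or disjoint at their tops only via the stopping cubes below them, and every dyadic cube is assigned either to some tree or to $\cB$. Properties (a) and (c) are then immediate.

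For the packing estimate in (b), the ADR property gives $\sigma(Q_0^R)\geq c_0\, 2^{-(n+1)N_0}\sigma(R)$; since the chain $R\supsetneq\cdots\supsetneq Q_0^R$ together with $\mathcal S(R)$ partitions $R$, we have $\sum_{\mathcal S(R)}\sigma(Q)=\sigma(R)-\sigma(Q_0^R)$, and from \eqref{preeq4.8} we have $\sum_{\mathcal F(R)}\sigma(Q_j)\leq(1-a_1)\sigma(Q_0^R)$. Combining,
\[
\sum_{Q\in\mathcal F(R)\cup\mathcal S(R)}\sigma(Q)\,\leq\,\sigma(R)-a_1\sigma(Q_0^R)\,\leq\,\bigl(1-a_1c_0 2^{-(n+1)N_0}\bigr)\sigma(R),
\]
a geometric contraction with rate $\alpha<1$ depending only on allowable parameters. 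Summing the geometric series over all generations of the recursion yields $\sum_{\sbf:Q(\sbf)\subseteq R}\sigma(Q(\sbf))\lesssim\sigma(R)$; since each call contributes at most $N_0$ new cubes to $\cB$, each having $\sigma$-measure at most that of its ``top'' $R$, the same packing bound propagates to the $\cB$ portion.

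The conceptually hard work is already hidden inside Lemma \ref{l4.4}: promoting the reverse-Hölder inequality \eqref{wrhpdefeq.eqrep} to the two-sided pointwise control \eqref{eq4.8} through the interplay of the two stopping criteria \eqref{stoptimebad1.eq}--\eqref{stoptimebad2.eq} and the self-improvement in \eqref{A2bound}. By comparison, the present lemma is a purely combinatorial assembly, whose only mild subtlety is to make sure every dyadic cube is routed either into a tree or into $\cB$; this is exactly what the off-chain enqueuing step $\mathcal S(R)$ accomplishes, and the uniformity of $a_1$ and $c_0$ (independent of the chosen $R$) is what turns the one-step contraction into the required Carleson packing.
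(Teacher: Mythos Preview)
Your proposal is correct and follows essentially the same approach as the paper: descend via Lemma~\ref{nondegcubelem.lem} to a good subcube $Q_0^R$, run the stopping time of Lemma~\ref{l4.4} there, place the chain $R\supsetneq\cdots\supsetneq Q_0^R$ into $\cB$, recurse on the stopping cubes together with the off-chain siblings, and obtain the packing from the geometric contraction $\sum_{\mathcal F(R)\cup\mathcal S(R)}\sigma(Q)\le(1-\eta)\sigma(R)$. One cosmetic slip: it is $\{Q_0^R\}\cup\mathcal S(R)$ (not the chain together with $\mathcal S(R)$) that partitions $R$, but your measure identity $\sum_{\mathcal S(R)}\sigma(Q)=\sigma(R)-\sigma(Q_0^R)$ is correct regardless.
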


\begin{proof} The lemma is a
variant of the folkloric result that
``Big Pieces implies Corona", and the proof will proceed accordingly.

Given $R\in\dd(\Sigma)$, let $Q_0\in \dd_{N_0}(R)$ be as in
Lemma \ref{nondegcubelem.lem}, and we denote this cube
using the notation $Q_R:=Q_0$.
Applying Lemma \ref{l4.4} to $Q_R:=Q_0$, we introduce the following notation.
\begin{itemize}
\item Let $\F'(R)$ denote the family $\F=\{Q_j\}_j\subset\dd_{Q_R}$
constructed in Lemma \ref{l4.4}, let
$\F''(R)$ denote the collection of subcubes of $R$ that are maximal with respect to the property
that they do not meet $Q_R$, i.e.,
\[
\F''(R):= \text{maximal elements of} \left\{ Q\subset R: Q_R\cap Q=\emptyset\right\},
\]
and set
\[
\F(R):= \F'(R) \sqcup \F''(R)\,,
\]
where here and elsewhere we use $\sqcup$ to denote a union of sets that are pairwise disjoint.
\smallskip
\item Let $\B(R)$ denote the collection of subcubes of $R$ (including $R$ itself), that
{\em properly} contain $Q_R$, i.e.,
\[
\B(R):= \left\{Q\subset R: Q_R\subsetneq Q\right\}.
\]
\item Let
\[
\sbf(R):=\left\{Q\subset Q_R: Q \text{ is not contained in any } Q_j\in\F \right\}
\]
denote the stopping time tree, with maximal element
$Q(\sbf(R))=Q_R$, relative to the family $\F=\{Q_j\}$ constructed in Lemma \ref{l4.4}.
\end{itemize}

We now iterate this construction as follows.  Given $R_0\in\dd(\Sigma)$, we set
$\F_0:= \{R_0\}$, and define recursively
\[
\F_k:= \bigsqcup_{R\in \F_{k-1}} \F(R)\,,\qquad k\ge 1\,,
\]
and set
\[
\B_k:= \bigsqcup_{R\in \F_{k}} \B(R)\,,\quad \G_k:= \bigsqcup_{R\in \F_{k}} \sbf(R)\,,
\quad k\ge 0\,,
\]
We further define
\[
\F_*:= \bigsqcup_{k=0}^\infty\F_k\,\qquad
\B:=\bigsqcup_{k=0}^\infty\B_k\,,\qquad \G:=\bigsqcup_{k=0}^\infty\G_k\,,
\]
Property $(a)$, and the fact that
$\dd_{R_0} = \B\sqcup\G$, now follow immediately from the construction.
Property $(c)$ follows immediately from the construction
and Lemma \ref{nondegcubelem.lem}.
Furthermore, once the local version of the lemma has been
established on any given $R_0$, one may produce a global version with $\dd(\Sigma) =\B\sqcup\G$, by the argument in \cite[p.~38]{DS1}.

It therefore remains only to establish the packing condition $(b)$.  To this end, let us make the following pair of observations, each of which follows immediately from the construction.
\begin{enumerate}
\item For each $k$, the cubes in $\F_k$ are pairwise disjoint.
\smallskip
\item There exists a uniform $\eta = \eta(a_1,N_0,n,M)\in (0,1)$ (where $a_1$ is the constant
in \eqref{preeq4.8}, which in turn also depends only on allowable parameters), such that
if $R\in \F_k$, then
\[
\sum_{R'\in \F_{k+1}: \,R'\subset R}\sigma(R') = \sum_{R'\in\F(R)} \sigma(R') \leq (1-\eta)
\, \sigma(R)\,.
\]
\end{enumerate}
Iterating (2), we then have
for $R\in\F_k$, and $m\geq 1$, that
\begin{equation*}
\sum_{R'\in \F_{k+m}: \,R'\subset R}\sigma(R') \leq (1-\eta)^m
\, \sigma(R)\,.
\end{equation*}
Consequently, for any cube $Q\subset R_0$, letting $k_Q$ denote the smallest $k$ such that
$Q$ contains or is equal to a cube in $\F_k$, we then have
\begin{equation}\label{Fkpack}
 \sum_{k=k_Q}^\infty \sum_{R\in\F_k:\,R\subset Q} \sigma(R)\, \leq\,
 \sum_{k=k_Q}^\infty  (1-\eta)^{k-k_Q}\sigma(Q) \,=\,C_\eta \,\sigma(Q)\,.
\end{equation}
Note also that, for any $R\in\F_*$, and for any $Q\in \B(R)$
(thus, $Q_R\subsetneq Q\subset R$), by construction
\begin{equation}\label{Bpack}
\sum_{Q'\in\B(R): \, Q'\subset Q}\sigma(Q') \lesssim \sigma(Q)\,,
\end{equation}
where the implicit constant is uniform and depends only on $n$ and $M$.
In particular, when $Q=R$, we simply have $\sum_{Q'\in\B(R)}\sigma(Q') \lesssim \sigma(R)$.

Recall that $Q_R=Q(\sbf(R))$ is the maximal cube for the tree
$\sbf(R)$.  For $Q\in\dd_{R_0}$, we then have
\begin{multline*}
\sum_{Q(\sbf): Q(\sbf) \subseteq Q} \sigma(Q(\sbf))
\,+\,\sum_{Q' \in \mathcal{B}: Q' \subseteq Q } \sigma(Q')
\\[4pt]
\leq\, c\sigma(Q) +
\sum_{k=k_Q}^\infty \sum_{R\in\F_k:\,R\subset Q}
\left[\sigma(Q_R)+\sum_{Q'\in\B(R)} \sigma(Q')\right]
\\[4pt]
\lesssim \,\sigma(Q)+\sum_{k=k_Q}^\infty \sum_{R\in\F_k:\,R\subset Q} \sigma(R)
\,\lesssim\,\sigma(Q)\,,
\end{multline*}
where the term $c\sigma(Q)$ appears in the first inequality to account for possible
cubes contained in $Q$, but which also
belong either to $\sbf(R)$ or to $\B(R)$ for some $R \in \F_{k_Q-1}$ (thus, for some $R$
not contained in $Q$), and here we have used \eqref{Bpack} to handle the bad cubes;
in the second inequality we have used the trivial fact that $Q_R\subset R$, along with
\eqref{Bpack} in the special case $Q=R$; finally, the last step is just \eqref{Fkpack}.
Thus, we obtain the packing condition $(b)$.
\end{proof}

\subsection{Analysis in a single stopping time tree $\sbf$ from Lemma \ref{initialcorona1.lem}}
\label{initialstopanlys.sect}

In this section we work with a single stopping time tree $\sbf$ from Lemma \ref{initialcorona1.lem}. For notational convenience we set $Q_0 := Q(\sbf)$ and we recall that $\sbf = \mathbb{D}_{\F,Q_0}$ for a collection of pairwise disjoint subcubes of $Q_0$, denoted $\mathcal{F}$.  Based on Lemma \ref{initialcorona1.lem} we consider the point $(Y_\sbf,s_\sbf)$ and we set
\[
\mu:=  \frac{\sigma(Q_0)}{\hm^{(Y_\sbf,s_\sbf) }(Q_0)} \hm^{(Y_\sbf,s_\sbf) }.
\]
For future reference, we let $\widehat u$ denote the Green function with
the same normalization:
\begin{equation}\label{normgreeb}\widehat u(Y,s):=G(Y_\sbf,s_\sbf,Y,s)\sigma(Q_0)/\omega^{(Y_\sbf,s_\sbf) }(Q_0).
\end{equation}

\begin{lemma}\label{doublingintree.lem}
Consider $Q \in \sbf=\mathbb{D}_{\mathcal{F}, Q_0}$,
let $(X,t)\in Q$, and suppose that there is a positive constant $a\leq 1$ such that
$a\ell(Q) \leq r $. If $\Delta(X,t,r) \subset 2Q_0$, then
\begin{equation}\label{muaveragebound}
\frac{\mu(\Delta(X,t,r))}{\sigma(\Delta(X,t,r))} \lesssim_a a_2\,,
\end{equation}
where $a_2$ is the constant in \eqref{eq4.8}.  More precisely,
\begin{equation}\label{dbtreeeq1.eq}
\mu(\Delta(X,t,r)) \lesssim_{a,a_2} \frac{\sigma(\Delta(X,t,r))}{\sigma(Q)} \mu(Q).
\end{equation}
 If in addition, $r \le b \ell(Q)$ for some $b\geq a$ then
\begin{equation}\label{dbtreeeq2.eq}
\mu(\Delta(X,t,r)) \lesssim_{a,b,a_2} \mu(Q).
\end{equation}
\end{lemma}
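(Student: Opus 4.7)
The plan is to derive the lemma as a direct consequence of property $(c.3)$ of Lemma \ref{initialcorona1.lem}, specifically the bound $\bariint_Q \mathcal{M}(\mu\big|_{2Q_0})\,\d\sigma \le a_2$ available for every $Q \in \sbf$. By Chebyshev's inequality applied to this average, the subset of $Q$ where $\mathcal{M}(\mu\big|_{2Q_0}) > 2a_2$ has $\sigma$-measure at most $\sigma(Q)/2$; in particular, there exists a base point $(X_0,t_0) \in Q$ with $\mathcal{M}(\mu\big|_{2Q_0})(X_0,t_0) \le 2a_2$. Fix any such $(X_0,t_0)$ for the remainder of the argument.

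The next step is to compare $\Delta(X,t,r)$ with an appropriately enlarged surface ball centered at $(X_0,t_0)$. Since both $(X,t)$ and $(X_0,t_0)$ lie in $Q$, Lemma \ref{cubes}$(iv)$ gives $\dist((X,t),(X_0,t_0)) \le \diam(Q) \le c_*\ell(Q) \le (c_*/a)\,r$. Setting $r' := r + c_*\ell(Q)$, we therefore obtain $\Delta(X,t,r) \subset \Delta(X_0,t_0,r')$ with $r \le r' \le (1+c_*/a)\,r$, so $r' \approx_{a} r$. Using the hypothesis $\Delta(X,t,r)\subset 2Q_0$ together with the definition of $\mathcal{M}$, this yields
\[
\mu(\Delta(X,t,r)) \,\le\, (\mu\big|_{2Q_0})(\Delta(X_0,t_0,r')) \,\le\, 2a_2\,\sigma(\Delta(X_0,t_0,r')) \,\lesssim_{a}\, a_2\,\sigma(\Delta(X,t,r)),
\]
where the last step uses parabolic ADR and $r' \approx_a r$. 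This establishes \eqref{muaveragebound}.

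Deducing \eqref{dbtreeeq1.eq} is then immediate: the lower bound $\mu(Q)/\sigma(Q) \ge 1/2$ from property $(c.3)$ of Lemma \ref{initialcorona1.lem} gives $\sigma(\Delta(X,t,r)) \le 2\,\frac{\sigma(\Delta(X,t,r))}{\sigma(Q)}\,\mu(Q)$, and combining this with the pointwise bound just obtained yields the claimed estimate with constant $\lesssim_{a} a_2$. For \eqref{dbtreeeq2.eq}, the additional hypothesis $r \le b\ell(Q)$ together with parabolic ADR gives $\sigma(\Delta(X,t,r)) \lesssim r^{n+1} \lesssim_{b} \ell(Q)^{n+1} \approx \sigma(Q)$, so $\sigma(\Delta(X,t,r))/\sigma(Q) \lesssim_{b} 1$, and \eqref{dbtreeeq2.eq} follows from \eqref{dbtreeeq1.eq}.

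I do not foresee any serious obstacle here: the substantive analytic content — the $L^\infty$ boundedness on average of $\mathcal{M}(\mu\big|_{2Q_0})$ over $Q$, and the lower bound $\mu(Q)/\sigma(Q)\ge 1/2$ — has already been absorbed into Lemma \ref{initialcorona1.lem}, and what remains is a straightforward Chebyshev-plus-covering manipulation. The only point requiring minor bookkeeping is tracking the dependence of implicit constants on $a$ (and on $b$, for \eqref{dbtreeeq2.eq}), which is transparent from the use of ADR to replace $r'$ by $r$.
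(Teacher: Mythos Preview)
Your proposal is correct. Both the paper and you derive \eqref{muaveragebound} from property $(c.3)$ of Lemma \ref{initialcorona1.lem}, but the mechanics differ slightly. The paper first treats the range $a\ell(Q)\le r\le 2\ell(Q)$ by noting that $\Delta(X,t,r)\subset \Delta(Z,\tau,c\diam(Q))$ for \emph{every} $(Z,\tau)\in Q$, so the ratio $\mu(\Delta(X,t,r))/\sigma(\Delta(X,t,r))$ is dominated by the infimum over $Q$ of the corresponding centered ratio, which is in turn at most the average $\bariint_Q \mathcal{M}(\mu|_{2Q_0})\,d\sigma\le a_2$; for $r>2\ell(Q)$ it then passes to an ancestor $Q'\in\sbf$ with $\ell(Q')\approx r$ (or to $Q_0$ if no such $Q'$ exists) and reruns the argument with $a=1$. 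Your Chebyshev step instead locates a single good point $(X_0,t_0)\in Q$ with $\mathcal{M}(\mu|_{2Q_0})(X_0,t_0)\le 2a_2$, and since $r'=r+c_*\ell(Q)\approx_a r$ holds uniformly for all $r\ge a\ell(Q)$, your covering argument works in one stroke without any case split or passage to ancestor cubes. This is a modest but genuine streamlining. The derivations of \eqref{dbtreeeq1.eq} and \eqref{dbtreeeq2.eq} are identical in the two proofs.
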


\begin{proof}
Let $(X,t)\in Q\in\sbf$, with $\Delta(X,t,r) \subset 2Q_0$,
 and suppose first that $a\ell(Q) \leq r \leq 2\ell(Q)$.
 Then for some $c$ large enough depending on $n$ and the ADR constants, we have
 \begin{multline}  \label{muaveragebound2}
\frac{\mu(\Delta(X,t,r))}{\sigma(\Delta(X,t,r))} \lesssim_a
\inf_{(Z,\tau)\in Q}\frac{\mu(\Delta(Z,\tau,c\diam(Q))\cap 2Q_0)}{\sigma(\Delta(Z,\tau,c\diam(Q)))}
\\[4pt]
\lesssim_a \bariint_Q \mathcal{M}(\mu\big|_{2Q_0})(X,t) \, \d\sigma(X,t) \lesssim_a a_2\,,
\end{multline}
by property $(c.3)$ of  Lemma \ref{initialcorona1.lem}, which proves
\eqref{muaveragebound} in the present scenario.  On the other hand, if $r>2\ell(Q)$, then choose $Q'\in\sbf$ with $Q\subset Q'$, and $\ell(Q')<r\le 2\ell(Q')$. In this case,
\eqref{muaveragebound2} (thus also
\eqref{muaveragebound}) continues to hold, by the same argument, with $Q$ replaced by $Q'$
(and with $a=1$).
If there is no such $Q'\in\sbf$, then $r\approx \diam(Q_0)$, with $\Delta(X,t,r) \subset 2Q_0$,
and we may repeat the previous argument with $Q$ replaced by $Q_0$, to see that
\eqref{muaveragebound2}, and hence also \eqref{muaveragebound}, hold yet again.  Thus,
\eqref{muaveragebound} holds in all cases with $(X,t)\in Q\in\sbf$, $a\ell(Q)\leq r$,
and  $\Delta(X,t,r) \subset 2Q_0$.

Combining \eqref{muaveragebound} with the first inequality in \eqref{eq4.8},
we obtain \eqref{dbtreeeq1.eq}.  Finally, \eqref{dbtreeeq2.eq} follows immediately from \eqref{dbtreeeq1.eq} and the ADR property.
\end{proof}

In the following we let  $K_0 \gg 1$ be a large constant depending at most on the allowable parameters, i.e., on $n,M,\gamma$ and the weak-$A_\infty$ constants of $\hm$. We consistently assume that
\begin{equation}\label{consta} \eps \le \eps_0 =  K_0^{-100}.
\end{equation}

\subsubsection{Non-degeneracy of $\nabla_X \widehat u$}
Recall the normalized Green function $\widehat u$ defined in \eqref{normgreeb}.

 Given $Q_0$, let $\F=\{Q_j\}$ be the family of dyadic subcubes of
 $Q_0$, constructed in Lemma \ref{l4.4}, and set
 $\sbf=\mathbb{D}_{\F,Q_0}$, so that $Q_0=Q(\sbf)$.
 We let
\begin{equation}\label{subsetcubes}
\mathbb{D}_{\F,Q_0}^*:=\{Q\in \sbf: \ell(Q)\le \eps^{10} \,\ell(Q_0)\}.
\end{equation}
Recall that
\begin{equation*}
U_Q:= \bigcup_{I\in \W_Q}I^*, 
\end{equation*}
where
\begin{equation*}
\W_Q:= \left\{I\in \W:\,K_0^{-1} \ell(Q)\leq \ell(I)
\leq K_0\,\ell(Q),\, {\rm and}\, \dist(I,Q)\leq K_0\, \ell(Q)\right\},
\end{equation*}
and
\begin{equation*}I^* =I^*(\tau) := (1+\tau)I,
\end{equation*}
for a given small, positive parameter $\tau$. Thus, if $Q \in \mathbb{D}_{\F,Q_0}^*$ then the pole of the (normalized) Green function $\widehat u$, $(Y_\sbf,s_\sbf)$, is well separated (at a scale
on the order of $\diam(Q_0$) from $U_Q$, provided $K_0$ is chosen sufficiently large, and consequently, that $\eps$ is sufficiently small. We will  frequently use that $\widehat u$ is a solution to the adjoint heat equation in and near $U_Q$.

\begin{lemma}\label{refp} If  $\eps\leq\eps_0$,  and $K_0$ is chosen sufficiently large depending only on the allowable parameters, then the following holds for all $Q\in \mathbb{D}_{\F,Q_0}^*$. There exists $(Y^0_Q,s^0_Q)\in U_Q$ such that
$$
\ell(Q)\lesssim \delta(Y^0_Q,s^0_Q)\leq
\dist(Y^0_Q,s^0_Q, X_Q,t_Q)\leq 20 \diam(Q),
$$
and such that
\begin{equation*}
1\lesssim |\nabla_X \widehat u(Y^0_Q,s^0_Q)|.
\end{equation*}
The implicit constants depend only on the allowable parameters,
but are in particular independent of $K_0$ and $\eps$.
\end{lemma}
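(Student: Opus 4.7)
My plan is to locate a reference point $(Y^*, s^*) \in U_Q$ at which the normalized Green function $\widehat u$ is bounded below by a constant times $\ell(Q)$, then exploit the vanishing of $\widehat u$ on $\Sigma$, together with parabolic H\"older regularity up to the boundary and the fundamental theorem of calculus, to extract a point in $U_Q$ with non-degenerate spatial gradient. The key quantitative inputs will be the normalized measure estimate $\mu(Q) \geq \sigma(Q)/2$ from \eqref{eq4.8}, the pointwise bound on $\widehat u$ from Lemma \ref{lemma:G-aver} combined with Lemma \ref{doublingintree.lem}, and a parabolic CFMS-type comparison.

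For the setup, I would take $(Y^*, s^*)$ to be an interior corkscrew point for $(X_Q, t_Q)$ at scale proportional to $\ell(Q)$, with $s^* - t_Q \approx \ell(Q)^2$ so that $(Y^*, s^*)$ lies strictly in the future of $Q$. Since $\ell(Q) \leq \eps^{10}\ell(Q_0)$ and $s_\sbf - t_{Q_0} \gtrsim \diam(Q_0)^2$, the pole $(Y_\sbf, s_\sbf)$ is safely separated from $(Y^*, s^*)$ in the future time direction, so $\widehat u$ is a nontrivial nonnegative adjoint-caloric function in a neighborhood of $U_Q$; for $K_0$ large enough, $(Y^*, s^*)\in U_Q$. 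Lemma \ref{lemma:G-aver} applied with $r \approx \ell(Q)$, together with the bound $\mu(\Delta(X_Q, t_Q, cr)) \lesssim a_2\, \sigma(\Delta(X_Q, t_Q, cr))$ from Lemma \ref{doublingintree.lem}, will yield
\[
\sup_{\Omega \cap C(X_Q, t_Q, C\ell(Q))} \widehat u \,\lesssim\, \ell(Q).
\]
For the matching lower bound, a parabolic CFMS-type estimate at the corkscrew $(Y^*, s^*)$, combined with $\mu(Q) \geq \sigma(Q)/2$ from \eqref{eq4.8}, should give
\[
\widehat u(Y^*, s^*) \,\gtrsim\, \frac{\mu(\Delta(X_Q, t_Q, c\ell(Q)))}{\ell(Q)^n} \,\gtrsim\, \ell(Q).
\]

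To extract the spatial gradient non-degeneracy, I would consider a spatial segment $\gamma(r) := (Y^* + rv, s^*)$, $r \in [0, L]$, with $L \approx \ell(Q)$ and $v$ a unit spatial vector pointing toward the closest boundary point of $\Sigma$ to $(Y^*, s^*)$. By Lemma \ref{touchp}, TSADR guarantees that this closest point is laterally placed on the parabolic cylinder around $(Y^*, s^*)$, so the segment will parabolically approach $\Sigma$ as $r$ grows. Choosing the length so that $\delta(\gamma(L)) \approx \ell(Q)/K_0$ (made possible by the Whitney structure of $U_Q$, which contains cubes of size down to $\ell(Q)/K_0$), the adjoint form of Lemma \ref{continuity} gives $\widehat u(\gamma(L)) \lesssim K_0^{-\alpha}\, \ell(Q)$, which is much smaller than $\tfrac{c}{2}\ell(Q)$ for $K_0$ large. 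Setting
\[
r_* := \inf\bigl\{\, r \in [0, L] : \widehat u(\gamma(r)) \leq \tfrac{c}{2}\ell(Q)\,\bigr\}
\]
and applying the fundamental theorem of calculus to $\widehat u \circ \gamma$ on $[0, r_*]$,
\[
\tfrac{c}{2}\ell(Q) = \widehat u(Y^*, s^*) - \widehat u(\gamma(r_*)) \leq r_*\, \sup_{r \in [0, r_*]}|\nabla_X \widehat u(\gamma(r))|,
\]
so since $r_* \leq L \approx \ell(Q)$, some $r_1 \in [0, r_*]$ gives $|\nabla_X \widehat u(\gamma(r_1))| \gtrsim 1$. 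By construction $\widehat u(\gamma(r_1)) \geq \tfrac{c}{2}\ell(Q)$, which via Lemma \ref{continuity} forces $\delta(\gamma(r_1)) \gtrsim \ell(Q)$; then $(Y^0_Q, s^0_Q) := \gamma(r_1)$ lies in $U_Q$ (for $K_0$ sufficiently large) and satisfies all the required distance conditions.

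The principal obstacle I anticipate is the parabolic CFMS lower bound for $\widehat u(Y^*, s^*)$. The upper bound Lemma \ref{CFMS1} is direct, but the matching lower bound is subtler in the parabolic setting because time-directedness precludes a verbatim repetition of the elliptic argument; it should follow from Bourgain's estimate (Lemma \ref{Bourgain}) combined with an adjoint-caloric Harnack chain connecting $(Y^*, s^*)$ to a region near the pole $(Y_\sbf, s_\sbf)$, exploiting the careful time placement $\ell(Q)^2 \approx s^* - t_Q \ll s_\sbf - t_Q$, together with the maximum principle. A secondary geometric issue is to guarantee that the spatial segment $\gamma$ stays inside a single connected component of $U_Q$ as it approaches $\Sigma$; this can be handled via Lemma \ref{touchp} and the Whitney structure, choosing the direction $v$ compatible with the Whitney cubes in $U_Q$ of size $\ell(Q)/K_0$ adjacent to $\Sigma$.
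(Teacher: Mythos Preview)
Your approach has a genuine gap at the step you yourself flag as ``the principal obstacle'': the CFMS-type lower bound $\widehat u(Y^*,s^*)\gtrsim \ell(Q)^{-n}\mu(Q)$ is not available under the hypotheses of Theorem~\ref{main.thrm}. The domain $\Omega$ is only assumed to satisfy an interior corkscrew condition (Definition~\ref{CS.def}) and to have TSADR boundary; there is \emph{no Harnack chain hypothesis}. Consequently there is no uniform way to connect the corkscrew point $(Y^*,s^*)$ near $Q$ to a neighborhood of the pole $(Y_\sbf,s_\sbf)$, and the fix you propose (Bourgain's estimate plus an adjoint-caloric Harnack chain) cannot be carried out with uniform constants. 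Indeed, the absence of connectivity is precisely why only the upper bound (Lemma~\ref{CFMS1}) appears in the paper, and why the lower bound in Corollary~\ref{cor.deltahatu} is only obtained \emph{after} Lemma~\ref{refp} has been proved---so invoking it here would be circular.

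The paper circumvents this obstruction entirely by using the Riesz formula (Lemma~\ref{Representa}) in place of a pointwise CFMS lower bound. Testing against a cutoff $\phi_Q$ supported in $C(X_Q,t_Q,10\diam(Q))$ yields an integral identity bounding $\ell(Q)\mu(Q)$ by integrals of $|\nabla_X\widehat u|\,|\nabla_X\phi_Q|$ and $\widehat u\,|\partial_s\phi_Q|$. The near-boundary contributions are shown to be small via the boundary Caccioppoli inequality (Lemma~\ref{gensalsa-aEN}), boundary H\"older continuity (Corollary~\ref{continuity2}), and Lemma~\ref{lemma:G-aver} combined with Lemma~\ref{doublingintree.lem}. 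The interior time-derivative term is the parabolic novelty: it is converted back into a spatial-gradient term by integrating $\widehat u$ along a piecewise-linear path toward $(X_Q,t_Q)$ and using the adjoint equation $\partial_t\widehat u=-\Delta\widehat u$ together with interior estimates. This leads to $\ell(Q)\mu(Q)\lesssim \ell(Q)^{n+2}\sup_{\widehat C_Q\setminus\Sigma_{\zeta/4}}|\nabla_X\widehat u|$, and since $\mu(Q)\ge\tfrac12\sigma(Q)\approx\ell(Q)^{n+1}$ the gradient lower bound follows---without ever needing $\widehat u$ itself to be large at any specific point.
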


\begin{proof} Fix $Q\in \mathbb{D}_{\F,Q_0}^*$, that is,
$Q\in\mathbb{D}_{\F, Q_0}$ with
$\ell(Q)\le \eps^{10}\,\ell(Q_0)$.   
Let $$\widehat{C}_Q:=C(X_Q,t_Q,10 \diam(Q)),$$  and recall that $(X_Q,t_Q)$ is defined as the center of $Q$. Let $\phi_Q\in C_c^\infty(\widehat{C}_Q)$, $0\leq\phi_Q\leq 1$, be such that $\phi_Q\equiv 1$ on $Q$ and  $$\|\nabla_X \phi_Q\|_\infty\lesssim \ell(Q)^{-1},\ \|\partial_t\phi_Q\|_\infty\lesssim \ell(Q)^{-2}.$$
In the following argument we will require that $\eps$ 
is chosen small
enough to ensure that if $c' \ge 1$ is the 
constant $c$ from  
Lemma \ref{lemma:G-aver}, then
\begin{equation}\label{bigCcontain}
C(X_Q,t_Q, 10^{10} (c')^2\diam(Q)) \cap \Sigma \subseteq 2Q_0\,,
\end{equation}
and the pole of $\widehat u$, $(Y_\sbf,s_\sbf)$, satisfies
\[ (Y_\sbf,s_\sbf) \not \in C\big(X_Q,t_Q, 10^{10}(c')^2 \diam(Q)\big),\]
as well as
\[s_\sbf - t_Q \ge 40^{1000}(c')^2 \diam(Q)^2.\]
To achieve this we can simply choose
$\eps_0^{-1} \gtrsim 40^{1000}c'$, 
with implicit constant depending only on $n$ and $M$, as the first property stated then
holds by properties of dyadic cubes, while the second and third properties are
possible by property $(c.1)$ of Lemma \ref{initialcorona1.lem} (and that $\diam(Q) \approx \ell(Q)$).
In the following we let $\zeta \in (0,10^{-10})$ be a small parameter to be chosen,  depending only on $n$, $M$, and the constant $a_2$ from Lemma \ref{initialcorona1.lem}, i.e., $\zeta$ will in the end depend only
on the allowable parameters.

After these preliminaries we see, using Lemma \ref{Representa}, that
\begin{align}\label{eqn:estw-}
\ell(Q)\,\mu(Q)
\le
\ell(Q)\,\iint \phi_Q\, d\mu \leq&\ell(Q)\iiint |\nabla_X \widehat u(Y,s)||\nabla_X \phi_Q(Y,s)|\, \d Y\d s\notag\\
&+ \ell(Q)\iiint\widehat u(Y,s)|\partial_s\phi_Q| \, \d Y\d s.
\end{align}
We let
\[\Sigma_\zeta := \Sigma_\zeta(Q) = \{(Y,s): \delta(Y,s) < \zeta \diam(Q) \},\]
and define $\Sigma_{\zeta/2}$ and $\Sigma_{\zeta/4}$ analogously. Introducing
\begin{align*}
I_1&:=\ell(Q)\iiint_{(\Omega\cap \widehat{C}_Q)\setminus \Sigma_\zeta} |\nabla_X \widehat u(Y,s)||\nabla_X \phi_Q(Y,s)|\, \d Y\d s,\notag\\
I_2&:=\ell(Q)\iiint_{\Omega\cap \Sigma_\zeta \cap \widehat C_Q} |\nabla_X \widehat u(Y,s)||\nabla_X \phi_Q(Y,s)|\, \d Y\d s,\notag\\
I_3&:=\ell(Q)\iiint_{(\Omega\cap \widehat{C}_Q)\setminus \Sigma_\zeta} \widehat u(Y,s)|\partial_s\phi_Q(Y,s)|\, \d Y\d s, \notag\\
I_4&:=\ell(Q)\iiint_{\Omega\cap \Sigma_\zeta \cap \widehat C_Q} \widehat u(Y,s)|\partial_s\phi_Q(Y,s)|\, \d Y\d s,
\end{align*}
we deduce from \eqref{eqn:estw-} that $\ell(Q)\,\mu(Q)\lesssim I_1+I_2+I_3+I_4$.
Clearly,
\begin{align}\label{I1est}
I_1&\lesssim\ell(Q)^{n+2}\sup_{(Y,s)\in \widehat{C}_Q\setminus \Sigma_\zeta}|\nabla_X \widehat u(Y,s)|.
\end{align}

Let $(Y,s) \in \Sigma_\zeta \cap \widehat C_Q$. Then $C(Y,s, \zeta \diam(Q))$ meets $\Sigma$ at a point $(\widehat{Y},\widehat{s})$ such that $\dist(Y,s, \widehat{Y},\widehat{s})<2 \zeta \diam(Q)$. Moreover, since $(Y,s) \in \widehat C_Q$ it holds that $\dist(Y,s, X_Q,t_Q) \leq 10\diam(Q)$, and hence
\[\dist(\widehat{Y},\widehat{s}, X_Q,t_Q) < 12 \diam(Q).
\]
In particular,
\[\Sigma_\zeta \cap \widehat C_Q \subseteq \bigcup_{(\widehat{Y},\widehat{s}) \in C(X_Q,t_Q, 12\diam(Q)) }  C(\widehat{Y},\widehat{s}, \zeta \diam(Q)).\]
By a straightforward argument
\[ \Sigma_\zeta \cap \widehat C_Q \subseteq \cup_k C_k, \]
where
\[C_k = C(Y_k,s_k, 5\zeta \diam(Q))\text{ with } (Y_k,s_k) \in  C(X_Q,t_Q, 12\diam(Q))\cap\Sigma,\]
and $\{C_k\}$ is a collection of cylinders chosen to have the same size and bounded overlap.

Using the notation introduced above, Schwarz's inequality,
Lemma \ref{gensalsa-aEN}, and Lemma \ref{lemma:G-aver}, we obtain
\begin{align}\label{I2est}
I_2&\leq
\sum_k \iiint_{C_k } |\nabla_X \widehat u(Y,s)|\, \d Y\d s\lesssim  \left(\frac{1}{\zeta\ell(Q)}\right)\,  \sum_k\iiint_{4C_k} \widehat u(Y,s)\, \d Y\d s \notag \\
&\lesssim
  \left(\frac{1}{\zeta\ell(Q)}\right)  \iiint_{C(X_Q,t_Q, 100 \diam(Q)) \cap \{\delta(Y,s) < 100\zeta \diam(Q)\}} \widehat u(Y,s)\, \d Y\d s\\
&\lesssim
   \left(\frac{1}{\zeta \ell(Q)}\right)  \zeta^2 \ell(Q)^2 \,
\mu (\Delta(X_Q,t_Q, 100 c'\diam(Q)))  \notag 
\,\lesssim\,
\zeta \ell(Q)\, \mu(Q),
\end{align}
where in the last step we have used \eqref{bigCcontain} and Lemma \ref{doublingintree.lem}.

In a similar manner (but simpler, since the boundary
Caccioppoli inequality of Lemma \ref{gensalsa-aEN} is not needed), we deduce that
\begin{align}\label{I4est}
I_4&\lesssim
\zeta^2 \ell(Q)\, \mu(Q).
\end{align}

We now consider $I_3$. Let
$(Y,s)\in (\Omega\cap \widehat{C}_Q) \setminus \Sigma_\zeta$.
Consider the continuous curve $\LL=\LL(\theta)$, from $(Y,s)$ to $(X_Q,t_Q)$, parameterized over $[0,2]$ by
\[\LL = \LL_1 \cup \LL_2\]
where
\[\LL_1 = ((1 -\theta) Y+ \theta X_Q,s), \quad \theta \in [0,1],\]
and
\[
\LL_2 = (X_Q, (2-\theta) s + (\theta-1) t_Q), \theta \in (1,2].
\]
By construction $\LL \subset \widehat{C}_Q$, and since $h = \delta \circ \LL$ is continuous with $h(0) \ge \zeta \diam(Q)$ and $h(2)  = 0$, there exists a smallest $\theta_0 \in (0,2)$ such that $h(\theta_0) = (\zeta/2)\diam(Q)$. To estimate $\widehat u(Y,s)$ we divide the argument  into two cases.
\\ \\
{\bf Case 1}: $\theta_0 \in (1,2)$. In this case we have $\LL_1 \subset \widehat{C}_Q \setminus \Sigma_{\zeta/2}$ and we estimate
\begin{align*}
\widehat{u}(Y,s) &\le \widehat{u}(\LL(\theta_0)) + |u(Y,s) - \widehat{u}(X_Q,s)| + |\widehat{u}(X_Q,s) - \widehat{u}(\LL(\theta_0))|
\\& =  \widehat{u}(\LL(\theta_0)) + J_1 + J_2.
\end{align*}
Using that $\delta(\LL(\theta_0))  = (\zeta/2)\diam(Q)$, along with
Corollary \ref{continuity2} and  Lemma \ref{lemma:G-aver}, we have
\begin{align*}
\widehat{u}(\LL(\theta_0)) &\lesssim  \,\zeta^\alpha \bariiint_{\Omega\cap C(X_Q,t_Q, 100c'\diam(Q))} \widehat u(Y,s)\, \d Y\d s
\\ & \lesssim \, \zeta^\alpha  \ell(Q)^{-n} \mu(\Delta(X_Q,t_Q, 100^2 (c')^2\diam(Q))) 
\,\lesssim \,\zeta^{\alpha} \ell(Q)^{-n} \mu(Q),
\end{align*}
where we in the last step  again have used Lemma \ref{doublingintree.lem}. Now for $J_1$, we recall that $\LL_1 \subset \widehat{C}_Q \setminus \Sigma_{\zeta/2}$ is a line segment in a spatial direction only, with length at most $c\ell(Q)$. Therefore, we can use
the fundamental theorem of calculus to obtain
\[J_1 \lesssim \ell(Q)\sup_{\widehat{C}_Q \setminus \Sigma_{\zeta/2}} |\nabla_X\widehat u|.\]
Furthermore, for $J_2$ have that  $\LL_2([1,\theta_0])$ is a line segment in the $t$-direction, contained in  $\widehat{C}_Q \setminus \Sigma_{\zeta/2}$ and with length at most $c\ell(Q)^2$. Hence, by the fundamental theorem of calculus,
\begin{align*}
J_2 & \lesssim \ell(Q)^2\sup_{\widehat{C}_Q \setminus \Sigma_{\zeta/2}} |\partial_t \widehat{u}|=  \ell(Q)^2\sup_{\widehat{C}_Q \setminus \Sigma_{\zeta/2}} |\Delta \widehat{u}|=   \ell(Q) \zeta^{-1}\sup_{\widehat{C}_Q \setminus \Sigma_{\zeta/4}} |\nabla_X\widehat{u}|\,,
\end{align*}
where in the last step we have used
interior estimates 
for solutions of the (adjoint) heat equation. 
Putting the estimates together we can conclude that in this case we have
\begin{equation}\label{goodcscase1.eq}
\widehat{u}(Y,s)  \lesssim    \zeta^{\alpha} \ell(Q)^{-n} \mu(Q)  +   \zeta^{-1} \ell(Q) \sup_{\widehat{C}_Q \setminus \Sigma_{\zeta/4}} |\nabla_X\widehat{u}|.
\end{equation}
{\bf Case 2}: $\theta_0 \in (0,1]$. In this case we have
\[\widehat{u}(Y,s) \le \widehat{u}(\LL(\theta_0)) + |u(Y,s) - \widehat{u}(\LL(\theta_0))| =\widehat{u}(\LL(\theta_0))  + J_1.\]
Note that in this case $\LL_1([0,\theta_0])$ is a line segment in a spatial direction only, with length at most $c\ell(Q)$ and contained in $\widehat{C}_Q \setminus \Sigma_{\zeta/2}$. We may therefore repeat the analysis above for $J_1$ and deduce that also in this case \eqref{goodcscase1.eq} holds.

By the analysis above, we have that \eqref{goodcscase1.eq} holds in either case,
for $(Y,s)\in (\Omega\cap \widehat{C}_Q) \setminus \Sigma_\zeta$.
Thus, using \eqref{goodcscase1.eq}, that $|\partial_s \phi| \lesssim \ell(Q)^{-2}$, and integrating over $(\Omega\cap \widehat{C}_Q)\setminus \Sigma_\zeta$, we obtain
\begin{align}\label{I3est}
I_3 &\lesssim \ell(Q)  \ell(Q)^{n+2} \ell(Q)^{-2}\big (\zeta^{\alpha} \ell(Q)^{-n} \mu(Q)  +    \zeta^{-1}\ell(Q) \sup_{\widehat{C}_Q \setminus \Sigma_{\zeta/4}} |\nabla_X\widehat{u}|\big)
\\ & \lesssim \ell(Q)\zeta^{\alpha}\mu(Q) + \zeta^{-1}\ell(Q)^{n+2}\sup_{\widehat{C}_Q \setminus \Sigma_{\zeta/4}} |\nabla_X\widehat{u}|. \notag
\end{align}
Collecting our estimates for $I_1,I_2,I_3$ and $I_4$ in \eqref{I1est}, \eqref{I2est}, \eqref{I3est},
\eqref{I4est},
, and using that $\zeta^{\alpha} > \zeta$, we obtain from \eqref{eqn:estw-} that
\[\ell(Q)\,\mu(Q) \lesssim \,\zeta^{\alpha}\ell(Q)\,\mu(Q) + \zeta^{-1}\ell(Q)^{n+2}\sup_{\widehat{C}_Q \setminus \Sigma_{\zeta/4}} |\nabla_X\widehat{u}|.\]
In particular, choosing $\zeta$ sufficiently small, depending only on the allowable parameters, we have
\[\ell(Q) \mu(Q) \lesssim \ell(Q)^{n+2}\sup_{\widehat{C}_Q \setminus \Sigma_{\zeta/4}} |\nabla_X\widehat{u}|.\]
As $Q \in \mathbb{D}_{\mathcal{F},Q_0}$, it holds that $\mu(Q) \ge (1/2)\sigma(Q)$, and since $\sigma(Q) \approx \ell(Q)^{n+1}$ we have
\[1\lesssim\sup_{\widehat{C}_Q \setminus \Sigma_{\zeta/4}} |\nabla_X\widehat{u}|.\]
Thus, there exists $(Y^0_Q,s^0_Q) \in \widehat{C}_Q \setminus \Sigma_{\zeta/4}$ with
\[1\lesssim|\nabla_X\widehat{u}(Y^0_Q,s^0_Q)|.\]
 Finally, insisting that $K_0$ is chosen sufficiently large,
 depending on $n$, $M$, and $\zeta$, we can
 ensure that $(Y^0_Q,s^0_Q) \in U_Q$. This proves the lemma.
\end{proof}

Recall that $\dd_Q:= \{Q'\in \mathbb{D}:\,Q'\subseteq Q\}.$
We introduce some notation that will be used in the next lemma.
Given $Q\in \dd(\Sigma)$, and a constant $\xi\in (0,1)$, set
\begin{equation}\label{Depsdef}
\dd^\xi(Q):=\left\{Q'\in\dd(\Sigma),\, Q'\subseteq Q:\,
\xi \ell(Q)\leq \ell(Q')\leq \ell(Q)  
\right\}\,.
\end{equation}
As a consequence of the previous lemma, along with Lemma \ref{initialcorona1.lem}, we shall deduce the following.

\begin{lemma}\label{refp+}
Let $\xi\in (0,1)$.  Then for each  stopping time tree
$\sbf\in\mathcal{S}$ from
Lemma \ref{initialcorona1.lem}, there is a family
$\mathcal{S}'_\sbf = \mathcal{S}'_\sbf(\xi) 
=\{\sbf'\}$ of pairwise disjoint, semi-coherent sub-trees of $\sbf$,
and a set $\mathcal{B}'_\sbf 
\subset \sbf$ such that
\[
\sbf = \left(\cup_{\sbf'\in\mathcal{S}'_\sbf}\,\sbf'\right) \cup \mathcal{B}'_\sbf\,,
\]
where the sub-trees $\sbf'$ have the property that
\begin{equation}\label{xicontain}
Q\in \sbf' \implies \dd^\xi(Q)\subset \sbf\,.
\end{equation}

Moreover, the maximal cubes $\{Q(\sbf')\}$ and the cubes in $\cB'_\sbf$ satisfy a Carleson packing condition
\begin{equation}\label{pack'}
\sum_{Q(\sbf'):\, \sbf'\in \mathcal{S}'_\sbf, \,Q(\sbf') \subseteq Q} \sigma(Q(\sbf'))
\,+\,\sum_{Q' \in \mathcal{B}'_\sbf: Q' \subseteq Q } \sigma(Q')
\,\lesssim_\xi \, \sigma(Q\cap Q(\sbf)), \quad \forall Q \in \dd\,,
\end{equation}
where the implicit constant depends only on $\xi$ and the allowable parameters.
Consequently, there is a refined Corona decomposition
$\dd(\Sigma) =\cG'\cup\cB'$, with 
\[
\cG' = \cup_{\sbf'\in\mathcal{S}'}\,\sbf'
\,,\qquad
\cB'=\cB\cup\left( \cup_{\sbf\in\mathcal{S}}\cB'_\sbf\right)\,,
\]
where $\mathcal{S}'= \cup_{\sbf\in\mathcal{S}}\mathcal{S}'_\sbf\,$,
and $\dd=\cG\cup\cB$ is the Corona decomposition in
Lemma \ref{initialcorona1.lem}.  Furthermore, the refinement
satisfies the packing condition
\begin{equation}\label{pack'+}
\sum_{Q(\sbf'): Q(\sbf') \subseteq Q} \sigma(Q(\sbf'))
\,+\,\sum_{Q' \in \mathcal{B}': Q' \subseteq Q } \sigma(Q')
\,\lesssim_\xi \, \sigma(Q), \quad \forall Q \in \dd\,,
\end{equation}
where again the implicit constant
depends only on $\xi$ and the allowable parameters.
\end{lemma}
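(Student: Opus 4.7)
The plan is to carry out a second stopping-time refinement inside each tree $\sbf\in\mathcal{S}$ from Lemma \ref{initialcorona1.lem}. Recall that $\sbf=\dd_{\F,Q(\sbf)}$ for some disjoint family $\F=\{Q_j\}\subset\dd(\Sigma)$ of stopping cubes. The crucial observation is that, for $Q\in\sbf$, the property $\dd^\xi(Q)\subset\sbf$ fails \emph{if and only if} there exists $Q_j\in\F$ with $Q_j\subsetneq Q$ and $\ell(Q_j)\geq \xi\,\ell(Q)$, since the only way for a cube in $\dd^\xi(Q)$ to leave $\sbf$ is to be contained in some such $Q_j$. I therefore set
\begin{equation*}
A_\sbf:=\{Q\in\sbf:\dd^\xi(Q)\subset\sbf\},\qquad \cB'_\sbf:=\sbf\setminus A_\sbf,
\end{equation*}
and plan to partition $A_\sbf$ into semi-coherent sub-trees $\{\sbf'\}=\mathcal{S}'_\sbf$.

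First I would establish the packing for $\cB'_\sbf$. For each $Q_j\in\F$, the bad cubes $Q\in\cB'_\sbf$ lying above $Q_j$ are the dyadic ancestors of $Q_j$ at scales $2\ell(Q_j),4\ell(Q_j),\ldots,\xi^{-1}\ell(Q_j)$, with exactly one ancestor per scale; geometric summation combined with ADR produces a total $\sigma$-mass of size $\lesssim \xi^{-(n+1)}\sigma(Q_j)$. Using the disjointness of $\F$ and that $\cB'_\sbf\subset\sbf\subset\dd_{Q(\sbf)}$, for every $R\in\dd$ one obtains
\begin{equation*}
\sum_{Q\in\cB'_\sbf,\,Q\subseteq R}\sigma(Q)\,\lesssim\,\xi^{-(n+1)}\sum_{Q_j\in\F,\,Q_j\subseteq R\cap Q(\sbf)}\sigma(Q_j)\,\lesssim_\xi\,\sigma(R\cap Q(\sbf)).
\end{equation*}

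Next I would construct the sub-trees. Let $\mathcal{M}_\sbf$ consist of $Q(\sbf)$ (in case $Q(\sbf)\in A_\sbf$), together with every $R\in A_\sbf$ whose dyadic parent in $\sbf$ lies in $\cB'_\sbf$; these will serve as the maximal cubes $\{Q(\sbf')\}$. For each $R\in\mathcal{M}_\sbf$, define $\sbf'(R)$ to be the set of all $Q\in A_\sbf$ with $Q\subseteq R$ such that every $\tilde Q\in\sbf$ with $Q\subseteq\tilde Q\subseteq R$ also lies in $A_\sbf$. Semi-coherence is then immediate from the definition; the $\sbf'(R)$ are pairwise disjoint; and their union equals $A_\sbf$, because any $Q\in A_\sbf$ can be traced upward through $A_\sbf$-cubes to the unique largest ancestor $R$ at which this chain still lives entirely in $A_\sbf$, and by construction such an $R$ is either $Q(\sbf)$ or a dyadic child of a cube in $\cB'_\sbf$, hence an element of $\mathcal{M}_\sbf$.

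Finally I would combine the ingredients to obtain the packings. Since every $R\in\mathcal{M}_\sbf\setminus\{Q(\sbf)\}$ is one of a bounded number of dyadic children of some cube in $\cB'_\sbf$, the first step yields
\begin{equation*}
\sum_{R\in\mathcal{M}_\sbf,\,R\subseteq Q}\sigma(R)\,\lesssim_\xi\,\sigma(Q\cap Q(\sbf)),\qquad \forall Q\in\dd,
\end{equation*}
which together with the $\cB'_\sbf$-packing established above is precisely the local statement \eqref{pack'}. The global packing \eqref{pack'+} then follows immediately by summing \eqref{pack'} over $\sbf\in\mathcal{S}$ and applying Lemma \ref{initialcorona1.lem}(b) to repack the top cubes $\{Q(\sbf)\}$ and the set $\cB$. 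I do not foresee a serious obstacle; the only nontrivial point is the scale-counting in Step~1, and the factors of $\xi^{-(n+1)}$ produced there are exactly those absorbed into the allowed $\lesssim_\xi$ constant.
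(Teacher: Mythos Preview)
Your proposal is correct and follows essentially the same route as the paper. The paper's proof is only a sketch: it defines $\xi$-good cubes exactly as your $A_\sbf$, places the non-$\xi$-good cubes into $\cB'_\sbf$, observes that each non-$\xi$-good cube contains a stopping cube $Q_j\in\F$ of comparable size (which is in $\cB$ or is a maximal cube of another tree), and says the packing follows from Lemma~\ref{initialcorona1.lem}(b); you have simply written out these routine details explicitly, including the scale-counting bound $\lesssim\xi^{-(n+1)}\sigma(Q_j)$ and the identification of $\mathcal{M}_\sbf$ with the maximal $\xi$-good cubes.
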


\begin{proof} We only sketch the proof, since the details are routine.
Given $\sbf\in\mathcal{S}$ as in
Lemma \ref{initialcorona1.lem}, we say that a cube $Q\in\sbf$ is ``$\xi$-good" if
  every $Q'\in\dd^\xi(Q)$ belongs to $\sbf$.
Starting with the maximal $\xi$-good cubes in $\sbf$,
we sub-divide dyadically, stopping the first time that we reach a
cube $Q$ that is not $\xi$-good.  Notice that such a cube $Q$ must contain a subcube $Q'$ of comparable size (depending on $\xi$), which either belongs to
$\cB$, or is a maximal cube for another stopping time tree in $\mathcal{S}$.  We then iterate this construction, starting with the maximal $\xi$-good sub-cubes of each of the previously selected stopping cubes, and so on.  In this way, we
organize the $\xi$-good cubes in $\sbf$ into semi-coherent trees $\{\sbf'\}$, while
all the non $\xi$-good cubes in $\sbf$ are placed into $\cB'_\sbf$.
The desired properties may be verified routinely, and we omit the details.
In particular, the packing condition \eqref{pack'} follows from the packing condition
in Lemma \ref{initialcorona1.lem},
and in turn, the latter two conditions imply \eqref{pack'+}.
\end{proof}

Recall that $r_Q\approx \ell(Q)$
is the ``radius" of the surface cylinder
$\Delta_Q=\Delta(X_Q,t_Q,r_Q)\subset Q$
defined in \eqref{cube-ball}, \eqref{cube-ball2}.

\begin{lemma}\label{refp++}
If  $\eps\leq\eps_0$, and $\xi>0$ are chosen sufficiently small, and $K_0$ is chosen sufficiently large, depending only on the allowable parameters, then given any stopping time tree $\sbf$ as in Lemma \ref{initialcorona1.lem}, it holds that for each
refined stopping time tree
$\sbf' =\sbf'(\xi)\subset \sbf$ as in Lemma \ref{refp+},
and for every
$Q\in \sbf'$ with $\ell(Q)\leq \eps^{10}\ell(Q(\sbf))$,
there exists $(Y^*_Q,s^*_Q)\in U_Q$ such that
\begin{equation}\label{Y*Qest}
\ell(Q)\lesssim \delta(Y^*_Q,s^*_Q)\leq
\dist(Y^*_Q,s^*_Q, X_Q,t_Q)\leq 10^{-100}r_Q\,,
\end{equation}
and such that
\begin{equation}\label{eq.nablaulowerbd}
1\,\lesssim \,|\nabla_X \widehat u(Y^*_Q,s^*_Q)|.
\end{equation}
The implicit constants depend only on the allowable parameters,
but are in particular independent of $K_0$ and $\eps$.
\end{lemma}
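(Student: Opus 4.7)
The strategy is to reduce Lemma~\ref{refp++} to Lemma~\ref{refp}, applied not to $Q$ itself but to a dyadic sub-cube $Q^*\subset Q$ of much smaller size, chosen near the center $(X_Q,t_Q)$. Taking $Q^*$ sufficiently small forces $U_{Q^*}$ to sit inside a parabolic $10^{-100}r_Q$-neighborhood of $(X_Q,t_Q)$, while the refined tree property \eqref{xicontain} of Lemma~\ref{refp+} keeps $Q^*$ inside $\sbf$ as soon as $\xi$ is chosen small enough.

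First I would fix a positive integer $k_0$, depending only on the allowable parameters, large enough that $c\,2^{-k_0}\le 10^{-100}$, where $c$ absorbs the (allowable) constants relating $\diam(\cdot)$ to $\ell(\cdot)$ and $r_Q$ to $\ell(Q)$ (cf.~Remark~\ref{remarkddcubes} and \eqref{cube-ball}). Setting $\eta_0:=2^{-k_0}$ and $\xi:=\eta_0/2$, since $(X_Q,t_Q)\in Q\cap\Sigma$ the dyadic decomposition of $\Sigma$ produces a sub-cube $Q^*\subset Q$ at dyadic level $\ell(Q^*)=\eta_0\ell(Q)$ with $(X_Q,t_Q)\in Q^*$ (if the point lies on a dyadic boundary, fix one such choice). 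By the choice of $\xi$ one has $Q^*\in \dd^{\xi}(Q)\subset \sbf$ via \eqref{xicontain}, and since $\ell(Q^*)\le \ell(Q)\le \eps^{10}\ell(Q(\sbf))$, also $Q^*\in \mathbb{D}_{\F,Q_0}^*$. Lemma~\ref{refp} applied to $Q^*$ then yields a point $(Y^*_Q,s^*_Q)\in U_{Q^*}$ with
\[
\ell(Q^*)\lesssim \delta(Y^*_Q,s^*_Q)\le \dist(Y^*_Q,s^*_Q,X_{Q^*},t_{Q^*})\le 20\diam(Q^*)
\]
and $|\nabla_X \widehat u(Y^*_Q,s^*_Q)|\gtrsim 1$, with implicit constants independent of $K_0$ and $\eps$.

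The conclusions of Lemma~\ref{refp++} now follow. Since $(X_Q,t_Q)\in Q^*$, the triangle inequality gives $\dist(Y^*_Q,s^*_Q,X_Q,t_Q)\le 21\diam(Q^*)\le 10^{-100}r_Q$ by the choice of $k_0$; the lower bound $\delta(Y^*_Q,s^*_Q)\gtrsim \eta_0\ell(Q)\gtrsim \ell(Q)$ holds because $\eta_0$ depends only on the allowable parameters (so that $\eta_0^{-1}$ may be absorbed into the suppressed constant); and the gradient bound $|\nabla_X \widehat u(Y^*_Q,s^*_Q)|\gtrsim 1$ is inherited directly from Lemma~\ref{refp}.

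The main (mild) obstacle is to verify that $(Y^*_Q,s^*_Q)$ in fact belongs to $U_Q$ and not merely to $U_{Q^*}$, since $Q^*$ is much smaller than $Q$ and the Whitney cubes associated to $\W_{Q^*}$ are correspondingly smaller than those allowed in $\W_Q$. If $I\in \W_{Q^*}$ is the Whitney cube with $(Y^*_Q,s^*_Q)\in I^*$, then $\ell(I)\approx \delta(Y^*_Q,s^*_Q)\approx \eta_0\ell(Q)$ and $\dist(I,Q)\lesssim \ell(Q)$; the three defining conditions for $I\in\W_Q$, namely $K_0^{-1}\ell(Q)\le \ell(I)\le K_0\ell(Q)$ and $\dist(I,Q)\le K_0\ell(Q)$, are then all satisfied provided $K_0$ is taken sufficiently large relative to $\eta_0^{-1}=2^{k_0}$. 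Since $k_0$ is fixed in terms of the allowable parameters, this pins down the dependence of $K_0$ on those parameters (and hence, via $\eps_0=K_0^{-100}$, of $\eps_0$), and completes the proof.
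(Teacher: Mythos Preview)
Your proposal is correct and follows essentially the same approach as the paper: both arguments pick a small dyadic sub-cube $Q^*\subset Q$ (the paper calls it $Q'$) containing $(X_Q,t_Q)$ with $\ell(Q^*)=2^{-N}\ell(Q)$, use the property \eqref{xicontain} to ensure $Q^*\in\sbf$, apply Lemma~\ref{refp} to $Q^*$, and then choose $K_0$ large enough to place the resulting point in $U_Q$. Your write-up gives slightly more detail on the verification that $(Y^*_Q,s^*_Q)\in U_Q$, but the argument is the same.
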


\begin{proof} Let $Q\in \sbf'=\sbf'(\xi)$,
where $\xi=2^{-N}$ with $N$ a large integer
to be chosen momentarily.
Fix $Q'\subset Q$, with $\ell(Q') = 2^{-N} \ell(Q)$, such that $(X_Q,t_Q)\in Q'$, and observe that $Q'\in \dd^\xi(Q)$.  Then by Lemma \ref{refp+}, $Q'\in\sbf$, so we may apply Lemma \ref{refp} to $Q'$. Set
\[
(Y^*_Q,s^*_Q):= (Y^0_{Q'},s^0_{Q'})\,,
\]
and observe that the claimed estimates follow immediately from
Lemma \ref{refp}, for $\xi$ chosen small enough (i.e., for $N$ chosen large enough),
depending only on allowable parameters.  Finally, choosing $K_0$ large enough, depending only on the implicit constant in the first inequality in
\eqref{Y*Qest} (thus, only on the allowable parameters),
we find that $(Y^*_Q,s^*_Q)\in U_Q$.
\end{proof}

\begin{corollary}\label{cor.deltahatu}
Let $Q$ be as in Lemma \ref{refp++},
and let $\tilde{U}_Q^i$, defined as in Definition \ref{def2.11a}, be
the connected component of $\tilde{U}_Q$ which
contains $({Y}^*_Q, {s}^*_Q)$.  Then
\[
1 \lesssim \inf_{(Y,s) \in \tilde{U}_Q^i} \,\delta(Y,s)^{-1}\,\widehat u(Y,s)\,,
\]
where the implicit constant may depend on $\eps, K_0$, and the allowable parameters.
\end{corollary}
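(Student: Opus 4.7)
The strategy has two ingredients: first, promote the non-degeneracy of $\nabla_X\widehat u$ at $(Y^*_Q,s^*_Q)$ from Lemma \ref{refp++} to a pointwise lower bound $\widehat u \gtrsim \ell(Q)$ at some nearby interior point; then propagate this lower bound to every point of $\tilde U^i_Q$ via a Harnack-type chain along the defining chain of parabolic Whitney cylinders, accepting a total multiplicative loss that depends only on $\eps$, $K_0$ and the allowable parameters.

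For the first step, observe that $\widehat u$ is nonnegative and adjoint caloric in $C(Y^*_Q,s^*_Q, c\ell(Q))$ for some small dimensional $c>0$; indeed $\delta(Y^*_Q,s^*_Q)\gtrsim \ell(Q)$ by Lemma \ref{refp++}, and the pole $(Y_\sbf,s_\sbf)$ lies at parabolic distance $\gg \ell(Q)$ from $(Y^*_Q,s^*_Q)$ (using $\eps\le\eps_0=K_0^{-100}$, $K_0$ large, and property $(c.1)$ of Lemma \ref{initialcorona1.lem}). The standard interior Bernstein/Cauchy estimate for solutions of the adjoint heat equation then gives
\[
1 \,\lesssim\, |\nabla_X \widehat u(Y^*_Q,s^*_Q)| \,\lesssim\, \frac{1}{\ell(Q)}\sup_{C(Y^*_Q,s^*_Q,\,c\ell(Q)/2)} \widehat u \,,
\]
so there exists a point $(\tilde Y,\tilde s)\in C(Y^*_Q,s^*_Q,c\ell(Q)/2)\subset\tilde U^i_Q$ with $\widehat u(\tilde Y,\tilde s)\gtrsim \ell(Q)$.

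For the second step, let $(Y,s)\in \tilde U^i_Q$ be arbitrary. By definition, $(Y,s)$ is connected to $(\tilde Y,\tilde s)$ (or to some point of $U_Q^i$ very close to it) by a chain of at most $N\le \eps^{-1}$ parabolic Whitney cylinders $C_k = C(Y^k,s^k,\delta(Y^k,s^k)/2)$ with consecutive cylinders overlapping and with $\delta(Y^k,s^k)\in[\eps^3\ell(Q),\eps^{-3}\ell(Q)]$. Each such cylinder is compactly contained in $\Omega$ and lies far from the pole, so $\widehat u$ is nonnegative and adjoint caloric, in fact smooth, on $2C_k$. Across each intersection $C_k\cap C_{k+1}$ we invoke the parabolic Harnack principle for nonnegative adjoint caloric functions to compare values at the two sides: in time-forward steps (smaller $s$ to larger $s$) one uses the standard parabolic Harnack in its adjoint form, and in time-backward steps one uses a Fabes--Safonov (Salsa-type) backward Harnack inequality for nonnegative adjoint caloric functions vanishing on a piece of $\Sigma$, applicable here since $\widehat u$ is a Green function and therefore vanishes continuously on $\Sigma$. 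Each step costs only a multiplicative constant depending on the allowable parameters and the ratio $\eps^{\pm 3}$, so after at most $\eps^{-1}$ steps the total loss is bounded by a constant $C(\eps,K_0,n,M,\gamma,p,C_1)$. Combining with Step~1,
\[
\widehat u(Y,s) \,\gtrsim_{\eps,K_0}\, \ell(Q) \,\approx_{\eps}\, \delta(Y,s)\,,
\]
which, absorbing the factor $\eps^{\pm 3}$ into the implicit constant, is the conclusion.

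\textbf{Main obstacle.} The delicate point is precisely that the chain underlying $\tilde U^i_Q$ is \emph{not} time-directed, so the adjoint Harnack inequality in its ordinary form propagates lower bounds in only one of the two time directions; to cross steps of the chain that go ``the wrong way'' one must additionally invoke a backward Harnack principle. The Green function structure of $\widehat u$ (in particular its vanishing on $\Sigma$, together with the parabolic TSADR of $\Sigma$) is what allows this, but keeping careful track of the geometric configuration at each step --- so that either the adjoint Harnack or a backward Harnack really does apply with a constant independent of $k$ --- is the main technical point of the argument.
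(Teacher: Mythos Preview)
Your Step 1 is correct and matches the paper's brief outline: the gradient lower bound \eqref{eq.nablaulowerbd}, together with the standard interior gradient estimate and one local application of the (adjoint) parabolic Harnack inequality, yields $\widehat u \gtrsim \ell(Q)$ at a point adjacent to $(Y^*_Q,s^*_Q)$.

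The gap is in Step 2. You correctly observe that the chain defining $\tilde U^i_Q$ is not time-directed (the paper itself warns, just after Definition~\ref{def2.11a}, that these regions are deliberately \emph{not} ``Harnack chain'' regions), so the ordinary adjoint Harnack inequality cannot by itself carry the lower bound across steps that move forward in time. Your proposed repair is to invoke a Fabes--Garofalo--Salsa / Fabes--Safonov--Yuan backward Harnack inequality for the Green function. But those results (\cite{FGS,FS,FSY,N1997}) are proved in Lipschitz cylinders or Lip(1,1/2) graph domains, and their proofs use that geometry essentially---via the boundary comparison principle, doubling of caloric measure, and Carleson-type estimates that are simply not available here. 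In the present paper $\Omega$ is assumed only to have TSADR boundary and interior corkscrews; there is no Harnack-chain or NTA hypothesis, caloric measure is not known to be doubling (only weak-$A_\infty$), and the paper establishes only the one-sided CFMS \emph{upper} bound Lemma~\ref{CFMS1}, never a companion lower bound. A backward (interior) Harnack inequality for the Green function in this generality is not proved anywhere in the paper and is not, to my knowledge, in the literature. So your time-backward chain steps are unjustified as written.

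The paper's own proof here is a single sentence and does not spell out how the time-directedness is handled; you have put your finger on exactly the delicate point. But the specific tool you reach for to resolve it is not at hand under the hypotheses of Theorem~\ref{main.thrm}, and you would need either to supply a proof of such a backward Harnack in the TSADR\,+\,corkscrew setting, or to find a different mechanism for the forward-in-time steps.
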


\begin{proof}
The claimed estimate follows immediately from \eqref{eq.nablaulowerbd},
standard interior estimates for non-negative (adjoint) caloric functions,
Harnack's inequality, and the definition of $\tilde{U}_Q$.  We omit the routine details.
\end{proof}

We conclude this subsection by proving an upper bound for
$\nabla_X\widehat u$, independent of $\eps$.

\begin{lemma}\label{refp+++} Let $\sbf$
be as in Lemma \ref{initialcorona1.lem}, and let $Q\in \sbf$, with
$\ell(Q)\leq \eps^{10}\ell(Q(\sbf))$, and with $\eps$ small enough.
Then the normalized Green
function $\widehat u$ satisfies
\begin{equation*} 
|\nabla_Y \widehat u(Y,s)| +(\delta(Y,s))^{-1}\widehat u(Y,s)\lesssim 1\,,
\quad\forall  (Y,s)\in U_Q,
\end{equation*}
where the implicit constant depends only on
the allowable parameters, including  $K_0$, but {\em not} on $\eps$.
\end{lemma}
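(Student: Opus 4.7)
The plan is to derive the pointwise bound $\widehat u(Y,s)\lesssim \delta(Y,s)$ first, using Lemma~\ref{CFMS1} together with the doubling estimate of Lemma~\ref{doublingintree.lem}, and then obtain the gradient bound from standard interior parabolic regularity. Fix $(Y,s)\in U_Q$; from the Whitney construction \eqref{eqWh1}--\eqref{eq2.3} we have $K_0^{-1}\ell(Q)\lesssim \delta(Y,s)\lesssim K_0\,\ell(Q)$, and there exists $(\tilde X,\tilde t)\in Q$ with $\dist(Y,s,\tilde X,\tilde t)\lesssim K_0\,\ell(Q)$. Consequently, the surface ball $\Delta_{Y,s}$ of Remark~\ref{remarkDYs}, which is centered on $\Sigma$ within $\delta(Y,s)$ of $(Y,s)$ and has parabolic radius $\approx \delta(Y,s)$, satisfies
\[
\Delta_{Y,s}\subset \Delta\bigl(\tilde X,\tilde t,CK_0\,\ell(Q)\bigr)
\]
for some constant $C$ depending only on allowable parameters. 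Since $\ell(Q)\le \eps^{10}\ell(Q_0)$ and $\eps\le K_0^{-100}$, the enlarged surface ball of radius $CK_0\,\ell(Q)$ fits comfortably inside $2Q_0$.

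The first step is then to invoke Lemma~\ref{doublingintree.lem} with center $(\tilde X,\tilde t)\in Q$, radius $r=CK_0\,\ell(Q)$, and parameter $a=1$, which yields
\[
\mu(\Delta_{Y,s})\,\le\,\mu\bigl(\Delta(\tilde X,\tilde t,CK_0\,\ell(Q))\bigr)\,\lesssim\, (K_0\,\ell(Q))^{n+1}\,,
\]
where the implicit constant depends on $K_0$ (through the enlargement factor and $a_2$) but not on $\eps$. Combining this with the normalized CFMS bound \eqref{CFMSDYs} applied to $G(Y_\sbf,s_\sbf,Y,s)$, together with the definition \eqref{normgreeb} of $\widehat u$, gives
\[
\widehat u(Y,s)\,\lesssim\,\delta(Y,s)^{-n}\,\mu(\Delta_{Y,s})\,\lesssim_{K_0}\,\delta(Y,s)^{-n}\,\ell(Q)^{n+1}\,\lesssim_{K_0}\,\delta(Y,s),
\]
where in the last step I have used $\ell(Q)\lesssim K_0\,\delta(Y,s)$. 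This is exactly the claimed bound $\delta(Y,s)^{-1}\widehat u(Y,s)\lesssim 1$, with constant independent of $\eps$.

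For the gradient estimate, observe that by property~(c.1) of Lemma~\ref{initialcorona1.lem} together with $\ell(Q)\le \eps^{10}\ell(Q_0)$, the pole $(Y_\sbf,s_\sbf)$ lies at parabolic distance $\gtrsim \eps^{-10}\ell(Q)\gg \delta(Y,s)$ from $(Y,s)$. Thus $\widehat u$ is a non-negative adjoint-caloric function in the cylinder $C(Y,s,\delta(Y,s)/4)$, so standard interior gradient estimates for (adjoint) caloric functions give
\[
|\nabla_Y\widehat u(Y,s)|\,\lesssim\,\delta(Y,s)^{-1}\sup_{C(Y,s,\delta(Y,s)/8)}\widehat u.
\]
For any $(Y',s')$ in the smaller cylinder one has $\delta(Y',s')\approx \delta(Y,s)$, and the argument of the previous paragraphs applies verbatim to $(Y',s')$ (using the same $(\tilde X,\tilde t)\in Q$), giving $\widehat u(Y',s')\lesssim_{K_0}\delta(Y',s')\approx_{K_0} \delta(Y,s)$. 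Plugging back produces $|\nabla_Y\widehat u(Y,s)|\lesssim_{K_0} 1$.

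The main technical point is simply careful bookkeeping of the dependencies on $K_0$ versus $\eps$: every constant in the argument is controlled by $K_0$ through the Whitney ratios $\delta(Y,s)/\ell(Q)\approx_{K_0} 1$ and through Lemma~\ref{doublingintree.lem}, while $\eps$ enters only to verify the containment $\Delta(\tilde X,\tilde t,CK_0\,\ell(Q))\subset 2Q_0$ and therefore does not appear in the final constant. The only thing one needs to check beyond this bookkeeping is that the interior gradient estimate for adjoint caloric functions is available at the scale $\delta(Y,s)$, which is standard given that the pole is far away.
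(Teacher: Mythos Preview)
Your proof is correct and follows essentially the same strategy as the paper's: bound $\widehat u(Y,s)/\delta(Y,s)$ by a $\mu$-average via the CFMS-type estimate, control that average using Lemma~\ref{doublingintree.lem}, and then get the gradient bound from interior regularity. The only (cosmetic) difference is that the paper routes the first step through Lemma~\ref{lemma:G-aver} applied on a cylinder $C(X_Q,t_Q,NK_0\ell(Q))$, whereas you invoke Lemma~\ref{CFMS1}/\eqref{CFMSDYs} pointwise; both lead to the same $\mu$-bound and the same $K_0$-dependent (but $\eps$-free) constants.
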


Note that in particular, combining Lemma \ref{refp++} and
Lemma \ref{refp+++}, we obtain
\begin{equation}\label{eq6.5}
 |\nabla_X \widehat u(Y_Q^*,s_Q^*)| \approx 1,
\end{equation}
with implicit constants depending at most on
the allowable parameters, including  $K_0$, but {\em not} on $\eps$.

\begin{proof}[Proof of Lemma \ref{refp+++}]
By standard interior estimates for solutions of the (adjoint)
heat equation, it suffices to prove that
$\widehat u(Y,s)\lesssim_{K_0} \delta(Y,s)$, for
$(Y,s)\in V_Q$, where $V_Q$ is a slightly fattened version of $U_Q$.

Set $r_1:= NK_0\ell(Q)$, where $N\ge 1$ is a sufficiently large fixed constant, depending only on allowable parameters, chosen so that
$V_Q\subset \Omega\cap C(X_Q,t_Q,r_1)$.
Let $c\in [1,\infty)$, be the constant in Lemma \ref{lemma:G-aver}, and
note that for $\eps$ small, $s_\sbf\geq t_Q+4cr_1^2$, and
$\Delta(X_Q,t_Q,cr_1)\subset 2Q(\sbf)$.
Since $\mu$ and $\widehat u$ have the same normalization, we may use
Lemma \ref{lemma:G-aver} to deduce that for $(Y,s)\in V_Q$,
\begin{equation*} 
\widehat u (Y,s) \lesssim
r_1^{-n}\mu\big(\Delta(X_Q,t_Q,cr_1)\big) \lesssim r_1 \approx
K_0 \ell(Q) \approx_{K_0} \delta(Y,s)\,,
\end{equation*}
where in the second step, we have used ADR and Lemma \ref {doublingintree.lem}/\eqref{muaveragebound} with $a=1$.
\end{proof}

\subsection{An important square function estimate}\label{cutoffsawtoothsection.sect} For a suitably small $\eps_0 = K_0^{-100}$,
we fix $\eps$,  $0<\eps\leq\eps_0$, and we let $L$ be a large (fixed) positive constant  to be chosen, depending at most on the allowable parameters, in particular, $L$ is independent of $K_0$ and $\eps$. Given $(Y,s)\in\Omega$ we introduce closed cylinders
\begin{equation}\label{eq5.1}
C_{Y,s}:= \overline{C(Y,s,(1-\eps^{2L/\alpha})\delta(Y,s))},\qquad \widetilde{C}_{Y,s}:= \overline{C\big(Y,s,\delta(Y,s)\big)},
\end{equation}
where $0<\alpha<1$ is the exponent appearing in Lemma \ref{continuity}. Then $C_{Y,s}\subset \widetilde{C}_{Y,s}$ and $\Sigma\cap \widetilde{C}_{Y,s}\neq\emptyset$. We also introduce the intermediate closed cylinder
\begin{equation}\label{eq5.1bp}
\widehat C_{Y,s}:= \overline {C(Y,s,(1-\eps^{2L/\alpha}/10)\delta(Y,s))},
\end{equation}
and we have
\begin{equation}\label{eq5.1bg}
C_{Y,s}\subset \widehat C_{Y,s}\subset \widetilde{C}_{Y,s}.
\end{equation}

Recall the set  $\tilde U^i_{Q}$ introduced in Definition \ref{def2.11a}. In the following we also need to augment $\tilde U^i_{Q}$ as follows. For each $i$ we set
\begin{equation}\label{Ustar}
\W^{i,*}_{Q}:= \bigl\{I\in \W: I^*\,\, {\rm meets}\, C_{Z,\tau}\, {\rm for\, some\,} (Z,\tau) \in \bigcup_{(X,t)\in\tilde U^i_{Q}}C_{X,t}\bigr\}
\end{equation}
and
\[
\W^{*}_{Q}:= \bigcup_{i} \W^{i,*}_{Q}.
\]
We also set
\begin{equation}\label{Ustar2}
U^{i,*}_{Q} := \bigcup_{I\in\W^{i,*}_{Q}} I^{**},\qquad
U^{*}_{Q} :=
\bigcup_{I\in\W^{*}_{Q}} I^{**}
=
\bigcup_i U^{i,*}_{Q},
\end{equation}
where $I^{**}=(1+2\tau)I$ is a fattened Whitney cube, with $\tau$ small and previously fixed. By construction,
$$
\tilde U^i_{Q}\,
\subset\,
\bigcup_{(X,t)\in\tilde U^i_{Q}} C_{X,t}\,
\subset
\bigcup_{(Z,\tau) \in\bigcup_{(X,t)\in\tilde U^i_{Q}}  C_{X,t}} C_{Z,\tau}
\subset\, U^{i,*}_{Q},
$$
and for all $(Z,\tau) \in U_{Q}^{i,*}$, we have that $\delta(Z,\tau)\approx_\eps \ell(Q)$.
Moreover, by construction there is a  path connecting any pair of points in
$U^{i,*}_{Q}$ (with characteristics depending on $\eps$).
Furthermore, for every $I\in\W^{i,*}_{Q}$,
\begin{equation}\label{defsforlwbnd.eq}
\eps^{\upsilon}\,\ell(Q)\lesssim \ell(I) \lesssim \eps^{-3}\,\ell(Q),
\qquad
\dist(I,Q)\lesssim \eps^{-4} \,\ell(Q),  
\end{equation}
where $\upsilon\geq 3$ depends on $n$, $M$, and $L$. In particular, for
$Q_0 \in \dd(\Sigma)$ fixed, and  for any pairwise disjoint collection $\F = \{Q_j\}$ of cubes in $\dd_{Q_0}$,  the above construction yields, for every
 $Q\in \mathbb{D}_{\F,Q_0}^*:=
\{Q\in \mathbb{D}_{\F,Q_0}: \ell(Q)\le \eps^{10} \,\ell(Q_0)\}$, enlarged Whitney regions $\tilde U_Q$ and $U_Q^\ast$, such that
 \begin{equation}\label{UQcontain}
 U_{Q}\subset \tilde U_Q\subset U_Q^\ast.
\end{equation}
For us, the case of interest is that
 $\mathbb{D}_{\F,Q_0}=\sbf$,
 with $\sbf$ as in Lemma \ref{initialcorona1.lem}, so that $Q_0=Q(\sbf)$, and
 $\mathbb{D}_{\F,Q_0}^*$ is as in \eqref{subsetcubes}.
Consider now any semi-coherent subtree $\sbf_1\subset\sbf$, with maximal cube
$Q_1:=Q(\sbf_1)\in \mathbb{D}_{\F,Q_0}^*$ (thus,
$\sbf_1\subset \mathbb{D}_{\F,Q_0}^*$).
 For such $\sbf_1$, we introduce subdomains
 \begin{equation}\label{sawtooth-appen-HMM--}
\Omega^{*}_{\sbf_1} :=
\interior\bigg(\bigcup_{Q\in \sbf_1} U_Q^*\bigg)
=
\interior\bigg(\bigcup_{Q\in \sbf_1 } \bigcup_{I\in\W^{*}_{Q}} I^{**} \bigg)
\end{equation}
(where as usual, $\interior(E)$ denotes the topological interior of a set $E$),
and we let
\begin{equation}\label{sawtooth-appen-HMM--+}
\widetilde{\mathcal{W}}_{\sbf_1}
:=
\bigcup_{Q\in \sbf_1 } \W^{*}_{Q}
\end{equation}
We also introduce
\begin{equation}\label{moredomains}
\Omega^{**}_{\sbf_1}:=
\interior\bigg(\bigcup_{I\in \widetilde{\mathcal{W}}_{\sbf_1}} I^{***}\bigg), \quad  \Omega^{***}_{\sbf_1}:= \interior\bigg(\bigcup_{I\in \widetilde{\mathcal{W}}_{\sbf_1}} I^{****}\bigg),
\end{equation}
where $\tau>0$ is small and fixed, and $I^{***} = (1 + 4\tau)I$ and $I^{****} = (1 + 8\tau)I$.

Given a tree $\sbf_1$ as above, with maximal cube $Q_1:=Q(\sbf_1)$,
we introduce its ``truncated" version (which is itself
also a semi-coherent tree):
\[
\sbf_1^N:= \{Q\in\sbf_1: \ell(Q) \geq 2^{-N}\ell(Q_1)\}
\]
where $N$ is an arbitrary large integer, and for any such $N$, we define the corresponding subdomains $\Omega^{*}_{\sbf_1^N}$, etc., as in
\eqref{sawtooth-appen-HMM--} and \eqref{moredomains}, but of course with
$\sbf_1^N$ in place of $\sbf_1$.  By the nature of the Whitney regions,
the subdomains $\Omega^{*}_{\sbf_1^N}$, etc., are effectively truncated
so that the distance from the resulting region to
$\Sigma$ is roughly of the order $2^{-N}\ell(Q_1)$.
Similarly, we define
$\widetilde{\mathcal{W}}_{\sbf_1^N}$ as in \eqref{sawtooth-appen-HMM--+},
but again with $\sbf_1^N$ in place of $\sbf_1$.
Then clearly
\begin{align}\label{doma++}
\Omega_{\sbf_1}^* \subset
\Omega^{**}_{\sbf_1}\subset \Omega^{***}_{\sbf_1},\quad
\Omega_{\sbf^N_1}^* \subset
 \Omega^{**}_{\sbf^N_1}\subset \Omega^{***}_{\sbf^N_1},
\end{align}
and
\begin{align}\label{doma++a}
 \Omega^{*}_{\sbf^N_1}\subset \Omega^{*}_{\sbf^{N'}_1},\quad
 \Omega^{**}_{\sbf^N_1}\subset \Omega^{**}_{\sbf^{N'}_1},
 \quad \Omega^{***}_{\sbf^N_1}\subset \Omega^{***}_{\sbf^{N'}_1},
\end{align}
whenever $N\leq N'$. Consequently, by monotone convergence,
\begin{equation}
\iiint_{\Omega^{**}_{\sbf_1}} F(Y,s)\,\d Y\d s
=\lim_{N\to\infty} \iiint_{\Omega^{**}_{\sbf^N_1}} F(Y,s)\,\d Y\d s,
\label{eq:Omega_N-TCM}
\end{equation}
for any locally integrable, non-negative function $F$ on $\Omega^{**}_{\sbf_1}$.
The conclusion remains valid with $\Omega^{**}_{\sbf^N_1}$, $\Omega^{**}_{\sbf_1}$, replaced by $\Omega^{*}_{\sbf^N_1}$, $\Omega^{*}_{\sbf_1}$
or $\Omega^{***}_{\sbf^N_1}$, $\Omega^{***}_{\sbf_1}$.  Finally, we introduce
 \begin{align}\label{doma+-ha}
\Gamma_{\sbf^N_1}&:=\{I\in \widetilde{\mathcal{W}}_{\sbf^N_1}: \exists J \in \W\setminus\widetilde{\mathcal{W}}_{\sbf^N_1} \text{ such that } \partial J\cap \partial I \neq\emptyset\}.
\end{align}

\begin{lemma}\label{lemma:approx-saw} For each $N$ (large),
there exists
$\Psi_N\in C_c^\infty(\Omega^{***}_{\sbf^N_1})$, 
 such that
\begin{align}\label{Coral}
\quad 1_{\Omega^{**}_{\sbf^N_1}}\lesssim \Psi_N\le 1_{\Omega^{***}_{\sbf^N_1}},
\end{align}
and such that
\begin{align}\label{Corb}
\quad |\nabla_X\Psi_N(Y,s)|\delta(Y,s)+|\partial_t\Psi_N(Y,s)|\delta^2(Y,s)\lesssim 1,
\end{align}
for all $(Y,s)\in\mathbb R^{n+1}\setminus\Sigma$, with implicit constants depending only on $n$, $M$,  $\eps$, and $L$, but not on $N$. Furthermore,
\begin{equation}
|\nabla_X\Psi_N(Y,s)|+|\partial_t\Psi_N(Y,s)|\equiv 0, \quad  \forall (Y,s) \in \mathbb{R}^{n+1} \setminus \bigcup_{I \in \Gamma_{\sbf^N_1}} I^{****}. 
\label{eq:fregtgtr}
\end{equation}
\end{lemma}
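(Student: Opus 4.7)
The plan is to build $\Psi_N$ as a smooth product-type cutoff attached to the enlarged Whitney cubes of $\widetilde{\mathcal{W}}_{\sbf^N_1}$. For each $I\in\widetilde{\mathcal{W}}_{\sbf^N_1}$, I would select, by parabolic mollification of a single model bump, a function $\varphi_I\in C_c^\infty(\mathbb{R}^{n+1})$ satisfying $0\le\varphi_I\le 1$, $\varphi_I\equiv 1$ on $I^{***}$, $\mathrm{supp}\,\varphi_I\subset \interior(I^{****})$, $|\nabla_X\varphi_I|\lesssim \ell(I)^{-1}$, and $|\partial_t\varphi_I|\lesssim \ell(I)^{-2}$. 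I would then set
\[
\Psi_N(X,t):=1-\prod_{I\in \widetilde{\mathcal{W}}_{\sbf^N_1}}\bigl(1-\varphi_I(X,t)\bigr),
\]
which is a well-defined smooth function, since \eqref{Whitneytouch} and the bounded overlap of $\{I^{****}\}$ ensure that at each point only finitely many factors differ from $1$.

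The two-sided bound \eqref{Coral} is then immediate: if $(X,t)\in\Omega^{**}_{\sbf^N_1}\subset \bigcup_{I\in \widetilde{\mathcal{W}}_{\sbf^N_1}} I^{***}$ then some $\varphi_I(X,t)=1$ and hence $\Psi_N(X,t)=1$; and $\mathrm{supp}\,\Psi_N\subset \bigcup_{I\in \widetilde{\mathcal{W}}_{\sbf^N_1}}\mathrm{supp}\,\varphi_I\subset \Omega^{***}_{\sbf^N_1}$. For \eqref{Corb} I would differentiate the product,
\[
\nabla_X\Psi_N=\sum_{I\in \widetilde{\mathcal{W}}_{\sbf^N_1}}\nabla_X\varphi_I\prod_{J\neq I}(1-\varphi_J),
\]
and similarly for $\partial_t\Psi_N$; each nonzero summand is bounded by $\ell(I)^{-1}\approx \delta(X,t)^{-1}$ (respectively $\ell(I)^{-2}\approx \delta(X,t)^{-2}$), and only boundedly many summands contribute at a given point.

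The main technical step is the localization \eqref{eq:fregtgtr}, and this is where I expect the real work to lie. Setting $S:=\bigcup_{I\in\widetilde{\mathcal{W}}_{\sbf^N_1}}I$, I would argue that if $(X_0,t_0)\notin \bigcup_{I\in\Gamma_{\sbf^N_1}}I^{****}$ then one of two alternatives holds. If $(X_0,t_0)\in\interior(S)$, a small neighborhood $U$ of $(X_0,t_0)$ lies in $S\subset \bigcup_{I\in\widetilde{\mathcal{W}}_{\sbf^N_1}} I^{***}$, so $\Psi_N\equiv 1$ on $U$. Otherwise $(X_0,t_0)\notin S$, and I would show via \eqref{Whitneytouch} that $(X_0,t_0)\notin I^{****}$ for \emph{any} $I\in\widetilde{\mathcal{W}}_{\sbf^N_1}$ whatsoever: the Whitney cube $J$ containing $(X_0,t_0)$ then lies in $\mathcal{W}\setminus\widetilde{\mathcal{W}}_{\sbf^N_1}$, and if $(X_0,t_0)\in I^{****}$ for some $I\in\widetilde{\mathcal{W}}_{\sbf^N_1}$ then \eqref{Whitneytouch} would force $\partial I\cap\partial J\neq\emptyset$, placing $I\in\Gamma_{\sbf^N_1}$ and putting $(X_0,t_0)\in \bigcup_{\Gamma_{\sbf^N_1}}I^{****}$, contradicting the hypothesis. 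Hence every $\varphi_I$ vanishes on a neighborhood of $(X_0,t_0)$ and $\Psi_N\equiv 0$ there. Either way, $\Psi_N$ is locally constant off $\bigcup_{I\in\Gamma_{\sbf^N_1}}I^{****}$, which gives \eqref{eq:fregtgtr}.

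The principal subtlety I anticipate is precisely ensuring that the collar $\mathrm{supp}(\varphi_I)\setminus I^{***}$ supporting $\nabla\varphi_I$ can only meet Whitney cubes outside $\widetilde{\mathcal{W}}_{\sbf^N_1}$ through honest neighbor adjacencies; this is exactly what \eqref{Whitneytouch} provides, once $\tau$ is fixed small enough to make the equivalence valid at the scale of the $(1+8\tau)$-enlargement, as is already arranged in Section \ref{cubes.sect+}. With that geometric input in hand, the argument closes cleanly, and all implicit constants depend only on $n$, $M$, $\eps$, and $L$, but not on $N$.
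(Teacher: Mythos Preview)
Your construction is correct and takes a genuinely different route from the paper. The paper builds $\Psi_N$ additively, as a partial sum of a global Whitney partition of unity: it sets $\eta_I=\phi_I/\sum_{J\in\W}\phi_J$ and defines $\Psi_N=\sum_{I\in\widetilde{\mathcal{W}}_{\sbf^N_1}}\eta_I$. Your multiplicative construction $\Psi_N=1-\prod_I(1-\varphi_I)$ is more self-contained (it involves only the cubes in $\widetilde{\mathcal{W}}_{\sbf^N_1}$, not all of $\W$) and in fact yields the sharper conclusion $\Psi_N\equiv 1$ on $\Omega^{**}_{\sbf^N_1}$ rather than merely $\Psi_N\gtrsim 1$. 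The paper's partition-of-unity form makes the localization step \eqref{eq:fregtgtr} marginally cleaner, since $\Psi_N\equiv 1$ follows at once whenever every Whitney cube $J$ with $\phi_J\neq 0$ locally lies in $\widetilde{\mathcal{W}}_{\sbf^N_1}$; but both arguments rest on the same geometric input \eqref{Whitneytouch}.

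One small gap in your argument for \eqref{eq:fregtgtr}: the dichotomy ``$(X_0,t_0)\in\interior(S)$ or $(X_0,t_0)\notin S$'' omits the case $(X_0,t_0)\in\partial S$. You should note that this case cannot occur under the hypothesis: if $(X_0,t_0)\in I_0\in\widetilde{\mathcal{W}}_{\sbf^N_1}$ while a sequence $(Y_k,s_k)\to(X_0,t_0)$ lies outside $S$, then for large $k$ the Whitney cube $J_k\ni(Y_k,s_k)$ satisfies $J_k\notin\widetilde{\mathcal{W}}_{\sbf^N_1}$ and $(Y_k,s_k)\in (1+100\tau)I_0\cap(1+100\tau)J_k$, so \eqref{Whitneytouch} forces $\partial I_0\cap\partial J_k\neq\emptyset$, placing $I_0\in\Gamma_{\sbf^N_1}$ and hence $(X_0,t_0)\in I_0^{****}$, contradicting the hypothesis. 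With this remark your proof is complete.
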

\begin{proof}
We follow the argument in \cite[Lemma 4.44]{HMT}. 	Recall that given $I$, any closed dyadic parabolic cube in $\mathbb R^{n+1}$, we have define $I^*=(1+\tau)I$, $I^{**} = (1 + 2\tau)I$, $I^{***} = (1 + 4\tau)I$, $I^{****} = (1 + 8\tau)I$ . We here also introduce $\widetilde I=(1+6\tau)I$ so that
\begin{equation}
I^{***}
\subsetneq
\interior(\widetilde I)
\subsetneq \interior(I^{****}).
\label{eq:56y6y6}
\end{equation}
Given $I_0:=[-1/2,1/2]^{n}\times [-1/2,1/2]\subset\mathbb R^{n+1}$, fix
$\phi_0\in C_c^\infty(\interior(\widetilde I_0))$ such that
$$
 1_{I^{***}_0} \le \phi_0\le 1_{\widetilde I_0},
$$
and $|\nabla_X \phi_0|+|\partial_t \phi_0|\lesssim 1$. Here the implicit constant will also depend on the parameter $\tau$. Given $I\in \W=\W(\Omega)$, we set $$
\phi_I(Y,s)=\phi_0((Y-X(I))/\ell(I),(s-t(I))/\ell^2(I)),
$$
 where $(X(I),t(I))$ denotes the center of $I$. Then $\phi_I\in C^\infty(\interior(\widetilde I\,))$,
 $$
 1_{I^{***}}\le \phi_I\le 1_{\widetilde I},
 $$
 and
 $$
 \ell(I)|\nabla_X \phi_I|+\ell^2(I)|\partial_t \phi_I|\lesssim 1.
 $$
 To start the construction of $\Psi_N$ leading to \eqref{Coral}, we let,
 for every $(Y,s)\in\Omega$, 
 $$
 \phi(Y,s):=\sum_{I\in \W} \phi_I(Y,s).
 $$
 It then follows that $\phi\in C^\infty(\Omega)$ since for every compact subset of $\Omega$ the stated sum has finitely many non-vanishing terms. Also, $1\le \phi(Y,s)\leq c_{\tau}$ for every $(Y,s)\in \Omega$, since the family $\{\widetilde I\}_{I\in \W}$ has bounded overlaps by our choice of $\tau$, and the Whitney boxes cover $\Omega$. Hence, letting $\eta_I:=\phi_I/\phi$ we see that
 $\eta_I\in C_c^\infty(\mathbb R^{n+1})$,
 $$
 c_\tau^{-1}1_{I^{***}}\le \eta_I\le 1_{\widetilde I},
 $$
 and that $$\ell(I)|\nabla_X \eta_I|+\ell^2(I)|\partial_t \eta_I|\lesssim 1.$$ Using this, and recalling the definition of $\widetilde{\mathcal{W}}_{\sbf^N_1}$ above, we set
\begin{align}\label{cuttoff}
\Psi_N(Y,s):=\sum_{I\in \widetilde{\mathcal{W}}_{\sbf^N_1}} \eta_I(Y,s)\,=\,
\phi(Y,s)^{-1} \sum\limits_{I\in \widetilde{\mathcal{W}}_{\sbf^N_1}} \phi_I(Y,s)\,.
\end{align}
for all $(Y,s)\in\Omega$. Note that the number of terms in the sum defining $\Psi_N$ is bounded depending on $N$, hence $\Psi_N\in C_c^\infty(\mathbb R^{n+1})$
since each $\eta_I\in C_c^\infty(\mathbb R^{n+1})$.
By construction
\begin{align}\label{PsiNsupport}
\supp \Psi_N
\subset \bigcup_{I\in \widetilde{\mathcal{W}}_{\sbf^N_1}} \widetilde I
\subset\interior\Big(
\bigcup_{I\in \widetilde{\mathcal{W}}_{\sbf^N_1}} I^{****}\big)= \Omega^{***}_{\sbf^N_1}.
\end{align}
Consequently, since $\Psi_N\leq 1$ by construction, we have
$\Psi_N\le 1_{\Omega^{***}_{\sbf^N_1}}$. On the other hand if
$(Y,s)\in \Omega^{**}_{\sbf^N_1}$,
then $(Y,s)\in I^{***}$ for some
$I\in \widetilde{\mathcal{W}}_{\sbf^N_1}$, so that
$\Psi_N(Y,s)\ge \eta_I(Y,s)\ge c_\tau^{-1}$. This completes the proof of \eqref{Coral}.

To prove \eqref{Corb} we note that
$$
|\nabla_X \Psi_N(Y,s)|
\le
\sum_{I \in \widetilde{\mathcal{W}}_{\sbf^N_1}} |\nabla_X\eta_I(Y,s)|
\lesssim
\sum_{I\in \W} \ell(I)^{-1}\,1_{\widetilde I}\,(Y,s)
\lesssim
\delta(Y,s)^{-1},
$$
for every $(Y,s)\in \Omega$, where we have used that if $(Y,s)\in \widetilde I$, then $\delta(Y,s)\approx \ell(I)$,  as well as the fact that the elements of the family $\{\widetilde I\}_{I\in \W}$ have bounded overlap. The estimate for $|\partial_t \Psi_N(Y,s)|$ proceeds analogously.

Finally, we prove \eqref{eq:fregtgtr}. By \eqref{PsiNsupport}, 
we only need to verify \eqref{eq:fregtgtr} in
$\Omega^{***}_{\sbf^N_1}\setminus \bigcup_{I\in\Gamma_{\sbf^N_1}} I^{****}$.
Write $F_N:=\bigcup_{I\in\Gamma_{\sbf^N_1}} I^{****}$ and
let  $(Y,s)\in \Omega^{***}_{\sbf^N_1}\setminus F_N$.
Then $(Y,s)\in I^{****}$ for some
$I\in \widetilde{\mathcal{W}}_{\sbf^N_1}\setminus \Gamma_{\sbf^N_1}$.
By definition of $\Gamma_{\sbf^N_1}$, it follows that $J\in \widetilde{\mathcal{W}}_{\sbf^N_1}$,
for any $J\in\W$ such that $\partial J$ meets $\partial I$, and thus in particular for
any $J\in\W$ such that $(Y,s)\in \interior(\widetilde{J})$, by  \eqref{Whitneytouch}.
Consequently, by construction, we see that $\Psi_N\equiv 1$ in a neighborhood of
$(Y,s)$, and \eqref{eq:fregtgtr} follows.
\end{proof}

\begin{lemma}\label{partialADR} Let $\Psi=\Psi_N\in C_c^\infty(\mathbb R^{n+1})$ be as in Lemma \ref{lemma:approx-saw}. Then
\begin{align*}
\iiint_{\Omega} \big(|\nabla_X\Psi_N(Y,s)|+\delta(Y,s)|\partial_t\Psi_N(Y,s)|\big)\,
\d Y\d s\lesssim \sigma(Q_1),\end{align*}
with implicit constant depending only on $n$, $M$,  $\eps$, and $L$, but not on $N$.
\end{lemma}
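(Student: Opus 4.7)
The plan is to combine the pointwise bounds on $\Psi_N$ from Lemma \ref{lemma:approx-saw} with a packing estimate for certain ``boundary'' dyadic cubes of the truncated tree $\sbf^N_1$. By \eqref{Corb} and \eqref{eq:fregtgtr},
$$|\nabla_X \Psi_N(Y,s)| + \delta(Y,s)|\partial_t \Psi_N(Y,s)| \,\lesssim\, \delta(Y,s)^{-1}\, \mathbf{1}_{\bigcup_{I\in\Gamma_{\sbf^N_1}} I^{****}}(Y,s)\,.$$
Since $\delta(Y,s) \approx \ell(I)$ on each Whitney cube $I^{****}$ and $|I^{****}|\approx \ell(I)^{n+2}$, the integral is controlled by $\sum_{I\in\Gamma_{\sbf^N_1}}\ell(I)^{n+1}$.

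For each $I\in\Gamma_{\sbf^N_1}\subset\widetilde{\mathcal{W}}_{\sbf^N_1}$, select some $Q_I\in\sbf^N_1$ with $I\in\mathcal{W}^*_{Q_I}$. By \eqref{defsforlwbnd.eq}, $\ell(I)\approx_\eps\ell(Q_I)$ and $\dist(I,Q_I)\lesssim_\eps \ell(Q_I)$, so by parabolic ADR, $\ell(I)^{n+1}\approx_\eps\sigma(Q_I)$. Since $\#\mathcal{W}^*_Q\lesssim_\eps 1$ for every $Q$, the map $I\mapsto Q_I$ is at most $C_\eps$-to-one, and consequently
$$\sum_{I\in\Gamma_{\sbf^N_1}}\ell(I)^{n+1}\,\lesssim_\eps\,\sum_{Q\in\partial^{\star}\sbf^N_1}\sigma(Q),$$
where $\partial^{\star}\sbf^N_1 := \{Q\in\sbf^N_1: \mathcal{W}^*_Q\cap\Gamma_{\sbf^N_1}\neq\emptyset\}$. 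It therefore suffices to establish the $N$-independent packing bound $\sum_{Q\in\partial^{\star}\sbf^N_1}\sigma(Q)\lesssim_\eps\sigma(Q_1)$.

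To analyze $\partial^\star\sbf^N_1$, observe that if $Q\in\partial^\star\sbf^N_1$, then some $I\in\mathcal{W}^*_Q$ has an adjacent Whitney cube $J$ with $\ell(J)\approx\ell(I)\approx_\eps\ell(Q)$ such that $J\notin\mathcal{W}^*_{Q'}$ for any $Q'\in\sbf^N_1$. The nearest dyadic footprint $\widetilde Q(J)\subset\Sigma$ has $\ell(\widetilde Q(J))\approx\ell(J)$ and $\dist(\widetilde Q(J),Q)\lesssim_\eps\ell(Q)$, yet $\widetilde Q(J)\notin\sbf^N_1$. Either $\widetilde Q(J)\in\sbf_1$ lies below the truncation (forcing $Q$ into the bottom stratum $\ell(Q)\lesssim_\eps 2^{-N}\ell(Q_1)$, where cubes are disjoint subcubes of $Q_1$ at finitely many dyadic scales depending only on $\eps$, and hence pack into $\sigma(Q_1)$), or else $\widetilde Q(J)\in\dd_{Q_1}\setminus\sbf_1$, which by semi-coherence of $\sbf_1$ means $\widetilde Q(J)$ is a descendant of some maximal cube $Q^\star$ in the pairwise disjoint stopping family $\mathcal{F}_1:=\{\text{maximal elements of }\dd_{Q_1}\setminus\sbf_1\}$.

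The principal obstacle is this lateral case. When $\ell(Q^\star)\approx_\eps\ell(Q)$, the assignment $Q\mapsto Q^\star$ is bounded-to-one, and disjointness of $\mathcal F_1$ inside $Q_1$ gives the packing immediately. The delicate situation is $\ell(Q^\star)\gg\ell(Q)$: then $Q\in\sbf_1$ must sit in a thin $C_\eps\ell(Q)$-neighbourhood of $\partial Q^\star$ within $Q_1\setminus Q^\star$, and a layered dyadic covering argument (summed geometrically in the scale ratio $\ell(Q)/\ell(Q^\star)$) together with parabolic ADR shows that for each fixed $Q^\star$ the total $\sigma$-mass of lateral cubes at all relevant scales is $\lesssim_\eps\sigma(Q^\star)$; summing over the pairwise disjoint family $\mathcal F_1\subset\dd_{Q_1}$ then yields $\lesssim\sigma(Q_1)$. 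Combining the bottom and lateral contributions produces the desired packing estimate with constants independent of $N$, completing the proof.
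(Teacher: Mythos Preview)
Your reduction to $\sum_{I\in\Gamma_{\sbf^N_1}}\ell(I)^{n+1}\lesssim\sigma(Q_1)$ matches the paper, and your further reduction to packing the cubes in $\partial^\star\sbf^N_1$ is valid since $\#\mathcal{W}^*_Q\lesssim_\eps 1$. However, two gaps remain. First, the case split is incomplete: you omit the possibilities $\widetilde Q(J)\supsetneq Q_1$ (easy: this forces $\ell(Q)\gtrsim_\eps\ell(Q_1)$, so only boundedly many $Q$) and $\widetilde Q(J)\cap Q_1=\emptyset$ (another boundary argument, with $Q_1$ playing the role of $Q^\star$). Second, and more substantively, your lateral packing does not follow from ``parabolic ADR'' alone. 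To get geometric decay in the scale ratio $\ell(Q)/\ell(Q^\star)$ you need the \emph{thin boundary} property of the dyadic cubes (Lemma~\ref{cubes}\,$(vi)$): ADR by itself does not prevent the $C_\eps\ell(Q)$-neighbourhood of $\partial Q^\star$ in $\Sigma$ from having measure comparable to $\sigma(Q^\star)$ at every small scale, in which case the sum over scales diverges. Thin boundary is available in this setting and does close the gap, but it must be invoked explicitly; ``ADR'' is not the right tool here.

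The paper takes a different route that avoids thin boundaries and any summation over scales. Rather than packing cubes of $\sbf^N_1$, it assigns to each $I\in\Gamma_{\sbf^N_1}$ a cube $Q^\star_I\in\dd$ with $\ell(Q^\star_I)\approx\ell(I)$ and $\dist(I,Q^\star_I)\lesssim\ell(I)$, chosen as a fixed-depth descendant of $\widetilde Q_I$ at its center. The key point is a \emph{Whitney-type} property: $\dist(Q^\star_I,\Sigma\setminus Q^k)\approx\ell(Q^\star_I)$ when $\widetilde Q_I\subset Q^k\in\F_1^N$, and $\dist(Q^\star_I,Q_1)\approx\ell(Q^\star_I)$ when $\widetilde Q_I\cap Q_1=\emptyset$. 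This forces the family $\{Q^\star_I\}_I$ to have \emph{bounded overlaps}, and then ADR alone gives $\sum_I\ell(I)^{n+1}\approx\sum_I\sigma(Q^\star_I)\lesssim\sigma(Q_1)$ directly.
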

\begin{proof} Using Lemma \ref{lemma:approx-saw} it follows that
\begin{align*}
\iiint_{\Omega } \big(|\nabla_X\Psi_N(Y,s)|+\delta(Y,s)|\partial_t\Psi_N(Y,s)|\big)\,
\d Y\d s\lesssim \sum_{I\in \Gamma_{\sbf^N_1}}\ell(I)^{n+1}.\end{align*}
Therefore, to prove the lemma is suffices to prove that
\begin{equation}\label{eq:fregtgtr+}
\sum_{I\in \Gamma_{\sbf^N_1}}\ell(I)^{n+1}\lesssim \sigma(Q_1),
\end{equation}
with implicit constant depending only on $n$, $M$,  $\eps$, and $L$, but not on $N$.

For each $I \in \Gamma_{\sbf^N_1}$ there exists $J = J(I)\in\W \setminus \widetilde{\mathcal{W}}_{\sbf^N_1}$ such that $\partial J \cap \partial I \neq \emptyset$ . For $I \in \Gamma_{\sbf^N_1}$ fixed, we take $Q_I$ to be any cube such that $Q_I \in \sbf^N_1$ and $I \in \W_{Q_I}^*$.

Pick $(Y,s)\in\Sigma$ so that $\dist(Y,s,J) = \dist(J, \Sigma)$ and let $\widetilde{Q}_I\in\dd$ be such that $\widetilde{Q}_I\ni (Y,s)$ and
$$
\ell(\widetilde{Q}_I) = \ell(J) \approx \ell(I) \approx \ell(Q_I).
$$
Our choice of $K_0$ guarantees that $J \in \W_{\widetilde{Q}_I} \subset \W_{\widetilde{Q}_I}^*$. As a consequence (and examining the definition of $\widetilde{\mathcal{W}}_{\sbf^N_1}$), we see that
$\widetilde{Q}_I \not\in \sbf^N_1$,
as otherwise it would hold that $J \in \widetilde{\mathcal{W}}_{\sbf^N_1}$.

\noindent
{\bf Claim:} For each $I \in \Gamma_{\sbf^N_1}$ there exists a $Q^\star_I \in \dd$ such that
\begin{equation}\label{bbdovqstarpick.eq}
\ell(Q^\star_I) \approx \ell(I), \quad \dist(I, Q^\star_I) \lesssim \ell(I), \quad \text{ and } \quad \sum_{I \in \Gamma_{\sbf^N_1}} 1_{Q^\star_I} \lesssim 1,
\end{equation}
where again the implicit constants depend on $n$, $M$,  $\eps$, and $L$, but not on $N$.

Taking this claim for granted,
we observe that any such $Q_I^\star$ as in the claim necessarily further satisfies
\[
\ell(Q_I^\star) \lesssim \ell(Q_1)\,,\quad \text{and} \quad \dist(Q_I^\star,Q_1) \lesssim \ell(Q_1)\,.
\]
Consequently, by the Ahlfors-David regularity of $\Sigma$, and the bounded overlap property
of the cubes $Q_I^\star$, we have
\begin{align*}
\sum_{I\in \Gamma_{\sbf^N_1}}\ell(I)^{n+1} \approx \sum_{I\in \Gamma_{\sbf^N_1}}\sigma(Q^\star_I)
\lesssim \sigma(Q_1)\,.
\end{align*}
This proves \eqref{eq:fregtgtr+}.

Hence, we have reduced matters to proving the claim, i.e., to
proving that for each $I \in \Gamma_{\sbf^N_1}$ there exists $Q^\star_I$ such that \eqref{bbdovqstarpick.eq} holds. In the following we enumerate the cubes in $\F^N$ by $\{Q^k\}$, and we then split $\Gamma_{\sbf^N_1}$ into three subsets, based on how the cube $\widetilde{Q}_I$ introduced above fails to be in
$\sbf_1^N$. Indeed, let $\F_1^N$ denote the collection
of maximal subcubes of $Q_1$ that not in $\sbf_1^N$, and set
\begin{align*}
\Gamma_{\sbf^N_1}^1 &:= \{I \in  \Gamma_{\sbf^N_1}: Q_1 \subsetneq \widetilde{Q}_I\}, \\
\Gamma_{\sbf^N_1}^2 &:= \{I \in  \Gamma_{\sbf^N_1}: \exists Q^k \in \F_1^N \text{ such that } \widetilde{Q}_I \in \dd_{Q^k}\}, \\
\Gamma_{\sbf^N_1}^3 &:= \{I \in  \Gamma_{\sbf^N_1}: Q_1 \cap \widetilde{Q}_I = \emptyset\}.
\end{align*}
To prove the claim we will split the argument into cases depending on whether $I \in \Gamma^1_{\sbf^N_1}$, $I \in \Gamma^2_{\sbf^N_1}$ or $I \in \Gamma^3_{\sbf^N_1}$. Before proceeding,  we note that  $\partial J \cap \partial I \neq \emptyset$ implies that
\begin{equation}\label{tQiclosetoi.eq}
\dist(\widetilde{Q}_I, I) 
\lesssim \ell(J) \approx \ell(I) \lesssim \ell(Q_1) .
\end{equation}
Consequently, $\#\Gamma_{\sbf^N_1}^1$ is uniformly bounded.
For $I \in \Gamma_{\sbf^N_1}^1$ we then
simply take $Q^\star_I = \widetilde{Q}_I$, so that
the bounded overlap property in \eqref{bbdovqstarpick.eq} is trivial for the
collection $\Gamma_{\sbf^N_1}^1$. The other two properties in
\eqref{bbdovqstarpick.eq} follow readily from the construction and \eqref{tQiclosetoi.eq}.

If $I\in \Gamma_{\sbf^N_1}^2$ or $I\in \Gamma_{\sbf^N_1}^3$, the idea is to pick $Q^\star_I$ as a (small) subcube of $\widetilde{Q}_I$ in such a way that the cubes possess a Whitney type property (in $Q^k$ or $\Sigma\setminus Q_1$ resp.),
and such that the other two properties in \eqref{bbdovqstarpick.eq} hold. From this we will establish the last property in \eqref{bbdovqstarpick.eq}. With this in mind, let $I \in \Gamma_{\sbf^N_1}^i$ with $i = 2$ or $3$, and let $N' \in \mathbb{N}$ be large and to be chosen later (independent of $N$). We take $Q^\star_I$ to be the cube in $\dd_{N'}(\widetilde{Q}_I)$ containing
the center $(X_{\widetilde{Q}_I}, t_{\widetilde{Q}_I})$ of $\widetilde{Q}_I$.

Before splitting into the cases $i = 2$ and $i=3$ we first establish
a preliminary estimate. By \eqref{cube-ball}
(see also property $(v)$ in Lemma \ref{cubes}),  and the ADR property, we have
\[\dist(X_{\widetilde{Q}_I}, t_{\widetilde{Q}_I}, \Sigma\setminus \widetilde{Q}_I) \approx  \ell(\widetilde{Q}_I).\]
Consequently, since $\diam(Q^\star_I) \approx 2^{-N'}\ell(\widetilde{Q}_I)$,
we may choose $N'$ large enough so that
\begin{equation}\label{stwhitqI.eq}
\dist(Y,s, \Sigma\setminus \widetilde{Q}_I) \approx  \diam(Q^\star_I), \quad  \forall (Y,s) \in Q^\star_I.
\end{equation}
With $N'$ now fixed, we observe that the first
two properties in \eqref{bbdovqstarpick.eq}
follow routinely from the construction of $Q^\star_I$.
Hence it remains to only verify the bounded overlap property.

We now divide the argument into the cases depending on whether $I \in \Gamma_{\sbf^N_1}^i$ for $i = 2$ or $i = 3$. First, suppose that $I \in \Gamma_{\sbf^N_1}^2$, so
that $\widetilde{Q}_I \subset Q^k$ for some $Q^k \in \F^N$. We claim that
\begin{equation}\label{stwhitqj.eq}
\ell(Q^\star_I) \lesssim \dist(Y,s, \Sigma\setminus Q^k  ) \lesssim \ell(Q^\star_I), \quad  \forall (Y,s) \in Q^\star_I.
\end{equation}
Since $\widetilde{Q}_I \subseteq Q^k$, we see that
$$
\dist(Y,s, \Sigma\setminus\widetilde{Q}_I) \le \dist(Y,s, \Sigma\setminus Q^k  ),
$$
and the lower bound in \eqref{stwhitqj.eq} holds by \eqref{stwhitqI.eq}.
To prove the upper bound in \eqref{stwhitqj.eq}, we consider two
subcases, either $Q^k \subseteq Q_I$ or $Q^k \cap Q_I= \emptyset$
(recall $Q_I \in \sbf_1^N$ and $Q^k \in \F_1^N$).
Suppose $Q^k \subseteq Q_I$. By
\eqref{cube-ball} and the ADR property there
must exist $(Z_\ast,\tau_\ast) \in \Sigma\setminus Q^k$ such
that $\dist(Z_\ast,\tau_\ast, Q^k ) \approx \ell(Q^k)$.  Since
$\widetilde{Q}_I \subseteq Q^k\subset Q_I$ in the present scenario,
we have by construction that
$
\ell(Q^\star_I)\approx \ell(\widetilde{Q}_I) \approx \ell(Q^k)  \approx \ell(Q_I),
$
and since also $Q^\star_I\subset \widetilde{Q}_I \subseteq Q^k$, we therefore have
\[
\dist(Z_\ast,\tau_\ast, Q^\star_I ) \leq \diam(Q^k) + \dist(Z_\ast,\tau_\ast, Q^k )
\approx \ell(Q^k) \approx \ell(Q^\star_I )\,.
\]
The upper bound in \eqref{stwhitqj.eq} then follows in the subcase $Q^k \subseteq Q_I$.

If $Q^k \cap Q_I= \emptyset$, then $(X_{Q_I}, t_{Q_I})\in \Sigma\setminus Q^k$.
On the other hand, $Q^\star_I\subset\widetilde{Q}_I \subset Q^k$ for
$I \in \Gamma_{\sbf^N_1}^2$, and furthermore
$\ell(Q^\star_I)\approx \ell(\widetilde{Q}_I) \approx \ell(Q_I) \gtrsim
\dist(Q_I,\widetilde{Q}_I)$, so
\begin{equation}\label{obvtqiqiclosebd.eq}
\dist(Q_I^\star, \Sigma\setminus Q^k  )\le \dist(Q_I^\star, X_{Q_I}, t_{Q_I} )
 \lesssim \ell(Q_I^\star)\,,
\end{equation}
which yields the
upper bound in \eqref{stwhitqj.eq} in the subcase $Q^k \cap Q_I= \emptyset$.
Hence, \eqref{stwhitqj.eq} holds in general for
$I \in \Gamma_{\sbf^N_1}^2$.

Now we use \eqref{stwhitqj.eq} to verify the bounded overlap property of $\{Q^\star_I\}_{I \in \Gamma_{\sbf^N_1}^2}$. Suppose that $Q^\star_I \cap Q^\star_{I'} \neq \emptyset$ for $I, I' \in \Gamma_{\sbf^N_1}^2$. Then, by construction,
$Q^\star_I \subseteq Q^k$ and $Q^\star_{I'} \subset Q^{k'}$,
for some $Q^{k},Q^{k'} \in \F^N$.
Thus, $Q^{k} \cap Q^{k'} \neq \emptyset$ and since the cubes in $\F^N$ are pairwise disjoint we conclude that $Q^k = Q^{k'}$.  Using \eqref{stwhitqj.eq} for $(Y,s) \in Q^\star_I \cap Q^\star_{I'}$ it holds that $\ell(Q^\star_I) \approx \ell(Q^\star_{I'})$, and hence
$$
\dist(I, Q^\star_I)\approx
\ell(I) \approx \ell(Q^\star_I) \approx \ell(Q^\star_{I'}) \approx \ell(Q_{I'})\approx \ell(I')
\approx \dist(I', Q^\star_{I'})\,,
$$
so that $\dist(I, I') \lesssim \ell(I)\approx \ell(I')$.  Since there can be only a uniformly
bounded number of such Whitney cubes $I'$ relative to any given $I$,
the bounded overlap property follows.

We are left with showing the bounded overlap property of the
collection $\{Q^\star_I\}_I$
in the case that $I \in \Gamma_{\sbf^N_1}^3$. We claim that if $I \in \Gamma_{\sbf^N_1}^3$, then
\begin{equation}\label{stwhitstarqnau.eq}
\ell(Q^\star_I) \lesssim \dist(Y,s, Q_1) \lesssim \ell(Q^\star_I), \quad  \forall (Y,s) \in Q^\star_I.
\end{equation}
To prove this, we note, by definition of $\Gamma_{\sbf^N_1}^3$,
that $\widetilde{Q}_I \cap Q_1 = \emptyset$.
Hence,
$$
\dist(Y,s, \Sigma\setminus \widetilde{Q}_I)  \le \dist(Y,s, Q_1),
$$
and the lower bound in \eqref{stwhitstarqnau.eq} holds by \eqref{stwhitqI.eq}.
To obtain the upper bound in \eqref{stwhitstarqnau.eq}, we recall that
$\dist(\widetilde{Q}_I,Q_I)\lesssim \ell(I)$, by construction, and therefore,
\[
\dist(Q^\star_I,Q_I)\lesssim\dist(\widetilde{Q}_I,Q_I) + \ell(\widetilde{Q}_I)
\lesssim  \ell(Q_I^\star)\,,
\]
since $Q_I^\star\subset \widetilde{Q}_I$,
 with $\ell(Q_I^\star)\approx \ell(\widetilde{Q}_I)\approx\ell(I)$.
 Thus, since $Q_I \subset Q_1$,
\[
\dist(Q_I^\star, Q_1) \leq
 \dist(Q_I^\star, Q_I) \lesssim \ell(Q_I^\star)\,,
\]
and the claim \eqref{stwhitstarqnau.eq} follows.

We can now establish the
bounded overlap property of $\{Q^\star_I\}$ for
$I \in \Gamma_{\sbf^N_1}^3$, much as
we did for $\{Q^\star_I\}$ in the case when $I \in \Gamma_{\sbf^N_1}^2$,
but using  \eqref{stwhitstarqnau.eq} in place of \eqref{stwhitqj.eq}.  In fact, in the
present case, the argument is similar, but simpler, and we omit the details.
We have therefore established all of the
properties in \eqref{bbdovqstarpick.eq} and thus have proved the lemma.
\end{proof}

Recall the set $\mathbb{D}_{\F,Q_0}^*$ introduced in \eqref{subsetcubes}.
We define a related Whitney collection
\begin{equation*}
\widetilde{\mathcal{W}}_{\F,Q_0}
:=
\bigcup_{Q\in \mathbb{D}_{\F,Q_0}^* } \W^{*}_{Q}\,,
\end{equation*}
along with corresponding domains
\begin{equation*}
\Omega^{**}_{\F,Q_0}:= \interior\bigg(\bigcup_{I\in \widetilde{\mathcal{W}}_{\F,Q_0}} I^{***}\bigg), \quad  \Omega^{***}_{\F,Q_0}:= \interior\bigg(\bigcup_{I\in \widetilde{\mathcal{W}}_{\F,Q_0}} I^{****}\bigg),
\end{equation*}
where $\tau>0$ is the same small parameter fixed above, and as before,
$I^{***} = (1 + 4\tau)I$ and $I^{****} = (1 + 8\tau)I$.

\begin{remark}\label{remark.sawtoothcontain}
Note that for all
$\sbf_1$ as above, $\Omega^{**}_{\sbf_1}\subset \Omega^{**}_{\F,Q_0}$,
and $\Omega^{***}_{\sbf_1}\subset \Omega^{***}_{\F,Q_0}$,
since by construction, $\sbf_1\subset \mathbb{D}_{\F,Q_0}^*$, and thus
$\widetilde{\mathcal{W}}_{\sbf_1}\subset \widetilde{\mathcal{W}}_{\F,Q_0}$.
\end{remark}

\begin{lemma}\label{bdsforibpinuq.lem} The normalized Green
function $\widehat u=\widehat u(Y,s)$, introduced in \eqref{normgreeb}, satisfies
\begin{equation}\label{upperb1}
\delta(Y,s)|\nabla_Y^2\widehat u(Y,s)|+|\nabla_Y
\widehat u(Y,s)| +(\delta(Y,s))^{-1}\widehat u(Y,s)\lesssim 1\,,
\quad\forall  (Y,s)\in \Omega^{***}_{\mathcal{F}, Q_0},
\end{equation}
and more generally,
\begin{equation} \label{upperb}
 \delta(Y,s)^{|\beta| + 2j - 1}|\nabla_Y^{\beta} \partial_t^j
 (\widehat u(Y,s))| \lesssim_{\beta,j} 1\,,
 \qquad \forall\, (Y,s)\in \Omega^{***}_{\mathcal{F}, Q_0},
  \end{equation}
 where $\beta= (\beta_1, \dots, \beta_n)$ is a multi-index with
 $\beta_i \ge 0$, $j\geq 0$, and
 \[\nabla_Y^\beta \widehat u(Y,s) =
 \partial_{y_1}^{\beta_1}\dots \partial_{y_n}^{\beta_n}\widehat u(Y,s),
 \qquad |\beta| = \sum_{i= 1}^n \beta_i.\]
 In \eqref{upperb1}, the implicit constant depends only
 on $\eps$, $L$ and the usual allowable parameters, and in
 \eqref{upperb}, it depends on these same quantities,
 as well as on $\beta$ and $j$.
\end{lemma}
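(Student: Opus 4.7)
The plan is to reduce everything to the pointwise bound $\widehat u(Y,s)\lesssim \delta(Y,s)$, uniformly on a slight fattening of $\Omega^{***}_{\mathcal F, Q_0}$, and then obtain all higher derivative bounds from standard interior regularity for the adjoint heat equation. Once this pointwise estimate is in hand, for each $(Y,s)\in \Omega^{***}_{\mathcal F, Q_0}$ the parabolic cylinder $C(Y,s,c\,\delta(Y,s))$ (with $c$ small) lies inside that fattened sawtooth, $\widehat u$ is adjoint-caloric there and bounded by a constant times $\delta(Y,s)$, so interior Bernstein-type estimates immediately deliver
\[
\delta(Y,s)^{|\beta|+2j-1}\,\bigl|\nabla_Y^{\beta}\partial_t^j\widehat u(Y,s)\bigr|\,\lesssim_{\beta,j}\,1\,,
\]
which is \eqref{upperb}, and in particular \eqref{upperb1}.

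To prove the pointwise bound, I would revisit the concluding step of the proof of Lemma \ref{refp+++} and carry it out with the enlarged Whitney regions $U_Q^{*}$ from \eqref{Ustar}--\eqref{defsforlwbnd.eq} in place of $U_Q$, for each cube $Q\in\mathbb{D}^{*}_{\mathcal F, Q_0}$ (see \eqref{subsetcubes}). For any such $Q$ and any $(Y,s)\in U_Q^{*}$ one has $\delta(Y,s)\approx_\eps \ell(Q)$ and $\dist(Y,s,X_Q,t_Q)\lesssim_\eps \ell(Q)$, and because $\ell(Q)\le \eps^{10}\ell(Q_0)$ the pole $(Y_\sbf,s_\sbf)$ sits at forward-in-time parabolic distance comparable to $\ell(Q_0)$ from $U_Q^{*}$, which is much larger than any $\eps$-dependent multiple of $\ell(Q)$. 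Hence Lemma \ref{lemma:G-aver} applies on a parabolic cylinder of radius $r\approx_\eps \ell(Q)$ enclosing $U_Q^{*}$, and combining it with the doubling bound \eqref{muaveragebound} of Lemma \ref{doublingintree.lem} gives
\[
\widehat u(Y,s)\,\lesssim\, r^{-n}\,\mu\bigl(\Delta(X_Q,t_Q,cr)\bigr)\,\lesssim_\eps\, r\,\approx_\eps\, \delta(Y,s)\,,\qquad (Y,s)\in U_Q^{*}\,.
\]
Since $\Omega^{***}_{\mathcal F, Q_0}$ is a union of Whitney-type enlargements $I^{****}$ built from these $U_Q^{*}$'s, this bound propagates throughout $\Omega^{***}_{\mathcal F, Q_0}$.

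The only subtle point, and in my view the main (mild) obstacle, is organizational: to justify the interior estimates at a point $(Y,s)\in\Omega^{***}_{\mathcal F, Q_0}$ one needs the pointwise bound on a genuine $\delta(Y,s)$-neighborhood of $(Y,s)$, and not merely on $\Omega^{***}_{\mathcal F, Q_0}$ itself. This is handled by running the argument above on a mildly enlarged version of each $U_Q^{*}$ (equivalently, by inflating the Whitney parameter $\tau$ slightly so as to absorb a small fattening of each $I^{****}$); the resulting constants still depend only on $\eps$, $L$, and the allowable parameters, as required. With this in place, standard interior parabolic regularity for the adjoint heat equation yields both \eqref{upperb1} and \eqref{upperb}.
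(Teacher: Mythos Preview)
Your proposal is correct and follows essentially the same approach as the paper: reduce to the pointwise bound $\widehat u(Y,s)\lesssim\delta(Y,s)$ on a slight fattening of the sawtooth, establish it via Lemma~\ref{lemma:G-aver} combined with Lemma~\ref{doublingintree.lem}, and then invoke interior parabolic estimates. The paper makes the fattening explicit by working on $I^{*****}:=(1+16\tau)I$ for each $I\in\widetilde{\mathcal W}_{\mathcal F,Q_0}$, which is precisely the device you describe in your final paragraph.
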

\begin{proof}
Given the definition of $\Omega^{***}_{\mathcal{F}, Q_0}$ and
standard energy and Schauder estimates for the (adjoint) heat equation
it suffices to see that
\begin{equation}\label{boundu}
(\delta(Y,s))^{-1}\widehat u(Y,s)\lesssim 1, \ \forall  (Y,s)\in I^{*****}:=(1 + 16\tau)I, \text{ with } I\in \widetilde{\mathcal{W}}_{\F,Q_0}\,.
\end{equation}
Fix such an $I$, hence there exists
$Q \in \mathbb{D}_{\F,Q_0}^*$ such that $I\in \W^{i,*}_Q$ for some $i$. Thus, recalling \eqref{Ustar} we have that
\begin{align}\label{nota2uu}
I^*\cap  C_{Z,\tau}\neq\emptyset, \quad \text{where $(Z,\tau) \in C_{X,t}$ for some $(X,t)\in\tilde U^i_{Q}$.}
\end{align}
By the construction of $\tilde U^i_Q$,
\begin{equation*}
\eps^3\ell(Q)\lesssim \delta(X,t)
\lesssim \eps^{-3}\ell(Q),\qquad
\diam(\tilde U_Q^i)\lesssim \eps^{-4}\ell(Q)\,.
\end{equation*}
Let $C_Q^{**}:=C(X_Q,t_Q, 2\eps^{-5}\ell(Q))$. Using that $\ell(Q)\le \eps^{10}\,\ell(Q_0)$ by the definition of $\mathbb{D}_{\F,Q_0}^*$,  we may apply Lemma \ref{lemma:G-aver}
with $(X_0,t_0)=(X_Q,t_Q)$ and $r=\eps^{-5}\ell(Q)$, and then
Lemma \ref {doublingintree.lem}/\eqref{muaveragebound} with $a=1$, to obtain
\begin{align}\label{nota3uu}
\widehat u(\tilde Z,\tilde \tau)\lesssim  \bariiint_{C_Q^{**}}\,
\widehat u(X, t)\,\d X\d t\lesssim \eps^{-5}\ell(Q)\,
\frac{\mu(\Delta(X_Q,t_Q,c\eps^{-5}\ell(Q)))}{\sigma(\Delta(X_Q,t_Q,c\eps^{-5}\ell(Q)))}
\lesssim \eps^{-5}\ell(Q)\,,
\end{align}
whenever $(\tilde Z,\tilde \tau)\in \Omega\cap C(X_Q,t_Q, \eps^{-5}\ell(Q))$.

Let $(Y,s)\in I^{*****}$,
recall that $I\in \W^{i,*}_Q$ for some $i$, and observe that
\eqref{defsforlwbnd.eq} holds with $I^{*****}$  in place of $I$.
Since $\tau$ is small, it then follows 
that there exists $\upsilon\geq 3$, depending only on $n$, $M$, and $L$,
such that such $\eps^\upsilon\ell(Q)\lesssim \delta(Y,s)$.
Furthermore, $(Y,s)\in \Omega\cap C(X_Q,t_Q, \eps^{-5}\ell(Q))$,
and therefore that \eqref{nota3uu} holds with
$(\tilde Z,\tilde \tau)$ replaced by $(Y,s)$.
 Combining these facts we can conclude that if $(Y,s)\in I^{*****}$, then
\begin{align}\label{nota3uu+}
(\delta(Y,s))^{-1}\widehat u(Y,s)\lesssim \eps^{-5-\upsilon},
\end{align}
with implicit constant depending  on the allowable parameters, $\eps$, and $L$. This proves as desired \eqref{boundu}.
 \end{proof}

\begin{lemma}\label{squarefunction} Let
$\sbf_1$ be as above, i.e., $\sbf_1\subset \sbf$
is a semi-coherent tree, with maximal cube
$Q_1=Q(\sbf_1)$ contained in $\mathbb{D}_{\F,Q_0}^*$ (see \eqref{subsetcubes}).
Then the normalized Green function $\widehat u$,
introduced in \eqref{normgreeb}, satisfies  the square function estimate
\begin{align*}
\iiint_{\Omega^{**}_{\sbf_1}} \big(|\nabla_X^2 \widehat u(Y,s)|^2\widehat u(Y,s)
+|\partial_t \widehat u(Y,s)|^2\widehat u(Y,s)\big) \,
\d Y\d s\lesssim \sigma(Q_1),\end{align*}
with implicit constant depending  on $K_0$,
$\eps$, $L$, and the usual allowable parameters.
\end{lemma}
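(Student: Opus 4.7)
I will use a parabolic variant of the Dahlberg--Jerison--Kenig square-function identity, applied to the normalized Green function $\widehat u$. Because $\sbf_1\subset \mathbb{D}_{\F,Q_0}^{*}$, the pole $(Y_\sbf,s_\sbf)$ lies at distance $\gtrsim \ell(Q_0)\gg \eps^{-O(1)}\ell(Q_1)$ from the sawtooth, so $\widehat u$ is smooth on a neighborhood of $\overline{\Omega^{***}_{\sbf_1}}$ and satisfies there the adjoint heat equation $\partial_s\widehat u+\Delta_Y\widehat u=0$. The starting pointwise identity, immediate from $\partial_s\widehat u=-\Delta_Y\widehat u$, is
\begin{equation*}
(\partial_s+\Delta_Y)|\nabla_Y\widehat u|^2\,=\,2|\nabla_Y^2\widehat u|^2.
\end{equation*}
I first work with the truncated tree $\sbf_1^N$ and the cutoff $\Psi=\Psi_N$ from Lemma \ref{lemma:approx-saw}, which is compactly supported in $\Omega^{***}_{\sbf_1^N}$, so that all integrations by parts are boundary-term-free.

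Multiplying the identity by $\widehat u\,\Psi$ and integrating (using that the formal adjoint of $\partial_s+\Delta_Y$ is $-\partial_s+\Delta_Y$) yields controlled $\Psi$-derivative terms together with a cross term $-2\iiint \Psi|\nabla_Y\widehat u|^2\partial_s\widehat u$ that resists a crude bound, since we have no lower bound $\widehat u\gtrsim\delta$ on the full sawtooth (Corollary \ref{cor.deltahatu} only gives this in the component containing $(Y^*_Q,s^*_Q)$). The crucial step is an independent IBP computation of $\iiint\widehat u\,\Psi|\partial_s\widehat u|^2$, obtained by writing $|\partial_s\widehat u|^2=-(\partial_s\widehat u)\Delta_Y\widehat u$ and integrating by parts once in space and once in time. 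Comparing the two expressions for $\iiint\Psi|\nabla_Y\widehat u|^2\partial_s\widehat u$ produces the combined identity
\begin{multline*}
2\iiint \widehat u\,\Psi|\nabla_Y^2\widehat u|^2 + 4\iiint \widehat u\,\Psi|\partial_s\widehat u|^2
= \iiint \widehat u|\nabla_Y\widehat u|^2\Delta_Y\Psi - 3\iiint \widehat u|\nabla_Y\widehat u|^2\partial_s\Psi \\
+ 2\iiint |\nabla_Y\widehat u|^2\,\nabla_Y\widehat u\cdot\nabla_Y\Psi + 4\iiint \widehat u\,\partial_s\widehat u\,\nabla_Y\widehat u\cdot\nabla_Y\Psi,
\end{multline*}
in which both squared quantities appear on the left with favorable sign. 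This algebraic cancellation is the main technical point.

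The right-hand side is then estimated termwise by $\sigma(Q_1)$ using the pointwise bounds of Lemma \ref{bdsforibpinuq.lem}, namely $\widehat u\lesssim \delta$, $|\nabla_Y\widehat u|\lesssim 1$, and $|\partial_s\widehat u|+\delta|\nabla_Y^2\widehat u|\lesssim \delta^{-1}\cdot\delta$, together with Lemma \ref{partialADR} and the Whitney-cube counting \eqref{eq:fregtgtr+}. For instance, $\widehat u|\nabla_Y\widehat u|^2\lesssim \delta$ and $|\Delta_Y\Psi_N|\lesssim \delta^{-2}$ on $\bigcup_{I\in\Gamma_{\sbf_1^N}} I^{****}$, so the first term is bounded by $\sum_{I\in\Gamma_{\sbf_1^N}}\ell(I)^{n+1}\lesssim\sigma(Q_1)$; the $\partial_s\Psi$ term is controlled directly by $\iiint \delta|\partial_s\Psi_N|\lesssim \sigma(Q_1)$; the pure-gradient term by $\iiint|\nabla_Y\Psi_N|\lesssim \sigma(Q_1)$; and the last by $\widehat u|\partial_s\widehat u||\nabla_Y\widehat u|\lesssim 1$ times $\iiint|\nabla_Y\Psi_N|\lesssim \sigma(Q_1)$. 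Since $\mathbf{1}_{\Omega^{**}_{\sbf_1^N}}\lesssim \Psi_N$, we obtain the square-function estimate on $\Omega^{**}_{\sbf_1^N}$, and monotone convergence \eqref{eq:Omega_N-TCM} as $N\to\infty$ delivers the desired bound on $\Omega^{**}_{\sbf_1}$.

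The hard part is genuinely the second IBP producing the combined identity: without it, the cross term admits no closed-form bound by the RHS quantities, because Cauchy--Schwarz/Young absorption would require controlling $\iiint \Psi|\nabla_Y\widehat u|^4/\widehat u$, and no uniform lower bound on $\widehat u$ across the entire sawtooth is available. It is the algebraic rearrangement moving $\iiint \widehat u\,\Psi|\partial_s\widehat u|^2$ to the left-hand side that saves the estimate, and simultaneously yields the companion $|\partial_t\widehat u|^2\widehat u$ bound requested in the statement.
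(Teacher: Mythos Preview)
Your proof is correct and follows essentially the same route as the paper. The paper defers the key integration-by-parts computation to \cite{LeNy} and \cite[Lemma 5.38]{BHMN}, obtaining
\[
A+2B:=\iiint \widehat u\,|\nabla_X^2\widehat u|^2\Psi_N + 2\iiint \widehat u\,|\partial_t\widehat u|^2\Psi_N \,\lesssim\, \iiint\big(|\nabla_X\Psi_N|+\delta\,|\partial_t\Psi_N|\big),
\]
which it then controls by $\sigma(Q_1)$ via Lemma \ref{partialADR}. Your combined identity is precisely (twice) this estimate written out in full, and your termwise bounds are the same as the paper's, save that you use $|\Delta_Y\Psi_N|\lesssim\delta^{-2}$, a second-derivative bound not explicitly recorded in Lemma \ref{lemma:approx-saw} but immediate from the construction of $\Psi_N$ (or avoidable by one more integration by parts in the $\Delta_Y\Psi$ term). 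Your observation that the cross term $\iiint \Psi|\nabla_Y\widehat u|^2\partial_s\widehat u$ cannot be handled by crude absorption, and that the second IBP is what makes the argument close, is exactly the point of the \cite{LeNy} computation.
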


\begin{proof}
By \eqref{eq:Omega_N-TCM}, it is enough to prove the stated estimate
with $\sbf_1$ replaced by $\sbf_1^N$, with a bound depending
on $K_0$,
$\eps$, $L$, and the allowable parameters, but not on $N$.

Let $\Psi=\Psi_N\in C_c^\infty(\mathbb R^{n+1})$ be as in Lemma \ref{lemma:approx-saw}. By \eqref{cuttoff},
\begin{align*}\Psi_N(Y,s)=
\sum_{I\in \widetilde{\mathcal{W}}_{\sbf_1^N}} \eta_I(Y,s)\,,
\end{align*}
for all $(Y,s)\in\Omega$. Based on this we introduce
\begin{align*}
A:= \sum_{i,j}\iiint \widehat u ( \partial_{x_i}\partial_{x_j} \widehat u)^2
\,\Psi_N \, \d Y\d s\,, \qquad
B:= \iiint \widehat u (\partial_t\widehat u)^2\, \Psi_N \, \d Y\d s\,.
\end{align*}
Proceeding as in  \cite[Proof of Theorem 9]{LeNy}
(see also \cite[Lemma 5.38]{BHMN}), and using Lemma \ref{bdsforibpinuq.lem}
(and Remark \ref{remark.sawtoothcontain}),
one can prove that
\begin{align*}
A+2B\lesssim \iiint_{\Omega}
\big(|\nabla_X\Psi_N(Y,s)|+\delta(Y,s)|\partial_t\Psi_N(Y,s)|\big)\, \d Y\d s,
\end{align*}
with implicit constant depending  on the allowable parameters, $\eps$, and $L$,
but not on $N$.
We refer the reader to \cite[Lemma 5.38]{BHMN} for the details.
Therefore, using Lemma \ref{partialADR} we can conclude that
\begin{align*}
A+2B\lesssim \sigma(Q_1),
\end{align*}
with implicit constant depending  on the allowable parameters, $\eps$, and $L$, but not on $N$.
\end{proof}

\section{Proof of Theorem \ref{WHSA.thrm}}\label{Sec6}

In this section we prove Theorem \ref{WHSA.thrm}.  In particular, we intend to prove that $\Sigma$ satisfies the $(\eps,K_0)$-WHSA condition (see Definition \ref{def2.14}), for all $\eps\leq\eps_0$, $\eps_0=K_0^{-100}$.  We note that $\eps$ and $K_0$ are (respectively, sufficiently small and large) degrees of freedom, whose values may eventually be fixed depending only on the allowable parameters.
As in Subsection \ref{initialstopanlys.sect} we work with a single
stopping time tree $\sbf$ from Lemma \ref{initialcorona1.lem},
and more precisely, with a single sub-tree
$\sbf'\in \mathcal{S}'_\sbf=\mathcal{S}'_\sbf(\xi) $
from Lemma \ref{refp+},
where $\xi$ is chosen so that Lemma \ref{refp++} holds.
For notational convenience we will continue to use the notation $Q_0 = Q(\sbf)$ and we recall that $\sbf = \mathbb{D}_{\F,Q_0}$ for a pairwise disjoint collection of subcubes of $Q_0$, denoted $\mathcal{F}$.
Note that, as a consequence of Lemma \ref{refp+},
in order to prove Theorem \ref{WHSA.thrm},
it suffices to show
that the cubes in $\sbf'$ for
which {\it the $(\eps,K_0)$-local WHSA condition fails},
satisfy a Carleson packing condition.

\subsection{Partitioning the cubes in $\sbf'$ 
into classes} \label{sspart}
Recall that we are working with the normalized caloric measure $\mu$, which
satisfies
\begin{equation}\label{eq6.1}
1/2 \le \frac{\mu(Q)}{\sigma(Q)} \le \bariint_Q \mathcal{M}(\mu\big|_{2Q(\sbf)})(X,t) \, \d\sigma(X,t) \le a_2,  \quad \forall Q \in \dd_{\F,Q_0}
\end{equation}
(see Lemma \ref{initialcorona1.lem}).
We have also shown in Lemma \ref{refp++} that there exists,
for each $$Q\in \sbf'\cap \dd_{\F,Q_0}^*
=\{Q\in\sbf': \ell(Q)\le \eps^{10}\,\ell(Q_0)\}\,,$$
a point $(Y^*_Q,s^*_Q)\in U_Q$ such that
\begin{align}\label{eq6.5ll}
 1\lesssim |\nabla_X \widehat u(Y^*_Q,s^*_Q)|\,,
\end{align}
where the implicit constant is independent of $\eps$ and $K_0$.
Recall that the Whitney region $U_{Q}$ has a uniformly bounded number
of connected components, which we have enumerated by $\{U^i_{Q}\}$.
Given $Q$ we now fix {\it the particular $i$ for which}
\begin{equation}\label{Y*s*component}
(Y^*_Q,s^*_Q)\in U^i_Q\subset \tilde U^i_Q \subset U^{i,*}_{Q},
\end{equation}
where the latter two sets are the enlarged Whitney regions
constructed in Definition \ref{def2.11a} and in \eqref{Ustar2}, respectively. In the following the reference point  $(Y^*_Q,s^*_Q)$ will play a prominent role and so will the direction defined by $\nabla_X \widehat u(Y^*_Q,s^*_Q)$.


We divide the cubes in $\sbf'$ 
into four classes/collections by considering four different cases.

\noindent{\bf Case 0}: $Q\in 
\sbf' $, with $\ell(Q)> \eps^{10}\,\ell(Q_0)$.

\smallskip
\noindent{\bf Case 1}: $Q\in 
\sbf'$, with $\ell(Q)\le \eps^{10}\,\ell(Q_0)$, and
\begin{equation}\label{eq5.3}
\sup_{(X,t)\in \tilde U^i_{Q}}\,\sup_{(Z,\tau)\in C_{X,t}} |\nabla_X \widehat u(Z,\tau)
 -\nabla_X \widehat u(Y^*_Q,s^*_Q)|\,>\,\eps^{2L}\,.
\end{equation}

\smallskip
\noindent{\bf Case 2}: $Q\in 
\sbf'$,
with $\ell(Q)\le \eps^{10}\,\ell(Q_0)$, and
\begin{equation}\label{eq5.2}
\sup_{(X,t)\in \tilde U^i_{Q}}\,\sup_{(Z,\tau)\in C_{X,t}} |\nabla_X \widehat u(Z,\tau)
 -\nabla_X \widehat u(Y^*_Q,s^*_Q)| \leq  \eps^{2L},
\end{equation}
but
\begin{equation}\label{eq5.3+}
\sup_{(X,t)\in \tilde U^i_{Q}}\,\sup_{(Z,\tau)\in C_{X,t}} |\widehat u(Z,\tau)-\widehat u(X,t)-\nabla_X\widehat u(X,t)\cdot(Z-X)|>\eps^{2 L}\ell(Q).
\end{equation}

\smallskip
\noindent{\bf Case 3}: $Q\in 
\sbf'$, with $\ell(Q)\le \eps^{10}\,\ell(Q_0)$, and
\begin{equation}\label{eq5.2a}
\sup_{(X,t)\in \tilde U^i_{Q}}\,\sup_{(Z,\tau)\in C_{X,t}} |\nabla_X \widehat u(Z,\tau)
 -\nabla_X \widehat u(Y^*_Q,s^*_Q)| \leq  \eps^{2L},
\end{equation}
as well as
\begin{equation}\label{eq5.3+a}
\sup_{(X,t)\in \tilde U^i_{Q}}\,\sup_{(Z,\tau)\in C_{X,t}} |\widehat u(Z,\tau)-\widehat u(X,t)-\nabla_X\widehat u(X,t)\cdot(Z-X)|\leq \eps^{2L}\ell(Q).
\end{equation}

For cubes in {Case 0}, we use the trivial fact that in particular,
$Q\in\sbf= \mathbb{D}_{\F,Q_0}$.  Observe that for every $R \in \dd$,
\begin{equation}\label{eqn-case0-pack}
\sum_{\substack{Q\in \mathbb{D}_{\F,Q_0} \\ {\rm Case\, 0 \, holds } \\ Q \subseteq R }}\sigma(Q)
\le
\sum_{k = 0}^{\log_2(\eps^{-10}) + 1} \sum_{Q\in\mathbb{D}_k(Q_0)} \sigma(Q \cap R)
\lesssim
(\log_2 \eps^{-1})\,\sigma(R).
\end{equation}
This shows that the cubes in {Case 0} satisfy the packing condition.

To prove Theorem \ref{WHSA.thrm} we will, in the following three sections, prove that the cubes in {Case 1} (Section \ref{subq1}) and {Case 2} (Section \ref{subq2}) satisfy the packing condition, and that cubes in {Case 3} (Sections \ref{subq3} and \ref{subq4}) satisfy the $(\eps, K_0)$-local WHSA
condition in the sense of Definition \ref{def2.13}. We
remind the reader that the constant appearing in the
packing statement of the WHSA condition may depend on $\eps$,
and we do not make this dependence precise.

\subsection{The cubes in {Case 1} satisfy the packing condition}\label{subq1}
Consider a cube $Q\in 
\sbf'$ as in the statement of {Case 1}, so that in particular,
$\ell(Q)\le \eps^{10} \,\ell(Q_0)$.
In the scenario of Case 1, there exist $(X,t)\in \tilde U^i_{Q}$ and $(Z,\tau)\in C_{X,t}$ such that
\begin{equation}\label{eq5.3bla}
 |\nabla_X \widehat u(Z,\tau)
 -\nabla_X \widehat u(Y^*_Q,s^*_Q)|\,>\,\eps^{2L}\,.
\end{equation}
By the construction of $\tilde U^i_{Q}$, and \eqref{Y*s*component},
there exists a chain $\{C_{Y_j,s_j}\}_{1\le j\le l}$
 connecting $(X,t)$ and $(Y^*_Q,s^*_Q)$ so that $(Y_1,s_1)=(Y^*_Q,s^*_Q)$ and
$(Y_l,s_l)=(X,t)$ and $l\lesssim\eps^{-1}$.
Clearly, we can construct the chain in such a way that
\begin{equation}\label{Yjsjcontain}
(Y_{j},s_{j})\in C_{Y_{j-1},s_{j-1}}, \quad 2\leq j \leq l\,.
\end{equation}
Reformulating \eqref{eq5.3bla}  with this notation,
we may assume both that
\begin{equation}\label{eq5.3ha}
\sup_{(Z,\tau)\in C_{Y_l,s_l}}|\nabla_X \widehat u(Z,\tau)
 -\nabla_X \widehat u(Y_1,s_1)|\,>\,\eps^{2L}\,.
\end{equation}
and that
\begin{equation}\label{eq5.3haa}
\sup_{(Z,\tau)\in C_{Y_j,s_j}}|\nabla_X \widehat u(Z,\tau)
 -\nabla_X \widehat u(Y_1,s_1)|\leq\,\eps^{2L},\quad \mbox{for $1\leq j\leq l-1$, provided $l>1$},
\end{equation}
since otherwise we shorten the
chain and work with the first $(Y_j,s_j)$ for which \eqref{eq5.3ha} holds.
We claim that
\begin{equation}\label{eq5.3hagg---}
|\nabla_X \widehat u(Y_j,s_j)|\geq |\nabla_X \widehat u(Y_1,s_1)|-\eps^{2L},\quad \mbox{for $1\leq j\leq l$}.
\end{equation}
Indeed, this is trivial when $l=1$ and for $l>1$ it
follows at once from \eqref{eq5.3haa} and
the fact that $(Y_{l},s_{l})\in C_{Y_{l-1},s_{l-1}}$ (see \eqref{Yjsjcontain}).
Furthermore, by the triangle inequality,
\begin{align}\label{eq5.3hagg-}
\eps^{2L}\leq \sup_{(Z,\tau)\in C_{Y_l,s_l} }|\nabla_X\widehat u(Z,\tau)
 -\nabla_X \widehat u(Y_l,s_l)|
+\sum_{j=1}^{l-1}|\nabla_X \widehat u(Y_{j+1},s_{j+1})
 -\nabla_X \widehat  u(Y_j,s_j)|.
\end{align}
Consequently, since $l\lesssim \eps^{-1}$, we have one of the following options
\begin{list}{$(\theenumi)$}{\usecounter{enumi}\leftmargin=1cm \labelwidth=1cm \itemsep=0.2cm \topsep=.2cm \renewcommand{\theenumi}{\roman{enumi}}}
\item $\sup_{(Z,\tau)\in C_{Y_l,s_l} }|\nabla_X\widehat u(Z,\tau)
 -\nabla_X \widehat u(Y_l,s_l)|\geq \eps^{2L+2}$,

\item  $|\nabla_X \widehat u(Y_{j+1},s_{j+1})
 -\nabla_X \widehat  u(Y_j,s_j)|\geq\eps^{2L+2}$, for some $1\leq j\leq l-1$.
\end{list}

We treat both scenarios simultaneously. In scenario $(i)$,
we set $(Y,s) = (Y_l, s_l)$, and let $(Z,\tau)$ be
the point in $C_{Y_l,s_l}$ at which the sup is attained
(recall that we have taken the cylinders $C_{Y,s}$ to be closed).
If we are in scenario $(ii)$, then we set,  with an abuse of notation,
$(Y,s) = (Y_j,s_j) $ and $(Z,\tau) =(Y_{j+1},s_{j + 1})$. In either scenario,
$(Z,\tau)$ and $(Y,s)$ lie in the cylinder
$C_{Y,s}$, which has parabolic diameter
$\approx_\varepsilon\ell(Q)$.  Note also that
there is a constant $\kappa=\kappa_{\eps,L} > 1$ such that
$2\kappa C_{Y,s}$ (the $2\kappa$-dilate of $C_{Y,s}$) is
contained in $U_Q^*$. In particular, we
may use \eqref{upperb} in this dilated
cylinder and we note that $|Z - Y| \lesssim_\varepsilon \ell(Q)$ and $|\tau - s| \lesssim_\varepsilon \ell(Q)^2$. Using $(i)$ or $(ii)$ we then have for some $k$
\[
|\partial_{x_k} \widehat{u}(Z,\tau) - \partial_{x_k} \widehat{u}(Y,s)| \ge  \eps^{2L+2}/\sqrt{n}.
\]
Hence by Taylor's theorem we see there is a point $(\widehat{Y}, \widehat{s}\,) \in C_{Y,s}$
such that
\[
|\partial_t \partial_{x_k} \widehat{u} (\widehat{Y}, \widehat{s}) (\tau- s) + \nabla_X \partial_{x_k} \widehat{u} (\widehat{Y}, \widehat{s})\cdot (Z - Y)| = |\partial_{x_k} \widehat{u}(Z,\tau) - \partial_{x_k} \widehat{u}(Y,s)| \ge  \eps^{2L+2}/\sqrt{n}
\]
\
Then, using that $|Z - Y| \lesssim_{\eps,L} \ell(Q)$ and $|\tau - s| \lesssim_{\eps,L} \ell(Q)^2$,
and that $\widehat{u}$ is a solution to the adjoint equation, we see that either
\begin{equation}\label{c1dtnablg.eq}
|\nabla_X^3 \widehat{u} (\widehat{Y}, \widehat{s}\,)|  \ell(Q)^2 \gtrsim
|\partial_t \partial_{x_k} \widehat{u} (\widehat{Y}, \widehat{s}\,)|  \ell(Q)^2
\ge a_{\eps,L} 
\end{equation}
or
\begin{equation}\label{c1dnab2lg.eq}
|\nabla_X^2 \widehat{u} (\widehat{Y}, \widehat{s}\,)| \ell(Q)
\ge
|\nabla_X \partial_{x_k}  \widehat{u} (\widehat{Y}, \widehat{s}\,)| \ell(Q)
 \geq a_{\eps,L}\,, 
\end{equation}
where $a_{\eps,L}$ is a small uniform
constant depending only on $\eps, L$ and allowable parameters.
Moreover, by standard interior estimates for adjoint solutions,
\eqref{c1dtnablg.eq} implies \eqref{c1dnab2lg.eq} with $a_{\eps,L}$ replaced by a
somewhat smaller constant with the same dependence, and with
$(\widehat{Y}, \widehat{s}\,)$ replaced by
$(\widehat{Y}, \widehat{s}-\eta\ell(Q))$, where $\eta=\eta_{\eps,L}$ is a small, fixed constant
which we may choose small enough that the point
$(\widehat{Y}, \widehat{s}-\eta\ell(Q))$ belongs to the
dilated cylinder $\kappa C_{Y,s}$.  After a suitable relabeling,
we may therefore assume that \eqref{c1dnab2lg.eq} holds for some
$(\widehat{Y}, \widehat{s}\,)\in \kappa C_{Y,s}$.

We then have
that for a sufficiently small uniform constant
$\zeta=\zeta_{\eps,L}$, depending only on $\eps,L$ and allowable parameters,
the cylinder $C(\widehat{Y}, \widehat{s}, \zeta \ell(Q))\subset 2\kappa C_{Y,s}
\subset U_Q^*$, and by \eqref{upperb},
\begin{equation}\label{c1dnab2lgnhd.eq}
|\nabla_X^2 \widehat{u} (X,t)| \, \ell(Q) \, \geq \frac{a_{\eps,L}}2 \,, 
\quad \forall (X,t) \in C(\widehat{Y}, \widehat{s}, \zeta \ell(Q)).
\end{equation}
Consequently, by standard interior estimates for adjoint solutions, we have
\begin{equation}\label{ulowerbound}
 \frac{\widehat{u}(X,t)}{\ell(Q)}  \,\gtrsim \,a_{\eps,L} \,, 
\quad \forall (X,t) \in C^-(\widehat{Y}, \widehat{s}, \zeta \ell(Q))\,,
\end{equation}
where we recall that $C^-$ denotes the time-backwards half of a cylinder (see \eqref{Cpm}).

Combining \eqref{c1dnab2lgnhd.eq} and \eqref{ulowerbound}, we obtain that
\begin{equation}\label{c1dnab2lgavgpre.eq}
a^3_{\eps,L}\,\lesssim\,
\ell(Q) \bariiint_{C^-(\widehat{Y}, \widehat{s}, \zeta \ell(Q))}
|\nabla_X^2 \widehat{u}|^2\,\widehat{u}\, \d X\d t
\,\approx_{\eps,L}\,
\ell(Q)^{-n-1} \!\iiint_{C^-(\widehat{Y}, \widehat{s}, \zeta \ell(Q))}
|\nabla_X^2 \widehat{u}|^2\,\widehat{u}\, \d X\d t
\,.
\end{equation}
As noted above, $C^-(\widehat{Y}, \widehat{s}, \zeta \ell(Q))\subset U_Q^*$, and therefore, since
$\widehat u$ is an adjoint solution,
\begin{equation} \label{case1finallwbd.eq}
 \sigma(Q)\approx \ell(Q)^{n+1} \lesssim_{\eps,L}
\iiint_{U_Q^*}\big(|\nabla_X^2 \widehat u(X,t)|^2 + |\partial_t \widehat u(X,t)|^2\big) \widehat{u}(X,t) \, \d X\d t\,.
\end{equation}

Now let $R \in \dd$ be such that there exist
$Q\in\sbf'\cap\dd(R)$ with $\ell(Q)\le \eps^{10}\,\ell(Q_0)$,
let $\dd^*(R)$ denote the collection of all such $Q$, and let
$\dd_{\tt max}^*(R)$ denote the collection of maximal cubes (with respect to containment) in $\dd^*(R)$.
Our goal is to show that
\[
\sum_{Q\in\dd^*(R)}
\sigma(Q) \lesssim \sigma(R)\,.
\]
Observe that the maximal cubes are disjoint, and thus
$\sum_{Q\in\dd_{\tt max}^*(R)}
\sigma(Q) \leq \sigma(R)$.
Hence, it suffices to consider the case that $R=Q_1\in \sbf'$, with
$\ell(Q_1)\le \eps^{10}\,\ell(Q_0)$.

To this end, set $\sbf_1:= \sbf'\cap\dd(Q_1)$, so that
$\sbf_1$ is a semi-coherent tree with maximal cube
$Q_1=Q(\sbf_1)$ belonging to $\mathbb{D}_{\F,Q_0}^*$
(see \eqref{subsetcubes}).
Therefore, since the elements in $\{U^*_Q\}$
have bounded overlaps,
using \eqref{case1finallwbd.eq}, followed by
\eqref{sawtooth-appen-HMM--} and \eqref{doma++},
 we see that
\begin{multline*}
\sum_{\substack{Q\in \sbf' \cap \dd(Q_1) \\ {\rm Case\, 1 \, holds} }} \! \!\! \sigma(Q)
 \, \lesssim_{\eps,L} \, \sum_{Q\in \sbf_1}\,
 \iiint_{U_Q^*}(|\nabla_X^2 \widehat u(X,t)|^2 + |\partial_t \widehat u(X,t)|^2) \widehat{u}(Y,s)  \, \d X \, \d t
\\  \lesssim_{\eps,L} \, \iiint_{\Omega^{**}_{\sbf_1}}
(|\nabla_X^2 \widehat u(X,t)|^2\widehat u(X,t)+|\partial_t \widehat u(X,t)|^2\widehat u(X,t)) \, \d X\d t\, \lesssim \sigma(Q_1)\,,
 \end{multline*}
 where the last step follows by Lemma \ref{squarefunction}.
This completes the proof of the packing of  the cubes in Case 1.

\subsection{The cubes in  {Case 2} satisfy the packing condition}\label{subq2}
We fix $Q\in 
\sbf'$ as in the statement of {Case 2}, so that in particular,
$\ell(Q)\le \eps^{10} \,\ell(Q_0)$.
 Recall,
as we observed in Lemma \ref{refp++}
(see also \eqref{eq6.5ll}), that there exists $a > 0$ independent of $\eps$ such that
\[|\nabla_X \widehat u(Y_Q^*, s_Q^*)| \ge 2a.\]
Hence, by \eqref{eq5.2} (perhaps enforcing further smallness on $\eps$) we have
\[ |\nabla_X \widehat u(Z,\tau)| \ge a, \quad \mbox{whenever } (Z,\tau)\in C_{X,t},\ (X,t)\in \tilde U^i_{Q}.\]
As a consequence, if $(X,t)\in \tilde U^i_{Q}$, then combining the latter estimate
and \eqref{upperb}, 
we see that for some $\zeta =\zeta_{\eps,L} \ll 1$ sufficiently small, depending
only on $\eps$, $L$ and allowable parameters,
\[
|\nabla_X \widehat u(Z,\tau)| \ge a/2, \quad  \forall (Z,\tau) \in \mathcal{N}_{X,t, \eps}^{(1)}\,,
\]
where
\begin{equation}\label{N1def}
\mathcal{N}_{X,t, \eps}^{(1)} := \{(Z,\tau): \dist(Z,\tau, C_{X,t}) < \zeta \ell(Q)\}
\subset U_Q^*\,,
\end{equation}
 (see \eqref{Ustar} and \eqref{Ustar2}). Consequently, using interior estimates for adjoint solutions
 we have
\begin{equation}\label{case2nondegen.eq}
 \widehat u(Z,\tau) \gtrsim_{\eps,L} \ell(Q),
 \quad \forall (Z,\tau) \in \mathcal{N}_{X,t, \eps}^{(2)},
 \end{equation}
 where for the same $\zeta$ as in \eqref{N1def},
\begin{equation}\label{N2def}
\mathcal{N}_{X,t, \eps}^{(2)} := \{(Z,\tau): \dist(Z,\tau, C_{X,t}) < \zeta^2 \ell(Q)\}.
\end{equation}

By the definition of {Case 2} there exist $(X,t)\in \tilde U^i_{Q}$ and  $(Z,\tau)\in C_{X,t}$ such that
\begin{equation*}
 |\widehat u(Z,\tau)-\widehat u(X,t)-\nabla_X\widehat u(X,t)\cdot(Z-X)|>\eps^{2L}\ell(Q).
\end{equation*}
Using this and the triangle inequality
\begin{equation}\label{case2prelimest.eqML}
\eps^{2L}\ell(Q) \le  I_1 + I_2,
\end{equation}
where
\begin{equation*} 
I_1:= |\partial_t \widehat u(X,t)(\tau - t)|, \quad
I_2:=|\widehat u(Z,\tau)-\widehat u(X,t)- \partial_t \widehat  u(X,t)(\tau - t) -\nabla_X\widehat u(X,t)\cdot(Z-X)|.
\end{equation*}

We now make the following claim.

\noindent{\bf Claim}:  There is a point $(\widehat Y,\widehat s\,)\in C_{X,t}$, a cylinder
$C(\widehat  Y,\widehat  s,\zeta^2\ell(Q))\subset \mathcal{N}_{X,t, \eps}^{(2)}\subset U_Q^*$, and a uniform positive constant
$a_{\eps,L}$ such that for $\zeta>0$ sufficiently small, depending
only on $\eps$, $L$ and allowable parameters
\begin{equation}  \label{grad2lower}
|\nabla_Y^2 \widehat{u} (Y,s)| \, \ell(Q) \, \geq a_{\eps,L} \,, 
\quad \forall (Y,s) \in C(\widehat{Y}, \widehat{s}, \zeta^2 \ell(Q))\,,
\end{equation}
and
\begin{equation} \label{ulowerbound2}
 \frac{\widehat{u}(Y,s)}{\ell(Q)}  \,\geq \,a_{\eps,L} \,, 
\quad \forall (Y,s) \in C(\widehat{Y}, \widehat{s}, \zeta^2 \ell(Q))\,.
\end{equation}

Taking the claim for granted momentarily, we observe
that the packing property of the cubes in Case 2 now follows almost
exactly as in Case 1:  indeed,
estimates \eqref{grad2lower} and  \eqref{ulowerbound2} are essentially similar to
\eqref{c1dnab2lgnhd.eq} and \eqref{ulowerbound}, and the respective cylinders
under consideration are each contained in $U_Q^*$.

Thus, it suffices to prove the claim.  Note first that
for every $(\widehat Y,\widehat s\,)\in C_{X,t}$, we have by construction
(see \eqref{N1def} and \eqref{N2def}) that
$C(\widehat  Y,\widehat  s,\zeta^2\ell(Q))\subset \mathcal{N}_{X,t, \eps}^{(2)}\subset U_Q^*$,
and hence that \eqref{ulowerbound2} holds by \eqref{case2nondegen.eq}.
We therefore need only find a point $(\widehat Y,\widehat s\,)\in C_{X,t}$
verifying \eqref{grad2lower}.

Suppose first that $I_1\,\geq\,\eps^{2L}\,\ell(Q)/2$.
Since $|t- \tau| \lesssim_{\eps,L} \ell(Q)^2$, and $\widehat u$ is an adjoint solution,
we have that
\begin{equation}\label{case2I1est.eq}
\eps^{2L}\,\ell(Q)\lesssim I_1\,=\, |\Delta \widehat u(X,t)||\tau - t|  \,
\lesssim_{\eps,L} \,|\nabla^2_X\widehat u(X,t)|\,\ell(Q)^2\,.
\end{equation}
Consequently, setting $(\widehat Y,\widehat s\,)=(X,t)$, and using
\eqref{upperb}, we see that \eqref{grad2lower} holds in the present scenario,
for $\zeta$ chosen
suitably small.
Thus, the claim holds if $I_1\,\geq\,\eps^{2L}\,\ell(Q)/2$.


On the other hand, if $I_1\,<\,\eps^{2L}\,\ell(Q)/2$, then necessarily
$I_2\,\geq\,\eps^{2L}\,\ell(Q)/2$,
by \eqref{case2prelimest.eqML}.
In this case, we use Taylor's theorem to obtain
\[I_2 = |R(X,t,Z,\tau)|,\]
where $R(X,t,Z,\tau)$ denotes the Taylor remainder whose coefficients are second order derivatives, in $X$ and $t$, evaluated at a point on the line segment between $(X,t)$ and $(Z,\tau)$. In particular, since $C_{X,t}$ is convex, there exists $(\widehat Y, \widehat s\,) \in C_{X,t}$ such that
\begin{multline*}
\eps^{2L}\,\ell(Q)\,\lesssim \,I_2
\, \lesssim \, 
|\nabla_X^2 \widehat{u}(\widehat Y,\widehat s\,)|\,|Z - X|^2 +
|\partial_t \nabla_X \widehat{u}(\widehat Y,\widehat s\,)|\,|\tau-t||Z- X| +
|\partial^2_t \widehat{u}(\widehat Y,\widehat s)|\,|\tau-t|^2
 \\[4pt]
\lesssim_{\eps,L}
|\nabla_X^2 \widehat{u}(\widehat Y,\widehat s\,)|\,\ell(Q)^2 +
|\partial_t \nabla_X \widehat{u}(\widehat Y,\widehat s\,)|\,\ell(Q)^3 +
|\partial^2_t \widehat{u}(\widehat Y,\widehat s\,)|\,\ell(Q)^4
\end{multline*}
For $\zeta$ small enough, the latter estimate plus \eqref{upperb} then
yields the bound
\[
1\,\lesssim_{\eps,L}\,
|\nabla_Y^2 \widehat{u}(Y,s)|\,\ell(Q) +
|\partial_s \nabla_Y \widehat{u}(Y,s)|\,\ell(Q)^2 +
|\partial^2_s \widehat{u}(Y,s)|\,\ell(Q)^3\,,\quad (Y,s)\in C(\widehat{Y}, \widehat{s}, \zeta \ell(Q))\,,
\]
and in turn, we obtain \eqref{grad2lower}
from standard interior estimates for adjoint solutions.
This concludes the treatment of the cubes in {Case 2}.

\subsection{Analysis of cubes in  {Case 3}: technical steps}\label{subq3}

Cubes in {Case 3} remain to be treated. Here our goal is to show that the cubes in {Case 3} satisfy the WHSA condition. In this subsection we carry out some preliminary analysis of the {Case 3} cubes.
Modifying as needed to treat the parabolic case, we
follow the corresponding arguments in \cite{HLMN}, which in turn are based on those of \cite{LV}.

Recall that if $Q\in \sbf'$ is a cube belonging to {Case 3}, then $\ell(Q)\le \eps^{10}\,\ell(Q_0)$, and \eqref{eq5.2a} and \eqref{eq5.3+a} hold. We also recall that
$\widehat u$ is the normalized Green function with pole far into the future, see Lemma \ref{nondegcubelem.lem} part $(a)$,  and that $\widehat u$ solves the adjoint heat equation. In the following, the implicit constants are allowed to depend on  the allowable parameters and $K_0$, unless otherwise stated, but not on $\eps$ or $L$. Dependence on $\eps$ and $L$ will be stated explicitly. Recall the cylinders $C_{Y,s}$ and $\widetilde C_{Y,s}$, defined in \eqref{eq5.1}.

Consider
$(\widehat Z,\widehat \tau\,)\in \partial\widetilde  C_{Y,s}\cap \Sigma$.
Then, by our assumptions and Lemma \ref{touchp},
$$
(\widehat Z,\widehat \tau\,)\in \partial B(Y,d)\times [s-d^2,s+d^2],
$$
where $d=\delta(Y,s)$. Given $\widehat Z$ and $Y$, we let $\tilde Z$ denote the radial projection, along the line defined by $\widehat Z$ and $Y$, of
$\widehat Z $ onto $\partial B(Y, (1-\eps^{2L/\alpha})d)$.  Furthermore, if
$$
\widehat \tau\in [s-(1-\eps^{2L/\alpha})^2d^2,s+(1-\eps^{2L/\alpha})^2d^2],
$$
then we define $\tilde\tau:=\widehat\tau$. If
\begin{equation}\label{hattauinterval}
\widehat \tau\notin [s-(1-\eps^{2L/\alpha})^2d^2,s+(1-\eps^{2L/\alpha})^2d^2],
\end{equation}
then we simply set $\tilde \tau$ equal to the closest
endpoint of the interval in \eqref{hattauinterval}; e.g., if
\[
s+(1-\eps^{2L/\alpha})^2d^2 <\hat \tau \leq s+d^2\,,
\]
then we set $\tilde \tau := s+(1-\eps^{2L/\alpha})^2d^2$.
This construction maps   $(\widehat Z,\widehat \tau)\in \partial\widetilde C_{Y,s}\cap\Sigma$ to a point
$(\tilde Z,\tilde \tau)\in C_{Y,s}$ and
\begin{align}\label{distf}
\dist(\widehat Z,\widehat \tau,\tilde Z,\tilde \tau)\lesssim \eps^{2L/\alpha}d=\eps^{2L/\alpha}\delta(Y,s).
\end{align}

\begin{lemma}\label{l5.14}  Consider $(Y,s)\in U^i_Q$, $(X,t)\in \tilde U^i_Q$,
$(\widehat Z,\widehat \tau\,)\in \partial\widetilde  C_{Y,s}\cap \Sigma$, and
let $(\widehat Z,\widehat \tau\,)$ be mapped to $(\tilde Z,\tilde \tau)\in C_{Y,s}$ as above. Then
\begin{equation}\label{eq5.15a}
\widehat u(\tilde Z,\tilde \tau)\lesssim \eps^{2L-5} \delta(Y,s)\,.
\end{equation}
Similarly, if  $(\widehat Z,\widehat \tau\,)\in \partial\widetilde  C_{X,t}\cap \Sigma$, and
$(\widehat Z,\widehat \tau\,)$ is mapped to $(\tilde Z,\tilde \tau)\in C_{X,t}$ through the construction above, then
\begin{equation}\label{eq5.16a}
\widehat u(\tilde Z,\tilde \tau)
\lesssim
\eps^{2L-5} \ell(Q)\,.
\end{equation}
\end{lemma}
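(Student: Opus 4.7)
The strategy is to combine the boundary H\"older continuity of the adjoint-caloric Green function $\widehat u$ (Lemma~\ref{continuity}, adjoint form as in Remark~\ref{remarkcorollary}) with the crude averaged $L^\infty$ bound provided by Lemma~\ref{lemma:G-aver}, both applied centered at the touching boundary point $(\widehat Z,\widehat\tau)\in\Sigma$. By the construction preceding the lemma (together with \eqref{distf}), the parabolic distance from $(\tilde Z,\tilde\tau)$ to $(\widehat Z,\widehat\tau)$ is $\lesssim \eps^{2L/\alpha}\delta(Y,s)$; since $(\widehat Z,\widehat\tau)\in \Sigma$, this forces
\[
\delta(\tilde Z,\tilde\tau)\lesssim \eps^{2L/\alpha}\delta(Y,s).
\]

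Setting $r:=\delta(Y,s)/200$, I would apply Lemma~\ref{continuity} at $(\widehat Z,\widehat\tau)$ at scale $r$; its hypothesis is met because $\widehat u = G(Y_\sbf,s_\sbf,\cdot,\cdot)\,\sigma(Q_0)/\omega^{(Y_\sbf,s_\sbf)}(Q_0)$ is a non-negative bounded adjoint solution in $\Omega\cap C(\widehat Z,\widehat\tau,2r)$ vanishing continuously on the corresponding surface cylinder, with the pole $(Y_\sbf,s_\sbf)$ safely placed in the future by item~$(c.1)$ of Lemma~\ref{initialcorona1.lem}. This yields
\[
\widehat u(\tilde Z,\tilde\tau)\,\lesssim\,\Bigl(\frac{\delta(\tilde Z,\tilde\tau)}{r}\Bigr)^{\!\alpha}\sup_{\Omega\cap C(\widehat Z,\widehat\tau,2r)}\widehat u\,\lesssim\,\eps^{2L}\sup_{\Omega\cap C(\widehat Z,\widehat\tau,2r)}\widehat u.
\]
For the supremum, I would apply Lemma~\ref{lemma:G-aver} at $(\widehat Z,\widehat\tau)$ at scale $2r$ (again permissible thanks to item $(c.1)$ of Lemma~\ref{initialcorona1.lem} and $\ell(Q)\leq\eps^{10}\ell(Q_0)$) to obtain $\sup\widehat u\lesssim r^{-n}\mu(\Delta(\widehat Z,\widehat\tau,cr))$. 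Since $(\widehat Z,\widehat\tau)$ lies within parabolic distance $\lesssim_{K_0}\ell(Q)$ of $Q$ (as $(Y,s)\in U^i_Q$) and $r\approx\ell(Q)$, one picks an ancestor $\widetilde Q\in\sbf$ of $Q$ with $\ell(\widetilde Q)\approx\ell(Q)$ (available via the semi-coherence of $\sbf$ and the fact that $\ell(Q)\le\eps^{10}\ell(Q_0)\ll\ell(Q_0)$) such that $\Delta(\widehat Z,\widehat\tau,cr)\subset\widetilde Q$. Lemma~\ref{doublingintree.lem} then gives $\mu(\widetilde Q)\lesssim\sigma(\widetilde Q)\approx\ell(Q)^{n+1}$ with constants independent of $\eps$, whence $\sup\widehat u\lesssim\delta(Y,s)$, and \eqref{eq5.15a} follows (in fact with the stronger exponent $\eps^{2L}$).

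The proof of \eqref{eq5.16a} proceeds identically with $(Y,s)$ replaced by $(X,t)\in \tilde U^i_Q$ and the scale $r\approx \delta(X,t)$. Since $\eps^3\ell(Q)\lesssim \delta(X,t)\lesssim\eps^{-3}\ell(Q)$ and $\tilde U^i_Q$ has parabolic diameter $\lesssim \eps^{-4}\ell(Q)$, one picks $\widetilde Q\in\sbf$ of size $\approx \max(r,\eps^{-4}\ell(Q))$ containing $\Delta(\widehat Z,\widehat\tau,cr)$; this ancestor is in $\sbf$ because $\eps^{-4}\ell(Q)\le \eps^6\ell(Q_0)\ll\ell(Q_0)$. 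The same calculation yields
\[
\widehat u(\tilde Z,\tilde\tau)\,\lesssim\,\eps^{2L}\delta(X,t)\,\lesssim\,\eps^{2L-3}\ell(Q)\,\le\,\eps^{2L-5}\ell(Q),
\]
using $\eps<1$. The only technical subtlety is that $(\widehat Z,\widehat\tau)$ need not lie in $Q$ itself; this is resolved by the ancestor construction, and the slack $\eps^{-5}$ in the statement comfortably absorbs the $\eps^{-3}$ loss from the range of $\delta(X,t)$ together with any dilation factors from the geometry of $\tilde U^i_Q$.
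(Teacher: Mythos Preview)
Your strategy is exactly the paper's: boundary H\"older decay for the adjoint Green function, combined with Lemma~\ref{lemma:G-aver} and the $\mu/\sigma$ comparability from Lemma~\ref{doublingintree.lem}. The paper executes it slightly differently, centering everything at the cube center $(X_Q,t_Q)$ and working at the single large scale $\eps^{-5}\ell(Q)$ (applying Corollary~\ref{continuity2} on $C_Q^{**}:=C(X_Q,t_Q,\eps^{-5}\ell(Q))$), which produces the factor $\eps^{2L+2\alpha}\cdot\eps^{-5}\ell(Q)$; your choice of the local scale $r\approx\delta(Y,s)$ centered at $(\widehat Z,\widehat\tau)$ is equally valid and indeed yields the slightly sharper $\eps^{2L-3}$.

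One small slip: the containment $\Delta(\widehat Z,\widehat\tau,cr)\subset\widetilde Q$ for a dyadic ancestor $\widetilde Q$ of $Q$ is not guaranteed --- even a large ancestor need not contain a surface neighborhood of $Q$, since $Q$ may sit at the boundary of $\widetilde Q$. The fix is immediate and is precisely why the paper centers at $(X_Q,t_Q)$: simply enlarge to $\Delta(\widehat Z,\widehat\tau,cr)\subset\Delta(X_Q,t_Q,c'\eps^{-4}\ell(Q))$ and apply Lemma~\ref{doublingintree.lem} directly with center $(X_Q,t_Q)\in Q\in\sbf$; the required containment in $2Q_0$ holds because $\eps^{-4}\ell(Q)\le\eps^{6}\ell(Q_0)\ll\diam(Q_0)$. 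With that correction your argument goes through.
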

\begin{proof}  As $K_0^{-1}\ell(Q)\lesssim \delta(Y,s)\lesssim K_0\, \ell(Q)$ whenever $(Y,s)\in U_Q^i$, it is enough to prove
\eqref{eq5.16a}. To prove \eqref{eq5.16a}, we first
note that
\begin{align}\label{distf+}
\delta(\tilde Z,\tilde \tau)\leq
\dist(\widehat Z,\widehat \tau,\tilde Z,\tilde \tau)\lesssim \eps^{2L/\alpha}\delta(X,t) \lesssim   \eps^{2L/\alpha}\eps^{-3}\ell(Q),
\end{align}
by \eqref{distf} (with $(X,t)$ in place of $(Y,s)$), and the fact that
by the construction of $\tilde U^i_Q$,
\begin{equation}\label{eq5.27a}
\eps^3\ell(Q)\lesssim \delta(X,t)
\lesssim \eps^{-3}\ell(Q),\quad \forall \, (X,t)\in\tilde U_Q^i.
\end{equation}
In addition, again by the construction of $\tilde U_Q^i$,
\begin{equation}\label{eq5.29a}
\diam(\tilde U_Q^i)\lesssim \eps^{-4}\ell(Q)\,.
\end{equation}

Let $C_Q^{**}:=C(X_Q,t_Q, \eps^{-5}\ell(Q))$. Using that $\ell(Q)\le \eps^{10}\,\ell(Q_0)$, we see that $\widehat u$ solves the adjoint heat equation in $C_Q^{**}\cap\Omega$.  Recall also that
$\widehat u$ is a normalized Green function, whose pole is a time forward corkscrew point relative to
$Q_0$ (see \eqref{normgreeb} and Lemma \ref{initialcorona1.lem}).
By construction,
$(\tilde Z,\tilde \tau)\in \frac12 C_Q^{**}$, so
using Corollary \ref{continuity2} and \eqref{distf+}, we deduce
\begin{align*}
\widehat u(\tilde Z,\tilde \tau) &\lesssim  \left(\frac{\eps^{2L/\alpha}\eps^{-3}\ell(Q)}{\eps^{-5}\ell(Q)}\right)^\alpha \bariiint_{C_Q^{**}}\, \widehat u(X,t)\,\d X\d t\lesssim \eps^{2L+2\alpha}  \bariiint_{C_Q^{**}}\, \widehat u(X,t)\,\d X\d t.
\end{align*}
Furthermore, using Lemma \ref{lemma:G-aver},
the fact that $Q\in \mathbb{D}^{*}_{\F,Q_0}$
(hence, $\ell(Q) \le \eps^{10} \ell(Q_0)$), and then
Lemma \ref {doublingintree.lem}/\eqref{muaveragebound} with $a=1$, we obtain
\begin{align*}
 \bariiint_{C_Q^{**}}\, \widehat u(X,t)\,\d X\d t\lesssim \eps^{-5}\ell(Q)\,\frac{\mu(\Delta(X_Q,t_Q,c\eps^{-5}\ell(Q)))}{\sigma(\Delta(X_Q,t_Q,c\eps^{-5}\ell(Q))}
 \lesssim \eps^{-5}\ell(Q)\,.
\end{align*}
Hence, putting the estimates together we deduce
\begin{align*}
\widehat u(\tilde Z,\tilde \tau) &\lesssim \eps^{2L+2\alpha-5}\ell(Q) \leq  \eps^{2L -5}\ell(Q)\,.
\end{align*}
\end{proof}

By the definition of {Case 3} we have
\begin{equation*}
\sup_{(X,t)\in \tilde U^i_{Q}}\,\sup_{(Z,\tau)\in C_{X,t}} |\widehat u(Z,\tau)-\widehat u(X,t)-\nabla_X\widehat u(X,t)\cdot(Z-X)|\leq \eps^{2L}\ell(Q).
\end{equation*}
In particular, if $(Y,s)\in U^i_Q$, $(\widehat Z,\widehat \tau\,)\in \partial\widetilde C_{Y,s}\cap \Sigma$, and if $(\widehat Z,\widehat \tau)$ is mapped to $(\tilde Z,\tilde \tau)\in C_{Y,s}$ as above, then
\begin{equation}\label{eq6.15}
|\widehat u(\tilde Z,\tilde \tau) -\widehat u(Y,s) -\nabla_X \widehat u(Y,s) \cdot(\tilde Z-Y)|\leq \eps^{2 L}\ell(Q)\lesssim \eps^{2 L} \delta(Y,s),
\end{equation}
where we in the last step have used that  $l(Q)\lesssim K_0\,\delta(Y,s)$ by the construction of
$U_Q$ (see \eqref{eq2.1} and \eqref{eq2.3}).
Note that $\widehat u(\widehat Z,\widehat \tau\,)=0$, since $(\widehat Z,\widehat \tau\,)\in\Sigma$,
and that by Lemma \ref{refp+++},
\begin{align}\label{reffa}
|\nabla_X \widehat u(Y,s)|\lesssim 1.
\end{align}
We then observe that
\begin{multline*}
|\widehat u(\widehat Z,\widehat \tau\,)-\widehat u(Y,s) -\nabla_X \widehat u(Y,s) \cdot(\widehat Z-Y)|
\\ \leq \, \widehat u(\tilde Z,\tilde \tau)\,+\,
|\widehat u(\tilde Z,\tilde \tau)-\widehat u(Y,s) -\nabla_X \widehat u(Y,s) \cdot(\tilde Z-Y)|
 \,+\, |\nabla_X \widehat u(Y,s)|\, |\widehat Z-\tilde Z|\\
 \lesssim\,  \eps^{2L-5} \delta(Y,s)+ \eps^{2 L} \delta(Y,s)+\eps^{2 L/\alpha} \delta(Y,s)\lesssim
 \eps^{2 L-5} \delta(Y,s)\,,
\end{multline*}
where in the next-to-last step we have used Lemma \ref{l5.14},
\eqref{eq6.15}, \eqref{reffa}, and \eqref{distf}.

Thus, we have proved that if $(Y,s)\in U^i_Q$, $(\widehat Z,\widehat \tau\,)\in
\partial \widetilde C_{Y,s}\cap \Sigma$, then
 \begin{align}\label{eq6.17}
 |\widehat u(\widehat Z,\widehat \tau\,)-\widehat u(Y,s)
 -\nabla_X \widehat u(Y,s) \cdot(\widehat Z-Y)|\lesssim \eps^{2L-5} \delta(Y,s).
\end{align}

From now on, we let $(Y,s)$ be the reference point $(Y_Q^*,s_Q^*) \in U_Q^i$ introduced in Lemma \ref{refp++}, so that by
\eqref{eq6.5},
\begin{equation}\label{eq6.5+}
 |\nabla_X \widehat u(Y_Q^*,s_Q^*)| \approx 1.
\end{equation}
Using \eqref{eq6.5+}  and \eqref{eq5.2a} we deduce,  for $\eps\ll 1$  and $L$ large enough, that
\begin{equation}\label{eq6.5aa}
|\nabla_X \widehat u(Z,\tau)| \approx 1,\qquad \forall\, (Z,\tau)\in \tilde U^i_Q\,.
\end{equation}
Let $(\widehat Z,\widehat \tau)\in \partial\widetilde C_{Y_Q^*,s_Q^*}\cap \Sigma$.
After a possible translation in all coordinates, and a rotation in the spatial coordinates,
we may assume that $(\widehat Z,\widehat \tau)=(0,0)\in \partial\widetilde C_{Y_Q^*,s_Q^*}\cap \Sigma$, and that $Y_Q^*/|Y_Q^*|=(1,0,...,0)=:e_{\perp}\subset \rn$
(recall that when we wish to distinguish a particular ``vertical" direction in space,
we use the notation $X=(x_0,x_1,...,x_{n-1})$, with $x_0$ viewed as the distinguished variable).
In the following, we will use the notation
 $d_Q:=\delta(Y_Q^*,s_Q^*)$ and we note that by construction $Y_Q^*=d_Q e_{\perp}$. We first prove the following lemma.

\begin{lemma}\label{claim6.19}  We have
\begin{equation}\label{eq6.20}
\big|\, \nabla_X\widehat u(Y_Q^*,s_Q^*)\cdot e_{\perp} -|\nabla_X\widehat u(Y_Q^*,s_Q^*)|\,\big|\lesssim \eps^{2L-5}\,.
\end{equation}
\end{lemma}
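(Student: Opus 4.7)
The plan is to combine two pieces of information about $v := \nabla_X \widehat u(Y_Q^*, s_Q^*)$: the linear approximation of $\widehat u$ vanishes at the touch point $(0,0)$, which pins down $v\cdot e_\perp$; and $\widehat u \ge 0$ in $\Omega$, which, via the approximate linearity of $\widehat u$, forces $v$ to be nearly parallel to $e_\perp$.

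First I would apply \eqref{eq6.17} at $(\widehat Z, \widehat \tau) = (0,0) \in \partial \widetilde C_{Y_Q^*, s_Q^*} \cap \Sigma$ with $(Y,s) = (Y_Q^*, s_Q^*)$. Since $\widehat u(0,0) = 0$ and $Y_Q^* = d_Q e_\perp$, this immediately yields
\begin{equation}\label{eq:planA}
\widehat u(Y_Q^*, s_Q^*) = d_Q \, (v \cdot e_\perp) + O(\eps^{2L-5} d_Q).
\end{equation}

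To produce a complementary estimate featuring $|v|$, I would probe $\widehat u$ at the interior point $P := Y_Q^* - (1 - \eps^{2L/\alpha}) d_Q\, v/|v|$, which is well-defined since $|v| \approx 1$ by \eqref{eq6.5+}. By construction $(P, s_Q^*) \in C_{Y_Q^*, s_Q^*}$, and moreover $(P, s_Q^*)$ lies in $B(Y_Q^*, d_Q) \times \{s_Q^*\} \subset C(Y_Q^*, s_Q^*, d_Q)$. By the definition of $\delta(Y_Q^*, s_Q^*) = d_Q$, this open parabolic cylinder is disjoint from $\Sigma$, and hence, being connected and containing $(Y_Q^*, s_Q^*) \in \Omega$, is contained in $\Omega$; in particular $\widehat u(P, s_Q^*) \ge 0$. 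Applying the spatial linearization \eqref{eq5.3+a} at $(X,t) = (Y_Q^*, s_Q^*)$ and $(Z, \tau) = (P, s_Q^*)$, and computing $v \cdot (P - Y_Q^*) = -(1 - \eps^{2L/\alpha}) d_Q |v|$, I would obtain
$$0 \le \widehat u(P, s_Q^*) \le \widehat u(Y_Q^*, s_Q^*) - (1 - \eps^{2L/\alpha}) d_Q |v| + O(\eps^{2L} \ell(Q)),$$
which, combined with $\ell(Q) \lesssim d_Q$ (Lemma \ref{refp++}), gives
\begin{equation}\label{eq:planB}
\widehat u(Y_Q^*, s_Q^*) \ge (1 - \eps^{2L/\alpha}) d_Q |v| - O(\eps^{2L} d_Q).
\end{equation}

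Comparing \eqref{eq:planA} with \eqref{eq:planB} and dividing by $d_Q$ will produce $v \cdot e_\perp \ge (1 - \eps^{2L/\alpha}) |v| - O(\eps^{2L-5})$. Using $|v| \lesssim 1$ from Lemma \ref{refp+++}, together with $\eps^{2L/\alpha} \le \eps^{2L-5}$ (which holds since $0 < \alpha < 1$ gives $2L/\alpha > 2L \ge 2L-5$ for $L \ge 3$), this upgrades to $v \cdot e_\perp \ge |v| - O(\eps^{2L-5})$; combined with the trivial bound $v \cdot e_\perp \le |v|$, the lemma follows. The main (but minor) technical point to verify is that $(P, s_Q^*) \in \Omega$, which rests on the characterization of $\delta$ as the smallest radius for which the closed cylinder meets $\Sigma$, together with the connectedness of the open cylinder.
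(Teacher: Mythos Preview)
Your proof is correct and takes essentially the same approach as the paper: both use \eqref{eq6.17} at the touch point $(0,0)$ to obtain $\widehat u(Y_Q^*,s_Q^*)\approx d_Q(v\cdot e_\perp)$, then probe at the point $Y_Q^* - (1-\eps^{2L/\alpha})d_Q\, v/|v|$ (your $P$ is exactly the paper's $Z^\ast$) and combine the Case~3 linearization with $\widehat u\ge 0$ there to get the complementary inequality. The only differences are organizational: the paper packages the first two ingredients into the intermediate estimate \eqref{eq6.21} before specializing to $Z^\ast$, whereas you combine them directly; and you make explicit the (easy) verification that $(P,s_Q^*)\in\Omega$.
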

\begin{proof} Applying \eqref{eq6.17}, with $\widehat u(\widehat Z,\widehat \tau\,) =0$ (since
$(\widehat Z,\widehat \tau\,)\in\Sigma)$, and with
$(\widehat Z,\widehat \tau\,)=(0,0)$, we obtain
\begin{align}\label{eqimp}
|\widehat u(Y_Q^*,s_Q^*) -\nabla_X \widehat u(Y_Q^*,s_Q^*)\cdot Y_Q^*|\lesssim
\eps^{2L-5}d_Q.
\end{align}
Consider $(Z,\tau)\in C_{Y_Q^*,s_Q^*}$.
By the definition of {Case 3} and the construction of $U^i_Q$ we have
\begin{equation*}
	 |\widehat u(Z ,\tau )-\widehat u(Y_Q^*,s_Q^*)-\nabla_X\widehat u(Y_Q^*,s_Q^*)\cdot(Z -Y_Q^*)|
	 \leq
	 \eps^{2L}\ell(Q)
	 \approx
	 \eps^{2L}\dist(Y_Q^*,s_Q^*)
	 =
	 \eps^{2L}d_Q.
\end{equation*}
This estimate and \eqref{eqimp} yield
\begin{equation}\label{eq6.21}
|\widehat u(Z ,\tau )  -\nabla_X \widehat u(Y_Q^*,s_Q^*)\cdot Z |
\,\lesssim \, \eps^{2L-5}d_Q \,,\qquad \forall \, (Z,\tau)\in C_{Y_Q^*,s_Q^*}\,.
\end{equation}

In particular, we now fix $(Z,\tau) = (Z^\ast ,\tau^\ast )
\in C_{Y_Q^*,s_Q^*}$, with $Z^\ast \in \partial B(Y_Q^*,(1-\eps^{2L/\alpha})d_Q)$
chosen so that
\begin{align}\label{Zastdef}
\nabla_X \widehat u(Y_Q^*,s_Q^*)\cdot\frac {(Z^\ast -Y_Q^*)}{|Z^\ast -Y_Q^*|}=-|\nabla_X \widehat u(Y_Q^*,s_Q^*)|.
\end{align}
That is, $Z^\ast $ is a point on $\partial B(Y_Q^*,(1-\eps^{2L/\alpha})d_Q)$ such that $(Z^\ast -Y_Q^*)/{|Z^\ast -Y_Q^*|}$ points in the direction opposite to $\nabla_X \widehat u(Y_Q^*,s_Q^*)/|\nabla_X \widehat u(Y_Q^*,s_Q^*)|$.
By \eqref{Zastdef}, 
and the fact that $\widehat u\geq 0$,
\begin{multline*}
0\leq \,|\nabla_X \widehat u(Y_Q^*,s_Q^*)|-\nabla_X \widehat u(Y_Q^*,s_Q^*)\cdot e_{\perp}
\\
\leq \,-\nabla_X \widehat u(Y_Q^*,s_Q^*)\cdot\frac {(Z^\ast -Y_Q^*)}{|Z^\ast -Y_Q^*|}\,-\,\nabla_X \widehat u(Y_Q^*,s_Q^*)\cdot e_{\perp} \,+\,\frac{\widehat u(Z^\ast ,\tau^\ast )}{d_Q}\,.
\end{multline*}
Using that
$|Z^\ast -Y_Q^*|= (1-\eps^{2L/\alpha})d_Q$ and $Y_Q^*=d_Q e_{\perp}$,  
we rewrite the last estimate as
\begin{multline*}
0\leq \,|\nabla_X \widehat u(Y_Q^*,s_Q^*)|-\nabla_X \widehat u(Y_Q^*,s_Q^*)\cdot e_{\perp}
\\[4pt] 
\leq \, \frac1{d_Q}\left(-\nabla_X \widehat u(Y_Q^*,s_Q^*)\cdot\frac {(Z^\ast -Y_Q^*)}{1-\eps^{2L/\alpha}}-\nabla_X\widehat u(Y_Q^*,s_Q^*)\cdot Y_Q^* +\widehat u(Z^\ast ,\tau^\ast )\right)
\\ \lesssim \, \eps^{2L-5} \,+\, \eps^{2L/\alpha}\, \lesssim \, \eps^{2L-5}\,,
\end{multline*}
by \eqref{eq6.21} and \eqref{eq6.5}.
\end{proof}

Now set
$$
\nabla_\| \widehat u(Y_Q^*,s_Q^*):= \nabla_X \widehat u(Y_Q^*,s_Q^*) - (\nabla_X \widehat u(Y_Q^*,s_Q^*)\cdot e_{\perp}) e_{\perp}.
$$
Then
\begin{multline*}
|\nabla_\| \widehat u(Y_Q^*,s_Q^*)|^2
=
|\nabla_X\widehat u(Y_Q^*,s_Q^*)|^2 -\big(\nabla_X \widehat u(Y_Q^*,s_Q^*)\cdot e_{\perp}\big)^2\\
=
\big(|\nabla_X \widehat u(Y_Q^*,s_Q^*)|-\nabla_X \widehat u(Y_Q^*,s_Q^*)\cdot e_{\perp}\big)\,
\big(|\nabla_X \widehat u(Y_Q^*,s_Q^*)|+\nabla_X \widehat u(Y_Q^*,s_Q^*)\cdot e_{\perp}\big)
\lesssim
\eps^{2L-5},
\end{multline*}
by \eqref{eq6.20} and \eqref{eq6.5}.  
Combining the latter estimate with \eqref{eq6.20},
we see  that for $L>5/2$,
\begin{equation}\label{eq6.25}
\big|\, |\nabla_X \widehat u(Y_Q^*,s_Q^*)|e_{\perp}-\nabla_X \widehat u(Y_Q^*,s_Q^*)\,\big|
\lesssim
\eps^{L-\frac52}\,.
\end{equation}

With $(Y_Q^*,s_Q^*)=(d_Qe_{\perp},s_Q^*)\in U_{Q}^i$ still
fixed as above, we next consider an arbitrary  point
$(X,t)\in\tilde U_Q^i$. By definition of $\tilde U_Q^i$,
there is a polygonal path in $\tilde U^i_Q$, joining $(Y_Q^*,s_Q^*)$ to $(X,t)$, with vertices
$$(Y_0,s_0):=(Y_Q^*,s_Q^*), (Y_1,s_1),\dots,(Y_N,s_N):= (X,t),\qquad N\lesssim \eps^{-4},$$
such that 
$(Y_{k+1},s_{k+1})\in C_{Y_k,s_k}\cap C(Y_k,s_k,\ell(Q)),\, 0\leq k\leq N-1$, and such that
the distance between consecutive vertices is at most  $c\ell(Q)$. Indeed,
by definition of $\tilde U_Q^i$, we may connect $(Y_Q^*,s_Q^*)$ to $(X,t)$ by a polygonal
path connecting the centers of at most $\eps^{-1}$ Whitney cylinders, such that the distance between consecutive vertices 
is no larger than $\eps^{-3}\,\ell(Q)$.
If any such distance is greater than $\ell(Q)$, we
take at most $C\eps^{-3}$ intermediate vertices 
with distances on the order of $\ell(Q)$. The total length of the path is thus
on the order of $N\ell(Q)$  with $N\lesssim \eps^{-4}$. Note that since
$(Y_{k+1},s_{k+1})\in C_{Y_k,s_k}$, with $(Y_k,s_k)\in \tilde U_Q^i$, by \eqref{eq5.3+a} we have
\begin{equation}\label{eq5.3+ak}
|\widehat u(Y_{k+1},s_{k+1})-\widehat u(Y_k,s_k)-\nabla_X\widehat u(Y_k,s_k)
\cdot (Y_{k+1}-Y_k)|\leq \eps^{2L}\ell(Q)\,.
\end{equation}
Note also that for any
$(Y_*,s_*)\in \tilde U^i_Q$ and $(Y,s)\in C_{Y_*,s_*}$, we obtain from  \eqref{eq5.2a} and \eqref{eq6.25} that
\begin{multline}\label{eq6.26}
\big|\nabla_X\widehat u(Y,s) -|\nabla_X \widehat u(Y_Q^*,s_Q^*)|e_{\perp}\big|
\\[4pt]
 \leq |\nabla_X \widehat u(Y,s)-\nabla_X \widehat u(Y_Q^*,s_Q^*)| \,
+ \,\big|\nabla_X \widehat u(Y_Q^*,s_Q^*) -|\nabla_X \widehat u(Y_Q^*,s_Q^*)|e_{\perp}\big|
\\[4pt] \lesssim\,
\eps^{2L}+\eps^{L-\frac52}
\lesssim \eps^{L-3}\,.
\end{multline}

To proceed we prove the following lemma.

\begin{lemma}\label{claim6.27}
Assume $L>11$.  Let $(X,t)\in \tilde U_Q^i$, and define $\{Y_k,s_k\}_{k=0}^N$ as above.
Then for each $k=0,1,2,\dots,N$,
\begin{equation}\label{eq6.28}
\big|\widehat u(Y_{k},s_k)-|\nabla_X \widehat u(Y_Q^*,s_Q^*)|(Y_k\cdot e_{\perp})\big|  \le   (1+k)\,\eps^{L-7}\ell(Q)\,.
\end{equation}
Furthermore,
\begin{equation}\label{eq6.28a}
\big|\widehat u(Z,\tau)-|\nabla_X \widehat u(Y_Q^*,s_Q^*)|(Z\cdot e_{\perp})\big|  \le  \eps^{L-11}\ell(Q),
\qquad \forall\, (Z,\tau)\in C_{X,t}\,.
\end{equation}
\end{lemma}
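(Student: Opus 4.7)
I will establish \eqref{eq6.28} by induction on $k$, with the base case $k=0$ coming from the already-proven approximation of $\widehat u$ by its ``affine part'' at $(Y_Q^*,s_Q^*)$, and the inductive step coming from the local linear approximation \eqref{eq5.3+a} along the polygonal path, after replacing $\nabla_X\widehat u$ at the current vertex by $|\nabla_X\widehat u(Y_Q^*,s_Q^*)|e_\perp$ via \eqref{eq6.26}. The estimate \eqref{eq6.28a} then follows from \eqref{eq6.28} at $k=N$ combined with one more application of \eqref{eq5.3+a}, evaluated at the point $(X,t)=(Y_N,s_N)$ and $(Z,\tau)\in C_{X,t}$, together with another application of \eqref{eq6.26}.

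\textbf{Base case.} At $k=0$ we have $Y_0=Y_Q^*=d_Q e_\perp$, so $Y_0\cdot e_\perp = d_Q$. Start from \eqref{eqimp}, namely
\[|\widehat u(Y_Q^*,s_Q^*)-\nabla_X\widehat u(Y_Q^*,s_Q^*)\cdot Y_Q^*|\lesssim \eps^{2L-5}d_Q,\]
and rewrite $\nabla_X\widehat u(Y_Q^*,s_Q^*)\cdot Y_Q^* = d_Q\,\nabla_X\widehat u(Y_Q^*,s_Q^*)\cdot e_\perp$. The scalar \eqref{eq6.20} gives $|\nabla_X\widehat u(Y_Q^*,s_Q^*)\cdot e_\perp -|\nabla_X\widehat u(Y_Q^*,s_Q^*)||\lesssim \eps^{2L-5}$, so together with $d_Q\approx \ell(Q)$ this yields the base case with the much better bound $\lesssim \eps^{2L-5}\ell(Q)$, which is dominated by $\eps^{L-7}\ell(Q)$ once $\eps$ is small.

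\textbf{Inductive step.} Assume \eqref{eq6.28} at index $k$. By \eqref{eq5.3+ak},
\[|\widehat u(Y_{k+1},s_{k+1})-\widehat u(Y_k,s_k)-\nabla_X\widehat u(Y_k,s_k)\cdot(Y_{k+1}-Y_k)|\le \eps^{2L}\ell(Q).\]
Since $(Y_k,s_k)\in\tilde U_Q^i$ and $(Y_{k+1},s_{k+1})\in C_{Y_k,s_k}$, estimate \eqref{eq6.26} applies at $(Y_k,s_k)$, and $|Y_{k+1}-Y_k|\lesssim \ell(Q)$, so I may replace $\nabla_X\widehat u(Y_k,s_k)$ by $|\nabla_X\widehat u(Y_Q^*,s_Q^*)|e_\perp$ at cost $\lesssim \eps^{L-3}\ell(Q)$. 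Combining these and adding and subtracting $|\nabla_X\widehat u(Y_Q^*,s_Q^*)|(Y_k\cdot e_\perp)$, the induction hypothesis yields
\[\bigl|\widehat u(Y_{k+1},s_{k+1})-|\nabla_X\widehat u(Y_Q^*,s_Q^*)|(Y_{k+1}\cdot e_\perp)\bigr|\le \bigl(C\eps^{L-3} + (1+k)\eps^{L-7}\bigr)\ell(Q),\]
and the universal constant $C$ is absorbed by the factor $\eps^{-4}$ in $\eps^{L-7}=\eps^{-4}\cdot\eps^{L-3}$ (for $\eps$ small), giving $(2+k)\eps^{L-7}\ell(Q)$. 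This closes the induction.

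\textbf{From $\{Y_k\}$ to $C_{X,t}$.} Since $N\lesssim \eps^{-4}$, applying \eqref{eq6.28} at $k=N$ yields
\[\bigl|\widehat u(X,t)-|\nabla_X\widehat u(Y_Q^*,s_Q^*)|(X\cdot e_\perp)\bigr|\lesssim \eps^{-4}\eps^{L-7}\ell(Q)=\eps^{L-11}\ell(Q).\]
For $(Z,\tau)\in C_{X,t}$, the Case 3 hypothesis \eqref{eq5.3+a} gives $|\widehat u(Z,\tau)-\widehat u(X,t)-\nabla_X\widehat u(X,t)\cdot(Z-X)|\le \eps^{2L}\ell(Q)$, and \eqref{eq6.26} applied at $(X,t)$ (together with $|Z-X|\lesssim \ell(Q)$) replaces $\nabla_X\widehat u(X,t)$ by $|\nabla_X\widehat u(Y_Q^*,s_Q^*)|e_\perp$ up to $\lesssim \eps^{L-3}\ell(Q)$. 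Combining all three bounds gives \eqref{eq6.28a}. The main (modest) obstacle is simply bookkeeping the power losses: each gradient-replacement via \eqref{eq6.26} costs $\eps^{-3}$, and the $N\lesssim\eps^{-4}$ vertices cost another $\eps^{-4}$, which is precisely why the final estimates degrade from $\eps^{2L-5}$ to $\eps^{L-7}$ and $\eps^{L-11}$.
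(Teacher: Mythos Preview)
Your proof is correct and follows essentially the same inductive strategy as the paper, with only a cosmetic reorganization: the paper proves the slightly stronger claim that the estimate $|\widehat u(Z,\tau)-|\nabla_X\widehat u(Y_Q^*,s_Q^*)|(Z\cdot e_\perp)|\le(1+k)\eps^{L-7}\ell(Q)$ holds for \emph{all} $(Z,\tau)\in C_{Y_k,s_k}$ (from which both \eqref{eq6.28} and \eqref{eq6.28a} drop out directly), whereas you prove \eqref{eq6.28} pointwise and then do one extra step for \eqref{eq6.28a}. One small bookkeeping slip: in your final step you write ``$|Z-X|\lesssim\ell(Q)$'', but for $(Z,\tau)\in C_{X,t}$ you only have $|Z-X|\le\delta(X,t)\lesssim\eps^{-3}\ell(Q)$ by \eqref{eq5.27a}, so the gradient-replacement cost there is $\lesssim\eps^{L-6}\ell(Q)$ rather than $\eps^{L-3}\ell(Q)$; this is still harmless since $\eps^{L-6}\ll\eps^{L-11}$, so the conclusion is unaffected.
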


\begin{proof}
We claim that for every $0\le k\le N$, 
\begin{align}\label{eq6.29--kk:induc}
	\big |\widehat u(Z,\tau)-|\nabla_X \widehat u(Y_Q^*,s_Q^*)|(Z\cdot e_{\perp})\big |
\le (1+k)\,\eps^{L-7}\ell(Q)\,,\quad \forall \, (Z,\tau)\in C_{Y_k,s_k}\,.
\end{align}	
Assuming \eqref{eq6.29--kk:induc} momentarily, we obtain
 \eqref{eq6.28} at once since
$(Y_0,s_0):=(Y_Q^*,s_Q^*)$, and $(Y_{k},s_{k})\in C_{Y_{k-1},s_{k-1}}$ for
$1\le k\le N$. Applying \eqref{eq6.29--kk:induc} with $k=N$, we deduce \eqref{eq6.28a},
since $(X,t)=(Y_N,s_N)$ and $N\lesssim \eps^{-4}$.

It therefore suffices to verify \eqref{eq6.29--kk:induc}.	
We proceed by induction.   To treat the case $k=0$,
since $(Y_0,s_0):=(Y_Q^*,s_Q^*)$,
 we suppose that $(Z,\tau)\in C_{Y_Q^*,s_Q^*}$.
Set $Z_\perp:=Z\cdot e_{\perp}$. We write
\begin{multline}\label{eq6.29--}
\big|\widehat u(Z,\tau)-|\nabla_X \widehat u(Y_Q^*,s_Q^*)|Z_{\perp}\big|
\leq |\widehat u(Z,\tau)-\nabla_X \widehat u(Y_Q^*,s_Q^*)\cdot Z|\\
+\big|(\nabla_X \widehat u(Y_Q^*,s_Q^*)-|\nabla_X \widehat u(Y_Q^*,s_Q^*)|e_{\perp})\cdot Z\big|.
\end{multline}
Recall that by translation and spatial rotation, we have in particular that
$Y_Q^* =d_Q e_\perp$, so for $(Z,\tau)\in C_{Y_Q^*,s_Q^*}$, we have $|Z|\lesssim d_Q$.
Thus, combining \eqref{eq6.21} and \eqref{eq6.25},
and using that $\eps$ is small, we obtain the desired estimate in the case $k=0$:
\begin{align*}
\big|\widehat u(Z,\tau)-|\nabla_X \widehat u(Y_Q^*,s_Q^*)|Z_{\perp}\big|
\leq \eps^{2L-6}\,d_Q+\eps^{L-3}\,d_Q
\le \eps^{L-7}\ell(Q)\,,\quad (Z,\tau)\in C_{Y_0,s_0}\,.
\end{align*}

 Now let $1\leq k\leq N$, and suppose that  \eqref{eq6.29--kk:induc} holds for $k-1$.
Let $(Z,\tau)\in C_{Y_k,s_k}$. We set
 \[
 R(Z,\tau,Y_k,s_k):= \widehat u(Z,\tau)-\widehat u(Y_k,s_k)-\nabla_X\widehat u(Y_k,s_k)\cdot(Z-Y_k)\,,
 \]
 and note that $| R(Z,\tau,Y_k,s_k)| \leq \eps^{2L}\ell(Q)$
 by the definition of Case 3; see \eqref{eq5.3+a}.
Moreover, by construction, $(Y_k,s_k)\in C_{Y_{k-1},s_{k-1}}$, so
by the induction hypothesis
\begin{align*} 
|\widehat u(Y_k,s_k)-|\nabla_X \widehat u(Y_Q^*,s_Q^*)|(Y_k\cdot e_{\perp})|\le k\eps^{L-7}\ell(Q).
\end{align*}
 Thus,
  \begin{multline} \label{eq6.29-}
\big |\widehat u(Z,\tau)-|\nabla_X \widehat u(Y_Q^*,s_Q^*)|Z_{\perp}\big|
\\[4pt]
\leq \, | R(Z,\tau,Y_k,s_k)|
\,+\,\big |\widehat u(Y_k,s_k)+\nabla_X\widehat u(Y_k,s_k)\cdot(Z-Y_k) -
|\nabla_X \widehat u(Y_Q^*,s_Q^*)|Z_{\perp}\big|
\\[4pt]
\leq \eps^{2L}\ell(Q) + k\eps^{L-7}\ell(Q)
+\big |\nabla_X\widehat u(Y_k,s_k)\cdot(Z-Y_k) -
|\nabla_X \widehat u(Y_Q^*,s_Q^*)|(Z-Y_k)\cdot e_{\perp}\big|\,.
\end{multline}
We now apply \eqref{eq6.26} with $(Y_*,s_*)=(Y_{k-1},s_{k-1})$, and
$(Y,s)= (Y_k,s_k)$, to obtain
\begin{equation}\label{eq6.29--kk}
\big |\nabla_X\widehat u(Y_k,s_k)\cdot(Z-Y_k) -
|\nabla_X \widehat u(Y_Q^*,s_Q^*)|(Z-Y_k)\cdot e_{\perp}\big|
\,\lesssim \, \eps^{L-3} |Z-Y_k| \lesssim \eps^{L-6}\ell(Q)\,,
\end{equation}
since for $(Z,\tau)\in C_{Y_k,s_k}$, we have $|Z-Y_k|<\delta(Y_k,s_k)\lesssim \eps^{-3}\ell(Q)$,
by \eqref{eq5.27a}.
Plugging \eqref{eq6.29--kk} into \eqref{eq6.29-}, we conclude that for all $(Z,\tau)\in C_{Y_k,s_k}$,
\begin{equation*}
\big |\widehat u(Z,\tau)-|\nabla_X \widehat u(Y_Q^*,s_Q^*)|Z_{\perp}\big |
\le \eps^{2L}\ell(Q)+ k\eps^{L-7}\ell(Q)+ C\eps^{L-6}\ell(Q) \le (1+k)\eps^{L-7}\ell(Q)\,,
\end{equation*}
by choice of $\eps$ small enough.
\end{proof}

\begin{lemma}\label{claim6.32} Let $(X,t)\in \tilde U^i_Q$, and
let $(\widehat Z,\widehat\tau)\in \Sigma\cap \partial\widetilde C_{X,t}$.  Then
\begin{equation}\label{eq6.33}
|\nabla_X \widehat u(Y_Q^*,s_Q^*)|\, |\widehat Z_\perp|
\lesssim
\eps^{L/2}\ell(Q)\,.
\end{equation}
\end{lemma}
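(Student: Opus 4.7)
The plan is to reduce the statement about the boundary point $(\widehat Z,\widehat\tau)\in\Sigma\cap\partial\widetilde C_{X,t}$ to an estimate about a nearby interior point $(\tilde Z,\tilde\tau)\in C_{X,t}$, to which the already-established approximate linearity of $\widehat u$ from Lemma \ref{claim6.27} and the decay from Lemma \ref{l5.14} both directly apply.

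First, I would associate to $(\widehat Z,\widehat\tau)$ the projected point $(\tilde Z,\tilde\tau)\in C_{X,t}$ through exactly the construction introduced just before Lemma \ref{l5.14}: namely, $\tilde Z$ is the radial projection of $\widehat Z$ along the line through $X$ onto $\partial B(X,(1-\eps^{2L/\alpha})\delta(X,t))$, and $\tilde\tau$ is the truncation of $\widehat\tau$ to the appropriate time interval. This produces a pair satisfying
\[
\dist(\widehat Z,\widehat\tau,\tilde Z,\tilde\tau)\lesssim \eps^{2L/\alpha}\,\delta(X,t)\lesssim \eps^{2L/\alpha-3}\,\ell(Q)\,,
\]
by \eqref{distf} applied at $(X,t)$ and \eqref{eq5.27a}. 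In particular, writing $Z_\perp:=Z\cdot e_\perp$ for any point $Z$, we have
\begin{equation}\label{proj-perp}
|\widehat Z_\perp-\tilde Z_\perp|\,\leq\,|\widehat Z-\tilde Z|\,\lesssim\,\eps^{2L/\alpha-3}\ell(Q)\,.
\end{equation}

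Next, Lemma \ref{l5.14} applied to this $(\tilde Z,\tilde\tau)$ yields
\[
\widehat u(\tilde Z,\tilde\tau)\,\lesssim\,\eps^{2L-5}\,\ell(Q)\,,
\]
while the linearization estimate \eqref{eq6.28a} of Lemma \ref{claim6.27}, valid for every point of $C_{X,t}$, gives
\[
\bigl|\widehat u(\tilde Z,\tilde\tau)-|\nabla_X\widehat u(Y_Q^*,s_Q^*)|\,\tilde Z_\perp\bigr|\,\leq\,\eps^{L-11}\,\ell(Q)\,.
\]
Combining these two inequalities by the triangle inequality,
\begin{equation}\label{plan-proj}
|\nabla_X\widehat u(Y_Q^*,s_Q^*)|\,|\tilde Z_\perp|\,\lesssim\,\eps^{L-11}\,\ell(Q)+\eps^{2L-5}\,\ell(Q)\,\lesssim\,\eps^{L-11}\,\ell(Q)\,.
\end{equation}

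Finally, I would invoke \eqref{reffa} to bound $|\nabla_X\widehat u(Y_Q^*,s_Q^*)|\lesssim 1$, so that \eqref{proj-perp} converts into
\[
|\nabla_X\widehat u(Y_Q^*,s_Q^*)|\,|\widehat Z_\perp-\tilde Z_\perp|\,\lesssim\,\eps^{2L/\alpha-3}\,\ell(Q)\,,
\]
and add this to \eqref{plan-proj} via the triangle inequality to conclude
\[
|\nabla_X\widehat u(Y_Q^*,s_Q^*)|\,|\widehat Z_\perp|\,\lesssim\,\eps^{L-11}\,\ell(Q)+\eps^{2L/\alpha-3}\,\ell(Q)\,\lesssim\,\eps^{L/2}\,\ell(Q)\,,
\]
the last step using $0<\alpha<1$ (so $2L/\alpha-3>L-11$ for $L$ large) together with the standing assumption that $L$ is large enough depending only on allowable parameters (here, $L\geq 22$ suffices).

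There is no real obstacle here once Lemmas \ref{l5.14} and \ref{claim6.27} are in hand; the only genuine issue is bookkeeping the two different sources of error—the H\"older decay from the boundary projection versus the approximate-linearity error inside $C_{X,t}$—and checking that each has a power of $\eps$ comfortably larger than $L/2$. The one small point to be careful about is that in passing from $(\widehat Z,\widehat\tau)$ to $(\tilde Z,\tilde\tau)$ the estimate \eqref{distf} must be applied with $\delta$ evaluated at $(X,t)$, where \eqref{eq5.27a} forces the extra factor $\eps^{-3}$, but this loss is far smaller than the gain $\eps^{2L/\alpha}$ for $L$ large.
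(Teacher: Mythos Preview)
Your proof is correct and follows essentially the same approach as the paper's: project $(\widehat Z,\widehat\tau)$ to $(\tilde Z,\tilde\tau)\in C_{X,t}$, then combine the smallness of $\widehat u(\tilde Z,\tilde\tau)$ from Lemma~\ref{l5.14} with the approximate linearity estimate \eqref{eq6.28a} and the gradient bound via the triangle inequality. The only cosmetic difference is that the paper cites \eqref{eq6.5} rather than \eqref{reffa} for the bound $|\nabla_X\widehat u(Y_Q^*,s_Q^*)|\lesssim 1$, and arranges the triangle inequality in a single chain rather than in two steps; the content is identical.
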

\begin{proof}
	 Given $(\widehat Z,\widehat\tau)$ we construct $(\tilde Z,\tilde\tau)\in \partial C_{X,t}$ as before,
satisfying (see \eqref{distf})
\begin{equation*}
\dist(\widehat Z,\widehat\tau,\tilde Z,\tilde\tau)\lesssim\eps^{2L/\alpha}\delta(X,t) \lesssim \eps^{(2L/\alpha) -3}\ell(Q)\,,
\end{equation*}
where in the last step we have used \eqref{eq5.27a}.

Consequently, using also \eqref{eq6.5}, \eqref{eq6.28a}, and \eqref{eq5.16a}, we have
\begin{multline*}
|\nabla_X \widehat u(Y_Q^*,s_Q^*)|\, |\widehat Z_\perp|
\leq
|\nabla_X \widehat u(Y_Q^*,s_Q^*)|\, |\widehat Z-\tilde Z|+
\big|\widehat u(\tilde Z,\tilde\tau)-
|\nabla_X \widehat u(Y_Q^*,s_Q^*)|\,|\tilde Z_\perp|\big|+\widehat u(\tilde Z,\tilde\tau)\\
\lesssim
\eps^{(2L/\alpha)-3}\ell(Q)+\eps^{L-11}\ell(Q)+\eps^{2L-5} \ell(Q).
\end{multline*}
For $L$ chosen large enough, we obtain \eqref{eq6.33}.
\end{proof}

\subsection{The Cubes in {Case 3} satisfy the $(\eps,K_0)$-local WHSA condition} \label{subq4}

From \eqref{eq6.33} and \eqref{eq6.5} we see that
\begin{equation}\label{eq5.35}
|\widehat Z_\perp| \lesssim  \eps^{L/2} \ell(Q),
\end{equation}
whenever $(\widehat Z,\widehat\tau)\in \Sigma\cap \partial\widetilde C_{X,t}$,
for some $(X,t)\in \tilde U^i_Q$. Recalling that $(X_Q,t_Q)$ is the ``center" of $Q$ (see
Remark \ref{remarkscube}), we set
\begin{equation}\label{eq5.41a}
\mathcal{O}:= C(X_Q,t_Q,2\eps^{-5/2}\ell(Q)) \cap \left\{ (Z,\tau): Z_\perp> \eps^2 \ell(Q)\right\}.
\end{equation}

\begin{lemma}\label{claim5.40} For every point  $(X,t)=(x_0,x,t)\in \mathcal{O}$, we have
$(X,t)\sim_{\eps,Q} (Y_Q^*,s_Q^*)$ (see Definition \ref{def2.11a}).
Thus, in particular, $\mathcal{O}\subset \tilde U^i_Q.$
\end{lemma}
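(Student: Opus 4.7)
The plan is to exhibit, for every $(X,t)\in\mathcal{O}$, a chain of at most $\eps^{-1}$ parabolic Whitney cylinders $C_k=C(Y^k,s^k,\delta(Y^k,s^k)/2)$ connecting $(X,t)$ to $(Y_Q^*,s_Q^*)\in U_Q^i\subset\tilde U^i_Q$, with each center satisfying $\eps^3\ell(Q)\le\delta(Y^k,s^k)\le\eps^{-3}\ell(Q)$. First I note that $(Y_Q^*,s_Q^*)\in\mathcal{O}$: indeed $(Y_Q^*)_\perp=d_Q\approx\ell(Q)\gg\eps^2\ell(Q)$, while $|Y_Q^*-X_Q|\lesssim K_0\ell(Q)\ll\eps^{-5/2}\ell(Q)$ since $\eps=K_0^{-100}$.

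The key geometric input is a slab-inclusion for $\Sigma$. Applying \eqref{eq6.28a} to $(Z,\tau)\in\Sigma$ (where $\widehat u(Z,\tau)=0$), together with $|\nabla_X\widehat u(Y_Q^*,s_Q^*)|\approx 1$ from \eqref{eq6.5}, gives
\[
\Sigma\cap C_{X,t}\subset\{|Z_\perp|\le C\eps^{L-11}\ell(Q)\},\qquad\forall\,(X,t)\in\tilde U^i_Q.
\]
Consequently, within the ``observation window'' $W:=\bigcup_{(X,t)\in\tilde U^i_Q}C_{X,t}$, the surface $\Sigma$ is confined to a thin slab around $\{Z_\perp=0\}$; any point $(Y,s)\in W$ with $Y_\perp\ge\eps^2\ell(Q)$ then satisfies $\delta(Y,s)\ge Y_\perp-C\eps^{L-11}\ell(Q)\gtrsim Y_\perp$.

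I would next build the chain in three phases: (i) ascend from $(X,t)$ in the $e_\perp$ direction using $O(\log\eps^{-1})$ cylinders whose radii roughly double at each step, until reaching a point with $\perp$-coordinate $\approx\eps^{-3}\ell(Q)$; (ii) a uniformly bounded number of large cylinders of radius $\approx\eps^{-3}\ell(Q)/2$ to cover the remaining horizontal displacement, which is at most $\eps^{-5/2}\ell(Q)\ll\eps^{-3}\ell(Q)$; (iii) descend symmetrically to $(Y_Q^*,s_Q^*)$ with $O(\log\eps^{-1})$ cylinders of geometrically decreasing radii. The total length is $O(\log\eps^{-1})\ll\eps^{-1}$ for $\eps$ small. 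At each candidate center the lower bound $\delta\ge\eps^3\ell(Q)$ follows from the slab control combined with $(Y^k)_\perp\ge\eps^2\ell(Q)$, while the upper bound $\delta\le\eps^{-3}\ell(Q)$ follows from the presence of $(X_Q,t_Q)\in\Sigma$ within parabolic distance $\lesssim\eps^{-3}\ell(Q)$ of each such center.

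The main obstacle is an apparent circularity: the slab control is valid only inside $W$, but $W$ is defined using $\tilde U^i_Q$, which is precisely the set I am trying to enter. I would resolve this by a step-by-step bootstrap along the chain: each time a new cylinder is appended, its center is automatically $\sim_{\eps,Q}$-connected to $(Y_Q^*,s_Q^*)$ by the partial chain already constructed, hence lies in $\tilde U^i_Q$; this enlarges $W$ and yields the slab control needed to verify the $\delta$-bounds at the next candidate center. The delicate technical point is arranging the geometric path so that every new candidate center already sits inside (or well within reach of) the portion of $W$ certified at the previous step, so that the induction closes without appealing to any knowledge of $\Sigma$ that has not yet been established.
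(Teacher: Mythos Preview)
Your three–phase chain (ascend, traverse horizontally, descend) with $O(\log\eps^{-1})$ total length is exactly the paper's approach, and you have correctly identified that the main issue is a bootstrap: the slab control only applies at centers already known to lie in $\tilde U^i_Q$. However, as written the proposal is internally inconsistent, and the slab formulation is imprecise.

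\textbf{Direction of the bootstrap.} You build the chain starting from the arbitrary point $(X,t)\in\mathcal{O}$ in phase (i), yet in your bootstrap paragraph you assert that each new center is ``automatically $\sim_{\eps,Q}$-connected to $(Y_Q^*,s_Q^*)$ by the partial chain already constructed''. These two statements are incompatible: a partial chain emanating from $(X,t)$ connects new centers to $(X,t)$, not to $(Y_Q^*,s_Q^*)$. Since $(X,t)$ is not a priori in $\tilde U^i_Q$ (that is the conclusion), the bootstrap cannot get off the ground from that end. The paper runs the construction in the opposite order: start at $(Y_Q^*,s_Q^*)\in U^i_Q\subset\tilde U^i_Q$, ascend along the $e_\perp$-axis by the iteration using \eqref{eq5.35}, traverse horizontally at height $(2\eps)^{-3}\ell(Q)$, then descend to $(X,t)$. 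The fix is simply to reverse your phases; the ideas survive intact.

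\textbf{Formulation of the slab control.} Your statement ``$\Sigma\cap C_{X,t}\subset\{|Z_\perp|\le C\eps^{L-11}\ell(Q)\}$'' is vacuous: by the definition of $C_{X,t}$ (a closed cylinder of radius strictly less than $\delta(X,t)$), one always has $\Sigma\cap C_{X,t}=\emptyset$, so \eqref{eq6.28a} cannot be applied to points of $\Sigma$ there. The correct input is \eqref{eq5.35} (equivalently Lemma~\ref{claim6.32}): for $(X,t)\in\tilde U^i_Q$, the touch point $(\widehat Z,\widehat\tau)\in\Sigma\cap\partial\widetilde C_{X,t}$ satisfies $|\widehat Z_\perp|\lesssim\eps^{L/2}\ell(Q)$, and since $|X-\widehat Z|=\delta(X,t)$ by Lemma~\ref{touchp}, this gives $\delta(X,t)\ge X_\perp-C\eps^{L/2}\ell(Q)$. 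In particular the $\delta$-lower bound is available \emph{at the center $(X,t)\in\tilde U^i_Q$ itself}, not at an arbitrary point of the window $W=\bigcup_{(X,t)\in\tilde U^i_Q}C_{X,t}$; your claim that $\delta(Y,s)\gtrsim Y_\perp$ for all $(Y,s)\in W$ does not follow. This is precisely why the iteration must place each successive \emph{center} in $\tilde U^i_Q$ before invoking the slab estimate, which in turn forces the bootstrap direction noted above.
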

\begin{proof} For arbitrary $(X,t)\in \mathcal{O}$,
we need to connect $(X,t)$ to $(Y_Q^*,s_Q^*)$ by a chain of
at most $\eps^{-1}$ Whitney cylinders $C(Y^k,s^k,\delta(Y^k,s^k)/2)$, with
$\eps^3\ell(Q)\leq\delta(Y^k,s^k)\leq \eps^{-3}\ell(Q)$. For convenience,
we will simply refer to such cylinders as
admissible.   Recalling that $Y_Q^* =d_Q e_\perp$, with $d_Q:= \delta(Y_Q^*,s_Q^*)$,
we first observe that
if $X = \eta e_{\perp}$, with $\eps^{3} \ell(Q) \leq \eta\leq \eps^{-3}\ell(Q)$,
then we may join $(X,s_Q^*)$ to $(Y_Q^*,s_Q^*)$ by at most $C\log(1/\eps)$ admissible cylinders.
Indeed, for $\eta \in [\eps^{3} \ell(Q),d_Q]$, this fact holds trivially, and for
$\eta \in (d_Q, \eps^{-3}\ell(Q)]$, it holds
by an iteration argument using \eqref{eq5.35}
(with $L$ chosen large enough).
  The point $((2\eps)^{-3} \ell(Q) e_{\perp},s_Q^*)$ may then be joined to
any point of the form $((2\eps)^{-3} \ell(Q),x,t)$
by a chain of at most $C$ admissible cylinders, whenever
$(x,t)\in \mathbb R^{n-1}\times \mathbb{R}$ with $|x| +|t-s_Q^*|^{1/2}\leq (2\eps)^{-3}\ell(Q)$.
In turn, the latter point, with $(x,t)$ now fixed, may then be joined
to $(x_0,x,t)$, for any $x_0 \in [\eps^{3} \ell(Q), (2\eps)^{-3} \ell(Q)]$,
 by at most $C\log(1/\eps)$ admissible cylinders.
\end{proof}

Lemma \ref{claim5.40} implies that
\begin{equation}\label{eq5.41}
 \Sigma\cap \mathcal{O} =\emptyset\,.
\end{equation}
Indeed, $\mathcal{O}\subset \tilde U^i_Q\subset \Omega$. We introduce the hyperplane
$$P_0:=\left\{ (Z,\tau):  Z_\perp=0\right\}.$$

To prove the next lemma, we shall make use of the following geometric observation.
\begin{remark}\label{remarkgeo}
Note that if $(X,t)\in \overline{C(Z,\tau,r)}$, then
$$ 
\dist(X,t,Z,\tau\,)
= |X-Z| + |t-\tau|^{1/2}\leq 2r\,.
$$
Recall that $\delta(X,t)$ is the smallest value of $d$ such
that $\Sigma$ meets
$\overline{C(X,t,d)}$.  In particular,
if $\delta(X,t)=d$, and if
$(\hat{X},\hat{t}\,)\in \Sigma\cap\overline{C(X,t,d)}$
is a point that realizes $\delta(X,t)$, then
$d \leq \dist(X,t,\Sigma) \leq \dist(X,t,\hat{X},\hat{t}\,) \leq 2d$.
Combining these observations, we see that if
$(X,t)\in \overline{C(Z,\tau,r)}$, and $\delta(X,t) = d$, then
$\dist(Z,\tau,\Sigma) \leq 2(r + d)$.
\end{remark}

\begin{lemma}\label{claim5.42}
If $(Z,\tau)\in P_0$, with
$\dist (Z,\tau,X_Q,t_Q)\leq \,\eps^{-5/2}\ell(Q)$,  then
\begin{equation}\label{eq5.43}
\dist(Z,\tau, \Sigma) \leq \,32\, \eps^2\ell(Q)\,.
\end{equation}
\end{lemma}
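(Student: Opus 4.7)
The strategy is to lift $(Z,\tau)$ slightly into the half-space region $\mathcal{O}$, and then show that the lifted point is close to $\Sigma$ by arguing that $\widehat u$ is effectively linear in the $\perp$-direction with a nondegenerate positive slope, so $\delta$ at the lifted point cannot be too large without forcing $\widehat u$ negative.

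Concretely, my first step is to set $(X,t):=(Z+2\eps^2\ell(Q)\,e_\perp,\tau)$ and to check that $(X,t)\in \mathcal{O}\subset\tilde U^i_Q$ via Lemma \ref{claim5.40}: indeed $X_\perp=2\eps^2\ell(Q)>\eps^2\ell(Q)$ and $\dist((X,t),(X_Q,t_Q))\leq \eps^{-5/2}\ell(Q)+2\eps^2\ell(Q)<2\eps^{-5/2}\ell(Q)$ for $\eps$ small. Since $X-Z$ is a pure spatial vector of length $2\eps^2\ell(Q)$ and $t=\tau$, we have $\dist((Z,\tau),(X,t))=2\eps^2\ell(Q)$. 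If $(\widehat Z,\widehat\tau)\in\Sigma$ realizes $\delta(X,t)$, then Lemma \ref{touchp} gives $\dist((X,t),(\widehat Z,\widehat\tau))\leq 2\delta(X,t)$, and the triangle inequality yields
\[
\dist((Z,\tau),\Sigma)\leq 2\eps^2\ell(Q)+2\delta(X,t).
\]
The proof therefore reduces to showing $\delta(X,t)\leq 15\eps^2\ell(Q)$.

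I plan to prove this bound on $\delta(X,t)$ by contradiction. Suppose $d:=\delta(X,t)>15\eps^2\ell(Q)$. By definition of $\delta$, the open cylinder $C(X,t,d)$ is disjoint from $\Sigma$ and, being connected and meeting $\Omega$ at $(X,t)$, is contained in $\Omega$; hence $C_{X,t}=\overline{C(X,t,(1-\eps^{2L/\alpha})d)}\subset \Omega$. Take the ``lowest'' boundary point $(Z',\tau'):=(X-(1-\eps^{2L/\alpha})d\,e_\perp,\,t)\in C_{X,t}\subset\Omega$, so that $\widehat u(Z',\tau')\geq 0$. Its perpendicular coordinate satisfies
\[
z'_\perp=2\eps^2\ell(Q)-(1-\eps^{2L/\alpha})d < -13\eps^2\ell(Q)+15\eps^{2+2L/\alpha}\ell(Q)\leq -12\eps^2\ell(Q),
\]
for $L$ large. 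Since $(X,t)\in \tilde U^i_Q$ and $(Z',\tau')\in C_{X,t}$, Lemma \ref{claim6.27} (estimate \eqref{eq6.28a}) together with $|\nabla_X\widehat u(Y_Q^*,s_Q^*)|\gtrsim 1$ (see \eqref{eq6.5}) gives
\[
0\leq \widehat u(Z',\tau')\leq |\nabla_X\widehat u(Y_Q^*,s_Q^*)|\,z'_\perp+\eps^{L-11}\ell(Q)\leq -c\,\eps^2\ell(Q)+\eps^{L-11}\ell(Q),
\]
for a uniform $c>0$. For $L$ large enough (so that $\eps^{L-13}\ll c$, which is automatic under our standing choice $\eps\leq \eps_0=K_0^{-100}$), the right-hand side is strictly negative, which is the desired contradiction. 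Hence $\delta(X,t)\leq 15\eps^2\ell(Q)$ and the reduction above yields $\dist((Z,\tau),\Sigma)\leq 32\eps^2\ell(Q)$.

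The one delicate point is the simultaneous placement of $(Z',\tau')$ inside both $\Omega$ (so that $\widehat u(Z',\tau')\geq 0$) and $C_{X,t}$ (so that Lemma \ref{claim6.27} applies), while ensuring that $z'_\perp$ is still negative of order $\eps^2\ell(Q)$. This is exactly what the slight shrinking factor $(1-\eps^{2L/\alpha})$ in the definition of $C_{X,t}$ affords: the cylinder $C_{X,t}$ almost reaches the boundary of $\widetilde C_{X,t}$, so a lower bound $d>15\eps^2\ell(Q)$ transfers to $|z'_\perp|>12\eps^2\ell(Q)$ up to an error $O(\eps^{2+2L/\alpha}\ell(Q))$ that is negligible for $L$ large. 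Apart from this quantitative bookkeeping, everything else is a routine triangle inequality combined with the nonnegativity of $\widehat u$.
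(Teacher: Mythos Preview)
Your proof is correct and follows essentially the same approach as the paper's: lift $(Z,\tau)$ slightly into $\mathcal{O}\subset\tilde U^i_Q$, then use the approximate linearity \eqref{eq6.28a} together with $\widehat u\geq 0$ and the nondegeneracy \eqref{eq6.5} to derive a contradiction from a point in $C_{X,t}$ with negative $\perp$-coordinate. The only differences are organizational (you reduce first to a bound on $\delta(X,t)$ and pick the contradiction point on $\partial C_{X,t}$, whereas the paper assumes \eqref{eq5.43} fails directly and uses a point in a small sub-cylinder $C(Z,\tau,4\eps^2\ell(Q))\subset C_{X,t}$), but the substance is identical.
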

\begin{proof}
Observe that $C(Z,\tau, 2 \eps^2\ell(Q))$ meets $\mathcal{O}$. Hence,
by Lemma \ref{claim5.40},
there is a point $(X,t)$ in $\tilde U^i_Q \cap C(Z,\tau, 2 \eps^2\ell(Q))$.
Suppose now that \eqref{eq5.43} is false: then Remark \ref{remarkgeo}
implies that $\delta(X,t) \geq14\eps^2\ell(Q).$
Consequently, $C(Z,\tau, 4 \eps^2\ell(Q))\subset C_{X,t}$, and thus by \eqref{eq6.28a}, we have
\begin{equation}\label{eq5.44}
\big|\widehat u(\tilde Z,\tilde \tau)-|\nabla_X \widehat u(Y_Q^*,s_Q^*)|\tilde Z_{\perp}\big|
\lesssim \eps^{L-11}\ell(Q),\quad \forall \, (\tilde Z,\tilde \tau)\in
C(Z,\tau, 4 \eps^2\ell(Q))\,.
\end{equation}
In particular, since $Z_{\perp}=0$, we may
choose $(\tilde Z,\tilde \tau)$ such that $\tilde Z_{\perp} = -\eps^2\ell(Q)$,
to obtain that
$$|\nabla_X\widehat u(Y_Q^*,s_Q^*)|\, \eps^2\ell(Q) \lesssim \eps^{L-11}\ell(Q),$$
since $\widehat u\geq 0$.  But for $\eps<1/2$, and $L$ large enough, this contradicts \eqref{eq6.5}.
\end{proof}

We conclude that $Q$ satisfies the $(\eps^{9/8},K_0)$-local WHSA
condition  in the sense of Definition \ref{def2.13}, with respect to
\begin{equation}\label{eq5.44++}P=\hat P_Q:= \{(Z,\tau):\,Z_{\perp} = \eps^2\ell(Q)\}\},\quad
H=\hat H_Q:= \{(Z,\tau): \, Z_{\perp}>\eps^2\ell(Q)\} \,.
\end{equation}
In particular, $(ii)$ in Definition \ref{def2.13} holds by construction of $\hat P_Q$ and
Remark \ref{cnchoic.rmk}.

Note further that $(Y_Q^*,s_Q^*) \in \hat H_Q$.  Indeed,
$Y_Q^*=d_Q e_\perp$, with
$d_Q=\delta(Y_Q^*,s_Q^*)$, and since
$(Y_Q^*,s_Q^*)\in U_Q$, we therefore have  $d_Q\gtrsim K_0^{-1} \ell(Q) \gg \eps\ell(Q)$.

\begin{remark}\label{nablaXnormal}  Choosing $L$ large enough, and using \eqref{eq6.5} and
\eqref{eq6.25}, we see that the (purely spatial) $n$-dimensional
vector $\nabla_X \widehat{u}(Y_Q^*,s_Q^*)$  lies within
a (spatial) vertical upward cone of aperture $\eps^{100}$, with central axis $e_\perp$.
Now let $ P_Q$ and $ H_Q$ denote the images of $\hat P_Q$ and $\hat H_Q$ after
a rotation (about the origin) in the spatial variables only,
by an angle of at most $\eps^{100}$, such that the space-time vector
$(\nabla_X \widehat{u}(Y_Q^*,s_Q^*),0)$
is orthogonal to $ P_Q$, and points into $ H_Q$.
We then have that $Q$ satisfies the $(\eps,K_0)$-local WHSA
with respect to $ P_Q$ and $ H_Q$, and furthermore, $(Y_Q^*,s_Q^*) \in  H_Q$.
\end{remark}

We summarize our findings in the following proposition.
\begin{proposition}\label{WHSAplorient.lem}
Let $\sbf'$ be a stopping time tree
from Lemma \ref{refp+}, with $\xi \in (0,1)$ fixed so that
Lemma \ref{refp++} holds. Then every Case 3 cube $Q\in \sbf'$
satisfies the $(\eps,K_0)$-local WHSA (see Definition \ref{def2.13}).  Moreover,
the associated plane $P_Q$ and half-space $H_Q$ may be chosen
to have the additional property that the unit normal vector
to $P_Q$, pointing into $H_Q$, is given by the space-time vector
$(\nabla_X \widehat{u}(Y_Q^*,s_Q^*),0)/|\nabla_X \widehat{u}(Y_Q^*,s_Q^*)|$.
Furthermore, $(Y_Q^*,s_Q^*)\in H_Q$.
\end{proposition}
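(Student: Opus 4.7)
The proposition is essentially a cataloging statement: nearly all of the work has already been executed in Sections 6.3 and 6.4, and what remains is to organize those estimates into a verification of the three conditions of Definition \ref{def2.13}, together with the orientation refinement. The plan is to fix a Case 3 cube $Q\in\sbf'$ and the reference point $(Y_Q^*,s_Q^*)\in U_Q^i$ from Lemma \ref{refp++}, then run through the coordinate normalization of Section 6.3: translate so that a touching point $(\widehat Z,\widehat\tau)\in\partial\widetilde C_{Y_Q^*,s_Q^*}\cap\Sigma$ becomes the origin, and rotate spatially so that $Y_Q^*/|Y_Q^*|=e_\perp$. In those coordinates Lemma \ref{claim6.19} and the subsequent estimate \eqref{eq6.25} force $\nabla_X\widehat u(Y_Q^*,s_Q^*)$ to point within a spatial cone of aperture $\lesssim\eps^{L-5/2}$ of $e_\perp$, and Lemma \ref{claim6.27} promotes linearization from $(Y_Q^*,s_Q^*)$ to every $(X,t)\in\tilde U_Q^i$ along the prescribed polygonal path.

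Next I would invoke the geometric consequences of this linearization. Lemma \ref{claim6.32} combined with \eqref{eq6.5} yields $|\widehat Z_\perp|\lesssim\eps^{L/2}\ell(Q)$ for every touching point $(\widehat Z,\widehat\tau)\in\Sigma\cap\partial\widetilde C_{X,t}$ with $(X,t)\in\tilde U_Q^i$, which is \eqref{eq5.35}. Defining $\mathcal{O}$ as in \eqref{eq5.41a}, Lemma \ref{claim5.40} identifies $\mathcal{O}\subset\tilde U_Q^i\subset\Omega$, which together with Lemma \ref{claim5.42} gives, with the preliminary plane
\[
\hat P_Q:=\{Z_\perp=\eps^2\ell(Q)\}\,,\qquad \hat H_Q:=\{Z_\perp>\eps^2\ell(Q)\}\,,
\]
items (i) and (iii) of Definition \ref{def2.13} (with a better constant, of order $\eps^{9/8}$). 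Item (ii) holds by the explicit choice of $c_n$ in Remark \ref{cnchoic.rmk}, since by construction $\dist(Q,\hat P_Q)\leq\dist(Q,X_Q,t_Q)+\eps^2\ell(Q)+|x_0\text{-offset}|\lesssim K_0\ell(Q)$.

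To obtain the promised orientation, I would perform the small rotation in Remark \ref{nablaXnormal}: a spatial rotation about the origin by an angle at most $\eps^{100}$ takes $(\nabla_X\widehat u(Y_Q^*,s_Q^*),0)/|\nabla_X\widehat u(Y_Q^*,s_Q^*)|$ to the unit normal of the rotated plane $P_Q$ pointing into $H_Q$. Because the rotation angle is $\eps^{100}\ll\eps$, the new plane $P_Q$ and half-space $H_Q$ still satisfy (i)--(iii) within $C_{Q,\eps}^*$, with only a negligible adjustment of the constants (since $\eps^{100}\cdot\eps^{-2}\ell(Q)\ll\eps\ell(Q)$). Finally, $(Y_Q^*,s_Q^*)=d_Qe_\perp$ with $d_Q\gtrsim K_0^{-1}\ell(Q)\gg\eps\ell(Q)$, so $(Y_Q^*,s_Q^*)\in\hat H_Q$; and since the rotation is spatial, preserves $(Y_Q^*,s_Q^*)$ up to an error of size at most $\eps^{100}d_Q\ll\eps\ell(Q)$, membership in the rotated half-space $H_Q$ persists.

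The only step requiring any real care beyond bookkeeping is checking that the $\eps^{100}$-rotation preserves the WHSA conditions quantitatively: the plane condition (i) is slightly delicate because one must ensure that a point $(Z,\tau)\in P_Q\cap C_{Q,\eps}^*$ still lies within $\eps\ell(Q)$ of $\Sigma$, and similarly for the half-space separation in (iii). This is straightforward since the rotation displaces any point in $C_{Q,\eps}^*$ by at most $\eps^{100}\cdot\eps^{-2}\ell(Q)=\eps^{98}\ell(Q)$, which is absorbed into the $\eps$-tolerance of the unrotated estimates \eqref{eq5.41} and \eqref{eq5.43}. With this verification, all three items of Definition \ref{def2.13} hold for $(P_Q,H_Q)$, the prescribed orientation of the normal is built in, and $(Y_Q^*,s_Q^*)\in H_Q$ follows from the observation above.
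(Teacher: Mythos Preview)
Your proposal is correct and mirrors the paper exactly: the paper provides no separate proof, introducing the proposition with ``We summarize our findings in the following proposition'' immediately after Remark \ref{nablaXnormal}, so the content is precisely the chain \eqref{eq5.35}--\eqref{eq5.44++} plus the small-angle rotation of Remark \ref{nablaXnormal} that you have spelled out. Your additional care in verifying that the $\eps^{100}$-rotation preserves conditions (i)--(iii) and the membership $(Y_Q^*,s_Q^*)\in H_Q$ is more explicit than the paper, but entirely in the same spirit.
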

Thus, using the coronization in Lemma \ref{initialcorona1.lem}, Theorem \ref{WHSA.thrm} is proved.

\section{Proof of Theorem \ref{main.thrm}: preliminaries}\label{Sec7}

The rest of the paper is devoted to the proof of Theorem \ref{main.thrm}, with
Theorem \ref{WHSA.thrm} (more precisely Proposition \ref{WHSAplorient.lem}),
and the associated circle of ideas, taken
as the starting point.  Therefore, in the following we will
consistently assume the hypotheses of  Theorem \ref{main.thrm}. The proof will rely heavily on Section \ref{Sec5} and Section \ref{Sec6}, and in the following we consequently let $\eps\leq\eps_0=K_0^{-100}$. All implicit
constants will depend at most on the allowable parameters, i.e., on $n,M,\gamma,p$ and $C$,  and on $K_0$, unless otherwise stated.

By Lemma \ref{refp+} we have that $\dd = \dd(\Sigma)$ can be written as a disjoint decomposition $\dd=\cG' \cup \cB'$, with the collection $\cG'$ being further decomposed into semi-coherent stopping time trees $\{\sbf'\}$, and such that the maximal cubes $\{Q(\sbf')\}$, and the cubes in $\cB'$, pack, i.e.,
\begin{align}\label{Sec71}
&\sum_{Q(\sbf'): Q(\sbf') \subseteq R } \sigma(Q(\sbf'))
+\sum_{Q \in \cB': Q \subseteq R } \sigma(Q) \lesssim \sigma(R),
\quad \forall R \in \dd.
\end{align}

The following proposition is the key to the proof
of Theorem \ref{main.thrm}.
\begin{proposition}\label{finalprop} Let $\sbf'$ be a (single) stopping time tree from
Lemma \ref{refp+}. Then $\sbf'$ can be divided into a collection of disjoint semi-coherent stopping time trees $\{\sbf^*\}$,  and a set of  cubes $ \B_{\sbf'}$, such that the following holds. First, the maximal cubes $\{Q(\sbf^*)\}$ and the cubes in
$\B_{\sbf'}$ satisfy a packing condition,
\begin{equation}\label{graphclosepack}
\sum_{Q(\sbf^*): Q(\sbf^*) \subseteq R} \sigma(Q(\sbf^*)) + \sum_{Q' \in \B_{\sbf'}: Q \subseteq R} \sigma(Q) \lesssim \sigma(R) \quad \forall R \in \dd.
\end{equation}
Second, there are constants $\eta > 0, K  > 1$,  such that for each $\sbf^*$ there
exists a {\bf regular} Lip(1,1/2) graph $\Gamma_{\sbf^*}$ in such  a way that if
$Q\in {\sbf^*}$, then
\begin{equation}\label{graphclose}
\sup_{(X,t) \in KQ} \dist(X,t,\Gamma_{{\sbf^*}} ) \leq \eta\,\diam(Q)\,.
\end{equation}
 Here the constants of the regular Lip(1,1/2) graphs, and the constants $\eta$ and $K$, depend only on the allowable parameters and $K_0$.
 Moreover, the Lip(1,1/2) constant $b_1$ in
 Subsection \ref{ssRLip} can be made small, so that in particular $b_1<\eta$.
\end{proposition}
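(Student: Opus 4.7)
The plan is to follow Steps 2--4 outlined in the introduction, taking Proposition \ref{WHSAplorient.lem} as the starting point: every cube $Q\in\sbf'$ satisfies the $(\eps,K_0)$-local WHSA with an approximating half-space $H_Q$ whose boundary plane $P_Q$ has unit normal proportional to $\nabla_X\widehat u(Y_Q^*,s_Q^*)$. The idea is to refine $\sbf'$ by stopping whenever the orientation of the approximating plane (i.e.\ the direction of $\nabla_X\widehat u(Y_Q^*,s_Q^*)$) deviates, at a given cube $Q$, from the orientation at the maximal ancestor by more than a small prescribed amount. This produces a disjoint family of semi-coherent subtrees $\{\sbf^*\}$ on which all the planes $P_Q$ point in nearly the same direction, together with the residual bad cubes $\B_{\sbf'}$ and new maximal cubes $\{Q(\sbf^*)\}$ whose packing \eqref{graphclosepack} must be verified.

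For Step 2 (graph construction, Section \ref{sec: graph}), on each subtree $\sbf^*$ I would fix a coordinate system in which the common normal direction is spatial-vertical, and then follow the DS1--style scheme in \cite{DS1,BHHLN-Corona}: build a regularized envelope of the family $\{P_Q\}_{Q\in\sbf^*}$ via a smooth partition of unity subordinate to a Whitney decomposition of $\mathbb R^{n-1}\times\mathbb R$. Because every cube in $\sbf^*$ satisfies the $(\eps,K_0)$-WHSA, item $(i)$ of Definition \ref{def2.13} forces $\Sigma$ to lie within $\eps\ell(Q)$ of $P_Q$ on large dilated cylinders, and the stopping rule forces neighboring planes to be nearly parallel; both together yield a Lip(1,1/2) function $\psi_{\sbf^*}$ with constant $b_1$ that is $\lesssim\eps$ small, and crucially Lip(1,1/2) (and not merely parabolic H\"older) in $t$ thanks to the $t$-independence of each $P_Q$. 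The approximation bound \eqref{graphclose} then follows from the construction, and item $(iii)$ of the WHSA supplies the one-sided property that a localized portion of $\{x_0>\psi_{\sbf^*}(x,t)\}$ lies above $\Sigma$ in a single connected component of $\Omega$.

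Step 3 (Section \ref{sec: reggraph}) upgrades $\Gamma_{\sbf^*}$ to a \emph{regular} Lip(1,1/2) graph. Let $\omega_{\sbf^*}$ denote caloric measure for the graph domain $\Omega_{\sbf^*}^+$ defined by $\{x_0>\psi_{\sbf^*}(x,t)\}$. I would transfer the weak-$A_\infty$ hypothesis from $\omega$ to $\omega_{\sbf^*}$ by comparing the two measures on the ``coincidence'' portion of $\Sigma$ and $\Gamma_{\sbf^*}$ (where these two surfaces are quantitatively close in view of \eqref{graphclose}), using a change-of-pole and maximum-principle argument against the associated Green functions. Because the graph domain has interior corkscrews on both sides and its caloric measure is doubling, weak-$A_\infty$ for $\omega_{\sbf^*}$ upgrades to full $A_\infty$, and the main theorem of \cite{BHMN} then forces $D^t_{1/2}\psi_{\sbf^*}\in\mathrm{BMO}$ with a uniform bound $b_2$ depending only on the allowable parameters, so that $\Gamma_{\sbf^*}$ is a regular Lip(1,1/2) graph with small $b_1$ and uniform $b_2$.

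The main obstacle is Step 4 (Section \ref{sec:pack}), the packing bound \eqref{graphclosepack}. With Step 3 in hand, the non-tangential and square function machinery of \cite{B89} becomes available on each graph region $\Omega_{\sbf^*}^+$, and my plan is to adapt the $\eps$-approximability technology (in the spirit of \cite[Ch.\ VIII, \S 6]{G} and \cite[Section 5]{HMM}) to produce, for each $\sbf^*$, an approximator controlling the oscillation of the unit vector $\nabla_X\widehat u(Y_Q^*,s_Q^*)/|\nabla_X\widehat u(Y_Q^*,s_Q^*)|$ along $\sbf^*$ by a Carleson measure; since the stopping rule defining the $\sbf^*$ was based precisely on such oscillations (and since the bad cubes in $\B_{\sbf'}$ can be arranged to reflect either a failure of the graph construction or of the $A_\infty$ transfer), a Carleson packing estimate then implies \eqref{graphclosepack}. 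The technical difficulty --- and the reason the parabolic case is substantially harder than the elliptic one treated in \cite{HMM} --- is that without a priori parabolic uniform rectifiability of $\Sigma$, the corkscrew/reference points $(Y_Q^*,s_Q^*)$ associated to different cubes of $\sbf^*$ cannot be joined through nice parabolic Harnack chains inside $\Omega$ itself. Thus estimates need to be transferred between two a priori non-comparable domains, namely $\Omega$ near $\Sigma$ and the graph domain $\Omega_{\sbf^*}^+$, and the $\eps$-approximation has to be glued across different $\sbf^*$ using only the one-sided containment from Step 2 and the square-function estimates available on the graph side. This is where I expect the bulk of the technical work, and the place where the argument genuinely departs from its elliptic prototype.
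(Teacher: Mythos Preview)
Your proposal is correct and follows essentially the same approach as the paper: the stopping rule based on oscillation of $\nabla_X\widehat u$ relative to the maximal cube, the DS1-style graph construction on each $\sbf^*$, the transfer of weak-$A_\infty$ to $\omega_{\sbf^*}$ followed by an appeal to \cite{BHMN}, and the packing via the non-tangential maximal/square-function machinery of \cite{B89} on the now-regular graph, closed by the square-function estimate of Lemma~\ref{squarefunction}. One point of precision: in Step~3 the comparison of $\omega$ and $\omega_{\sbf^*}$ does not rest on \eqref{graphclose} (whose constant $\eta\approx K_0$ is large) but on the much finer Whitney-level estimate $\dist((\psi(y,s),y,s),\Sigma)\le\eps^{1/2}\diam(I_i)$ (see \eqref{closeIiforpsi.eq}), which is what makes the maximum-principle comparison effective; and in Step~4 the paper does not literally build an $\eps$-approximator, but rather uses an auxiliary ``ample'' set $A_{\sbf^*}\subset Q(\sbf^*)$ to guarantee that a specific point above the graph lies in $U_{Q'}$, converting the stopping condition directly into an NT-max lower bound on $\Gamma_{\sbf^*}$.
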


Taking Proposition \ref{finalprop} for granted momentarily, we may deduce
Theorem \ref{main.thrm} in short order.

\begin{proof}[Proof of Theorem \ref{main.thrm}]
By Theorem \ref{coronaisUR.thrm}, it is enough to
show that $\Sigma$ has a semi-coherent
$(\eta,K)$-Corona decomposition in the sense of
Definition \ref{semiCorona}, by regular Lip(1,1/2) graphs with constant
$b_1\leq\eta$.  Observe that Proposition \ref{finalprop} says that
this is indeed true locally,
within any given $\sbf'$ from
Lemma \ref{refp+}, so that in particular
\eqref{graphclosepack} holds with $R\subset Q(\sbf')$.
We can then ``globalize" the packing condition
\eqref{graphclosepack} by using the packing condition for
the maximal cubes $Q(\sbf')$, and for the bad cubes (the collection $\cB'$) from
Lemma \ref{refp+}.
We omit the routine details.
\end{proof}

Thus, it remains to prove Proposition \ref{finalprop}.

\begin{remark}\label{prop8.2proofoutline}
The proof of Proposition \ref{finalprop} will proceed in four stages.  The preliminary stage, to be carried out in the present section, is
to decompose $\sbf'$ as described, via a stopping time procedure, and to
reduce the packing estimate \eqref{graphclosepack}
to a certain key lemma
(Lemma \ref{keylemma1}).
In Section \ref{sec: graph}, we then
construct, for each $\sbf^*$,
a Lip(1,1/2) graph $\Gamma_{\sbf^*}$ with small Lip(1,1/2) constant,
satisfying \eqref{graphclose}.  The third stage, to be carried out in
Section \ref{sec: reggraph}, is to show that this
graph is regular.  Finally, in Section \ref{sec:pack}, we prove
Lemma \ref{keylemma1}, thus completing the proof of the packing
condition \eqref{graphclosepack}, and hence also the proof of Proposition
\ref{finalprop}.
\end{remark}

In the sequel we fix $\sbf'$ as in Lemma \ref{refp+} (a `caloric measure tree'),
with the relevant parameters chosen small enough that Lemma \ref{refp++} holds.
Our goal is produce the decomposition in Proposition \ref{finalprop}.
We first observe that $\sbf'\subset \sbf=\dd_{\F,Q_0}$ for some stopping-time
family $\F$ with $Q_0=Q(\sbf)$ (we mention that this property of $\sbf$
follows from the constructions in Lemma \ref{initialcorona1.lem},
or just from the fact that $\sbf$ is semi-coherent).
Recall that we had also defined
\[ \mathbb{D}_{\F,Q_0}^*=\{Q\in \mathbb{D}_{\F,Q_0}: \ell(Q)\le \eps^{10} \,\ell(Q_0)\}\]
and we know that the cubes in $\B_1 := \sbf' \setminus \mathbb{D}_{\F,Q_0}^*$ satisfy a packing condition
(these were the ``Case 0" cubes treated in Subsection \ref{sspart}). Therefore, we define $\sbf'' := \sbf'\cap \mathbb{D}_{\F,Q_0}^*$ and to prove Proposition \ref{finalprop} we can work with $\sbf''$ in place of $\sbf'$.
The collection $\sbf''$ is not necessarily a semi-coherent stopping time tree (it is a finite, disjoint union of such), but this fact will be harmless.

Recall that for each $Q\in\sbf'$, there exists $(Y^*_Q, s_Q^*)\in U_Q$ satisfying
\eqref{Y*Qest}, such that the normalized Green function satisfies the non-degeneracy condition
\begin{equation}\label{nondegenstoppack.eq}
a_\star^{-1} \le |\nabla_X\widehat u(Y^*_Q, s_Q^*)| \le a_\star,
\end{equation}
where $a_\star > 1$ is a constant depending only the allowable parameters and $K_0$, but not on $\eps$ (see Lemma \ref{refp++},
Lemma \ref{refp+++}, and \eqref{eq6.5};  in fact, by Lemma \ref{refp++},
the lower bound can be taken to be independent of
$K_0$ as well, but this is not really needed for our purposes).

In order to prove Proposition \ref{finalprop}, we shall need to refine Proposition \ref{WHSAplorient.lem} slightly,
in order to give ourselves more flexibility in the choice of the reference point $(Y^*_Q, s_Q^*)$.
In principle, a single cube could have two WHSA planes whose half-spaces point in (roughly)
opposite directions, and we would like to allow ourselves the choice of either of those in our construction.

Based on \eqref{nondegenstoppack.eq},
we make the following definition,
recalling that we have enumerated the components of $U_Q$ as $U_Q = \cup_j U_Q^j$.
\begin{definition}[Indices with non-degenerate gradient]
Given $Q\in\sbf'$, we let $\mathcal{J}_Q$ be the set of indices  $j$  such that  component $U_Q^j$ contains a point $$(\tilde{Y}_Q^{*,j},\tilde{s}_Q^{*,j}) \in U_Q,$$ satisfying
\begin{equation}\label{nondegenweakened.eq}
  (a_\star^{-1}/2)\leq |\nabla_X \widehat u(\tilde{Y}_Q^{*,j},\tilde{s}_Q^{*,j})| \leq 2a_\star.
\end{equation}
\end{definition}
Note that by taking $\eps$ sufficiently small, we can ensure that $\eps^{2L} \ll a_\star/200$, and by construction $\mathcal{J}_Q\neq\emptyset$ (see the discussion before \eqref{nondegenstoppack.eq}). One further definition is required to state our refined version of Proposition \ref{WHSAplorient.lem}.

\begin{definition}[$\eps^{2L}$-blue and $\eps^{2L}$-red cubes]\label{defosc}
Let $Q \in \sbf''$.
We say that $Q$ is an $\eps^{2L}$-blue cube if for all $j\in \mathcal{J}_Q$ and every point $(\tilde{Y}_Q^{*,j},\tilde{s}_Q^{*,j})$ in $U^j_{Q}$ for which \eqref{nondegenweakened.eq} holds the following estimates hold
\begin{equation}\label{smalloscubdef1.eq}
\sup_{(X,t)\in \tilde U^j_{Q}}\,\sup_{(Z,\tau)\in C_{X,t}} |\nabla_X \widehat u(Z,\tau)
 -\nabla_X \widehat u(\tilde{Y}_Q^{*,j},\tilde{s}_Q^{*,j})| \leq  \eps^{2L},
\end{equation}
and
\begin{equation}\label{smalloscubdef2.eq}
\sup_{(X,t)\in \tilde U^j_{Q}}\,\sup_{(Z,\tau)\in C_{X,t}} |\widehat u(Z,\tau)-\widehat u(X,t)-\nabla_X\widehat u(X,t)\cdot(Z-X)|\leq\eps^{2L}\ell(Q).
\end{equation}
If $Q$ is not $\eps^{2L}$-blue we say $Q$ is $\eps^{2L}$-red.
\end{definition}

Our refinement of Proposition \ref{WHSAplorient.lem} is as follows.
\begin{proposition}\label{WHSAplorientrefine.lem}
The $\eps^{2L}$-red cubes in $\sbf''$ satisfy a packing condition. If $Q$ is an
$\eps^{2L}$-blue cube, then the $(\eps,K_0)$-local WHSA holds for $Q$ (see Definition \ref{def2.13}). Moreover, for all $\eps^{2L}$-blue cubes and for each $j\in \mathcal{J}_Q$ the WHSA hyperplane $P_Q$ and half-space $H_Q$
can be chosen so that the unit normal to $P_Q$, pointing into $H_Q$, is given by the space-time vector
$ (\nabla \widehat u(\tilde{Y}_Q^{*,j},\tilde{s}_Q^{*,j}),0)/|\nabla \widehat u(\tilde{Y}_Q^{*,j},\tilde{s}_Q^{*,j})| $,
where $(\tilde{Y}_Q^{*,j},\tilde{s}_Q^{*,j})$ is {\bf any} point in $U^j_{Q}$ for which \eqref{nondegenweakened.eq} holds.  In addition, $(\tilde{Y}_Q^{*,j},\tilde{s}_Q^{*,j})\in H_Q$.
\end{proposition}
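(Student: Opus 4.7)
The plan is to re-run the case analysis of Subsections~\ref{sspart}--\ref{subq4} with the reference point treated as a free parameter, and to verify that the arguments there transport to any point satisfying the non-degeneracy \eqref{nondegenweakened.eq}. The crucial observation is that the square-function lower bounds used to pack Case~1 and Case~2 depend on the reference point only through a non-degenerate spatial gradient of $\widehat u$, while the Case~3 construction of the WHSA plane is translation/rotation covariant with respect to the reference point, so the normal direction may be taken to be that of $\nabla_X\widehat u$ at whichever non-degenerate reference point is chosen.

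\textbf{Packing of $\eps^{2L}$-red cubes.} If $Q\in\sbf''$ is red, then by Definition~\ref{defosc} there exist $j\in\mathcal{J}_Q$ and a point $(\tilde Y,\tilde s)\in U_Q^j$ with $(a_\star^{-1}/2)\le |\nabla_X\widehat u(\tilde Y,\tilde s)|\le 2a_\star$, such that either \eqref{smalloscubdef1.eq} or \eqref{smalloscubdef2.eq} fails at $(\tilde Y,\tilde s)$. These failures are exactly the Case~1 and Case~2 hypotheses of Subsection~\ref{sspart}, with $(\tilde Y,\tilde s)$ in place of $(Y_Q^*,s_Q^*)$ and $j$ in place of $i$. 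Inspecting Subsections~\ref{subq1}--\ref{subq2}, the arguments there use the reference point solely via the lower bound $|\nabla_X\widehat u(\tilde Y,\tilde s)|\gtrsim 1$, combined with \eqref{upperb} and standard interior regularity, to produce a sub-cylinder in $U_Q^{j,*}\subset U_Q^*$ on which $|\nabla_X^2\widehat u|+|\partial_t\widehat u|\gtrsim_{\eps,L}\ell(Q)^{-1}$ and $\widehat u\gtrsim_{\eps,L}\ell(Q)$. This yields the key lower bound
\[
\sigma(Q)\,\lesssim_{\eps,L}\iiint_{U_Q^*}\!\!\bigl(|\nabla_X^2\widehat u|^2+|\partial_t\widehat u|^2\bigr)\widehat u\,\d X\d t,
\]
which is the same estimate \eqref{case1finallwbd.eq} produced by the original Case~1 analysis (and equally by the Case~2 analysis). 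Summing over red cubes in any semi-coherent subtree $\sbf_1\subset\sbf''$ with maximal element $Q_1$, using the bounded overlap of $\{U_Q^*\}_{Q\in\sbf_1}$ together with Lemma~\ref{squarefunction}, gives $\sum\sigma(Q)\lesssim\sigma(Q_1)$; the packing over arbitrary $R\in\dd$ follows from the reduction to maximal cubes as at the end of Subsection~\ref{subq1}.

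\textbf{WHSA for $\eps^{2L}$-blue cubes with prescribed orientation.} Fix a blue cube $Q\in\sbf''$, an index $j\in\mathcal{J}_Q$, and any $(\tilde Y_Q^{*,j},\tilde s_Q^{*,j})\in U_Q^j$ satisfying \eqref{nondegenweakened.eq}. By the definition of blue, both \eqref{smalloscubdef1.eq} and \eqref{smalloscubdef2.eq} hold at $(\tilde Y_Q^{*,j},\tilde s_Q^{*,j})$; these are exactly the Case~3 hypotheses \eqref{eq5.2a}--\eqref{eq5.3+a}. We then rerun the analysis of Subsections~\ref{subq3}--\ref{subq4} verbatim with this substitution: the arguments use the reference point only through those two inequalities and the non-degeneracy $|\nabla_X\widehat u|\approx 1$ (with constants depending on $a_\star$, and hence only on allowable parameters and $K_0$). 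After translating and rotating so that $(\tilde Y_Q^{*,j},\tilde s_Q^{*,j})=d\,e_\perp$ with $d:=\delta(\tilde Y_Q^{*,j},\tilde s_Q^{*,j})$, the proofs of Lemmas~\ref{l5.14}--\ref{claim5.42} carry over without modification, showing that the hyperplane $\hat P_Q:=\{Z_\perp=\eps^2\ell(Q)\}$ and half-space $\hat H_Q:=\{Z_\perp>\eps^2\ell(Q)\}$ satisfy the $(\eps^{9/8},K_0)$-local WHSA; the spatial rotation by an angle at most $\eps^{100}$ described in Remark~\ref{nablaXnormal} then produces a plane $P_Q$ and half-space $H_Q$ for which the $(\eps,K_0)$-WHSA holds and whose unit normal pointing into $H_Q$ is the prescribed vector $(\nabla_X\widehat u(\tilde Y_Q^{*,j},\tilde s_Q^{*,j}),0)/|\nabla_X\widehat u(\tilde Y_Q^{*,j},\tilde s_Q^{*,j})|$. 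Finally, $(\tilde Y_Q^{*,j},\tilde s_Q^{*,j})\in H_Q$ because $d\gtrsim K_0^{-1}\ell(Q)\gg \eps^2\ell(Q)$, exactly as in the remark immediately following \eqref{eq5.44++}.

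\textbf{Principal obstacle.} The real work is purely a verification: one must confirm that no step in Sections~\ref{subq1}--\ref{subq4} tacitly exploits the provenance of $(Y_Q^*,s_Q^*)$ from Lemma~\ref{refp++} beyond the three properties (membership in some component $U_Q^j$, non-degeneracy of $\nabla_X\widehat u$, and the Case~1/2/3 dichotomy). A careful pass through those sections shows this to be the case, with all implicit constants picking up only a polynomial dependence on $a_\star$, which is absorbed into the allowable parameters. This completes the proof.
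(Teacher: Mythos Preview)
Your proposal is correct and follows essentially the same approach as the paper: rerun the Case~1/2/3 analysis with the reference point $(Y_Q^*,s_Q^*)$ replaced by an arbitrary $(\tilde Y_Q^{*,j},\tilde s_Q^{*,j})\in U_Q^j$ satisfying \eqref{nondegenweakened.eq}, noting that the slightly weaker non-degeneracy constant $a_\star^{-1}/2$ in place of $a_\star^{-1}$ is harmless. The paper's own proof is just a two-sentence remark to this effect; your write-up supplies the verification details that the paper leaves implicit.
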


The proof is essentially the same as that of Proposition \ref{WHSAplorient.lem}.
Indeed, one may simply replace
$U_Q^i$, and the reference point $(Y_Q^{*},s_Q^{*})\in U_Q^i$, by any
$U_Q^j$, and  $(\tilde{Y}_Q^{*,j},\tilde{s}_Q^{*,j})\in U_Q^j$ such that \eqref{nondegenweakened.eq} holds, and then repeat the same argument.  The only difference is that we now have \eqref{nondegenweakened.eq}, rather than
\eqref{nondegenstoppack.eq}, but the slightly worse constants in \eqref{nondegenweakened.eq}
are harmless.  In particular, the $\eps^{2L}$-red cubes fall into either Case 1
(if \eqref{smalloscubdef1.eq} fails) or Case 2 (if \eqref{smalloscubdef1.eq} holds but
\eqref{smalloscubdef2.eq} fails).

 We define
\[
\B_2 := \{Q \in \sbf'' : Q \text{ is $\eps^{2L}$-red}\}, \qquad \B_{\sbf'}: = \B_1 \cup \B_2,\qquad
\sbf''' :=  \sbf'' \setminus\B_{\sbf'}=\sbf' \setminus\B_{\sbf'}\,.
\]
We already have that $\B_{\sbf'}$ satisfies a packing condition;
therefore, to prove Proposition \ref{finalprop}, it is enough to
divide $\sbf'''$ into semi-coherent stopping time trees with
maximal cubes that pack, and to
produce the associated graph in each stopping time.

We now describe the construction of the stopping times. If $\sbf'''$ is empty,
then Proposition \ref{finalprop} holds vacuously, so we may assume that this
is not the case. To construct the first stopping time tree $\sbf^*_1$, let
$Q_1\in  \sbf''$ be a maximal (with respect to generation) $\eps^{2L}$-blue cube,
and let $(Y^*_{Q_1}, s^*_{Q_1})$ be a point satisfying \eqref{nondegenstoppack.eq}
and \eqref{Y*Qest} (with $Q_1$ in place of $Q$).
We subdivide $Q_1$ dyadically,
and continue the subdivision until we encounter a cube $Q$ such that
any of the following occur.

\begin{condition}\label{stopping} {\bf (Stopping time conditions).}

\begin{enumerate}
\item  $Q\not\in \sbf'$;
\smallskip
\item  $Q\in \sbf'$ is $\eps^{2L}$-red;
\smallskip
\item  $Q\in\sbf'$ is $\eps^{2L}$-blue, but
\begin{equation}\label{stoppc.eq}
|\nabla_X \widehat u(Y,s) - \nabla_X\widehat u(Y^*_{Q_1}, s^*_{Q_1})| > 10\eps^{2L}, \quad \forall (Y,s) \in U_Q.
\end{equation}
\end{enumerate}
\end{condition}

The cubes on which we stop comprise a pairwise-disjoint family $\F_{Q_1}^*$. We then introduce $\sbf_1^*:=\dd_{\F_{Q_1}^*,Q_1}$, a
semi-coherent stopping tree with $Q(\sbf^*_1):=Q_1$.
In particular, none of the three conditions $(a)$--$(c)$ hold for $Q \in \sbf^*_1$, by construction.
Next,  let $Q_2\in \sbf'''\setminus \sbf^*_1$ be a
maximal (by generation) $\eps^{2L}$-blue cube,
and let $(Y^*_{Q_2}, s^*_{Q_2})$ be a point satisfying  \eqref{nondegenstoppack.eq}
and \eqref{Y*Qest} (with $Q_2$ in place of $Q$).  Based on this we build the second stopping time tree $\sbf^*_2$ as before: we subdivide $Q_2$ dyadically,
and stop the first time we encounter a cube $Q$ for which at least one of $(a)$--$(c)$ holds,
with $Q_1$ replaced by $Q_2$ in $(c)$/\eqref{stoppc.eq}.
As a result of this process we obtain again a
pairwise-disjoint stopping time family $\F_{Q_2}^*$,
and we introduce $\sbf_2^*:=\dd_{\F_{Q_2}^*,Q_2}$.
Note that $\sbf_2^*$ is a semi-coherent stopping tree with $Q(\sbf^*_2)= Q_2$.
Continuing in this way we get a sequence of
 semi-coherent stopping time trees
$\{\sbf^*_k\}_k$, such that
$\sbf^*_{k+1}\subset \sbf'''\setminus \cup_{j=1}^k\sbf^*_{j} $
for $k\ge 1$ and $\sbf'''=\cup_{k\geq 1}  \sbf^*_{k}$.

\begin{remark}
We carefully note that we have chosen our reference points $(Y^*_{Q(\sbf^*_k)}, s^*_{Q(\sbf^*_k)})$ so that they satisfy \eqref{nondegenstoppack.eq} (and not the weaker condition \eqref{nondegenweakened.eq}), as well as \eqref{Y*Qest}.
\end{remark}

Notice that our construction is a `corona within a corona'. It is therefore useful to think of the stopping time trees $\sbf'$ as {\bf caloric measure trees}, which we have further divided up into `bad cubes' $\B_{\sbf'}$
and {\bf graph trees} $\{\sbf^*_k\}$,
so that $\sbf' = \B_{\sbf'} \cup(\cup_k\sbf_k^*)$ and all these families are pairwise disjoint. Our goal below is to construct, for each graph tree $\sbf^*_k$, a regular graph $\Gamma_{\sbf^*_k}$ that is close to $\Sigma$ in the sense that \eqref{eq2.2a} holds and
such that the maximal cubes $\{Q(\sbf^*_k)\}$ satisfy a packing condition.

Let us analyze the maximal cubes  $\{Q(\sbf^*_k)\}$ further.  Given
a particular tree $\sbf^* = \sbf^*_k$ for some fixed $k$, we decompose
its maximal cube $Q(\sbf^*)$ as follows.  Let $\F^*=\F^*(\sbf^*)$
denote the collection of
stopping time cubes, that is, the sub-cubes of $Q(\sbf^*)$ satisfying one of the stopping time conditions 
(1), (2), or (3) in Condition \ref{stopping} above, and set
\begin{equation}\label{F0def}
F^0_{\sbf^*}:= Q(\sbf^*)\setminus \left(\cup_{Q_j\in \F^*}\,Q_j\right)\,.
\end{equation}
Let $\F_1^*=\F_1^*(\sbf^*)$, $\F_2^*=\F_2^*(\sbf^*)$, and
$\F_3^*=\F_3^*(\sbf^*)$ denote
the disjoint subcollections of $\F^*$
such that conditions 
(1), (2), and (3) hold (respectively), so that
\begin{equation}\label{Fcalidef}
\F^*= \F_1^* \cup \F_2^*\cup \F_3^*\,,
\end{equation}
and set
\begin{equation}\label{Upidef}
\Upsilon^i_{\sbf^*}:= \cup_{Q\in \F_i^*}\,Q\,,\quad i=1,2,3\,.
\end{equation}
We then decompose $Q(\sbf^*)$ as follows:
\begin{equation}\label{QS*decomp}
Q(\sbf^*)= 
\left(F^0_{\sbf^*} \cup \Upsilon^1_{\sbf^*}\right) \cup
\Upsilon^2_{\sbf^*}
\cup \Upsilon^3_{\sbf^*} =:
E_{\sbf^*}\cup \Upsilon^2_{\sbf^*}
\cup \Upsilon^3_{\sbf^*} \,,
\end{equation}
i.e.,  $E_{\sbf^*}:=  
F^0_{\sbf^*} \cup \Upsilon^1_{\sbf^*}$.

In Section \ref{sec:pack}, we shall prove the following key lemma.

\begin{lemma}\label{keylemma1}
 There are uniform constants
$\zeta_0 \in (0,1)$, and $M_0\in (1,\infty)$,
depending on $K_0, \eps$ and the allowable parameters,
such that for every
graph tree $\sbf^*=\sbf^*_k$ as above, there are
disjoint decompositions $\F_3^*:=\F_{3,good}^*\cup \F_{3,bad}^*\,$, and
 $\Upsilon^3_{\sbf^*} := \Upsilon^3_{\sbf^*,\, good}
\cup \Upsilon^3_{\sbf^*,\, bad}\,$  satisfying
\begin{equation}\label{Up3compample}
\sigma\big(Q(\sbf^*)\setminus \Upsilon^3_{\sbf^*,\, bad} \big) \,\geq \,
\zeta_0 \,\sigma\big(Q(\sbf^*)\big)\,,
\end{equation}
and such that, for each caloric measure tree $\sbf'$, we have the packing condition
 \begin{equation}\label{Fp3pack}
 \sum_{\sbf^*:\,\sbf^*\subset \sbf' \,,\,Q(\sbf^*)\subset R }\,\,
 \sum_{Q\in \F_{3,good}^*(\sbf^*)} \sigma(Q) \,
 \leq \, M_0\, \sigma(R)\,,\qquad \forall \, R\in \dd\big(Q(\sbf')\big).
 \end{equation}
Here, $\Upsilon^3_{\sbf^*,\,good}:= \cup_{Q\in \F_{3,good}^*(\sbf^*)}\,Q$, and
$\Upsilon^3_{\sbf^*,\,bad}:= \cup_{Q\in \F_{3,bad}^*(\sbf^*)}\,Q$,
and the first sum in
\eqref{Fp3pack} runs over graph trees $\sbf^*$ contained in the given caloric measure tree $\sbf'$.
\end{lemma}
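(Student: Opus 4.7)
The plan is to carry out a parabolic version of the $\eps$-approximability / Carleson measure argument of \cite[Chapter VIII]{G} and \cite{HMM}, but executed inside the \emph{graph subdomain} $\Omega^+_{\sbf^*}$ of $\Omega$ lying above the regular Lip(1,1/2) graph $\Gamma_{\sbf^*}$ (constructed in Section~\ref{sec: graph} and shown to be regular in Section~\ref{sec: reggraph}).  The advantage of working inside $\Omega^+_{\sbf^*}$ is that there, forward-in-time Harnack chains between corkscrew points across scales are available, and the adjoint-caloric non-tangential maximal function and square function bounds of \cite{B89, LewMur} are at our disposal.  The graph-closeness \eqref{graphclose}, combined with the smallness of the Lip(1,1/2) constant of $\psi_{\sbf^*}$ guaranteed in Proposition~\ref{finalprop}, ensures that for each $Q\in\sbf^*$ a substantial portion of the Whitney region $U_Q$ sits inside $\Omega^+_{\sbf^*}$.

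I would declare $Q\in\F_3^*$ to be \emph{good} if the reference point $(Y^*_{Q(\sbf^*)},s^*_{Q(\sbf^*)})$ can be joined, by a chain of at most $N(\eps,K_0)$ parabolic Whitney cylinders all lying inside $\Omega^+_{\sbf^*}$, to a point $(Z_Q,\tau_Q)\in U_Q\cap \Omega^+_{\sbf^*}$ at which the oscillation condition (c) of Condition~\ref{stopping} still holds; otherwise $Q$ is \emph{bad}.  A bad cube must force either (i) that $U_Q$ fails to meet $\Omega^+_{\sbf^*}$ in an ample sub-Whitney region (which, by \eqref{graphclose} and the smallness of $b_1$, is possible only for cubes located close to the exceptional set where $\Sigma$ crosses $\Gamma_{\sbf^*}$ substantially, whose total surface measure is small), or (ii) that a forward-in-time chain inside $\Omega^+_{\sbf^*}$ between $Q(\sbf^*)$ and $Q$ fails -- a purely geometric defect of $\psi_{\sbf^*}$ whose $\sigma$-measure in $Q(\sbf^*)$ is also small by a standard Whitney-type argument on the small Lip(1,1/2) graph.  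Together these give the bound \eqref{Up3compample} for $\Upsilon^3_{\sbf^*,bad}$ with some uniform $\zeta_0$.

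For \eqref{Fp3pack}, on each good $Q\in\F_{3,good}^*$ the oscillation condition (c), propagated back to the parent of $Q$ in the stopping tree $\sbf^*$ along the graph-Harnack chain, yields an oscillation of $\nabla_X\widehat u$ of size $\gtrsim\eps^{2L}$ between two points at parabolic distance $\lesssim\ell(Q)$ apart in $\Omega^+_{\sbf^*}$.  Proceeding exactly as in the Case~1 analysis of Subsection~\ref{subq1}, this produces a cylinder $C(\widehat{Y}_Q,\widehat{s}_Q,\zeta_{\eps,L}\ell(Q))\subset\Omega^+_{\sbf^*}$ on which $|\nabla_X^2\widehat u|\gtrsim_{\eps,L}\ell(Q)^{-1}$ and $\widehat u\gtrsim_{\eps,L}\ell(Q)$, giving
\begin{equation*}
\sigma(Q)\,\lesssim_{\eps,L}\,\iiint_{T_Q}\bigl(|\nabla_X^2\widehat u|^2 + |\partial_t\widehat u|^2\bigr)\,\widehat u\,dY\,ds,
\end{equation*}
for a parabolic Carleson box $T_Q\subset\Omega^+_{\sbf^*}$ of size $\sim\ell(Q)$ based near $Q$.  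The boxes $\{T_Q\}$ have bounded overlap within a single $\sbf^*$; across distinct graph trees $\sbf^*,\sbf^{**}\subset \sbf'$ the corresponding $T_Q$'s are essentially disjoint, because the maximal cube of one tree is a stopping cube for any strictly finer tree.  Summing over all good cubes in all $\sbf^*\subset\sbf'$ with $Q(\sbf^*)\subset R$ and invoking Lemma~\ref{squarefunction} applied to a suitable semi-coherent sub-tree of $\sbf'$ containing $R$ bounds the total by $\sigma(R)$, which is \eqref{Fp3pack}.

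The main obstacle will be constructing and controlling the forward-in-time Harnack chain inside $\Omega^+_{\sbf^*}$ connecting $(Y^*_Q,s^*_Q)$ to $(Y^*_{Q(\sbf^*)},s^*_{Q(\sbf^*)})$: nothing about the ambient geometry of $\Omega$ guarantees such a chain exists, and $(Y^*_Q,s^*_Q)$ may lie deep in the past of $(Y^*_{Q(\sbf^*)},s^*_{Q(\sbf^*)})$.  The regular Lip(1,1/2) structure of $\Gamma_{\sbf^*}$ provided by Section~\ref{sec: reggraph} is exactly the tool that makes such a chain available whenever the cube is not already absorbed into the bad set, and this is why passing to $\Omega^+_{\sbf^*}$ is the key step of the argument.
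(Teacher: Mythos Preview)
Your overall architecture is right --- work in the graph domain $\Omega'_{\sbf^*}$, exploit the regular Lip(1,1/2) structure proved in Section~\ref{sec: reggraph}, and close with the square function estimate of Lemma~\ref{squarefunction} --- but the mechanism you propose for \eqref{Fp3pack} has a genuine gap, and the paper's route differs from yours in an important way.

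\textbf{The gap.} For $Q'\in\F_3^*$, the stopping condition~(3) says that \emph{every} point of $U_{Q'}$ has $\nabla_X\widehat u$ far from the fixed vector $\nabla_X\widehat u(Y^*_{Q(\sbf^*)},s^*_{Q(\sbf^*)})$. This is an oscillation relative to the \emph{top} of the tree, not across one generation. Your plan is to locate, along a graph-Harnack chain, a transition point at which $|\nabla_X\widehat u-\nabla_X\widehat u(Y^*_{Q(\sbf^*)})|$ crosses the threshold $10\eps^{2L}$, and then run the Case~1 argument of Subsection~\ref{subq1} there. But nothing forces that transition to occur at scale $\ell(Q')$: it can happen at any scale between $\ell(Q')$ and $\ell(Q(\sbf^*))$, so the resulting cylinders $T_{Q'}$ need not have bounded overlap, and many distinct $Q'$ (indeed, from many distinct $\sbf^*$) could land on the same $T$. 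Your appeal to ``the parent of $Q$ in $\sbf^*$'' does not help: the point of $U_{\P(Q')}$ with small oscillation is merely \emph{some} point of $U_{\P(Q')}$, with no control placing it in $\Omega'_{\sbf^*}$ or near $Q'$.

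\textbf{How the paper avoids this.} The paper never tries to localize the oscillation of $\nabla_X\widehat u$ to scale $\ell(Q')$. Instead, it treats the vector $\nabla_X\widehat u(Y^*_{Q(\sbf^*)},s^*_{Q(\sbf^*)})$ as a \emph{constant} and observes that, for a good $Q'$, a specific point $(\psi_{\sbf^*}(x_{Q'},t_{Q'})+\ell(Q'),x_{Q'},t_{Q'})$ lies simultaneously in $U_{Q'}$ (so the stopping inequality holds there) and in a narrow cone in $\Omega'_{\sbf^*}$ with vertex on $\Gamma_{\sbf^*}$. Hence the nontangential maximal function $\nn_{\sbf^*}(\nabla_X\widehat u-\text{const})$ is $\gtrsim\eps^{2L}$ on a surface ball $\Delta_{\star,\eps}(Q')\subset\Gamma_{\sbf^*}$ of radius $\approx\eps\ell(Q')$. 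After thinning to make these surface balls disjoint, one sums their $\sigma_{\Gamma_{\sbf^*}}$--measures and invokes Brown's $L^2$ bound $\|\nn_{\sbf^*}\|_{L^2}\lesssim\|S\|_{L^2}$ on the regular Lip(1,1/2) graph (this is precisely where Section~\ref{sec: reggraph} is used), yielding the square-function integral over $\Omega'_{\sbf^*}$, which is then dominated via \eqref{ompsbdmn.eq} and Lemma~\ref{squarefunction}. No chain between reference points is ever built.

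\textbf{The good/bad split.} Correspondingly, the paper's decomposition is not via Harnack-chain connectability. It constructs (Lemma~\ref{keylemma2}) an explicit ample set $A_{\sbf^*}\subset Q(\sbf^*)$ with $\sigma(A_{\sbf^*})\geq\zeta_0\,\sigma(Q(\sbf^*))$, using only the graph-proximity estimates of Section~\ref{sec: graph} (in particular \eqref{closeIiforpsi.eq} and the Whitney structure of the $I_i$), with the property that whenever $Q\in\sbf^*$ meets $A_{\sbf^*}$, the point $(\psi_{\sbf^*}(x_{Q'},t_{Q'})+\ell(Q'),x_{Q'},t_{Q'})$ lies in $U_{Q'}$ for every child $Q'$. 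Then $Q'\in\F_3^*$ is declared good iff its parent meets $A_{\sbf^*}$, which makes \eqref{Up3compample} immediate and supplies exactly the point-in-cone input needed for the $\nn/S$ argument above. Your criteria (i)--(ii) for ``bad'' are aiming at the same phenomenon but are not sharp enough to yield \eqref{Up3compample} without essentially reproving this lemma.
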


We defer the proof of Lemma \ref{keylemma1} until Section \ref{sec:pack}.
The proof will rely in part on material to be developed in
Sections \ref{sec: graph} and \ref{sec: reggraph}.

Let us for now take Lemma \ref{keylemma1} for granted, and use it to prove
the packing condition \eqref{graphclosepack}.
Recall that in Lemma \ref{refp+},
we have already established packing of the maximal cubes
$Q(\sbf')$ of the caloric measure trees.  Thus, it is enough to verify \eqref{graphclosepack}
{\em within} a given caloric measure tree
$\sbf'$.  We therefore fix such an $\sbf'$, and note that we have shown that
$\sbf'$ has the disjoint decomposition $\sbf'= (\cup_k \sbf^*_k)\cup \B_{\sbf'}$,
where the collection $\B_{\sbf'}$ satisfies a packing condition.
Consider now any given $\sbf^*=\sbf^*_k$ in this decomposition.
Recalling \eqref{QS*decomp}, we see
that the set
$Q(\sbf^*)\setminus \Upsilon^3_{\sbf^*,\, bad}$ may be decomposed as
\[
Q(\sbf^*)\setminus \Upsilon^3_{\sbf^*,\, bad} =
E_{\sbf^*}\cup \Upsilon^2_{\sbf^*}
\cup \Upsilon^3_{\sbf^*,\, good} \,,
\]
hence, by \eqref{Up3compample}, it follows that at least one of the
following bounds holds:
\[
\sigma(E_{\sbf^*})\, \ge \,(\zeta_0/3)\, \sigma(Q(\sbf^*)), \leqno(i)
\]
\[
\sigma\left(\Upsilon^2_{\sbf^*} \right) \,\ge \,(\zeta_0/3) \,\sigma(Q(\sbf^*)) \quad \text { or } \leqno(ii)
\]
\[
\sigma\left(\Upsilon^3_{\sbf^*,\, good}\right) \,\ge\, (\zeta_0/3)
\sigma(Q(\sbf^*))\,. \leqno(iii)
\]
We may now deduce the packing condition \eqref{graphclosepack} as follows.  As we have already observed, the bad cubes $\B_{\sbf'}$ pack.
For $\sbf'$ fixed, the sets in the collection
$\{E_{\sbf^*_k}: (i) \text{ holds for } \sbf_k^*\}$
are pairwise disjoint, so
for such $\sbf_k^*$, we may trivially
pack the associated maximal cubes $Q(\sbf_k^*)$.
The maximal cubes $Q(\sbf_k^*)$ for which $\sbf_k^*$ satisfies $(ii)$ may be packed, because the red cubes pack.  For those $\sbf_k^*$ satisfying $(iii)$,
the desired packing follows from \eqref{Fp3pack}, thus, we have
reduced the proof of \eqref{graphclosepack} to that of Lemma \ref{keylemma1}.

As remarked above (see Remark \ref{prop8.2proofoutline}),
the proof of Proposition \ref{finalprop} will proceed in stages,
three of which remain.  Let us now
discuss some preliminary matters in connection with these three steps,
before completing
the argument in subsequent sections.

First, for each graph tree $\sbf^*=\sbf^*_k$,
we shall construct a Lip(1,1/2) graph $\Gamma_{\sbf^*}$ that is close to $\Sigma$ in the sense that \eqref{graphclose} holds, and such that
the graph has suitably small Lip(1,1/2) constant $b_1$.
For the construction of the graph we have a lot of information at our disposal. In particular, for any graph tree $\sbf^*$, the cube $Q(\sbf^*)$ satisfies the WHSA condition with a half space $H_{Q(\sbf^*)}$, whose boundary is the plane $P_{Q(\sbf^*)}= \partial H_{Q(\sbf^*)}$, such that the unit normal vector to the plane $P_{Q(\sbf^*)}$, pointing into $H_{Q(\sbf^*)}$, is given by the space-time vector
\[ \big(\nabla_X \widehat u(Y^*_{Q(\sbf^*)}, s^*_{Q(\sbf^*)}),0\big)/|\nabla_X \widehat u(Y^*_{Q(\sbf^*)}, s^*_{Q(\sbf^*)})|.\]
Furthermore, every $Q \in \sbf^*$ is a $\eps^{2L}$-blue cube satisfying the statement reverse  to that in Condition \ref{stopping} (3) 
i.e.,
\begin{align}\label{decomp1}
&|\nabla_X \widehat u(Y,s) - \nabla_X\widehat u(Y^*_{Q(\sbf^*)}, s^*_{Q(\sbf^*)})| \leq 10\eps^{2L},\ \mbox{for some } (Y,s) \in U_Q.
\end{align}
Recall that we always choose $(Y^*_{Q(\sbf^*)}, s^*_{Q(\sbf^*)})$ satisfying \eqref{nondegenstoppack.eq}, and hence by \eqref{decomp1}, 
for $\eps$ sufficiently small, there is
a $j\in\mathcal{J}_Q$, and a corkscrew point $(\tilde{Y}_Q^*,\tilde{s}_Q^*):=
(\tilde{Y}_Q^{*,j},\tilde{s}_Q^{*,j})$ in $U_Q^j$,  such that
\begin{align}\label{decomp1+}
&|\nabla_X \widehat u(\tilde{Y}_Q^*,\tilde{s}_Q^*) - \nabla_X\widehat u(Y^*_{Q(\sbf^*)}, s^*_{Q(\sbf^*)})| \leq 10\eps^{2L}.
\end{align}
Indeed, we simply choose $(\tilde{Y}_Q^*,\tilde{s}_Q^*)$ to be the point $(Y,s)$  in \eqref{decomp1} (or if $Q=Q(\sbf^*)$, we just set
$(\tilde{Y}_{Q(\sbf^*)}^*,\tilde{s}_{Q(\sbf^*)}^*):=(Y^*_{Q(\sbf^*)}, s^*_{Q(\sbf^*)})$).
It then follows that
\eqref{nondegenweakened.eq} holds for 
$(\tilde{Y}_Q^*,\tilde{s}_Q^*)$, since \eqref{nondegenstoppack.eq}
 holds for $(Y^*_{Q(\sbf^*)}, s^*_{Q(\sbf^*)})$, and we have insisted that $\eps^{2L} \ll a_\star/200$.
Consequently, by Proposition \ref{WHSAplorientrefine.lem}, $Q\in \sbf^*$
satisfies the $(\eps,K_0)$-local WHSA condition with a half space $H_Q$
containing $(\tilde{Y}_Q^*,\tilde{s}_Q^*)$, and such that
the normal to the plane $P_Q = \partial H_Q$ that points into
$H_Q$ is in the direction $(\nabla_X\widehat u(\tilde{Y}_Q^*,\tilde{s}_Q^*),0)$.  Since
\[|\nabla_X \widehat u(\tilde{Y}_Q^*,\tilde{s}_Q^*)|\approx_{a_\star} 1\approx_{a_\star}  |\nabla_X \widehat u(Y^*_{Q(\sbf^*)}, s^*_{Q(\sbf^*)})|,\] it follows from \eqref{decomp1+} that
the angle between $P_Q$ and $P_{Q(\sbf^\ast)}$, denoted $\angle(P_Q, P_{Q(\sbf^\ast)})$, satisfies
\[\angle(P_Q, P_{Q(\sbf^\ast)}) \lesssim \eps^{2L}.\]
Therefore, after rotating in the spatial coordinates so that $(e_\perp,0)=(1,0,...,0,0)$
is the normal to $P_{Q(\sbf^*)}$ pointing into $H_{Q(\sbf^*)}$, we see by
a simple geometric argument  
that if we take $(x_0, x,t) = (X,t) \in C_{Q,\eps}^{*} \cap P_Q$ and let
\[
\widetilde{H}_Q := \{(y_0, y, s): y_0 > x_0 + \eps\ell(Q)\ \},
\quad \widetilde{P}_Q := \{(y_0, y, s):  y_0 = x_0 + \eps\ell(Q)\},
\]
then the $(\teps,K_0)$-WHSA condition, with $\teps:= 10\eps$, holds for $Q$ with the plane $\widetilde{P}_Q$ and the half-space $\widetilde{H}_Q$. Here we used that $\eps\leq\eps_0=K_0^{-100}$,
and that $L$ is large and at our disposal. In particular, note that we have deduced
that every $Q \in \sbf^*$ satisfies the $(\teps,K_0)$-WHSA condition with a plane $\widetilde{P}_Q$ satisfying
\begin{align}\label{ffina-}
\widetilde{P}_Q \parallel \widetilde{P}_{Q(\sbf^*)}\,,
\end{align}
the normals to the planes $\{\widetilde{P}_Q\}_{Q\in \sbf^*}$ are all
pointing in the same direction, and we have set
\begin{align}\label{ffina}\widetilde{P}_{Q(\sbf^*)}: = P_{Q(\sbf^*)}\,,
\qquad \widetilde{H}_{Q(\sbf^*)}:={H}_{Q(\sbf^*)}\,.
\end{align}  Given $Q \in \sbf^*$ , we will refer to $\widetilde{P}_{Q}$ and  $\widetilde{H}_{Q}$ as the {canonical $\teps$-WHSA plane}, and the
canonical $\teps$-WHSA half-space associated to $Q$, respectively. The next section is devoted to rigorously (and painstakingly) proving that one can glue these planes together to form a Lip(1,1/2) graph, and to providing additional properties needed for the remaining two steps.
The next step, to be carried out in Section \ref{sec: reggraph},
is to show the graphs are, in fact, regular.
The final step, to be carried out in Section \ref{sec:pack}, is to prove Lemma \ref{keylemma1}, which then completes the proof of the packing condition \eqref{graphclosepack}, and thus also the proofs of Proposition \ref{finalprop} and of our main result, Theorem \ref{main.thrm}.

\section{Constructing Lip(1,1/2) graphs $\{\Gamma_{{\sbf^*}}\}$ associated to the graph trees $\{\sbf^*\}$}\label{sec: graph}
In this section, following the strategy
of  \cite{DS1}, we construct Lip(1,1/2) graphs adapted to the stopping time trees $\{\sbf^*\}$ introduced in the preliminary argument
for Proposition \ref{finalprop} outlined in Section \ref{Sec7}. The construction is similar to that in \cite{BHHLN-Corona}, which in itself is modeled on \cite{DS1}. Compared to \cite{BHHLN-Corona}, the main difference is that we have a poorer estimate in Lemma \ref{DSLem8.4mod.lem} below compared to the estimate in \cite[Lemma 4.2]{BHHLN-Corona} (and compared to the estimate in \cite[Lemma 8.4]{DS1}). We are also going to need additional properties of the graphs in order to, in the subsequent sections, improve the regularity of the graphs and to pack the maximal cubes.

In the following we consider a fixed stopping time tree $\sbf^*$.

\begin{remark}\label{*WHSA}
Recall that by construction,
(see \eqref{decomp1+}-\eqref{ffina}),
every $Q \in \sbf^*$ satisfies the $(\teps,K_0)$-WHSA condition,
$\teps=10\eps$, with a half-space $ \widetilde{H}_Q$,
and a plane $\widetilde{P}_Q=\partial \widetilde{H}_Q$
satisfying $\widetilde{P}_Q \parallel \widetilde{P}_{Q(\sbf^*)}$
(see \eqref{ffina-}), such that the
normals to the planes $\{\widetilde{P}_Q\}_{Q\in \sbf^*}$ are all
pointing in the same direction.
In particular (see Definition \ref{def2.13}), 
\begin{list}{$(\theenumi)$}{\usecounter{enumi}\leftmargin=1cm \labelwidth=1cm \itemsep=0.2cm \topsep=.2cm \renewcommand{\theenumi}{\roman{enumi}}}
\item $\dist(Z,\tau,\Sigma)\leq\teps\ell(Q),$ for every $(Z,\tau)\in \widetilde{P}_Q \cap C_{Q,\teps}^{*}$,
\item $\dist(Q,\widetilde{P}_Q)\leq c_nK_0 \ell(Q),$
\item $\widetilde{H}_Q\cap C_{Q,\teps}^{*}\cap \Sigma=\emptyset.$
\end{list}
\end{remark}

 The cylinder $C^*_{Q,\teps}$, of size
 roughly $\teps^{-2}\diam(Q)$, was introduced in \eqref{eq2.bstarstar}.

We are going to construct a coordinate system and a Lip(1,1/2)  function $ \psi_{\sbf^*}:=\psi = \psi ( x, t ) : \mathbb R^{n-1}\times\mathbb R\to \mathbb R$ with constant $b_1\lesssim \eps^{1/2}$, such if we define
 $$
 \Gamma_{\sbf^*}:=\{(\psi_{\sbf^*}(x,t),x,t): (x,t)\in \mathbb R^{n-1}\times\mathbb R\},
 $$
 then for $(\eta,K)$ as in \eqref{eq2.2ahala} below,
\begin{equation}\label{eq2.2aha}
\sup_{(X,t) \in KQ} \dist(X,t,\Gamma_{{\sbf^*}} )\leq\eta\,\diam(Q),
\end{equation}
 for every $Q\in {\sbf^*}$. We will prove that this can be done with
 \begin{equation}\label{eq2.2ahala}
(\eta,K)=(cK_0,2),
\end{equation}
where $c\geq 1$ is a constant that only depends on the allowable parameters. In particular, the results of this section and the following section, will in the end result in a $(cK_0,2)$-Corona decompositions in the sense of Definition \ref{unilateralcorona.def}.

Recall that $\teps := 10\eps$.
Fix $\sbf^*$, with maximal cube $Q(\sbf^*)$, so that Remark \ref{*WHSA} applies to
each $Q \in \sbf^*$.
We will work in the coordinates induced by
$\widetilde{P}_{Q(\sbf^*)}^\perp \times \widetilde{P}_{Q(\sbf^*)},$ i.e.,
\begin{equation}\label{PHdef}
\widetilde{P}_{Q(\sbf^*)} = \{(0,x,t)\in \mathbb{R}\times \mathbb{R}^{n-1}\times\mathbb{R}\}\,,
\quad \text{ and } \quad \widetilde{H}_{Q(\sbf^*)}=\{(x_0,x,t): x_0 > 0\}.
\end{equation}
We let $\pi$ denote the (orthogonal) projection onto $\widetilde{P}_{Q(\sbf^*)}$ and we let $\pi^{\perp}$ denote the (orthogonal) projection onto the inner normal of $\widetilde{H}_{Q(\sbf^*)}$, i.e., given $(X,t) = (x_0,x,t)$ we have
\[
\pi(X,t) = (x,t) \cong (0,x,t), \quad \pi^\perp(X,t) =x_0 \cong (x_0,0,0).
\]
Given $(x,t),(y,s)\in\mathbb R^n$ we  let
$$\dist(x,t,y,s):=|x-y|+|t-s|^{1/2}.$$
Furthermore,  given $(x,t)\in\mathbb R^n$, $r>0$, 
we define the $n$-dimensional parabolic cylinder
\begin{equation}\label{ndimcylinder}
C_r'(x,t):=\{(y,s)\in \mathbb R^n: y\in B_r'(x),\ |s-t|<r^2\}\,,
\end{equation}
where now $B_r'(x)$ is the Euclidean ball in $\mathbb R^{n-1}$ centered at $x$ and with radius $r$.

Following \cite{DS1, BHHLN-Corona}, we introduce (two) stopping time distances.
For $(X,t) \in \ree$ we define
\begin{equation}\label{DSDfn.eq--}d(X,t):= d_{\sbf^*}(X,t):= \inf_{Q \in \sbf^*}[\dist(X,t, Q) + \diam(Q)],
\end{equation}
and for $(x,t) \in \rn$ we define
\begin{equation}\label{DSDfn.eq}
D(x,t):= \inf_{(Y,s) \in \pi^{-1}(x,t)} d(Y,s) =  \inf_{Q\in \sbf^*} [\dist(x,t, \pi(Q)) + \diam(Q)].
\end{equation}
We also set
\begin{equation}\label{DSDfn.F}
F=F_{\sbf^*}:= \{(X,t) \in \Sigma: d(X,t) = 0\}.
\end{equation}
The set $F$ is the set where the graph $\Gamma_{\sbf^*}$, to be constructed, makes contact with $\Sigma$, i.e., the set where the graph $\Gamma_{\sbf^*}$ coincides with $\Sigma$.  It may be that the contact set $F$ is empty.

\begin{remark}\label{remarkF0vsF}
We note that the set $F^0_{\sbf^*}$ defined in \eqref{F0def} agrees with
$F_{\sbf^*}$ up to a set of $\sigma$-measure zero.  More precisely,
$F^0_{\sbf^*}\subset F_{\sbf^*}$, and
\begin{equation}\label{eqF0vsF}
\sigma(F_{\sbf^*}\setminus F^0_{\sbf^*}) = 0\,.
\end{equation}
Indeed, by definition, $(X,t)\in F^0_{\sbf^*}$ means that whenever
$(X,t)\in Q\subset Q(\sbf^*)$, then $Q\in \sbf^*$.  Letting
$Q\searrow (X,t)$, we see that $d(X,t)=0$, i.e., $F^0_{\sbf^*}\subset F_{\sbf^*}$.
To verify \eqref{eqF0vsF}, we observe that
$F_\sbf^*\subset \overline{Q(\sbf^*)}$, and that
$F_\sbf^*$ is disjoint from the interior of every stopping time
cube $Q_j\in\F^*(\sbf^*)$, i.e.,
$F_\sbf^*\subset \overline{Q(\sbf^*)}\setminus (\cup_{Q_j\in\F^*} \interior(Q_j)$.
Thus, the set $F_{\sbf^*}\setminus F^0_{\sbf^*}$ is
contained in a union of boundaries
of cubes in $\dd(\Sigma)$, whence \eqref{eqF0vsF} follows by the thin boundary property of the dyadic cubes (Lemma \ref{cubes} $(vi)$).
\end{remark}

Below, we will construct $\psi_{\sbf^*}$, the function which will define the graph $\Gamma_{\sbf^*}$, and in the construction we are going to choose a number $\kappa$ according to
\begin{align}\label{constant1}
\mbox{$\kappa = c K_0\,$, $\,\,\, c\ge 100$},
\end{align}
where $c$ is a constant depending only on $n$ and $M$, and chosen so that we can justify \eqref{IiQi.eq}.

\subsection{Constructing a graph parametrization $(\widehat{\psi}(x,t),x,t)$ of the contact set $F$} We first construct a graph on the contact set $F$.  If the latter is empty, then this step is moot.

\begin{lemma}\label{DSLem8.4mod.lem}
Let ${\sbf^*}$ be as above, let $F=F_{\sbf^*}$ be as in \eqref{DSDfn.F},
and let $\widetilde{P}_{ Q({\sbf^*})}=\mathbb R^n$.
Then  $\pi$ is one-to-one on $F$
and we can define a function $\widehat\psi$ on $\pi(F)\subset \mathbb R^n$
as follows. Given
$(X,t)=(x_0,x,t)\in F$, we set  $\widehat\psi(x,t):=\pi^\perp(X,t)=x_0$.  Then
$\widehat \psi$ is a well-defined Lip(1,1/2) function on $\pi(F)$,
with constant bounded by $K_0\eps$,  i.e.,
\begin{align}\label{hatlip}
|\widehat \psi(x,t)-\widehat \psi(y,s)|\lesssim \,K_0\eps  \dist(x,t,y,s),
\end{align}
whenever $(x,t), (y,s)\in \pi(F)$.  More generally, for
$\kappa$ as in \eqref{constant1}, there is a constant $\beta_0$, with  
$\beta_0K_0\ll_{n,M} 1$, 
such that if
$(X,t),\ (Y,s)\in 16 \kappa Q({\sbf^*})$ and
\begin{equation}\label{ddpsep.eq}
\max\{d(X,t),d(Y,s)\} \le \,\beta
\dist(X,t, Y,s)
\end{equation}
for some $\beta\in (\eps^{3/2},\beta_0)$, then
\begin{align}\label{DSl84cons.eq}
|\pi^\perp(X,t)-\pi^\perp(Y,s)|\lesssim \, 
\beta K_0 \dist(\pi(X,t),\pi(Y,s)).
\end{align}
\end{lemma}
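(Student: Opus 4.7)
The plan is to prove the general distortion estimate \eqref{DSl84cons.eq}; the two statements about $\widehat{\psi}$ then follow by specialization to pairs of contact points, where $d\equiv 0$ makes the hypothesis \eqref{ddpsep.eq} automatic for any admissible $\beta$. In particular, choosing $\beta=\eps$ in \eqref{DSl84cons.eq} gives both \eqref{hatlip} and, by setting $r'=0$, the injectivity of $\pi$ on $F$ (if $\pi(X,t)=\pi(Y,s)$, then $r'=0$ forces $|x_0-y_0|=0$). I write $(X,t)=(x_0,x,t)$, $(Y,s)=(y_0,y,s)$, and set $r:=\dist(X,t,Y,s)$, $r':=\dist(\pi(X,t),\pi(Y,s))$, noting $r=|x_0-y_0|+r'$; the target becomes $|x_0-y_0|\lesssim K_0\beta r'$.

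First I would attach a companion cube at scale $\sim\beta r$ to each of $(X,t)$ and $(Y,s)$. The hypothesis $d(X,t)\leq \beta r$ produces some $Q\in\sbf^*$ with $\dist(X,t,Q)+\diam(Q)\leq 2\beta r$; passing to an appropriate ancestor (still in $\sbf^*$ by semi-coherence) yields $Q_X\in\sbf^*$ with $\diam(Q_X)\sim \beta r$ and $\dist(X,t,Q_X)\lesssim \beta r$. The ancestor exists because $(X,t),(Y,s)\in 16\kappa Q(\sbf^*)$ together with $\beta_0 K_0\ll_{n,M} 1$ force $\beta r\ll \ell(Q(\sbf^*))$. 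Define $Q_Y$ the same way.

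Next I would carry out the core WHSA analysis. By Remark \ref{*WHSA} and \eqref{ffina-}, both $\widetilde{P}_{Q_X}$ and $\widetilde{P}_{Q_Y}$ are parallel to $\widetilde{P}_{Q(\sbf^*)}=\{x_0=0\}$, with all half-spaces on the same side, so I may write $\widetilde{P}_{Q_X}=\{x_0=a_{Q_X}\}$, $\widetilde{H}_{Q_X}=\{x_0>a_{Q_X}\}$, and analogously for $Q_Y$. WHSA $(ii)$ combined with $\diam(Q_X)\sim \ell(Q_X)$ forces every point of $Q_X$ to have vertical coordinate within $\lesssim K_0\ell(Q_X)\lesssim K_0\beta r$ of $a_{Q_X}$; choosing such a point within $\lesssim \beta r$ of $(X,t)$ gives
\[
|x_0-a_{Q_X}|\lesssim K_0\beta r,\qquad |y_0-a_{Q_Y}|\lesssim K_0\beta r.
\]
The hard part will be the cross-containment $(Y,s)\in C^*_{Q_X,\teps}$: the parabolic distance from $(Y,s)$ to the center of $Q_X$ is $\lesssim r$, whereas the cylinder has radius $\sim \teps^{-2}\ell(Q_X)\sim \teps^{-2}\beta r$, so this step needs $\beta\gtrsim \teps^2=100\eps^2$, which is precisely what the lower bound $\beta>\eps^{3/2}$ provides (for $\eps$ small enough). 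Since $(Y,s)\in \Sigma$, WHSA $(iii)$ for $Q_X$ then forces $y_0\leq a_{Q_X}$, and symmetrically $x_0\leq a_{Q_Y}$.

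Finally I would assemble the pieces: combining $x_0\leq a_{Q_Y}$ with $x_0\geq a_{Q_X}-CK_0\beta r$ yields $a_{Q_X}\leq a_{Q_Y}+CK_0\beta r$, and by symmetry $|a_{Q_X}-a_{Q_Y}|\lesssim K_0\beta r$, whence $|x_0-y_0|\lesssim K_0\beta r$. Substituting $r=|x_0-y_0|+r'$ and using $\beta K_0\leq \beta_0 K_0\ll_{n,M} 1$ to absorb the $|x_0-y_0|$ contribution on the right leaves $r\leq 2r'$, and hence $|x_0-y_0|\lesssim K_0\beta r'$, proving \eqref{DSl84cons.eq}. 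The two ends of the admissible range of $\beta$ correspond exactly to these two requirements: the lower bound $\beta\geq \eps^{3/2}$ is dictated by the cross-containment, while the upper bound $\beta<\beta_0$ with $\beta_0 K_0\ll 1$ is what permits the final absorption step.
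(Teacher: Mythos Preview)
Your proof is correct and follows essentially the same strategy as the paper's: pick a cube in $\sbf^*$ at scale $\sim\beta r$ near one of the points, use WHSA part $(ii)$ to bound the vertical distance from that point to the corresponding plane, use the cross-containment (which is exactly where $\beta>\eps^{3/2}$ enters) together with WHSA part $(iii)$ to force the other point below that plane, and finish with the absorption step enabled by $\beta K_0\ll 1$. The only cosmetic difference is that the paper works asymmetrically---assuming without loss of generality that $x_0>y_0$ and picking a single cube near $(Y,s)$---whereas you pick companion cubes near both points and argue symmetrically; this doubles a few lines but changes nothing substantive.
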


\begin{proof} The lemma is similar to \cite[Lemma 4.1]{BHHLN-Corona},
but the proof is a bit different, since
we are working with the $\teps$-WHSA condition.
Recall that $\eps\leq \eps_0=K_0^{-100} \ll (1+K_0)^{-2}$ and that $\teps = 10\eps$.
It suffices to prove that \eqref{ddpsep.eq} implies \eqref{DSl84cons.eq}:
indeed,  if $(X,t),\,(Y,s)\in F$, then by definition
\eqref{ddpsep.eq} holds for all $\beta \geq 0$. In turn, 
\eqref{DSl84cons.eq} implies that $\pi$ is 1-1 on $F$, and
we obtain \eqref{hatlip} by  taking $\beta=\eps$ in \eqref{DSl84cons.eq}.

To prove \eqref{DSl84cons.eq}, we consider
$(X,t)=(x_0,x,t),\, (Y,s)=(y_0,y,s)\in 16 \kappa Q({\sbf^*})$, and
we set $r:= \dist(X,t,Y,s)$.
It suffices to prove  that
\begin{equation}\label{eq2.7}
|\pi^\perp(X,t)-\pi^\perp(Y,s)|\lesssim  \, 
\beta K_0r\,.
\end{equation}
Indeed, for $\beta K_0\ll 1$, \eqref{eq2.7}
implies in particular that
$|\pi^\perp(X,t)-\pi^\perp(Y,s)| <r/2$.  Consequently,
$ \dist(\pi(X,t),\pi(Y,s))\geq r/2$, and \eqref{DSl84cons.eq} follows.
We may assume,  without loss of generality, that $x_0>y_0$ (if equality holds, there is nothing to prove).  Given $(Y,s)$, let  $Q_1\in{\sbf^*}$ be such that
\[
d(Y,s)\leq \dist(Y,s,Q_1) +\diam (Q_1) \leq 2\beta r,
\]
where in the last step we have used \eqref{ddpsep.eq} and the definition of $r$.
Given $Q_1\in{\sbf^*}$, since $r \lesssim \kappa \diam(Q(\sbf^*)) \approx K_0 \diam(Q(\sbf^*))$,
we may choose $Q\in {\sbf^*}$, with $Q_1\subseteq Q$, such that
\begin{equation}\label{Qcompr}
c\beta r\leq \diam (Q) \leq 2\beta r,
\end{equation}
where $c$ can be chosen to depend only on the parameters in the dyadic cube construction,
and hence ultimately only on $n$ and $M$.  We  then have
\[
\dist(Y,s,Q)\, \leq \,\dist(Y,s,Q_1)\, \leq\, 2\beta r\, \lesssim_{n,M}\, \diam (Q).
\]
Consequently,  $(Y,s) \in \lambda Q$, with $\lambda$ depending only on $n$ and $M$.  By the $\teps$-local WHSA condition,
we have (see Remark \ref{*WHSA})
\begin{equation}\label{eq2.10}
\dist(Q,\widetilde{P}_Q)\leq c_n K_0 \ell(Q) \lesssim \beta K_0 r,
\end{equation}
and also that
$\widetilde{H}_Q\cap C_{Q,\teps}^{*}\cap \Sigma=\emptyset$, where by \eqref{ffina-}
and \eqref{PHdef},
\[
\widetilde{H}_Q= \big\{(z_0,z,\tau): \, z_0 >z_Q\big\},\quad
 \widetilde{P}_Q = \big\{(z_0,z,\tau): \, z_0 =z_Q\big\},
\]
 for some suitable fixed height $z_Q$.  In particular,
 $ (X,t) \notin \widetilde{H}_Q \cap C_{Q,\teps}^{*}\,$, since $(X,t)\in \Sigma$.
 We further claim that $ (X,t) \in C_{Q,\teps}^{*}$, and thus
 $(X,t)\notin \widetilde{H}_Q$, i.e., $x_0\leq z_Q$.
 Indeed,  $(Y,s)\in \lambda Q$, 
with $\lambda=\lambda(n,M)$, so
\begin{multline*}
\dist(X,t,X_Q,t_Q)\, \leq\, \dist(X,t,Y,s) + \dist(Y,s,X_Q,t_Q)\\[4pt]
\leq \,r +\lambda \diam(Q)\, \lesssim\,
\big(\beta^{-1}+\lambda\big) \diam(Q)\,\lesssim \eps^{-3/2} \diam(Q)\,\ll
\eps^{-2} \diam(Q)\,,
\end{multline*}
by \eqref{Qcompr} and our assumed constraints on $\beta$.  Thus,
$(X,t)\in C_{Q,\teps}^{*}$, as claimed, hence $x_0\leq z_Q$, so also
$y_0<x_0\leq z_Q\,$.
Moreover, by
 \eqref{eq2.10}, \eqref{Qcompr}, and the fact that $(Y,s)\in \lambda Q$,
 \begin{equation*}
 0< z_Q -y_0 =
\dist(Y,s,\widetilde{P}_Q)\lesssim (\lambda + K_0)\, \diam(Q) \lesssim \beta (\lambda + K_0) r
\lesssim \beta K_0 r\,,
\end{equation*}
since $ K_0\gg 1$ and $\lambda=\lambda(n,M)$.
Consequently, using again that $y_0<x_0\leq z_Q\,$, we have
\[
|\pi^\perp(X,t) -\pi^\perp(Y,s)| = x_0-y_0 \lesssim \beta K_0 r\,,
\]
so that \eqref{eq2.7} holds and we are done.
\end{proof}

\subsection{Extending $\widehat\psi$ off $\pi(F)$: constructing $\psi$}
With Lemma \ref{DSLem8.4mod.lem} as the starting point we next construct a function  $\psi$ on
$\mathbb R^n \cong \widetilde{P}_{ Q({\sbf^*})}$
which coincides with $\widehat\psi$ on $\pi(F)$. We will
extend ${\widehat\psi}$ off $\pi( F)$ using an
appropriate Whitney type extension introduced in
\cite{DS1}, and also used in \cite{BHHLN-Corona}.  In contrast to the classical Whitney decomposition, the present version makes sense not only on the complement of a non-empty closed set, but even on all of $\rn$; thus, in the sequel,
we allow the possibility that the set $F$ is empty.
Given $(x,t)\in  \mathbb R^n\setminus\pi(F)$ not on the boundary of any parabolic
dyadic cube in $\re^n$, let $I_{(x,t)}$ be the largest (closed)
parabolic dyadic cube in
$\mathbb R^n$ containing $(x,t)$, and satisfying
$$\diam(I_{(x,t)})\leq \frac 1 {20}\inf_{(z,\tau)\in I_{(x,t)}}D(z,\tau).$$
Let $\{I_i\}$ be a labeling of the set of all these cubes $\{I_{(x,t)}\}$ without repetition. By construction, the collection
$\{I_i\}$ consists of pairwise non-overlapping closed parabolic
dyadic cubes in $\mathbb R^n$.
These cubes cover $ \mathbb R^{n} \sem  \pi( F) $ and they do not intersect
$\pi( F)$.   Notice that this covering is well-defined, even when $F=\emptyset$.

We will need the following lemma, the proof of
which follows that of \cite[Lemma 8.7]{DS1} (and \cite[Lemma 4.6]{BHHLN-Corona}) very closely.
We include the proof for the reader's convenience.
\begin{lemma}\label{10-60lemma}
Given $I_i$ we have
\begin{align}\label{10-60.eq}
10 \diam I_i \leq D(y,s) \leq 60 \diam I_i, \indent \forall (y,s) \in 10I_i.
\end{align}
In particular, if  $10I_i \cap 10I_j \neq \emptyset$, then
\begin{align}\label{eq1}
\diam I_i \approx \diam I_j.
\end{align}
\begin{proof}
Fix $i$ and let $(y,s) \in 10 I_i$.  As $D(\cdot,\cdot)$ is Lip$(1,1/2)$ with norm $1$, we have
\[D(y,s) \geq \inf_{(x,t)\in I_i} D(x,t) - 10\diam I_i \geq 10 \diam I_i,\]
where we used that $\diam I_i \leq 20^{-1} \inf_{(x,t)\in I_i} D(x,t)$.  On the other hand, if $I$ is the dyadic parent of $I_i$, then there exists $(z,\tau) \in I$ such that $D(z,\tau) <20 \diam I = 40 \diam I_i$.  Thus,
\[D(y,s) \leq D(z,\tau) + 20 \diam I_i \leq 60\diam I_i.\]
This proves \eqref{10-60.eq}, and hence also \eqref{eq1}.
\end{proof}
\end{lemma}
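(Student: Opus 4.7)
The plan is to prove \eqref{10-60.eq} by exploiting two facts: first, the function $D$ defined in \eqref{DSDfn.eq} is Lipschitz with constant $1$ in the parabolic metric on $\mathbb{R}^n$, since the triangle inequality applied term-by-term to $D(x,t)=\inf_{Q\in\sbf^*}[\dist(x,t,\pi(Q))+\diam(Q)]$ gives $|D(x,t)-D(y,s)|\le \dist(x,t,y,s)$; second, by the maximality in the definition of $I_i$, the cube $I_i$ satisfies $\diam(I_i)\le \tfrac{1}{20}\inf_{I_i} D$, while its dyadic parent does not. Once \eqref{10-60.eq} is established, the doubling statement \eqref{eq1} will be an immediate consequence.

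For the lower bound in \eqref{10-60.eq}, I would fix $(y,s)\in 10 I_i$ and an arbitrary $(x,t)\in I_i$. The maximality of $I_i$ gives $D(x,t)\ge 20\diam(I_i)$, and since $(y,s)\in 10 I_i$ implies $\dist(y,s,x,t)\le 10 \diam(I_i)$, the Lipschitz property yields
\[
D(y,s) \ge D(x,t) - 10\diam(I_i) \ge 10\diam(I_i).
\]
For the upper bound, let $\widehat I$ denote the dyadic parent of $I_i$. Since $\widehat I$ fails the defining condition of $I_i$ (otherwise $I_i$ would not be maximal), there exists $(z,\tau)\in \widehat I$ with $D(z,\tau)<20\diam(\widehat I)=40\diam(I_i)$. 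Any point $(y,s)\in 10 I_i$ is within parabolic distance $\lesssim 20\diam(I_i)$ of $(z,\tau)$, so the Lipschitz property gives $D(y,s)\le 40\diam(I_i)+20\diam(I_i)=60\diam(I_i)$.

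Finally, for \eqref{eq1}, I would pick any $(y,s)\in 10 I_i\cap 10 I_j$; then applying \eqref{10-60.eq} to each cube at this common point yields $10\diam(I_i)\le D(y,s)\le 60\diam(I_j)$ and symmetrically, so $\diam(I_i)\approx \diam(I_j)$.

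I do not anticipate any serious obstacle here; the only minor point requiring care is the bookkeeping of the numerical constants ($10$, $20$, $40$, $60$) and verifying that a point in $10 I_i$ and a point in the parent cube $\widehat I$ are indeed within parabolic distance $\le 20\diam(I_i)$ of one another, which follows since both lie inside the concentric dilate $10\widehat I = 5\cdot(2I_i)$ of $\widehat I$.
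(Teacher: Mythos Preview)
Your proof is correct and follows essentially the same approach as the paper: both exploit the parabolic $1$-Lipschitz property of $D$ together with the maximality of $I_i$ (giving $\inf_{I_i} D \ge 20\diam I_i$) and the failure of maximality for the parent (giving a point $(z,\tau)$ with $D(z,\tau)<40\diam I_i$), leading to the same constants $10$ and $60$. The deduction of \eqref{eq1} from a common point in $10I_i\cap 10I_j$ is likewise identical in spirit.
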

We continue to follow the constructions in \cite[Chapter 8]{DS1} and \cite{BHHLN-Corona}.
For the sake of notational convenience, we set
\begin{align}\label{nota2}
	R_{{\sbf^*}}:=\diam ( Q({\sbf^*})).
\end{align}
Given the center $(X_{ Q({\sbf^*})},t_{ Q({\sbf^*})})$ of $ Q({\sbf^*})$, we let
$$(x_{ Q({\sbf^*})},t_{ Q({\sbf^*})}):=\pi(X_{ Q({\sbf^*})},t_{ Q({\sbf^*})}),$$ and
we introduce the index set $\Lambda$,
\begin{equation}\label{la-def}
 \La \, := \{ i :
 I_i \cap C'_{2\kappa R_{\sbf^*}}(x_{ Q({\sbf^*})},t_{ Q({\sbf^*})}) \not = \emptyset \}.
\end{equation}
Recall that the parameter $\kappa$ is defined
in \eqref{constant1}, and observe that 
in the statement of Lemma \ref{DSLem8.4mod.lem},
estimates \eqref{hatlip} and \eqref{DSl84cons.eq}
hold with $K_0$ replaced by $\kappa$,
since $\kappa \approx K_0$.

We shall require the following.

\begin{lemma}\label{lemmaQi}
There exists, for each $i \in \Lambda$, a cube $Q(i) \in {\sbf^*}$ such that
\begin{align}\label{eq1+}
\kappa^{-1} \diam I_i \lesssim  \diam Q(i) \leq 120 \diam I_i, \quad \dist(\pi(Q(i)),I_i) \leq 120 \diam I_i.
\end{align}
\end{lemma}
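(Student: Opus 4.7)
The plan is to select $Q(i)$ as an almost-minimizer of the infimum defining $D(y,s)$ for any $(y,s) \in I_i$, replacing it by a dyadic ancestor (or by $Q(\sbf^*)$ itself) when the minimizer happens to be too small in diameter.

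Fix any $(y,s)\in I_i$. By definition of $D$ and Lemma \ref{10-60lemma}, I would choose $Q_0\in\sbf^*$ satisfying
\[
\dist\bigl((y,s),\pi(Q_0)\bigr)+\diam(Q_0)\,\le\, 2 D(y,s)\,\le\, 120\diam(I_i).
\]
If $\diam(Q_0)\ge\diam(I_i)$, I would take $Q(i):=Q_0$: the upper diameter bound is immediate, the lower bound is trivial since $\diam(Q_0)\ge\diam(I_i)\ge\kappa^{-1}\diam(I_i)$, and the distance bound follows from $\dist(\pi(Q_0),I_i)\le \dist(\pi(Q_0),(y,s))\le 120\diam(I_i)$. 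Otherwise, let $Q'$ be the smallest dyadic ancestor of $Q_0$ with $\diam(Q')\ge\diam(I_i)$; by Lemma \ref{cubes} the dyadic scales essentially double, so $\diam(Q')$ is comparable to $\diam(I_i)$, and in particular $\diam(Q')\le 120\diam(I_i)$. If $Q'\subseteq Q(\sbf^*)$, then the semi-coherence of $\sbf^*$ (Definition \ref{d3.11}) places $Q'$ in $\sbf^*$, and I would take $Q(i):=Q'$; the diameter bounds follow by construction of $Q'$, while the distance bound transfers from $Q_0$ to $Q'$ via $\pi(Q_0)\subseteq\pi(Q')$.

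If instead $Q'\not\subseteq Q(\sbf^*)$, dyadic nestedness applied to $Q_0\subseteq Q'\cap Q(\sbf^*)$ forces $Q'\supsetneq Q(\sbf^*)$, so $R_{\sbf^*}<\diam(Q')\le 120\diam(I_i)$. I would then set $Q(i):=Q(\sbf^*)\in\sbf^*$; the upper diameter bound is immediate, and $\pi(Q_0)\subseteq\pi(Q(\sbf^*))$ again transfers the distance bound from $Q_0$. The only point at which the hypothesis $i\in\Lambda$ is genuinely needed is the lower diameter bound here: this hypothesis yields some $(z,\tau)\in I_i$ with $\dist\bigl((z,\tau),(x_{Q(\sbf^*)},t_{Q(\sbf^*)})\bigr)<2\kappa R_{\sbf^*}$, whence
\[
D(z,\tau)\le \dist\bigl((z,\tau),\pi(Q(\sbf^*))\bigr)+\diam Q(\sbf^*)\le 3\kappa R_{\sbf^*},
\]
and Lemma \ref{10-60lemma} then gives $10\diam(I_i)\le D(z,\tau)\le 3\kappa R_{\sbf^*}$, i.e., $R_{\sbf^*}\gtrsim\kappa^{-1}\diam(I_i)$. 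The logical content of the argument is short; the only mildly delicate point is the case in which no interior dyadic ancestor is available, which must be rescued by the restriction $i\in\Lambda$, since without such a restriction $I_i$ could be arbitrarily far from $\pi(Q(\sbf^*))$ and no nontrivial lower bound on $\diam(Q(i))$ could hold.
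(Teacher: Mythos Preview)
Your proof is correct and follows the same approach as the paper: select an almost-minimizer $Q_0$ for $D(y,s)$ and pass to a suitable dyadic ancestor in $\sbf^*$, invoking semi-coherence. The paper compresses your three-case split into a single line by directly targeting the diameter interval $[(c\kappa)^{-1}D(x,t),\,2D(x,t)]$ and invoking $i\in\Lambda$ upfront (to obtain $D(x,t)\le c\kappa R_{\sbf^*}$, which ensures the ancestor lies in $\sbf^*$), but the logical content is identical.
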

\begin{proof}[Proof of Lemma \ref{lemmaQi}]
Note that given $I_i$ and $(x,t)\in I_i$, there exists a cube $Q\in {\sbf^*}$ such that
$$\dist(x,t,\pi(Q))+\mbox{diam}(Q)\leq 2D(x,t)\approx\diam (I_i).$$
Moreover, $D(x,t) \le c \kappa \diam(Q({\sbf^*}))$ for a constant $c\geq 1$ depending
only on $n$ and $M$, and
\[
\dist(x,t,\pi(Q^*)) \le 2D(x,t)\leq 120 \diam I_i,
\]
for all $Q  \subseteq Q^* \subseteq Q({\sbf^*})$. Therefore we can choose $Q(i)$ so that  $Q  \subseteq Q(i) \subseteq Q({\sbf^*})$ and
\begin{equation}\label{IiQi.eq}
(c \kappa)^{-1}D(x,t) \le \diam(Q(i)) \le 2D(x,t) \leq 120 \diam I_i,
\end{equation}
provided that the constant $c=c(n,M)$ in \eqref{constant1}
(i.e., for which $\kappa=cK_0$) is chosen large enough.
\end{proof}

In the sequel, we will, at instances, use the notation
\begin{align}\label{nota1}
r_i:=\diam I_i,\,\, \text{ so that } \,   r_i/K_0\approx  r_i/\kappa    \lesssim
\diam Q(i) \lesssim r_i,
\end{align}
by \eqref{eq1+}.

Let $ \{ \tilde \nu_i \} $ be a class of infinitely differentiable functions on $\mathbb R^{n}$ such that
\begin{align*}
 \tilde \nu_i  \equiv 1  \mbox{ on }  2I_i
 \mbox{ and }   \tilde \nu_i \equiv 0  \mbox{ in } \mathbb R^{n} \sem 3I_i \mbox{ for all } i,
 \end{align*}
 and
\begin{align*}
 r_i^{|\beta|} \, \left| \frac{\partial^{|\beta|}}{\partial x^\beta }  \tilde \nu_i \right|
\, + \,    r_i^{ 2 l}  \left| \frac{\partial^l}{\partial t^l } \tilde \nu_i \right|  \,  \leq
\, c (|\beta|, l, n ) \mbox{ for } |\beta|, l \in\{0, 1, 2, \dots\},
\end{align*}
where $ \frac{\partial^{|\beta|}}{\partial x^\beta } $
denotes a partial derivative with respect to the space variable $ x $, according to the multiindex $\beta=(\beta_1,...,\beta_{n-1})$, of order $|\beta|$.  We also introduce a related collection
$ \{ \nu_i \} $, which defines a partition of unity on $\rn\setminus \pi(F)$,
 adapted to $ \{ 2I_i \}\,$:
 $$\nu_i(x,t):=\tilde \nu_i(x,t)\big (\sum_{j} \tilde \nu_j(x,t)\big)^{-1}.$$
Then $ \{  \nu_i \} $  is also a class of infinitely differentiable functions on $\mathbb R^{n}$ and
\begin{align}\label{2.16+}
r_i^{|\beta|} \, \left| \frac{\partial^{|\beta|}}{\partial x^\beta }  \nu_i \right|
\, + \,    r_i^{ 2 l}  \left| \frac{\partial^l}{\partial t^l } \nu_i \right|  \,  \leq
\, c ( |\beta|,l, n ) \mbox{ for } |\beta|, l \in\{ 0,1, 2, \dots\}.
\end{align}

We are now ready to construct the extension of $  \widehat \psi  $ off $ \pi( F) $. {If $i \in \Lambda$, then} we have a dyadic cube $Q(i)\in {\sbf^*}$, associated  to $I_i$ as in Lemma \ref{lemmaQi}, 
and an associated hyperplane $\widetilde{P}_{Q(i)}$  which we express as
\begin{align}\label{2.19-} \widetilde{P}_{Q(i)}=\big\{ (B_i  ( x,t),x,t ) : ( x,t) \in \mathbb R^n \big\},
\end{align}
i.e., $B_i:\mathbb R^n\to \mathbb R$ is the affine function whose graph is $\widetilde{P}_{Q(i)}$.
Note that $B_i$ is simply constant for each $i$, i.e.,
$B_i\equiv constant$ (depending on $i$), since
$\widetilde{P}_Q \parallel \widetilde{P}_{Q(\sbf^*)}$,
for all $Q \in \sbf^*$. If $i \not \in \Lambda$, we set $B_i = 0$ and note that in this case, $B_i$ is the affine function
whose graph is $\widetilde{P}_{Q({\sbf^*})}$. For these reasons  we will from now on drop the dependence of $B_i$ on $(x,t)$. We define
\begin{align}\label{2.19}
\psi ( x,t)&:=\widehat \psi ( x,t)\quad\quad\quad\mbox{ when } ( x,t) \in \pi (
F),\notag\\
\psi ( x,t) &:=
 { \ds \sum_{ i } } \,  B_i
\,   \nu_i ( x,t )\,\, \mbox{ when } ( x,t ) \in \mathbb R^n \sem \pi (
F). \end{align}
This defines $\psi $ on $\mathbb R^n$, with $\psi=\widehat\psi$ on
$\pi (F)$.   Note that by construction (see \eqref{la-def}),
\begin{equation}\label{supportpsi}
\psi\equiv 0 \text{ on } \mathbb R^n\setminus C'_{4\kappa R_{\sbf^*}}(x_{ Q({\sbf^*})},t_{ Q({\sbf^*})}).
\end{equation}

As we will see, we need to investigate the planes $\{\widetilde{P}_Q\}$ in more depth, and subsequently we will prove that the region above the graph defined by $\psi$, at the scale $Q(\sbf^*)$, is in the {\it interior} of $\Omega=\mathbb R^{n+1}\setminus\Sigma$. We will also need to show that the graph is
``close"\footnote{In the sense that \eqref{graphclose} holds.}
to the cubes in the stopping time tree, albeit not necessarily with a
small constant. For all of this, we will utilize certain unions of truncated cones which we introduce below.

Since $\widetilde{P}_Q \parallel \widetilde{P}_{Q(\sbf^*)}$, we have as above, in the coordinates induced by $\widetilde{P}_{Q(\sbf^*)}$, that
\begin{equation}\label{eq.BQdef}
\widetilde{P}_Q  = \{(B_Q, x,t): (x,t) \in \rn\},\ \widetilde{H}_Q =
 \{(x_0, x,t): x_0 > B_Q,\ (x,t) \in \rn\},
 \end{equation}
for some $B_Q \in \mathbb{R}$, and in particular, $B_{Q(\sbf^*)}=0$.
Recall that $\teps = 10\eps$.
By Definition \ref{def2.13}, if $Q, Q^* \in \sbf^*$ are such that $Q^*$ is the parent of $Q$, then
\begin{equation}\label{graphcloseBQobs.eq}
B_Q - \teps\ell(Q) \le B_{Q^*} \le B_Q +  \teps\ell(Q^*).
\end{equation}
Note that by definition, $\ell(Q^*) = 2\ell(Q)$.
We then have that
\begin{equation}\label{BQBQS*.eq}
B_Q - \teps\ell(Q) \le B_{Q(\sbf^*)} =0\le B_Q +  2\teps\ell(Q(\sbf^*))\,,
\qquad \forall\, Q\in\sbf^*\,,
\end{equation}
where one obtains the right hand inequality by iterating \eqref{graphcloseBQobs.eq},
and summing a geometric series;
the left hand inequality holds
by Definition \ref{def2.13} (i) (for $Q$), and (iii) (for $Q(\sbf^*)$).

Given $Q\in \sbf^*$ we introduce the (union of) cones
\begin{align}\label{gammaQ'def}
\gamma_Q' &= \{(y_0,y,s): \exists (x_0,x,t) \in Q \text{ such that } y_0 - x_0 \ge \teps^{1/4}\dist(y,s,x,t)\},
\end{align}
and its truncated version
\begin{align} \label{gammaQdef}
\gamma_Q &= \{(y_0,y,s)\in \gamma_Q': \text{ such that }
B_Q < y_0 <  \eps^{-1/2} \ell(Q(\sbf^*))\}.
\end{align}
We will use the following lemma several times in the sequel.
\begin{lemma}\label{gqconelem.lem}
If $Q \in \sbf^*$, then 
\[
\gamma_Q \subseteq
\bigcup_{\widetilde{Q}: Q \subseteq \widetilde{Q} \subseteq Q({\sbf^*})}
\tilde{U}_{\widetilde{Q}}^i\,.
\]
In particular, $\gamma_Q \cap \Sigma = \emptyset$.
Here, for each $Q\in \sbf^*$, $\tilde{U}_Q^i$, defined as in Definition \ref{def2.11a}, is the component of
$\tilde{U}_Q$ which contains the corkscrew point $({Y}^*_Q, {t}^*_Q)$.
\end{lemma}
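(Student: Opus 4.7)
The plan is to show that each $P = (y_0, y, s) \in \gamma_Q$ lies in $\tilde U_{\widetilde Q}^i$ for an appropriately chosen ancestor $\widetilde Q \in \sbf^*$ of $Q$. Since every $\tilde U_{\widetilde Q}^i$ sits in $\Omega$, the supplementary claim $\gamma_Q \cap \Sigma = \emptyset$ is then immediate. The approach matches the scale $\ell(\widetilde Q)$ to the ``vertical height'' of $P$ above the canonical plane $\widetilde P_Q$, verifies that $P$ lies in $\widetilde H_{\widetilde Q} \cap C^*_{\widetilde Q, \teps}$, and finally joins $P$ to the reference corkscrew point $(Y^*_{\widetilde Q}, s^*_{\widetilde Q}) \in U_{\widetilde Q}^i$ by a short parabolic Whitney chain.

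Fix $P = (y_0, y, s) \in \gamma_Q$ and pick $(x_0, x, t) \in Q$ witnessing the cone inequality $y_0 - x_0 \ge \teps^{1/4}\dist((y,s),(x,t))$. Since $Q \subset \Sigma \cap C^*_{Q,\teps}$, Remark \ref{*WHSA}(iii) forces $x_0 \le B_Q$, while Remark \ref{*WHSA}(ii) gives $B_Q - x_0 \lesssim K_0 \ell(Q)$. Set $h := y_0 - B_Q > 0$ and $\rho := y_0 - x_0$; then $h \le \rho \lesssim h + K_0 \ell(Q)$ and the cone bound yields $\dist(P, Q) \lesssim \teps^{-1/4}\rho$. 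Let $\widetilde Q \in \sbf^*$ be the smallest dyadic ancestor of $Q$ with $\ell(\widetilde Q)$ comparable to $\teps^{-1/2}\rho$ (take $\widetilde Q := Q(\sbf^*)$ if no smaller ancestor works, which is consistent with the upper truncation $y_0 < \eps^{-1/2}\ell(Q(\sbf^*))$ built into $\gamma_Q$). Semi-coherence of $\sbf^*$ places every intermediate cube in $\sbf^*$, so iterating \eqref{graphcloseBQobs.eq} along the ancestor chain yields $|B_{\widetilde Q} - B_Q| \le 2\teps\,\ell(\widetilde Q)$; the choice of scale therefore gives $y_0 - B_{\widetilde Q} \ge h - 2\teps\,\ell(\widetilde Q) \gtrsim \teps^{1/2}\ell(\widetilde Q)$, so $P \in \widetilde H_{\widetilde Q}$. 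Simultaneously, $\dist(P, X_{\widetilde Q}) \lesssim \teps^{-1/4}\rho + \ell(\widetilde Q) \lesssim \teps^{1/4}\ell(\widetilde Q) \ll \teps^{-2}\ell(\widetilde Q)$, so $P \in C^*_{\widetilde Q, \teps}$. Remark \ref{*WHSA}(iii) applied at $\widetilde Q$ then yields $P \in \Omega$ and, quantitatively, $\delta(P) \approx \teps^{1/2}\ell(\widetilde Q)$, which is well inside the Whitney range $[\eps^3 \ell(\widetilde Q), \eps^{-3}\ell(\widetilde Q)]$.

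The corkscrew point $(Y^*_{\widetilde Q}, s^*_{\widetilde Q})$ supplied by Proposition \ref{WHSAplorient.lem} also belongs to $\widetilde H_{\widetilde Q} \cap C^*_{\widetilde Q, \teps}$, a convex open subset of $\Omega$. The Euclidean segment joining $P$ to it has parabolic length $\lesssim \teps^{-1/4}\ell(\widetilde Q)$ and maintains $\delta \gtrsim \teps^{1/2}\ell(\widetilde Q)$ along its length, by the same half-space argument applied to each intermediate point. Discretizing this segment into $\lesssim \teps^{-3/4} \le \eps^{-1}$ parabolic Whitney cylinders with centers of $\delta$-scale comparable to $\teps^{1/2}\ell(\widetilde Q)$, overlapping consecutively, realizes $P \sim_{\eps, \widetilde Q} (Y^*_{\widetilde Q}, s^*_{\widetilde Q})$ in the sense of Definition \ref{def2.11a}, so $P \in \tilde U_{\widetilde Q}^i$. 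The hard part is the simultaneous calibration of $\ell(\widetilde Q)$: it must be large enough that the drift $|B_{\widetilde Q} - B_Q| \le 2\teps \ell(\widetilde Q)$ accumulated along the ancestor chain does not eat up the positive gap $y_0 - B_{\widetilde Q}$, yet small enough that $\delta(P)$ does not drop below the Whitney floor $\eps^3 \ell(\widetilde Q)$; the factor $\teps^{1/2}$ in the choice is precisely the one which meets both requirements at once.
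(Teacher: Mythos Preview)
Your overall strategy---pick an ancestor $\widetilde Q$, show $P\in \widetilde H_{\widetilde Q}\cap C^*_{\widetilde Q,\teps}$, then land in $\tilde U^i_{\widetilde Q}$---matches the paper's. But there is a genuine gap in your calibration, and your final chain step is unnecessary.

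\textbf{The calibration fails when $h\ll\rho$.} You set $\ell(\widetilde Q)\approx \teps^{-1/2}\rho$ with $\rho=y_0-x_0$, and then assert $y_0-B_{\widetilde Q}\ge h-2\teps\,\ell(\widetilde Q)\gtrsim \teps^{1/2}\ell(\widetilde Q)$. But $h=y_0-B_Q$ and $\rho$ need not be comparable: WHSA part $(ii)$ only gives $0\le B_Q-x_0\lesssim K_0\ell(Q)$, so one can have $h$ arbitrarily small (say $h=\eps^{100}\ell(Q)$) while $\rho\approx K_0\ell(Q)$. With your choice, $2\teps\,\ell(\widetilde Q)\approx 2\teps^{1/2}\rho$, which then exceeds $h$, and your lower bound $y_0-B_{\widetilde Q}\ge h-2\teps\,\ell(\widetilde Q)$ becomes negative. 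You cannot conclude $P\in\widetilde H_{\widetilde Q}$. The paper avoids this by selecting $\widetilde Q$ \emph{according to the $B$-values}: if $y_0>B_{Q(\sbf^*)}=0$ take $\widetilde Q=Q(\sbf^*)$; otherwise let $Q_1$ be the smallest ancestor with $y_0\le B_{Q_1}$ and take $\widetilde Q$ its child containing $Q$. This guarantees $y_0\in(B_{\widetilde Q},\,B_{\widetilde Q}+\teps^{-3/2}\ell(\widetilde Q))$ directly, with no need to compare $h$ and $\rho$.

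\textbf{The chain construction is both unnecessary and under-justified.} The paper does not rebuild a Whitney chain here. Instead it records the containment
\[
C^*_{Q,\teps}\cap\widetilde H_Q \,\subset\, C^*_{Q,\eps}\cap H_Q \,\subset\, C^*_{Q,\eps^{9/8}}\cap\hat H_Q \,\subset\, \tilde U^i_Q,
\]
the last inclusion being exactly Lemma~\ref{claim5.40} (the set $\mathcal O$ there). Once $P\in\widetilde H_{\widetilde Q}\cap C^*_{\widetilde Q,\teps}$, you are done. Your direct chain argument also relies on the claim $\delta(P)\approx\teps^{1/2}\ell(\widetilde Q)$, which is not generally true: when $y_0-B_{\widetilde Q}$ is small, WHSA part $(i)$ places $\Sigma$ points within $\teps\ell(\widetilde Q)$ of the plane, so $\delta(P)$ can be far smaller than $\teps^{1/2}\ell(\widetilde Q)$. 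The buffer $\eps^2\ell(Q)$ built into $\hat H_Q$ (see \eqref{eq5.44++}) is precisely what makes Lemma~\ref{claim5.40} work, and you have not invoked it.
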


\begin{proof} To begin, recall that for each $Q\in \sbf^*$, we have constructed the
$\teps$-WHSA plane $\widetilde{P}_Q$
 in a two stage process.  First,
 starting with the plane $\hat P_Q$ (see \eqref{eq5.44++}),
with respect to which $Q$ satisfies the $(\eps^{9/8},K_0)$-local WHSA condition,
we produced a plane $P_Q$
(see Remark \ref{nablaXnormal}), with normal pointing in the direction
of $(\nabla_X\widehat u(Y_Q^*,s_Q^*),0)$, with respect
to which $Q$ satisfies the $(\eps,K_0)$-local WHSA condition. Second,
we constructed the plane $\widetilde P_Q$ with
respect to which $Q$ satisfies the $(\tilde \eps,K_0)$-local WHSA condition with
$\tilde\eps=10\eps$, such that $\widetilde P_Q$ is
parallel to $\widetilde P_{Q(\sbf^*)}$ (see \eqref{ffina-}),
and where after a possible translation and spatial rotation, we have assumed that
$\widetilde P_{Q(\sbf^*)}$ is simply the hyperplane $x_0=0$ in $\ree$ (see \ref{PHdef}).
In particular, by construction, we have
\begin{equation}\label{qfavsrrv}
	C_{Q,\teps}^{*} \cap \widetilde{H}_Q \,\subset\, C_{Q,\eps}^{*} \cap H_Q
	 \,\subset\, C_{Q,\eps^{9/8}}^{*} \cap \hat H_Q \, \subset\, \tilde{U}_Q^i\,,
	 \qquad \forall \,Q \in \sbf^*\,.
\end{equation}
For the last inclusion we used Lemma \ref{claim5.40}.
Thus, it suffices to prove
\begin{equation}\label{gqconelemred.eq}
\gamma_Q \subseteq \bigcup_{\widetilde{Q}: Q \subseteq \widetilde{Q} \subseteq Q({\sbf^*})} (C_{\widetilde{Q},\teps}^{*} \cap
\widetilde{H}_{\widetilde{Q}}).
\end{equation}

We claim that if $(y_0,y,t) \in \gamma_Q$, then
there exists $\widetilde{Q}$ such that $Q \subseteq \widetilde{Q} \subseteq Q(\sbf^*)$,
and  such that
\begin{equation}\label{ynintervalbqt.eq}
y_0 \in (B_{\widetilde{Q}} ,B_{\widetilde{Q}} + \teps^{-3/2} \ell(\widetilde{Q})).
\end{equation}
Indeed, by \eqref{gammaQdef}, $B_Q < y_0 <  \eps^{-1/2} \ell(Q(\sbf^*))$, so
if $y_0>0=B_{Q(\sbf^*)}$, we just set $\widetilde{Q} = Q(\sbf^*)$, and use that
$ \eps^{-1/2}\ll  \teps^{-3/2}$.  Otherwise, since $y_0>B_Q$, we let $Q_1$
be the smallest cube in $\sbf^*$, containing $Q$, for which $y_0\leq B_{Q_1}$ (note that
$Q_1$ exists in the present scenario, and properly contains $Q$), and then let $\widetilde{Q}$ be the child of
$Q_1$ that contains $Q$.  Then $y_0>B_{\widetilde{Q}}$, and by
\eqref{graphcloseBQobs.eq},
\[
y_0\leq B_{Q_1} \leq B_{\widetilde{Q}} +\teps\ell(Q_1) =
B_{\widetilde{Q}} +2\teps\ell(\widetilde{Q}) \ll
B_{\widetilde{Q}} +\teps^{-3/2}\ell(\widetilde{Q})\,.
\]
This proves the claim.


Now let $(Y,s) = (y_0,y,t) \in \gamma_Q$, let $\widetilde{Q}\supseteq Q$ be as above, and let
$(X,t) = (x_0,x,t)\in Q$ be the vertex of a cone containing $(y_0,y,t)$,
as in the definition of $\gamma'_Q$.
Then $(x_0,x,t) \in \widetilde{Q}$, and
by the $\teps$-WHSA condition for $\widetilde{Q}$
(see Definition \ref{def2.13} part $(ii)$), it holds that
\[-\diam(\widetilde{Q})  - cK_0\ell(\widetilde{Q}) \le B_{\widetilde{Q}} - x_0 < \diam(\widetilde{Q})  + cK_0\ell(\widetilde{Q}).\]
This combined with \eqref{ynintervalbqt.eq} gives
\[|y_0 - x_0| \lesssim \teps^{-3/2}\ell(\widetilde{Q}),\]
and therefore by the definition of $\gamma'_Q$,
\[\dist(y,s,x,t) \le \teps^{-1/4} (y_0 - x_0) \lesssim \teps^{-7/4}\ell(\widetilde{Q}).\]
Therefore,
\[\dist(X,t, Y,s) \lesssim \teps^{-7/4}\ell(\widetilde{Q}) \ll \teps^{-2}\ell(\widetilde{Q}),\]
which shows that $(Y,s) \in C_{\widetilde{Q},\teps}^{*}$. Then, using \eqref{ynintervalbqt.eq}  we have $(Y,s) \in \widetilde{H}_{\widetilde{Q}}$ and it follows that \eqref{gqconelemred.eq} holds.
\end{proof}

We next prove that the levels defining the planes $\{\widetilde{P}_Q\}$, i.e., the constant levels $\{B_Q\}$,  are well behaved in the following sense.
Recall that the set $F$ is defined in \eqref{DSDfn.F}.

\begin{lemma}\label{bqbqpclosediam.lem}
Let $Q, Q' \in \sbf^*$. If
\begin{equation}\label{bqbqpclsimphyp.eq}
\dist(Q,Q') \le \eps^{-1}\max\{\diam(Q),\diam(Q')\},
\end{equation}
then
\begin{equation}\label{bqbqpclsimpconc.eq}
|B_Q - B_{Q'}| \lesssim \eps \max\{\diam(Q),\diam(Q')\}.
\end{equation}
If
\begin{equation}\label{bqbqpclnotsimphyp.eq}
\dist(Q,Q') \ge \eps^{-1}\max\{\diam(Q),\diam(Q')\},
\end{equation}
then
\begin{equation}\label{bqbqpclnotsimpconc.eq}
|B_Q - B_{Q'}| \lesssim K_0^2\eps\dist(\pi(Q), \pi(Q')) + \eps \max\{\diam(Q),\diam(Q')\}.
\end{equation}
Moreover, if $Q \in \sbf^*$ and $(Z,\tau)\in F$, then
\begin{equation}\label{bqFclsdiamdst.eq}
|B_Q - \pi^\perp(Z,\tau)| \lesssim K_0^2\eps\dist(\pi(Q), \pi(Z,\tau)) + \eps\diam(Q).
\end{equation}
\end{lemma}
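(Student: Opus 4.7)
The plan is to prove all three estimates by exploiting the WHSA conditions $(i)$--$(iii)$ from Remark \ref{*WHSA}, together with the fact that the planes $\{\widetilde{P}_Q\}_{Q\in\sbf^*}$ are horizontal and mutually parallel, so that $|B_Q - B_{Q'}|$ is simply the vertical gap between them. The central geometric principle is that $(i)$ produces $\Sigma$-points of height within $\teps\ell(Q)$ of $B_Q$, while $(iii)$ forbids $\Sigma$ from rising above $\widetilde{P}_{Q'}$ inside $C_{Q',\teps}^*$; chaining these two facts bounds $B_Q - B_{Q'}$ from above whenever an appropriate $\Sigma$-point sits in both cylinders.

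For \eqref{bqbqpclsimpconc.eq}, after assuming without loss of generality that $\diam(Q)\ge\diam(Q')$, I will first reduce to the comparable-size regime: if $\diam(Q') \ll \diam(Q)$, the semi-coherence of $\sbf^*$ allows me to replace $Q'$ by its ancestor $\widetilde{Q}'\in\sbf^*$ of size in $[\diam(Q),2\diam(Q)]$, paying a cost $|B_{Q'} - B_{\widetilde{Q}'}| \lesssim \teps\diam(Q)$ by iterating \eqref{graphcloseBQobs.eq} and summing a geometric series. Once $\diam(Q)\approx\diam(Q')$, a crude estimate $|B_Q - B_{Q'}| \lesssim \diam(Q)$ obtained via a common ancestor and \eqref{graphcloseBQobs.eq} suffices to verify that the $\Sigma$-point $(X^*,\tau^*)$ produced by $(i)$ applied at $(B_Q,\pi(X_Q,t_Q))\in \widetilde{P}_Q\cap C_{Q,\teps}^*$ lies also in $C_{Q',\teps}^*$. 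Then $(iii)$ for $Q'$ gives $x_0^*\le B_{Q'}$, while $|x_0^*-B_Q|\le\teps\ell(Q)$, yielding $B_Q - B_{Q'}\le \teps\ell(Q)\lesssim \eps\diam(Q)$; the symmetric argument with $Q$ and $Q'$ swapped handles the reverse inequality.

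For \eqref{bqbqpclnotsimpconc.eq}, I plan to use $(i)$ to produce $\Sigma$-points $(X_Q^*,t_Q^*)$ near $Q$ with $|\pi^\perp(X_Q^*,t_Q^*) - B_Q|\le\teps\ell(Q)$, and analogously $(X_{Q'}^*,t_{Q'}^*)$ for $Q'$; each satisfies $d(X_Q^*,t_Q^*),\,d(X_{Q'}^*,t_{Q'}^*)\lesssim K_0\max\{\diam(Q),\diam(Q')\}$. The large-separation hypothesis then yields $\max\{d(X_Q^*,t_Q^*),\,d(X_{Q'}^*,t_{Q'}^*)\}\lesssim K_0\eps \cdot \dist(X_Q^*,t_Q^*,X_{Q'}^*,t_{Q'}^*)$, making $\beta \approx K_0\eps$ admissible for Lemma \ref{DSLem8.4mod.lem} (since $\eps\le K_0^{-100}$ forces both $\beta\ge \eps^{3/2}$ and $\beta K_0\ll 1$). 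The lemma delivers $|\pi^\perp(X_Q^*,t_Q^*) - \pi^\perp(X_{Q'}^*,t_{Q'}^*)| \lesssim K_0^2\eps\,\dist(\pi(X_Q^*,t_Q^*),\pi(X_{Q'}^*,t_{Q'}^*))$, and chaining the triangle inequality with the height estimates and the comparison $\dist(\pi(X_Q^*,t_Q^*),\pi(X_{Q'}^*,t_{Q'}^*))\le \dist(\pi(Q),\pi(Q')) + O(K_0\max\diam)$ produces the claimed bound. Finally, \eqref{bqFclsdiamdst.eq} will follow from the other two by approximating $(Z,\tau)\in F$ by a sequence $Q_\delta \in \sbf^*$ with $\dist((Z,\tau),Q_\delta)+\diam(Q_\delta)\to 0$ (which exist because $d(Z,\tau)=0$), observing via WHSA $(ii)$ that $|B_{Q_\delta}-\pi^\perp(Z,\tau)|\lesssim K_0 \delta \to 0$, and applying the already-proved estimate \eqref{bqbqpclsimpconc.eq} or \eqref{bqbqpclnotsimpconc.eq} to the pair $(Q,Q_\delta)$ before passing to the limit.

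The principal obstacle is the bootstrapping character of the argument for \eqref{bqbqpclsimpconc.eq}: to deploy $(iii)$ for $Q'$ one must first certify that the candidate $\Sigma$-point from $(i)$ for $Q$ lies in $C_{Q',\teps}^*$, which requires a preliminary non-sharp bound on $|B_Q - B_{Q'}|$. This is precisely why the reduction to comparable-size cubes via ancestors in the semi-coherent tree $\sbf^*$ is indispensable: for cubes of very different sizes the cylinder $C_{Q',\teps}^*$ would be too small to accommodate the relevant $\Sigma$-point and the chaining of $(i)$ and $(iii)$ would fail.
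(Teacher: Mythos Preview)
Your proposal is correct. For \eqref{bqbqpclnotsimpconc.eq} and \eqref{bqFclsdiamdst.eq} your approach matches the paper's (the paper simply picks a single sufficiently small $Q_\delta$ rather than passing to a limit, but this is cosmetic). For \eqref{bqbqpclsimpconc.eq}, however, you take a genuinely different route. The paper argues by contradiction without any reduction to comparable sizes: to rule out $B_Q\gg B_{Q'}$, it produces the $\Sigma$-point $(\tilde X,\tilde t)$ via $(i)$ for $Q$ above a point of $Q'$ and shows $(\tilde X,\tilde t)\in\gamma_{Q'}$, the truncated cone of \eqref{gammaQ'def}--\eqref{gammaQdef}, contradicting Lemma~\ref{gqconelem.lem}. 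Since $\gamma_{Q'}$ extends vertically up to height $\eps^{-1/2}\ell(Q(\sbf^*))$, no preliminary bound on $|B_Q-B_{Q'}|$ is needed to certify membership. For $B_Q\ll B_{Q'}$ the paper uses $(i)$ for $Q'$ and $(iii)$ for $Q$ directly, as you do. Your approach trades the cone machinery for a two-step bootstrap: semi-coherence reduces to comparable sizes, and the crude ancestor bound $|B_Q-B_{\widetilde Q'}|\lesssim\diam(Q)$ then suffices to verify membership in $C_{\widetilde Q',\teps}^*$ (and symmetrically in $C_{Q,\teps}^*$) for the direct $(i)$+$(iii)$ argument in both directions. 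Your route is more self-contained, needing only $(i)$--$(iii)$ and the parent estimate \eqref{graphcloseBQobs.eq}; the paper's is shorter once Lemma~\ref{gqconelem.lem} is available.

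One quantitative imprecision in your plan for \eqref{bqbqpclnotsimpconc.eq}: the projection of the $\Sigma$-point produced by $(i)$ at $(B_Q,x,t)$ lies within $\teps\ell(Q)$ of $(x,t)\in\pi(Q)$, not $K_0\ell(Q)$; so the correct comparison is $\dist(\pi(X_Q^*,t_Q^*),\pi(X_{Q'}^*,t_{Q'}^*))\le \dist(\pi(Q),\pi(Q'))+O(\eps\max\diam)$. With the looser $O(K_0\max\diam)$ you wrote, the second term in the final bound would pick up a spurious factor $K_0^3$. Choose the base points $(X,t)\in Q$, $(X',t')\in Q'$ to nearly realize $\dist(\pi(Q),\pi(Q'))$, as the paper does, and the issue disappears.
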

\begin{proof} 
We may assume without loss of generality that $\diam(Q) \ge \diam(Q')$.

We first assume that \eqref{bqbqpclsimphyp.eq} holds
and we verify \eqref{bqbqpclsimpconc.eq}. To this end,
we argue by contradiction.  Our goal is to rule out the following
two cases:

\smallskip

\noindent
\textbf{Case 1:} $B_Q \ge 40 \eps \ell(Q) + B_{Q'}$.

\smallskip

\noindent
\textbf{Case 2:} $B_Q \le  B_{Q'} -  40 \eps \ell(Q)$.

\noindent
Indeed, if Cases 1 and 2 both fail, then $|B_Q-B_{Q'}|\leq 40 \eps\ell(Q)\approx \eps \diam(Q)$ as desired.

To begin, suppose that \textbf{Case 1} holds.
By the $\teps$-WHSA condition for $Q$
(see Definition \ref{def2.13} part $(ii)$), we have that
\begin{equation}\label{Bqxtclstxnxt.eq}
|x_0-B_Q|=\dist((x_0,x,t), (B_Q,x,t)) \lesssim K_0 \ell(Q), \qquad \forall\, (x_0,x,t)\in Q.
\end{equation}
Let $(X',t') = (x_0',x',t') \in Q'$.
Using \eqref{Bqxtclstxnxt.eq}
and \eqref{bqbqpclsimphyp.eq}, we obtain
\begin{equation}\label{Bqxtincylinders.eq}
(B_Q, x',t') \in \,\widetilde{P}_Q \cap C_{Q,\eps^{3/4}}^*
 \,\subset\, \widetilde{P}_Q \cap C_{Q,\teps}^*.
\end{equation}
Thus, by the $\teps$-WHSA condition for $Q$
(Definition \ref{def2.13} part $(i)$),
there is a point
$(\tilde{X},\tilde{t}) \in\Sigma$
such that $\dist(\tilde{X},\tilde{t}, (B_Q,x',t')) \!\le \teps\ell(Q)\!=
\!10\eps \ell(Q)\,.$
Writing $(\tilde{X},\tilde{t}\,) = (\tilde{x}_0, \tilde{x}, \tilde{t}\,)$, we then have
\begin{equation}\label{Bqxtclstxnxt.equu1}
\dist(\tilde{x}, \tilde{t}, x',t') \le 10\eps \ell(Q)\,,
\end{equation}
and also $|\tilde{x}_0 - B_Q|\leq \teps\ell(Q)=10\eps \ell(Q)$, so that
\begin{equation}\label{Xtbelow.eq}
\tilde{x}_0 \leq B_Q+ \teps\ell(Q) \leq 2\teps\ell(Q) \ll\eps^{-1/2}\ell(Q(\sbf^*))\,.
\end{equation}
by \eqref{BQBQS*.eq}, and furthermore
\[
\tilde{x}_0 \ge B_Q - 10\eps \ell(Q) \ge B_{Q'} + 30\eps\ell(Q)\,,
\]
where the last step is just a restatement of Case 1.
In particular,
\begin{equation}\label{Bqxtclstxnxt.equu2}
\tilde{x}_0 - B_{Q'} \ge 30\eps \ell(Q).
\end{equation}
Note that $x_0'\leq B_{Q'},$ by Definition \ref{def2.13} part $(iii)$ applied
to $Q'$.
Thus, using \eqref{Bqxtclstxnxt.equu1},
\eqref{Bqxtclstxnxt.equu2}, and \eqref{Xtbelow.eq}, we see that
\[ \tilde{x}_0-x_0'\geq 3\dist(\tilde{x}, \tilde{t}, x',t'),\qquad  B_{Q'}<\tilde{x}_0
<\eps^{-1/2}\ell(Q(\sbf^*)).\]
Thus, $(\tilde{X},\tilde{t}) \in \gamma_{Q'}$, but
this is a contradiction, since $\gamma_{Q'}\cap \Sigma = \emptyset$ and
$(\tilde{X},\tilde{t}\,) \in \Sigma$, so Case 1 cannot hold.

Next, assume that \textbf{Case 2} holds. Fix any $(X',t') = (x'_0,x',t')\in Q'$. By the $\teps$-WHSA condition for $Q'$
(see Definition \ref{def2.13} part $(i)$),
there exists
$(\tilde{X},\tilde{t}\,) = (\tilde{x}_0, \tilde{x}, \tilde{t}\,) \in \Sigma$
such that
\begin{equation}\label{Xtildclscsb.eq}
\dist(\tilde{X},\tilde{t}, (B_{Q'},x',t')) \le \teps\ell(Q')=10\eps\ell(Q') \le 10\eps\ell(Q).
\end{equation}
As above it follows that
\[\tilde{x}_0 \ge B_{Q'} - 10\eps \ell(Q) \ge B_{Q} + 30\eps\ell(Q),\]
which implies in particular that
$(\tilde{X},\tilde{t})\in \widetilde{H}_{Q}$. Moreover, the $\teps$-WHSA condition for $Q'$ (see Definition \ref{def2.13} part $(ii)$), gives
\[
\dist((B_{Q'},x',t'), (X',t')) \lesssim K_0 \diam(Q') \approx K_0\ell(Q),
\]
which, when combined with \eqref{bqbqpclsimphyp.eq}
and \eqref{Xtildclscsb.eq}, 
implies that
$(\tilde{X},\tilde{t}\,) \in C_{Q,\teps}^*$. Hence, we see that
$(\tilde{X},\tilde{t}\,)\in \widetilde{H}_{Q}\cap C_{Q,\teps}^*$,  and
$(\tilde{X},\tilde{t}\,) \in \Sigma$, which contradicts
the $\teps$-WHSA condition for $Q$ (see Definition \ref{def2.13} part $(iii)$).  Thus, Case 2 cannot hold, and
we have proved that  \eqref{bqbqpclsimphyp.eq} implies \eqref{bqbqpclsimpconc.eq}.

Next we assume that \eqref{bqbqpclnotsimphyp.eq} holds and we prove \eqref{bqbqpclnotsimpconc.eq}. By \eqref{bqbqpclnotsimphyp.eq}
\begin{equation}\label{bqbqpntclshyp.eq}
\max\{\diam(Q),\diam(Q')\} = \diam(Q) \le \eps \dist(Q,Q').
\end{equation}
Let $(X,t)= (x_0,x,t) \in Q$ and let $(X',t') = (x_0',x',t') \in Q'$ be such that
\begin{equation}\label{distpiQ}
\dist(\pi(X,t),\pi(X',t')) \le 2 \dist(\pi(Q),\pi(Q')) +\eta\,,
\end{equation}
where we set $\eta=0$ if
$\dist(\pi(Q),\pi(Q')) > 0$, and $\eta := \eps\diam(Q)$
if $\dist(\pi(Q),\pi(Q')) = 0$.

As above, using Definition \ref{def2.13} part $(ii)$,
we see that
\begin{equation}\label{distQPQ}
\dist((B_Q,x,t), (X,t)), \dist((B_{Q'},x',t'), (X',t')) \lesssim K_0 \diam(Q)\,,
\end{equation}
and for suitable
$(\tilde{X},\tilde{t}\,)= (\tilde{x}_0,\tilde{x}, \tilde{t}\,)\in \Sigma$,
$(\tilde{X}',\tilde{t}') = (\tilde{x}_0',\tilde{x}', \tilde{t}') \in \Sigma$,
by Definition \ref{def2.13} part $(i)$,
\begin{equation}\label{txtxpcl1.eq}
\dist((B_Q,x,t), (\tilde{X},\tilde{t}\,)), \dist((B_{Q'},x',t'), \tilde X',\tilde t') \le
\teps \ell(Q)= 10 \eps \ell(Q)\,.
\end{equation}
Combining \eqref{distQPQ} and \eqref{txtxpcl1.eq},
and then using that $\eps^{-1}\gg K_0$, we obtain
\begin{equation}\label{distXtQ.eq}
\dist(\tilde{X},\tilde{t},Q), \dist(\tilde{X}',\tilde{t}',Q') \lesssim K_0 \diam(Q)
\ll \eps^{-1} \diam(Q).
\end{equation}
Furthermore, using \eqref{bqbqpntclshyp.eq} and \eqref{distXtQ.eq}, and
hiding (relatively) small terms, we have
\begin{equation}\label{disttildeXt.eq}
\dist(\tilde{X},\tilde{t}, \tilde{X}',\tilde{t}') \ge (2\eps)^{-1} \diam(Q)\,.
\end{equation}
Hence, by
the definition of $d$ (see \eqref{DSDfn.eq--}),
the first inequality in \eqref{distXtQ.eq}, and then \eqref{disttildeXt.eq},
\[\max\{d(\tilde{X},\tilde{t}\,), d(\tilde{X}',\tilde{t}')\} \lesssim K_0 \diam(Q)
\lesssim K_0 \eps  \dist(\tilde{X},\tilde{t}, \tilde{X}',\tilde{t}').\]
Thus, Lemma \ref{DSLem8.4mod.lem} applies with $\beta \approx K_0 \eps$ and we obtain
\begin{equation}\label{bqbqpclnotsimpalmst1.eq}
|\pi^{\perp}(\tilde X,\tilde t\,) - \pi^{\perp}(\tilde X',\tilde t')| \lesssim K_0^{2}\eps \dist(\pi(\tilde X,\tilde t\,), \pi(\tilde X',\tilde t')).
\end{equation}

Note that \eqref{txtxpcl1.eq} implies
\[
|B_Q - \pi^\perp(\tilde X,\tilde t\,)|,\ |B_{Q'} - \pi^\perp(\tilde X',\tilde t')|
 \le10 \eps \ell(Q) \approx \eps \diam(Q),
 \]
 \[\dist(\pi(\tilde X,\tilde t\,), (x,t) )= \dist(\pi(\tilde X,\tilde t), \pi(X,t))
 \lesssim 10 \eps \ell(Q) \approx \eps \diam(Q),\]
 and
 \[\dist(\pi(\tilde X',\tilde t'), (x',t') )= \dist(\pi(\tilde X',\tilde t'), \pi(X',t')) \lesssim
 10 \eps \ell(Q) \approx \eps \diam(Q)\,.\]
Combining the latter three estimates with \eqref{bqbqpclnotsimpalmst1.eq}
and \eqref{distpiQ}, fixing $\eta$ as above, and using that $\eps\ll K_0^{-1}$,
we obtain \eqref{bqbqpclnotsimpconc.eq}.

Statement \eqref{bqFclsdiamdst.eq} in the lemma remains to be proven. To this
end, we assume $Q \in \sbf^*$ and $(Z,\tau)\in F$.
The triangle inequality and the definition of $d$ give us a cube
$Q' \in \sbf^*$ such that
\begin{equation}\label{dfzslctcb.eq}
 \dist(Z,\tau, X,t)
 \leq \dist(Z,\tau, Q') + \diam(Q')  \le K_0^{-1}\eps\diam(Q)
 \,,\qquad \forall \, (X,t)\in Q'\,.
\end{equation}
Writing $(Z,\tau) = (z_0, z, \tau)$ and using the  $\teps$-WHSA condition for $Q'$ (see Definition \ref{def2.13} part $(ii)$), we then have
\begin{equation}\label{prlmznbqp.eq}
|z_0 - B_{Q'}| = \dist(Z,\tau, \widetilde{P}_{Q'}) \le \dist(Z,\tau, Q') + \diam(Q') + \dist(Q', \widetilde{P}_{Q'})
\lesssim \eps \diam(Q).
\end{equation}
Note that \eqref{dfzslctcb.eq} also implies
\begin{equation}\label{projzprgqp.eq}
\dist(\pi(Z,\tau),\pi(X,t)) \leq  K_0^{-1}\eps \diam(Q)
\le \eps \diam(Q)\,, \quad \forall \,(X,t) \in Q'.
\end{equation}
Now either \eqref{bqbqpclsimphyp.eq}
or \eqref{bqbqpclnotsimphyp.eq} holds for $Q$ and $Q'$  and therefore we may apply the weaker bound \eqref{bqbqpclnotsimpconc.eq}, along with
\eqref{prlmznbqp.eq}, and then \eqref{projzprgqp.eq}, using that
$K_0^2\eps^2 \ll \eps$,
to conclude that
\begin{multline*}
|z_0 - B_{Q}| \le |B_{Q'} - B_{Q}| + |B_{Q'} - z_0|
\lesssim K_0^2\eps\dist(\pi(Q), \pi(Q')) + \eps \diam(Q)
\\[4pt]
\lesssim\,  K_0^2\eps\dist(\pi(Q), \pi(Z,\tau)) \,+ \,\eps\diam(Q).
\end{multline*}
\end{proof}

Recall the construction of $\psi$ (see \eqref{2.19}); in particular,
$B_i=B_{Q(i)}$ for $i\in\Lambda$, and $B_i=0$, otherwise.
Recall also that $r_i:= \diam(I_i)$ (see \eqref{nota1}).

\begin{lemma}\label{lemma2.21}
We have
\begin{equation}\label{oldii.eq}
 | B_i - B_j  |\lesssim  \eps  \min ( r_i, r_j ),  \mbox{ if  }
  10  I_i  \cap 10  I_j  \not =\emptyset.
 \end{equation}
Also,
\begin{equation}\label{psilipestonIi.eq}
|\psi(y,s) - \psi(z,\tau)|  \lesssim \eps \dist(y,s,z,\tau), \qquad
\forall \,(y,s), (z,\tau) \in 10 I_i\,,
\end{equation}
\begin{equation}\label{psigradpestonIi.eq}
|\nabla_y\psi(y,s)|  \lesssim \eps\,, \qquad
\forall \,(y,s)  \in 10 I_i\,,
\end{equation}
\begin{equation}\label{psiderestonIi.eq}
|\nabla_y^2 \psi(y,s)| + |\partial_s \psi(y,s)| \lesssim \eps r_i^{-1},
\qquad \forall\, (y,s)
\in 10 I_i\,,
\end{equation}
and
\begin{equation}\label{B-psi}
 |B_i -  \psi(y,s)|  \lesssim \eps r_i\,,
\qquad  \forall\, (y,s) \in 10 I_i\,.
\end{equation}
Moreover, if $i \in \Lambda$, then
\begin{equation}\label{closeIiforpsi.eq}
\dist((\psi(y,s),y,s), \Sigma) \le \eps^{1/2}\diam(I_i), \qquad \forall \,(y,s) \in 10 I_i,
\end{equation}
\end{lemma}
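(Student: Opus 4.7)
The proof will establish the six estimates in order, with \eqref{oldii.eq} as the key geometric input and the remaining bounds \eqref{psilipestonIi.eq}--\eqref{B-psi} then following from the Whitney-type extension formula \eqref{2.19} in a standard way; estimate \eqref{closeIiforpsi.eq} will come from the $\teps$-WHSA condition for $Q(i)$.

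To prove \eqref{oldii.eq}, I will use Lemma \ref{lemmaQi} to extract cubes $Q(i),Q(j)\in\sbf^*$ satisfying $\diam Q(i)\approx_{K_0} r_i$, $\diam Q(j)\approx_{K_0} r_j$, and $\dist(\pi(Q(i)),I_i),\dist(\pi(Q(j)),I_j)\lesssim r_i\approx r_j$, where the comparability of $r_i$ and $r_j$ comes from \eqref{eq1}. The hypothesis $10I_i\cap 10I_j\neq\emptyset$ then gives $\dist(Q(i),Q(j))\lesssim r_i\lesssim K_0\max\{\diam Q(i),\diam Q(j)\}$, and since $K_0\ll\eps^{-1}$, condition \eqref{bqbqpclsimphyp.eq} of Lemma \ref{bqbqpclosediam.lem} holds. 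The conclusion \eqref{bqbqpclsimpconc.eq} then gives $|B_{Q(i)}-B_{Q(j)}|\lesssim\eps\max\{\diam Q(i),\diam Q(j)\}\lesssim\eps\min(r_i,r_j)$. When one or both of $i,j$ is not in $\Lambda$, the corresponding $B_i=0=B_{Q(\sbf^*)}$, and I will handle this by applying Lemma \ref{bqbqpclosediam.lem} to $Q(i)$ and $Q(\sbf^*)$ (or directly compare via a chain through cubes lying near the boundary of $C'_{2\kappa R_{\sbf^*}}$); the key observation is that the overlap $10I_i\cap 10I_j\neq\emptyset$ forces $r_i\approx r_j$ and anchors both $I_i,I_j$ at comparable scale and location to $\pi(Q(\sbf^*))$, so that the geometric hypothesis of \eqref{bqbqpclsimphyp.eq} can again be verified.

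For \eqref{B-psi}, \eqref{psigradpestonIi.eq}, and \eqref{psiderestonIi.eq}, I will exploit the partition of unity: since $\sum_j\nu_j\equiv 1$ on $10I_i$, one may write
\[\psi(y,s)-B_i=\sum_j (B_j-B_i)\,\nu_j(y,s),\]
where only finitely many indices $j$ (those with $3I_j\cap 10I_i\neq\emptyset$, which forces $10I_i\cap 10I_j\neq\emptyset$, hence $r_j\approx r_i$) contribute. Combining \eqref{oldii.eq} with the pointwise bound $\nu_j\le 1$ yields \eqref{B-psi}. Differentiating and using that $\sum_j\nabla_y\nu_j\equiv 0$ and $\sum_j\partial_s\nu_j\equiv 0$ on $10I_i$, the derivative bounds \eqref{2.16+} on $\nu_j$, together with \eqref{oldii.eq}, give \eqref{psigradpestonIi.eq} and \eqref{psiderestonIi.eq}. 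Finally, \eqref{psilipestonIi.eq} is obtained by integrating the derivative estimates along a polygonal path in $10I_i$, using \eqref{psigradpestonIi.eq} for spatial displacements and \eqref{psiderestonIi.eq} (on $\partial_s\psi$) for time displacements together with the relation $|s-\tau|^{1/2}\lesssim r_i^{1/2}$.

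For the distance bound \eqref{closeIiforpsi.eq}, fix $i\in\Lambda$ and $(y,s)\in 10I_i$. The point $(B_i,y,s)$ lies on the plane $\widetilde{P}_{Q(i)}$. Using Lemma \ref{lemmaQi}, $\dist((y,s),\pi(Q(i)))\lesssim r_i$, and using the $\teps$-WHSA estimate (ii) from Remark \ref{*WHSA}, $|B_i-\pi^\perp(X_{Q(i)},t_{Q(i)})|\lesssim K_0\ell(Q(i))$, so $\dist((B_i,y,s),(X_{Q(i)},t_{Q(i)}))\lesssim K_0 r_i\ll\teps^{-2}\ell(Q(i))$, placing $(B_i,y,s)$ in $\widetilde{P}_{Q(i)}\cap C^*_{Q(i),\teps}$. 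Remark \ref{*WHSA}(i) then yields a point of $\Sigma$ within distance $\teps\ell(Q(i))\lesssim\eps r_i$ of $(B_i,y,s)$, and combining with \eqref{B-psi} gives $\dist((\psi(y,s),y,s),\Sigma)\lesssim\eps r_i\le\eps^{1/2}r_i$. The main technical obstacle is the verification of \eqref{oldii.eq} in edge cases where exactly one of $I_i,I_j$ sits outside $C'_{2\kappa R_{\sbf^*}}$; handling these cases will require care in selecting the correct reference cube in $\sbf^*$ and verifying that the geometric separation hypothesis of Lemma \ref{bqbqpclosediam.lem} is in the favorable regime \eqref{bqbqpclsimphyp.eq}, since the ``far'' conclusion \eqref{bqbqpclnotsimpconc.eq} would otherwise introduce an $K_0^2\eps$ factor too large for our purposes.
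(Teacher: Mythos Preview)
Your overall strategy is correct and matches the paper's, but there is a genuine gap in your argument for \eqref{oldii.eq}. From Lemma~\ref{lemmaQi} you only obtain control of the \emph{projected} distances $\dist(\pi(Q(i)),I_i)$ and $\dist(\pi(Q(j)),I_j)$; the hypothesis $10I_i\cap 10I_j\neq\emptyset$ then gives $\dist(\pi(Q(i)),\pi(Q(j)))\lesssim r_i$, but it does \emph{not} give $\dist(Q(i),Q(j))\lesssim r_i$ as you claim. The perpendicular separation $|\pi^\perp(X_i,t_i)-\pi^\perp(X_j,t_j)|$ is not controlled by anything you have written, and without it you cannot verify the hypothesis \eqref{bqbqpclsimphyp.eq} of Lemma~\ref{bqbqpclosediam.lem}. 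The paper closes this gap by invoking Lemma~\ref{DSLem8.4mod.lem}: picking $(X_i,t_i)\in Q(i)$ and $(X_j,t_j)\in Q(j)$, either $\dist(X_i,t_i,X_j,t_j)\le \beta_0^{-1}\max\{\diam Q(i),\diam Q(j)\}$ (with $\beta_0\approx K_0^{-1}$) and the full distance is already $\lesssim K_0 r_i$, or else $d(X_i,t_i),d(X_j,t_j)\le\beta_0\,\dist(X_i,t_i,X_j,t_j)$ and \eqref{DSl84cons.eq} gives $|\pi^\perp(X_i,t_i)-\pi^\perp(X_j,t_j)|\lesssim r_i$. In either case $\dist(Q(i),Q(j))\lesssim K_0 r_i$, and since $K_0^2\ll\eps^{-1}$ one lands in the favorable regime \eqref{bqbqpclsimphyp.eq}, yielding \eqref{oldii.eq} with constant $\eps$ (not $K_0^2\eps$).

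Your worry about the edge case $i\notin\Lambda$ is somewhat misplaced: the paper handles it simply by setting $Q(i):=Q(\sbf^*)$ and observing that in this situation $r_i\approx r_j\approx\kappa R_{\sbf^*}$, so \eqref{eq1+} still holds and the same argument goes through. The real obstacle is the perpendicular-distance control above, which is needed already in the generic case. Once \eqref{oldii.eq} is secured, your derivations of \eqref{psilipestonIi.eq}--\eqref{closeIiforpsi.eq} are correct and essentially coincide with the paper's (the paper proves \eqref{psilipestonIi.eq} by a direct finite-difference argument using $\sum_j(\nu_j(y,s)-\nu_j(z,\tau))=0$, rather than integrating derivatives, but your route via \eqref{psigradpestonIi.eq} and \eqref{psiderestonIi.eq} is equally valid).
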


\begin{proof}	
	The proofs of \eqref{oldii.eq}, \eqref{psilipestonIi.eq}, \eqref{psiderestonIi.eq}, and \eqref{closeIiforpsi.eq}, will follow those of,
respectively, \cite[Proof of $(ii)$ Lemma 4.3]{BHHLN-Corona}, \cite[(4.23)]{BHHLN-Corona}, \cite[Lemma 4.7]{BHHLN-Corona}, and
\cite[Lemma 4.6]{BHHLN-Corona}.  The quoted results in \cite{BHHLN-Corona}
are, in turn, based on analogous estimates, in the elliptic case, in
\cite[Chapter 8]{DS1}.

To start the proof of \eqref{oldii.eq}, let $I_i$ and $I_j$ be two cubes such that $10 I_i  \cap 10 I_j  \not =
\emptyset$. Thus $\diam I_i=r_i \approx r_j=\diam I_j$ by Lemma \ref{10-60lemma}.
 If $i \notin \Lambda$ and $j \notin \Lambda$, then
$B_i \equiv 0\equiv B_j$, so the estimate holds trivially.  Otherwise, at least one of $i$ or $j$ belongs to $\Lambda$.
Without loss of generality, we suppose that
$j\in \Lambda$.
In this case, since $10 I_i $ meets $10 I_j $, we may still define $Q(i)$ satisfying
\eqref{eq1+}, \eqref{nota1}, even if
$i\notin \Lambda$:  indeed in that case, we may simply set $Q(i) := Q({\sbf^*})$, and observe that, recalling the notation in \eqref{nota2}, and the definition of the index set $\Lambda$ (see \eqref{la-def}), we
then have $r_i \approx r_j \approx \kappa R_{\sbf^*} = \kappa \diam (Q(i))=\kappa\diam (Q({\sbf^*}))$.  Of course, if $i\in \Lambda$, then $Q(i)$
satisfying
\eqref{eq1+} has already been defined.
In any case, with $j\in \Lambda$, we claim that
\begin{align}\label{6++-uu}\dist(Q(i),Q(j))\lesssim K_0 r_i\approx  K_0 r_j.
\end{align}

To prove the claim, let $(X_j,t_j) \in Q(j)$, $(X_i,t_i) \in Q(i)$.  Set $\beta_0:= (N K_0)^{-1}$, with $N$ a suitably large
constant depending only on the implicit constants in \eqref{eq2.7},  so that Lemma \ref{DSLem8.4mod.lem}
holds with this choice of $\beta_0$.  If
\[
\dist(X_j,t_j ,X_i,t_i ) \leq \beta_0^{-1} \max (\diam Q(j),\diam Q(i)),\]
then  by \eqref{nota1} we see immediately that \eqref{6++-uu} holds. Otherwise,
by definition of $d$,
\[
d(X_j,t_j) \leq \diam Q(j) \leq \,\beta_0 \,\dist(X_j,t_j,X_i,t_i ),\]
and
\[
d(X_i,t_i) \leq \diam Q(i) \leq\, \beta_0 \,\dist(X_j,t_j,X_i,t_i)\,.
\]
Therefore, by Lemma \ref{DSLem8.4mod.lem} we have
\begin{equation*}
|\pi^\perp(X_j,t_j) -\pi^\perp(X_i,t_i)| \, \lesssim \, \beta_0 K_0
\dist(\pi(X_j,t_j),\pi(X_i,t_i))\,
\lesssim \, r_j\, \approx  \,r_i,
\end{equation*}
where in the last step, 
we have used \eqref{eq1+} and the fact that
$\dist(I_i,I_j) \lesssim r_j\approx r_i$.  Thus  \eqref{6++-uu} holds, and in this case with constant $\approx 1$.

Having proved \eqref{6++-uu}, we see that \eqref{oldii.eq} follows from Lemma \ref{bqbqpclosediam.lem} since $K_0^2 \ll \eps^{-1}$. With \eqref{oldii.eq} in hand, we prove the remaining estimates.
Recall that by construction, $\nu_j$ is supported in $3I_j$, so in particular, $\nu_j$
vanishes on $10I_i$, unless $10I_j$ (indeed, $3I_j$) meets $10I_i$.

We first prove \eqref{psilipestonIi.eq}. Let $(y,s), (z,\tau) \in 10 I_i$. In this case, by Lemma \ref{10-60lemma}, \eqref{2.16+} and \eqref{oldii.eq}, using
that $B_i$ is constant, and that $\sum_{j}  (\nu_j(y,s) - \nu_j(z,\tau)) =0$,
we have
\begin{multline}\label{2.21++}
| \psi( y, s ) - \psi ( z,\tau) | =
\big|\sum_{j}  (B_j-B_i)(\nu_j(y,s) - \nu_j(z,\tau))\big|\\
\lesssim
  \eps r_i \dist(y,s,z,\tau)r_i^{-1}
\approx \eps\dist(y,s,z,\tau)\,.
\end{multline}
Note that \eqref{psigradpestonIi.eq} now follows immediately from
\eqref{psilipestonIi.eq}.

Next, we prove \eqref{psiderestonIi.eq}. To do this, let $\partial_y^2$ denote any second order spatial derivative,
and note that $\nabla_y \sum_{j} \nu_j = 0$ on $10I_i$. Then, for $(y,s) \in 10I_i$,
again using  \eqref{oldii.eq}, we have
\begin{equation*}
|\partial_y^2 \psi(y,s)|
=
\Big|\partial_y^2 \sum_{j} B_j \nu_j(y,s)\Big|
=
\Big|\sum_{i}(B_j - B_i) \partial_y^2 \nu_j(y,s)\Big|
\lesssim
\eps r_i \sum |\partial_y^2  \nu_j(y,s)|
\lesssim
\eps r_i^{-1},
\end{equation*}
where we used that $|\partial_y^2  \nu_j(y,s)| \lesssim r_j^{-2}$,
 by \eqref{2.16+}, and that $10I_j$ meets $10I_i$ for
all non-trivial terms appearing in the sum, hence
$r_j \approx r_i$, and the number of non-vanishing terms is uniformly bounded.
The estimate for the $t$-derivative may be obtained in the same way.

By similar reasoning, we may verify \eqref{B-psi}:  if
$(y,s) \in 10I_i$, then
by \eqref{oldii.eq},
\[|B_i -  \psi(y,s)| = \Big|\sum_{j} (B_i - B_j) \nu_j(y,s)\Big| \lesssim \eps r_i.\]

Finally, we prove \eqref{closeIiforpsi.eq}. Let
$(y,s) \in 10I_i$ for $i \in \Lambda$.
By the $\teps$-WHSA condition (see Definition \ref{def2.13} parts $(i)$ and $(ii)$),
and \eqref{eq1+}, \eqref{nota1}, there exists $(X,t)\in \Sigma$ such that
\[\dist((B_i,y,s), (X,t)) \le \teps \ell(Q(i)). \]
Thus, using \eqref{B-psi} and \eqref{nota1}, and taking $\eps$ sufficiently small,
we see that
\[\dist((\psi(y,s), y,s), X,t) \le |B_i - \psi(y,s)| +  \teps \ell(Q(i))
\le c \eps r_i \le \eps^{1/2}\diam(I_i)\,.\]
\end{proof}

\subsection{The function $\psi$  and the constructed graph are Lip(1,1/2)} We now prove that $\psi$ is Lip(1,1/2).

\begin{lemma}\label{graphlip.lem}
	Let $\psi:\mathbb R^n\to\mathbb R$ be defined as in
\eqref{2.19}.  Then $\psi: \mathbb R^n\to\mathbb R$ is Lip(1,1/2) with
constant $b_1$ on the order of $\eps^{1/2}$, i.e., \begin{equation}\label{lemma4.26est}
|\psi(y,s)- \psi(z,\tau)| \lesssim \eps^{1/2}\, \dist(y,s,z,\tau), \quad
\forall \, (y,s),(z,\tau) \in \mathbb{R}^n \,.
\end{equation}
\end{lemma}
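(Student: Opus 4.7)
The plan is to reduce the global Lip$(1,1/2)$ estimate for $\psi$ to pointwise comparisons against the constants $\{B_i\}$, and then to compare these constants using Lemma~\ref{bqbqpclosediam.lem}. The key quantitative observation is that $\eps\leq\eps_0=K_0^{-100}$, hence $K_0^{2}\eps\leq K_0^{-98}\ll K_0^{-50}=\eps^{1/2}$; all constants arising in the argument are of order $K_0^{2}\eps$, which is therefore absorbed into $\eps^{1/2}$.

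First, I will split into three cases according to whether $(y,s)$ and $(z,\tau)$ belong to $\pi(F)$ or to some Whitney cube $I_i$. When both points lie in $\pi(F)$, the estimate is immediate from \eqref{hatlip}, since $K_0\eps\ll \eps^{1/2}$. When exactly one point, say $(y,s)$, belongs to some $I_i$ and $(z,\tau)\in\pi(F)$, I will use the triangle inequality together with \eqref{B-psi} and \eqref{bqFclsdiamdst.eq}, namely
\[
|\psi(y,s)-\psi(z,\tau)|\,\leq\,|\psi(y,s)-B_i|+|B_{Q(i)}-\pi^{\perp}(z,\tau)|
\,\lesssim\, \eps r_i+K_0^{2}\eps\,\dist(\pi(Q(i)),(z,\tau))+\eps\diam(Q(i)).
\]
Since $(z,\tau)\in F$ implies $D(z,\tau)=0$ and $D$ is $1$-Lip in the parabolic metric, one has $r_i\approx D(y,s)\leq \dist((y,s),(z,\tau))$, and $\dist(\pi(Q(i)),(z,\tau))\lesssim r_i+\dist((y,s),(z,\tau))$, so the overall bound is $\lesssim K_0^{2}\eps\,\dist((y,s),(z,\tau))\ll\eps^{1/2}\dist((y,s),(z,\tau))$.

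When both points lie outside $\pi(F)$, with $(y,s)\in I_i$ and $(z,\tau)\in I_j$, I will consider two sub-cases. If $10I_i\cap 10I_j\neq\emptyset$, then $r_i\approx r_j$ by Lemma~\ref{10-60lemma} and I will bound $\psi$ by integrating along a polygonal path joining $(y,s)$ and $(z,\tau)$ inside the union $10I_i\cup 10I_j$ (via an intermediate point in the overlap), using the gradient and $t$-derivative estimates \eqref{psigradpestonIi.eq} and \eqref{psiderestonIi.eq}, to obtain a bound of order $\eps\dist((y,s),(z,\tau))$. If $10I_i\cap 10I_j=\emptyset$, then by Whitney geometry $\dist((y,s),(z,\tau))\gtrsim \max(r_i,r_j)$, and I will use
\[
|\psi(y,s)-\psi(z,\tau)|\,\leq\,|\psi(y,s)-B_i|+|B_i-B_j|+|B_j-\psi(z,\tau)|\,\lesssim\,\eps(r_i+r_j)+|B_i-B_j|.
\]
For $|B_i-B_j|$ I will apply Lemma~\ref{bqbqpclosediam.lem} to the cubes $Q(i),Q(j)\in\sbf^{*}$: in either alternative \eqref{bqbqpclsimphyp.eq} or \eqref{bqbqpclnotsimphyp.eq}, the worst bound is $\lesssim K_0^{2}\eps\,\dist(\pi(Q(i)),\pi(Q(j)))+\eps\max(\diam Q(i),\diam Q(j))$, and the diameters are $\lesssim r_i,r_j$ by \eqref{eq1+}. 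Since $\dist(\pi(Q(i)),\pi(Q(j)))\lesssim \dist(I_i,I_j)+r_i+r_j\lesssim \dist((y,s),(z,\tau))$, everything combines to $K_0^{2}\eps\,\dist((y,s),(z,\tau))\ll \eps^{1/2}\dist((y,s),(z,\tau))$.

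The main subtlety to handle carefully will be ensuring that the comparisons through the $B_i$'s remain valid when some indices lie outside the index set $\Lambda$ (where we set $B_i=0=B_{Q(\sbf^*)}$); here one should either reduce to the case $i,j\in\Lambda$ by observing that $\psi$ is supported in $C'_{4\kappa R_{\sbf^*}}$ and relating $I_i$ to the auxiliary cube $Q(i):=Q(\sbf^{*})$ as in the proof of Lemma~\ref{lemma2.21}, or use that $B_i=B_{Q(\sbf^{*})}=0$ also for $i\notin\Lambda$ so the same comparison lemma applies verbatim. The other delicate point, already anticipated in the two-sub-case split above, is the chain-path argument in the overlapping Whitney case, which must only invoke the pointwise derivative bounds of Lemma~\ref{lemma2.21} on cubes whose sizes are comparable; this is guaranteed by \eqref{eq1} in Lemma~\ref{10-60lemma}.
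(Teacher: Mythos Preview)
Your proposal is correct and follows essentially the same route as the paper: a case split according to membership in $\pi(F)$, use of \eqref{hatlip} on the contact set, and comparison through the constants $B_i$ via Lemma~\ref{bqbqpclosediam.lem} (together with \eqref{B-psi}) elsewhere. The paper also first reduces to $(y,s),(z,\tau)\in\overline{C'_{4\kappa R_{\sbf^*}}}$ using the support of $\psi$, which is exactly the $\Lambda$ issue you flag.

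One small point: in your overlapping sub-case ($10I_i\cap 10I_j\neq\emptyset$), the intermediate-point/path argument as written only yields $|\psi(y,s)-\psi(z,\tau)|\lesssim \eps r_i$, which is not sharp when $\dist(y,s,z,\tau)\ll r_i$. The paper sidesteps this by using a slightly different dichotomy: with $r_k\le r_j$ and $(y,s)\in 2I_j$, Case~3 is $(z,\tau)\in 10I_j$ (so both points lie in a single $10I_j$ and \eqref{psilipestonIi.eq} applies directly), while Case~4 is $(z,\tau)\notin 10I_j$ (so $\dist(y,s,z,\tau)\gtrsim r_j$ and the $B_j$ comparison suffices). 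Your split is easily repaired the same way---if $\dist(y,s,z,\tau)$ is small relative to $r_i$ then $(z,\tau)\in 10I_i$ and \eqref{psilipestonIi.eq} applies without any chain.
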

\begin{proof} The proof is similar to that of \cite[Lemma 4.5]{BHHLN-Corona}
(which in turn was based on arguments in \cite[Chapter 8]{DS1}).
As usual, we set $r_i:= \diam I_i$.

Let us make a preliminary observation.  Recall that we use the notation $C'$
to denote a parabolic cylinder
in $\rn$, see \eqref{ndimcylinder}.
Note that by \eqref{supportpsi} we may suppose that
$(y,s)$ and $(z,\tau)$ both lie in the closure of
$C_*':=C'_{4\kappa R_{\sbf^*}}(x_{ Q({\sbf^*})},t_{ Q({\sbf^*})})$.   Indeed, if both points lie
outside of this closed cylinder, then \eqref{lemma4.26est} is trivial. On the other hand, if one (say $(y,s)$) lies outside,
and the other inside, then we may replace $(y,s)$ in \eqref{lemma4.26est}
by another point $(y',s') \in \partial C'_*$, such that
$\dist(z,\tau,y',s') \leq \dist(z,\tau,y,s)$, since $\psi(y,s) = 0 = \psi(y',s')$ for any
such $(y,s)$ and $(y',s')$.
For example, we may take $(y',s')$ to be
the point on the line segment from $(y,s)$ to $(z,\tau)$ that meets $\partial C'_*$.

Turning to the proof of the lemma, there are four cases to consider:

\smallskip
\noindent
\textbf{Case 1:} $(y,s),(z,\tau) \in \pi(F)$.

\smallskip
\noindent
\textbf{Case 2:}
$(y,s)\in  \mathbb{R}^n\setminus \pi(F) $, with $(y,s) \in 2I_j$,
and  $(z,\tau) \in \pi(F)$.

\smallskip
\noindent
\textbf{Case 3:} $(y,s), (z,\tau) \in  \mathbb{R}^n\setminus \pi(F)$, with $(y,s) \in 2I_j$,
$(z,\tau) \in 2I_k$, and,
without loss of generality, $r_k \leq r_j$, $(z,\tau)\in 10I_j$.

\smallskip
\noindent
\textbf{Case 4:} $(y,s), (z,\tau) \in  \mathbb{R}^n\setminus \pi(F)$, with $(y,s) \in 2I_j$,
$(z,\tau) \in 2I_k$, and, without loss of generality, $r_k \leq r_j$, $(z,\tau)\notin 10I_j$.

Let us now discuss the various cases. \textbf{Case 1} follows immediately from Lemma \ref{DSLem8.4mod.lem} and the definition of $\psi$, and
in this case we obtain \eqref{lemma4.26est} with a smaller constant:
see \eqref{hatlip} and \eqref{2.19}.
\textbf{Case 3} (again with an even smaller constant) follows directly from Lemma \ref{lemma2.21}, see \eqref{psilipestonIi.eq}.

It remains to treat \textbf{Case 2} and \textbf{Case 4}, which we shall do more or less simultaneously.
 We remark that if $F$ is nonempty, one can reduce \textbf{Case 4} to \textbf{Case 2}, as in the
 proof of the classical Whitney extension theorem. However, in the present setting, it may be that
$F$ is empty, so we shall treat \textbf{Case 4}  directly.  For the sake of specificity, let us do this first,
as the proof in \textbf{Case 2}  will be similar.

 In \textbf{Case 4} , we decompose
$$|\psi(y,s) - \psi(z,\tau)|
\leq
|\psi(y,s) - B_j|+|B_j - B_k|+|B_k - \psi(z,\tau)|=:
E_1 + E_2 + E_3,$$
In the scenario of Case 4,
$(z,\tau)\in 2I_k\setminus 10I_j$,
and $(y,s)\in 2I_j$, with $r_k\leq r_j$, so that
\begin{equation}\label{4.29}
r_k\leq r_j \lesssim \dist(y,s,z,\tau),
\end{equation}
where the implicit constant only depend on $n$.  By \eqref{B-psi}, we then have
\[
 E_1 + E_3 \,  \lesssim \, \eps  (r_j + r_k)\, \lesssim \,
\eps \dist(y,s,z,\tau)\,.
\]

We now turn to $E_2$. We first recall that at the start of the proof,
we had reduced matters to the case that
$(y,s)$ and $(z,\tau)$ both lie in the closure of
$C_*':=C'_{4\kappa R_{\sbf^*}}(x_{ Q({\sbf^*})},t_{ Q({\sbf^*})})$.
Hence, we may assume that
there exist cubes $Q(j)$ and $Q(k)$ satisfying \eqref{eq1+},
relative to $I_j$ and $I_k$:
indeed, for $j$ or $k$ in $\Lambda$, we have already constructed such cubes, and
otherwise, for example if $j\notin \Lambda$, then we may simply set
$Q(j)=Q(\sbf^*)$, and note that \eqref{eq1+} holds for this choice of $Q(j)$,
as in the proof of Lemma \ref{lemma2.21}.

We now observe that by \eqref{eq1+} and \eqref{4.29}
\[
\dist\big((y,s), \pi(Q(j))\big) \,+ \,
\dist\big((z,\tau), \pi(Q(k))\big) \, \lesssim \,\dist(y,s,z, \tau),
\]
and therefore
\[\dist\big(\pi(Q(j)), \pi(Q(k))\big) \lesssim \dist(y,s,z, \tau).\]
Moreover, again using \eqref{eq1+} and \eqref{4.29} we have
\[\diam(Q(j))\,+\,\diam(Q(k))\, \lesssim \, \dist(y,s,z, \tau).\]
The previous two estimates and
Lemma \ref{bqbqpclosediam.lem}
(specifically, the weaker estimate \eqref{bqbqpclnotsimpconc.eq}) give
\[|B_j - B_k| \lesssim K_0^2\eps \dist(y,s,z, \tau) \lesssim \eps^{1/2}\dist(y,s,z, \tau).\]
Combining our estimates for $E_1,E_2,E_3$, we complete \textbf{Case 4}.

To handle \textbf{Case 2}  we have by the Whitney property of $I_j$ that
\begin{equation}\label{rjwhitc2.eq}
\dist(y,s,z,\tau) \gtrsim r_j.
\end{equation}

Next, we note that since $(z,\tau) \in \pi(F)$, Lemma \ref{DSLem8.4mod.lem} shows that $z_0 = \psi(z,\tau) = \widehat\psi(z,\tau)$ is the
unique real number so that $(Z,\tau)=(z_0,z,\tau) \in F$.
Now we have
\[
|\psi(y,s) - \psi(z,\tau)| = |\psi(y,s)-B_j|+  |B_j-z_0 |   =: E_1 + E_2.
\]
As in \textbf{Case 4},
by \eqref{B-psi} and \eqref{rjwhitc2.eq},
we have $E_1\lesssim \eps r_j \lesssim \eps \dist(y,s,z,\tau)$.
To deal with $E_2$ we use \eqref{eq1+} and \eqref{rjwhitc2.eq} to conclude that
\[\dist((y,s), \pi(Q(j))) \lesssim \dist(y,s,z, \tau)\, \text{ and }\, \diam(Q(j)) \lesssim  \dist(y,s,z, \tau).\]
Thus,
\[\dist(z,\tau, \pi(Q(j)) \lesssim \dist(y,s,z, \tau)\, \text{ and }\, \diam(Q(j)) \lesssim  \dist(y,s,z, \tau).\]
Based on this,  Lemma \ref{bqbqpclosediam.lem}
(specifically, \eqref{bqFclsdiamdst.eq}) yields
\[
E_2 = |B_j-z_0 | \lesssim K_0^2\eps \dist(y,s,z, \tau) \lesssim \eps^{1/2}\dist(y,s,z, \tau).
\]
This handles \textbf{Case 2}  and proves the lemma.
\end{proof}

We now define
\[\Gamma_{\sbf^*} := \{(\psi(y,s),y,s): (y,s) \in \rn\}.\]
Next we verify \eqref{eq2.2a}, with $(\eta,K)$ as in \eqref{eq2.2ahala}, for the graphs we have constructed.
\begin{lemma}\label{graphclosecrna.lem}
If $(X,t)\in 2Q$ with $Q\in \sbf^*$, then
\[\dist(X,t, \Gamma_{\sbf^*}) \lesssim K_0 \ell(Q).\]
\end{lemma}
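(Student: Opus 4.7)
The plan is to reduce the lemma to controlling the vertical distance $|x_0 - \psi(x,t)|$ from $(X,t) = (x_0,x,t)$ to the point $(\psi(x,t),x,t) \in \Gamma_{\sbf^*}$ lying directly above/below it, and then to split this via the plane height $B_Q$:
\[
\dist((X,t), \Gamma_{\sbf^*}) \leq |x_0 - \psi(x,t)| \leq |x_0 - B_Q| + |B_Q - \psi(x,t)|,
\]
where $(x,t) := \pi(X,t)$. The first term is $\lesssim K_0\ell(Q)$: since $(X,t) \in 2Q \subset \Sigma$, there exists $(X',t') = (x'_0,x',t') \in Q$ with $\dist((X,t),(X',t')) \leq \diam(Q)$, and the $\teps$-WHSA condition for $Q$ (Definition \ref{def2.13} $(ii)$, combined with $\widetilde{P}_Q = \{x_0 = B_Q\}$, cf. \eqref{eq.BQdef}) yields $|x'_0 - B_Q| \lesssim K_0 \ell(Q)$.

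For the second term, the strategy is to handle the two cases in the definition \eqref{2.19} of $\psi$. If $(x,t) \in \pi(F)$, then $\psi(x,t) = \widehat\psi(x,t) = \pi^\perp(Z,\tau)$ for $(Z,\tau) \in F$, and I would apply \eqref{bqFclsdiamdst.eq} together with the bound $\dist(\pi(Q),(x,t)) \lesssim \diam(Q)$ (from $(X,t) \in 2Q$ and the $1$-Lipschitz property of $\pi$) to get
\[
|B_Q - \psi(x,t)| \lesssim K_0^2\eps\, \dist(\pi(Q),(x,t)) + \eps\diam(Q) \lesssim K_0^2\eps\diam(Q).
\]
Otherwise $(x,t) \in I_i$ for some $i$, and $\psi(x,t) = \sum_j B_j \nu_j(x,t)$, where only those $j$ with $(x,t) \in 3I_j$ contribute; for such $j$, Lemma \ref{10-60lemma} forces $r_j \approx r_i$, and since $D(x,t) \leq \dist((x,t),\pi(Q)) + \diam(Q) \lesssim \diam(Q)$, one obtains $r_j \approx r_i \lesssim \diam(Q)$. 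A calculation as in the beginning of the proof of Lemma \ref{lemma2.21}, using that $\diam(Q) \leq R_{\sbf^*}$ and $\kappa \approx K_0$, shows every such $I_j$ meets $C'_{2\kappa R_{\sbf^*}}(x_{Q(\sbf^*)},t_{Q(\sbf^*)})$, so $j \in \Lambda$, $B_j = B_{Q(j)}$, and by \eqref{eq1+}, $\diam(Q(j)) \lesssim \diam(Q)$ and $\dist(\pi(Q(j)),\pi(Q)) \lesssim \diam(Q)$.

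At this point the key point is to bound $|B_Q - B_{Q(j)}|$. I would apply Lemma \ref{bqbqpclosediam.lem} to the pair $Q,Q(j)$: in the ``close'' alternative \eqref{bqbqpclsimphyp.eq} the conclusion \eqref{bqbqpclsimpconc.eq} immediately gives $|B_Q - B_{Q(j)}| \lesssim \eps\diam(Q)$, while in the ``far'' alternative \eqref{bqbqpclnotsimphyp.eq} the conclusion \eqref{bqbqpclnotsimpconc.eq} combined with the projection bound yields $|B_Q - B_{Q(j)}| \lesssim K_0^2\eps\diam(Q)$. In either scenario one gets the uniform bound $|B_Q - B_{Q(j)}| \lesssim K_0^2\eps\diam(Q)$, and summing against the partition of unity gives $|B_Q - \psi(x,t)| \lesssim K_0^2\eps\diam(Q)$. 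Combining everything and using $K_0^2\eps \leq K_0^{-98} \ll 1$, one concludes $\dist((X,t),\Gamma_{\sbf^*}) \lesssim K_0\ell(Q)$.

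The main potential obstacle I anticipate is that we have no direct control on the \emph{space-time} distance $\dist(Q,Q(j))$: the cubes $Q,Q(j) \in \sbf^*$ could sit at different heights in the $x_0$-direction, so the spatial projection estimate $\dist(\pi(Q),\pi(Q(j))) \lesssim \diam(Q)$ does not by itself give closeness in space-time. This is precisely why Lemma \ref{bqbqpclosediam.lem} was designed with its two-case dichotomy: the ``close'' case gives the strong bound automatically, and the ``far'' case accepts the projection distance as input — which is exactly what we have available here. The bound $K_0^2\eps\diam(Q)$ is in fact much better than the $K_0\ell(Q)$ that the lemma demands, so there is ample slack in the estimate.
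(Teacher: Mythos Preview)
Your proof is correct and follows essentially the same approach as the paper: both split $|x_0 - \psi(x,t)| \le |x_0 - B_Q| + |B_Q - \psi(x,t)|$, handle the first term by the $\teps$-WHSA condition (Definition~\ref{def2.13}$(ii)$), and treat the second term by cases according to whether $(x,t) \in \pi(F)$ or $(x,t)$ lies in some $I_i$, invoking Lemma~\ref{bqbqpclosediam.lem} in each case. The only cosmetic differences are that the paper first reduces to $(X,t) \in Q$ (harmless, since $K_0 > 1$), and in the non-contact case the paper uses the pre-packaged estimate \eqref{B-psi} to pass from $\psi(x,t)$ to a single $B_i = B_{Q(i)}$, whereas you expand the partition-of-unity sum and bound each $|B_Q - B_{Q(j)}|$ directly; your route in fact yields a slightly sharper constant ($K_0^2\eps$ versus the paper's $K_0^3\eps$), though both are well within the required $K_0\ell(Q)$.
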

\begin{proof} Since $K_0>1$, it suffices to prove that if $(X,t)\in Q \in \sbf^*$, then
\[\dist(X,t, \Gamma_{\sbf^*}) \lesssim K_0\ell(Q).\]
We write $(X,t) = (x_0,x,t)$ and we divide the argument into two cases, depending on whether $(x,t)\in\rn\setminus \pi(F)$ (equivalently, $D(x,t) >0$) or $(x,t)\in \pi(F)$
 (i.e., $D(x,t) = 0$).

First, suppose that $(x,t)\in\rn\setminus \pi(F)$, so
$D(x,t) > 0$.
Then $(x,t) \in I_i$ for some $i$. Note that
$i \in \Lambda$, since
$(X,t)\in Q\subset Q(\sbf*)$.
Hence, there is a corresponding $Q(i) \in \sbf^*$ with
\begin{equation}\label{grfclsqiqcmp.eq}
\diam(Q(i)) \le 2D(x,t) \le 2d(X,t) \le 2\diam(Q),
\end{equation}
where we have used the definitions of $D$ and $d$. Additionally,
using \eqref{eq1+}, we have
\[\dist(\pi(Q),\pi(Q(i)) \le \dist(x,t,\pi(Q(i)) \lesssim K_0\diam(Q(i)) \lesssim K_0\diam(Q).\]
In turn, by the previous estimate, \eqref{grfclsqiqcmp.eq}, and Lemma \ref{bqbqpclosediam.lem},
\[|B_Q - B_{Q(i)}| \lesssim K_0^3 \eps \diam(Q) \lesssim K_0 \diam(Q)\,,\]
since $\eps\ll K_0^{-2}$.
By Lemma \ref{lemma2.21} (specifically \eqref{B-psi}), and \eqref{nota1},
\[
|B_{Q(i)} - \psi(x,t)| \lesssim \eps r_i \lesssim  \eps K_0 \ell(Q(i)) \ll K_0 \diam(Q).
\]
Finally, by the $\teps$-WHSA condition, see Definition \ref{def2.13} part $(ii)$,
\[|x_0 - B_Q| \lesssim K_0  \diam(Q).\]
Combining all these, we find that
\[\dist(X,t, \Gamma_{\sbf^*}) \le \dist((x_0,x,t),(\psi(x,t), x,t)) = |x_0 - \psi(x,t)| \lesssim K_0 \diam(Q) \approx K_0 \ell(Q).\]
This proves the lemma in the case $(x,t)\in\rn\setminus \pi(F)$. 

Now suppose that $(x,t)\in \pi(F)$. 
In this case there is a unique $x_0'$ such that $(x'_0, x,t)\in F$.
In particular, $x_0'= \psi(x,t)$.
It then suffices to show that $|x_0 - x_0'| \lesssim  K_0 \diam(Q)$. To this end, we proceed as in \textbf{Case 2} of Lemma \ref{graphlip.lem}. Since $d(x_0',x,t) = 0$ and $\pi(x_0,x,t) = \pi(x_0',x,t)$,
we can apply directly
the last estimate in Lemma \ref{bqbqpclosediam.lem} (i.e.,
\eqref{bqFclsdiamdst.eq}) to obtain
\[
|\psi(x,t) - B_Q| \lesssim \eps \diam(Q) \ll K_0 \diam(Q).
\]
Then, using the $\teps$-WHSA condition for $Q$ (see Definition \ref{def2.13}
part $(ii)$), we see that
\[|x_0 - B_Q| \lesssim K_0  \diam(Q),\]
and therefore, 
$|x_0 - x_0'| = |x_0 - \psi(x,t)| \lesssim  K_0 \diam(Q),$ as desired.
\end{proof}

\subsection{Additional properties of the construction} Having constructed $\psi_{\sbf^*}:=\psi$, we introduce the region above $\psi_{{\sbf^*}}$,
\begin{equation}\label{ddom}
\Omega_{{\sbf^*}} := \{(y_0,y,s) \in \re\times \re^{n-1}\times \re :
y_0 > \psi_{{\sbf^*}}(y,s)\}.
\end{equation}
We also introduce  a localized version of $\Omega_{{\sbf^*}}$, near $Q(\sbf^*)$,
\begin{equation}\label{ddom+}\Omega'_{\sbf^*}  := \{(y_0,y,s): \psi_{\sbf^*}(y,s) < y_0 < \eps^{-1/4} \ell(Q(\sbf^*)),\, (y,s) \in C'_{\kappa R_{\sbf^*}} (x_{Q(\sbf^*)}, t_{Q(\sbf^*)})\},
\end{equation}
where as we recall, see \eqref{constant1}, $\kappa \approx K_0$. Furthermore, by construction (see \eqref{supportpsi})
\begin{equation*}
\psi_{{\sbf^*}}\equiv 0 \text{ on } \mathbb R^n\setminus C'_{4\kappa R_{\sbf^*}}(x_{ Q({\sbf^*})},t_{ Q({\sbf^*})}),
\end{equation*}
and since $\kappa \approx K_0$, we see by Lemma \ref{graphlip.lem} that
\begin{equation}\label{psiglobbd.eq}
\sup_{(y,s)\in\rn}|\psi_{{\sbf^*}}(y,s)| \lesssim K_0 \eps^{1/2}\ell(Q(\sbf^*)).
\end{equation}
We here prove an important result
concerning the containment of $\Omega'_{\sbf^*}$.

\begin{lemma}\label{intgraphdomlem.lem} Let $\Omega'_{\sbf^*}$ be as in \eqref{ddom+}. Then
\begin{equation}\label{ompsbdmn.eq}
\Omega'_{\sbf^*}  \subseteq \bigcup_{Q \in \sbf^*} \tilde{U}_Q^i,
\end{equation}
where as above $\tilde{U}_Q^i$ is the distinguished component of $\tilde{U}_Q$ which contains $({Y}_{Q}^*,{s}_{Q}^*)$ (see
Definition \ref{def2.11a}, Lemma \ref{refp++} , 
and \eqref{Y*s*component}).
\end{lemma}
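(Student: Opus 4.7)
The plan is to reduce the statement to Lemma \ref{gqconelem.lem} by showing that every point $(Y,s)=(y_0,y,s)\in \Omega'_{\sbf^*}$ lies in some cone $\gamma_Q$ with $Q\in\sbf^*$ (equivalently, by \eqref{qfavsrrv}, in some $C_{Q,\teps}^*\cap \widetilde H_Q$). Since $\gamma_Q\subset \bigcup_{\widetilde Q\supseteq Q}\tilde U_{\widetilde Q}^i$, this will immediately give the desired conclusion. Note that the upper height condition $y_0<\eps^{-1/2}\ell(Q(\sbf^*))$ in the definition of $\gamma_Q$ is automatic, since $\Omega'_{\sbf^*}$ is defined with the strictly smaller cutoff $\eps^{-1/4}\ell(Q(\sbf^*))$. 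So the task is to exhibit $Q\in\sbf^*$ and $(x_0,x,t)\in Q$ with $y_0-x_0\ge \teps^{1/4}\dist(y,s,x,t)$ and $y_0>B_Q$.

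Set $h:=y_0-\psi_{\sbf^*}(y,s)>0$. I would split into cases according to the location of $(y,s)$. First, if $(y,s)\in \pi(F)$, then for every $\rho>0$ there exists $Q\in\sbf^*$ with $\diam(Q)+\dist(\pi(Q),(y,s))<\rho$; selecting $(x_0,x,t)\in Q$ projecting nearly onto $(y,s)$, estimate \eqref{B-psi} together with \eqref{bqFclsdiamdst.eq} and the WHSA bound $|x_0-B_Q|\lesssim K_0\ell(Q)$ force $x_0\to \psi(y,s)$ and $\dist(y,s,x,t)\to 0$ as $\rho\to 0$, while $y_0-\psi(y,s)=h>0$ stays fixed. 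Choosing $\rho$ small enough, both $y_0-x_0\ge h/2$ and $h/2\ge \teps^{1/4}\dist(y,s,x,t)$ hold, as does $y_0>B_Q$, so $(Y,s)\in \gamma_Q$. Second, suppose $(y,s)\in I_i$; then $i\in\Lambda$ (since $(y,s)$ lies inside the cylinder defining $\Omega'_{\sbf^*}$ and $\kappa\approx K_0$), and Lemma \ref{lemmaQi} provides $Q(i)\in\sbf^*$ with $\ell(Q(i))\lesssim r_i\approx D(y,s)$ and $\dist(\pi(Q(i)),(y,s))\lesssim r_i$. A point $(x_0,x,t)\in Q(i)$ may be chosen with $\dist(y,s,x,t)\lesssim r_i$ and, by \eqref{B-psi}, $|B_{Q(i)}-\psi(y,s)|\lesssim \eps r_i$; when $h\gtrsim \teps^{1/4}r_i$ (``large $h$'' subcase) the cone inequality and $y_0>B_{Q(i)}$ follow immediately.

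The remaining and hardest subcase is when $(y,s)\in I_i$ but $h\ll \teps^{1/4}r_i$, i.e.\ the point hugs the graph while still being relatively far from any cube of $\sbf^*$. Here I would pass to a suitable ancestor $\widetilde Q\in\sbf^*$ of $Q(i)$ with $\ell(\widetilde Q)$ chosen so that $\teps^{1/4}\ell(\widetilde Q)$ dominates both $\dist(\pi(\widetilde Q),(y,s))$ and $r_i$; this can be done while keeping $\widetilde Q\subset Q(\sbf^*)$ because $Q(\sbf^*)$ itself satisfies $B_{Q(\sbf^*)}=0$ and has ample size. The key quantitative input is the telescoping bound \eqref{graphcloseBQobs.eq}, iterated from $Q(i)$ to $\widetilde Q$, which shows $B_{\widetilde Q}\le B_{Q(i)}+2\teps\ell(\widetilde Q)\le \psi(y,s)+C\eps\ell(\widetilde Q)$, together with \eqref{BQBQS*.eq} to prevent $B_{\widetilde Q}$ from drifting too high. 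Choosing $\ell(\widetilde Q)\approx \teps^{-1/4}r_i$ and picking $(x_0,x,t)\in\widetilde Q$ with $(x,t)$ nearest to $(y,s)$, one checks directly that $y_0-x_0\gtrsim \teps^{1/4}\ell(\widetilde Q)\gtrsim \teps^{1/4}\dist(y,s,x,t)$, and $y_0>\psi(y,s)\ge B_{\widetilde Q}-C\eps\ell(\widetilde Q)$ combined with the strict positivity of $h$ places $y_0$ in $\widetilde H_{\widetilde Q}$ after a small correction. Balancing the free parameter $\ell(\widetilde Q)$ against $\eps$ and $\teps$ is the most delicate step, but all the relevant quantitative estimates on $B_Q$ have already been established in Lemma \ref{bqbqpclosediam.lem} and in \eqref{graphcloseBQobs.eq}--\eqref{BQBQS*.eq}. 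Once $(Y,s)\in\gamma_{\widetilde Q}$ is verified in all cases, Lemma \ref{gqconelem.lem} concludes the proof.
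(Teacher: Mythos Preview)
Your treatment of the case $(y,s)\in\pi(F)$ is correct and matches the paper's argument. The ``large $h$'' subcase, with $h:=y_0-\psi_{\sbf^*}(y,s)\gtrsim \teps^{1/4}r_i$, is also essentially right. The gap is in the ``small $h$'' subcase.

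When $h$ is genuinely tiny (say $h<C\eps r_i$), you cannot force $(Y,s)$ into any cone $\gamma_{\widetilde Q}$. The cone condition demands a vertex $(x_0,x,t)\in\widetilde Q$ with $y_0-x_0\ge\teps^{1/4}\dist(y,s,x,t)$ and $y_0>B_{\widetilde Q}$. But the only available lower bound is $y_0-x_0\ge y_0-B_{\widetilde Q}$ (since $x_0\le B_{\widetilde Q}$ is all WHSA~(iii) gives), and by \eqref{B-psi} and the iterated parent bound \eqref{graphcloseBQobs.eq} one has $B_{\widetilde Q}\le B_{Q(i)}+2\teps\ell(\widetilde Q)$ and $|\psi(y,s)-B_{Q(i)}|\lesssim\eps r_i$, so $y_0-B_{\widetilde Q}\ge h-C\teps^{3/4}r_i$ for your choice $\ell(\widetilde Q)\approx\teps^{-1/4}r_i$. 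This is negative once $h<C\teps^{3/4}r_i$, so $(Y,s)\notin\widetilde H_{\widetilde Q}$ and the cone argument collapses. Enlarging $\widetilde Q$ further only worsens the $\teps\ell(\widetilde Q)$ drift, and there is no vertex in $\widetilde Q$ guaranteed to sit far below $B_{\widetilde Q}$. Your parenthetical ``equivalently, by \eqref{qfavsrrv}, in some $C_{Q,\teps}^*\cap\widetilde H_Q$'' is the right clue, but it is not an equivalence: the cone lemma gives one implication, and you need the other direction here.

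The paper fixes this in two moves. First, among all $k$ with $(y,s)\in 3I_k$ it selects the one \emph{minimizing} $B_k$; since $\psi(y,s)=\sum_j B_j\nu_j(y,s)\ge B_k$, this guarantees $y_0>\psi(y,s)\ge B_k$, i.e.\ $(Y,s)\in\widetilde H_{Q(k)}$ outright, with no $\eps$-correction. Second, it does \emph{not} insist on the cone: if $y_0-x_0\le\dist(y,s,x,t)$ for the chosen $(x_0,x,t)\in Q(k)$, then $\dist((Y,s),(x_0,x,t))\lesssim K_0\ell(Q(k))\ll\teps^{-2}\ell(Q(k))$, so $(Y,s)\in C_{Q(k),\teps}^*\cap\widetilde H_{Q(k)}$, and \eqref{qfavsrrv} places $(Y,s)$ directly in $\tilde U_{Q(k)}^i$. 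Adopting this dichotomy in place of your ancestor argument closes the gap.
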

\begin{proof}
Let $(y_0,y,s) \in \Omega'_{{\sbf^*}}$. We divide the argument into two cases. First, we assume that $(y,s) \in I_j$ for some $j$. By construction,
if $3I_k$ contains $(y,s)$, then $k \in \Lambda$. Among all such $k$,
we then fix $k \in \Lambda$ that minimizes $B_k=B_{Q(k)}$, i.e.,
$B_k \le B_j$ for all $j$ such that $(y,s) \in 3I_j$.
By definition of $\psi=\psi_{\sbf^*}$ (see \eqref{2.19}),
and of $ \Omega'_{{\sbf^*}}$,
it then follows that
\begin{equation}\label{psibkbdigl.eq}
0 \le \psi(y,s) - B_k \le y_0 - B_k\,,
\end{equation}
i.e., $y_0>B_k$, so that $(y_0, y,s) \in \widetilde{H}_{Q(k)}$.

Next, we observe that by construction, see \eqref{eq1+},
the cube $Q(k)$ satisfies
\[
\dist(I_k, \pi(Q(k)) \le 120 r_k = 120\diam(I_k) \lesssim K_0\ell(Q(k)),
\]
where we used $\kappa \approx K_0$. Therefore,
fixing any $(x_0,x,t) \in Q(k)$, since $(y,s)\in 3I_k$ we have
\begin{equation}\label{igdlemclsqk.eq}
\dist(x,t,y,s) \lesssim K_0\ell(Q(k))\,,
\end{equation}
Also, by \eqref{psibkbdigl.eq} and
the $\teps$-WHSA condition for $Q(k)$,
we see that $x_0 < B_k < y_0$.
Consider now two subcases. If $y_0 - x_0 > \dist(x,t,y,s)$,
then from the definition of $\gamma_Q$ (see \eqref{gammaQ'def}, \eqref{gammaQdef})
it follows that $(y_0,y,s) \in \gamma_{Q(k)}$,
by \eqref{psibkbdigl.eq} and \eqref{ddom+}.
Therefore, in this subcase Lemma \ref{gqconelem.lem} gives the containment in \eqref{ompsbdmn.eq}. Otherwise, using \eqref{igdlemclsqk.eq}
\[|y_0 - x_0| = y_0 - x_0 \le \dist(x,t,y,s) \lesssim K_0 \ell(Q(k)),\]
and therefore
\[\dist((y_0,y,s), (x_0,x,t)) \lesssim K_0 \ell(Q(k)) \ll \teps^{-2}\ell(Q(k)),\]
hence $(y_0,y,s) \in C_{Q(k),\teps}^*$. Since we have established
that $(y_0, y,s) \in \widetilde{H}_{Q(k)}$, we see by
\eqref{qfavsrrv} that $(y_0, y,s) \in \tilde{U}_{Q(k)}^i$, as desired.
This handles the case that $(y,s) \in I_j$ for some $j$.

We are left with treating the case $(y,s) \in \pi(F)$. In this case,
by Lemma \ref{DSLem8.4mod.lem}, there is a unique point
$(X,s):= (x_0,y,s)\in F$
with $\pi(X,s) =(y,s)$, and
$x_0= \psi(y,s) =\widehat\psi(y,s)$.
In particular, $y_0 > x_0$,
by definition of $\Omega'_{{\sbf^*}}$.
Moreover, since $d(x_0,y,s) = 0$, for every $\delta \in (0,\ell(Q({\sbf^*}))$,
there is a cube $Q_\delta \in \sbf^*$ such that
\[
\dist(X,s, Q_\delta)+ \diam(Q_\delta) \lesssim \diam(Q_\delta) \lesssim \delta
\]
(as in the proof of Lemma \ref{DSLem8.4mod.lem}, we can, if necessary,
replace $Q_\delta$ by one of its ancestors to ensure that
$\dist(X,s, Q_\delta)\lesssim \diam(Q_\delta)$).
Then, provided $\delta $ is sufficiently small,
\begin{equation}\label{xfulqdeq.eq}
\dist(X,s, Q_\delta) \le |x_0 - y_0|/8,
\end{equation}
and using the $\teps$-WHSA condition (see Definition \ref{def2.13} part $(ii)$),
\begin{equation}\label{xnbqdeq.eq}
|x_0 - B_{Q_\delta}| \leq c K_0 \delta  < |x_0 - y_0|/2.
\end{equation}
Since $y_0 > x_0$, it follows from \eqref{xnbqdeq.eq} that
\begin{equation}\label{ynbqdeq.eq}
y_0 > B_{Q_\delta} + (y_0 -x_0)/2.
\end{equation}
By \eqref{xfulqdeq.eq}, there is a point
$(\tilde X,\tilde t\,) = (\tilde{x}_0,\tilde{x}, \tilde{t}\,) \in Q_\delta$
such that $\dist(X,s, \tilde{X},\tilde t) < |x_0 - y_0|/4$, so in particular,
since $(X,s) = (\psi(y,s),y,s)$,
\begin{equation*} 
\dist(y,s,\tilde{x}, \tilde{t}\,) < |x_0 - y_0|/4 = (y_0-x_0)/4\,.
\end{equation*}
Hence, the $\teps$-WHSA condition for $Q_\delta$
(see Definition \ref{def2.13} part $(iii)$), and \eqref{ynbqdeq.eq}, give
\[
y_0 - \tilde{x}_0 \ge y_0 - B_{Q_\delta} > (y_0 - x_0)/2
> \dist( y,s,\tilde{x}, \tilde{t}\,).
\]
This string of inequalities, and the fact that $y_0 < \eps^{-1/4} \ell(Q(\sbf^*)$,
implies that $y_0 \in \gamma_{Q_\delta}$, and therefore
Lemma \ref{gqconelem.lem} gives the containment in \eqref{ompsbdmn.eq}.
\end{proof}

\section{The Lip(1,1/2) graphs $\{\Gamma_{{\sbf^*}}\}$ are {regular} Lip(1,1/2) graphs}\label{sec: reggraph}
In this section we prove that the constructed Lip(1,1/2) graph
$$
\Gamma_{{\sbf^*}}=\{(y_0,y,s): y_0=\psi_{{\sbf^*}}(y,s), (y,s) \in \rn\},
$$
is in fact a regular Lip(1,1/2) graph (recall that $\psi_{\sbf^*}=\psi$,
where the latter is defined in \eqref{2.19}).
We will achieve this by `pushing' the (weak)-$A_\infty$ estimates for the caloric measure associated to $\mathbb R^{n+1}\setminus \Sigma$ onto the approximating graphs,  and then use the main theorem proved \cite{BHMN}. We will use that although our graphs have poor approximation (i.e., with large constant)
in the sense of Lemma \ref{graphclosecrna.lem}, they  have excellent approximation in the sense that \eqref{closeIiforpsi.eq} holds. The latter
allows us to push the estimates efficiently and it enables us to avoid the connectivity hypotheses and additional complications encountered in \cite{HMMTZ}.

\begin{remark}\label{remarkq0}
We recall that by hypothesis, the caloric measure for $\Omega$ is absolutely continuous with respect to $\sigma$, and its density (the ``Poisson kernel") satisfies
the weak Reverse H\"older condition \eqref{wrhpdefeq.eq} for some $q>1$.  In the sequel, we shall let $q_0$ denote this weak reverse H\"older exponent.
\end{remark}

Recall the construction in \eqref{2.19}:
\begin{align}
\psi_{{\sbf^*}}( x,t)&=\widehat \psi_{{\sbf^*}} ( x,t)\mbox{ when } ( x,t) \in \pi (
F_{{\sbf^*}}),\notag\\\label{psiSdef}
\psi_{{\sbf^*}}( x,t) &=
 { \ds \sum_{ k } } \,  B_k
\,   \nu_k ( x,t ) \mbox{ when } ( x,t ) \in \mathbb R^n \sem \pi (
F_{{\sbf^*}}),\end{align}
where by definition $F_{{\sbf^*}}$ is the contact set between $\Gamma_{\sbf^*}$ and $\Sigma$, and $\widehat \psi_{{\sbf^*}}$ is the graph constructed on the contact set.  We also remind the reader that $ \{ \nu_i \} $ is the partition of unity adapted to
$ \{ 2I_i \}$, and that $ \{ I_i \}$ is the collection of
(adapted) Whitney-type parabolic dyadic
cubes decomposing
$\mathbb R^n\setminus \pi (F_{\sbf^*})$
(or simply covering $\rn$ if $F_{\sbf^*}$ is empty).
By Lemma \ref{graphlip.lem},  the Lip(1,1/2) constant of $\psi_{{\sbf^*}}$ is bounded by $c\eps^{1/2}$.  Recall that by construction (see \eqref{supportpsi}),
\begin{equation*}
\psi_{{\sbf^*}}\equiv 0 \text{ on } \mathbb R^n\setminus C'_{4\kappa R_{{\sbf^*}}}(x_{ Q({\sbf^*})},t_{ Q({\sbf^*})}),
\end{equation*}
where $\kappa \approx K_0$.   Given $I_i$ as above and $k\in \mathbb{N}$,
let $\dd_{k}(I_i,\rn)$
be the global grid of parabolic dyadic cubes in $\rn$
of (parabolic) side length $2^{-k}\ell(I_i)$,
and let $\dd_{k}(2I_i)$ denote the cubes
in $\dd_{k}(I_i,\rn)$ that meet $2I_i$.
We will need the  following technical lemma.

\begin{lemma}\label{localcoincwncgrf.lem}
There exist $k_0 \in \mathbb{N}$, 
depending only on $n$, such that if $I \in \dd_{k_0}(2I_i)$, then $200I \subset 3I_i$,
and there exists a  {regular} Lip(1,1/2) function  $\psi_I$,
with constants $(b_1,b_2)$ satisfying $b_1+b_2\lesssim \eps$,
such that  $\psi_I=\psi_{{\sbf^*}}$ on $100I$.
\end{lemma}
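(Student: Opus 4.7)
The plan is to localize $\psi_{\sbf^*}$ to a cube $100I$ by a smooth cutoff, after subtracting a well-chosen reference constant. The key input is that, although globally $\psi_{\sbf^*}$ is only Lip(1,1/2) with constant $\eps^{1/2}$, Lemma~\ref{lemma2.21} supplies the sharp bounds $|\nabla_y \psi_{\sbf^*}| \lesssim \eps$ and $|\partial_s \psi_{\sbf^*}| \lesssim \eps r_i^{-1}$ inside $10 I_i$, and $\psi_{\sbf^*}$ is genuinely smooth there since Lemma~\ref{10-60lemma} guarantees $D > 0$ on $10I_i$, hence $10 I_i \cap \pi(F_{\sbf^*}) = \emptyset$.

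First I would fix $k_0 = k_0(n) \in \mathbb{N}$ large enough that $200 I \subset 3 I_i \subset 10 I_i$ whenever $I \in \dd_{k_0}(2I_i)$, which is a purely geometric check on the parabolic grids. Writing $(y_0, t_0)$ for the parabolic center of $I$ and $c_I := \psi_{\sbf^*}(y_0, t_0)$, the fundamental theorem of calculus combined with \eqref{psigradpestonIi.eq}--\eqref{psiderestonIi.eq} yields, for $(y,s) \in 200 I$, the bound
\[
|\psi_{\sbf^*}(y,s) - c_I| \lesssim \eps\, \ell(I) \,+ \,\eps\, r_i^{-1}\, \ell(I)^2 \,\lesssim \,\eps\, \ell(I)\,,
\]
since $\ell(I) \approx 2^{-k_0} r_i$; this is the crucial improvement over the coarser bound $|\psi_{\sbf^*} - B_i| \lesssim \eps r_i$ of \eqref{B-psi}. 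Next I would pick a parabolic cutoff $\phi \in C_c^\infty(200 I)$ with $\phi \equiv 1$ on $100 I$, $|\nabla_y \phi| \lesssim \ell(I)^{-1}$, $|\partial_s \phi| \lesssim \ell(I)^{-2}$, and define
\[
\psi_I(y,s) \,:=\, c_I + \phi(y,s)\big(\psi_{\sbf^*}(y,s) - c_I\big)\,,
\]
so $\psi_I \equiv \psi_{\sbf^*}$ on $100 I$, while $\psi_I \equiv c_I$ off $200 I$.

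The Lip(1,1/2) bound $b_1 \lesssim \eps$ is then obtained from the product rule: the bound on $\psi_{\sbf^*} - c_I$ above, the derivative bounds on $\phi$, and the pointwise estimates for $\psi_{\sbf^*}$ combine to give $\|\nabla_y \psi_I\|_\infty \lesssim \eps$ and $\|\partial_s \psi_I\|_\infty \lesssim \eps\, \ell(I)^{-1}$. The spatial Lipschitz bound is immediate; for the time $\tfrac12$-H\"older bound one uses $|\partial_s \psi_I|$ when $|t-s| \leq \ell(I)^2$ and the global oscillation bound $\|\psi_I - c_I\|_\infty \lesssim \eps\, \ell(I)$ when $|t-s| > \ell(I)^2$.

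For $b_2 = \|D_{1/2}^t \psi_I\|_* \lesssim \eps$, I would establish the stronger pointwise bound $\|D_{1/2}^t \psi_I\|_{L^\infty(\rn)} \lesssim \eps$, which suffices since the parabolic BMO seminorm is bounded by twice the $L^\infty$ norm. Noting that $D_{1/2}^t c_I \equiv 0$, matters reduce to estimating $D_{1/2}^t(\psi_I - c_I)$ via \eqref{1.8}; splitting the principal value integral at $|s-t| = \ell(I)^2$, using the bound $|\psi_I(x,s) - \psi_I(x,t)| \lesssim \|\partial_s \psi_I\|_\infty |s-t|$ in the inner region and $|\psi_I(x,s) - \psi_I(x,t)| \leq 2\|\psi_I - c_I\|_\infty$ in the outer, gives
\[
|D_{1/2}^t(\psi_I - c_I)(x,t)| \,\lesssim\, \|\partial_s \psi_I\|_\infty\, \ell(I) \,+\, \|\psi_I - c_I\|_\infty\, \ell(I)^{-1} \,\lesssim\, \eps\,,
\]
uniformly in $(x,t) \in \rn$. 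The main potential obstacle is that the factor $\ell(I)^{-1}$ from $\nabla \phi$ (respectively $\ell(I)^{-2}$ from $\partial_s \phi$) combined with the coarse oscillation bound $\eps r_i$ would be catastrophic; the sharper estimate $\eps\, \ell(I)$ on $200 I$, obtained by applying the mean value theorem at the scale $\ell(I)$ rather than $r_i$, is precisely what causes all terms to balance at the desired size $\eps$.
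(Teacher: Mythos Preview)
Your construction is identical to the paper's: the same cutoff localization $\psi_I = c_I + \phi\,(\psi_{\sbf^*} - c_I)$ with $c_I = \psi_{\sbf^*}$ evaluated at the center of $I$, and the same use of \eqref{psigradpestonIi.eq}--\eqref{psiderestonIi.eq} to propagate the $\eps$-scale bounds from $10I_i$ to the localized function. The only difference is in how $b_2 \lesssim \eps$ is verified: the paper invokes the Carleson-measure/$\beta$-number characterization of regular Lip(1,1/2) functions (referencing \cite[Section~2]{HLN2}) and checks it via Taylor's theorem, whereas you establish the stronger pointwise bound $\|D_{1/2}^t \psi_I\|_{L^\infty} \lesssim \eps$ by splitting the singular integral at scale $\ell(I)^2$. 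Your route is more self-contained and elementary (no external characterization needed), while the paper's route connects more directly to the $\beta$-number framework used elsewhere in the theory; both are short and correct.
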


\begin{proof}
Clearly, we obtain $200I \subset 3I_i$ by simply taking $k_0$ large enough.
The remainder of the proof will proceed via a standard localization argument.
Let $(x_I,t_I)$ denote the center of $I$, and
let $\Phi_I$ be a smooth bump function
with support in $200I$,
such that $\Phi_I\equiv 1$ on $100I$, with $r_i|\nabla_x\Phi_I(x,t)| + r_i^2\left(|\nabla_x^2\Phi_I(x,t)| +
\partial_t\Phi_I(x,t)| \right)\lesssim 1$, where as usual $r_i =\diam(I_i)$.
Set $\tilde{\psi}(x,t):= \left(\psi_{\sbf^*}(x,t) -c_I\right)\Phi_I(x,t)$,
where $c_I:= \psi(x_I,t_I)$, and define
\[
\psi_I(x,t):= \tilde{\psi}(x,t) +c_I\,.
\]
Then $\psi_I=\psi_{\sbf^*}$ on $100I_i$. Moreover,
$\tilde{\psi}$ is supported in $200I\subset 3I_i$, so using
 \eqref{psilipestonIi.eq},
\eqref{psigradpestonIi.eq} and \eqref{psiderestonIi.eq} for $\psi$, we
see that $\psi_I$ satisfies \eqref{psilipestonIi.eq},
\eqref{psigradpestonIi.eq} and \eqref{psiderestonIi.eq} globally
(indeed, $\psi_I$ is non-constant only in $3I_i$).
Now using \eqref{psiderestonIi.eq} for $\psi_I$, and Taylor's Theorem,
we may then routinely verify the
characterization (implicit in \cite[Section 2]{HLN2}) of regular
Lip(1,1/2) functions, based on showing that
\[
d\mu(x,t,r):=
\left[\inf_L r^{-n-1}\iint_{C'_r(x,t)}
\left(\frac{|\psi_I(y,s)-L(y)|}{r}\right)^2dyds\right]  dx dt \frac{dr}{r}
\]
is a Carleson measure on $\re^{n-1}\times \re \times(0,\infty)$, where the infimum runs over all linear functions $L(y)$ depending only on the space variables.
We omit the details.
\end{proof}

After these preliminaries, the section is devoted to the proof of the following proposition.

\begin{proposition}\label{graphregisreg.prop}
For each $\sbf^*$, the graph $\Gamma_{\sbf^*}$ is the graph of a regular Lip(1,1/2) function $\psi_{{\sbf^*}}$ with constants $(b_1,b_2)$, such that
$b_1\lesssim \eps^{1/2}$, and with $b_2$ bounded by a constant that only depends on the allowable parameters and $K_0$.
\end{proposition}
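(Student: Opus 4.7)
The plan is to verify the hypothesis of the main theorem of \cite{BHMN}, which asserts that on a Lip(1,1/2) graph domain, $A_\infty$ of caloric measure with respect to surface measure forces the defining function to be \emph{regular} Lip(1,1/2). Lemma \ref{graphlip.lem} already guarantees that $\psi_{\sbf^*}$ is Lip(1,1/2) with small constant $b_1 \lesssim \eps^{1/2}$, so $\Omega_{\sbf^*}$ enjoys the usual structural properties (interior corkscrew, parabolic Harnack chain, doubling of $\sigma_{\sbf^*}$, and doubling of $\omega_{\sbf^*}$). Because doubling together with weak $A_\infty$ implies $A_\infty$ (a standard fact in harmonic analysis), and because $\psi_{\sbf^*}$ is supported in a bounded parabolic cylinder of size comparable to $\ell(Q(\sbf^*))$ by \eqref{supportpsi}, it will be enough to prove a weak reverse H\"older inequality for the Poisson kernel $k_{\sbf^*} = d\omega_{\sbf^*}^{Y_0,s_0}/d\sigma_{\sbf^*}$ on surface cylinders $\Delta'_r = \Gamma_{\sbf^*}\cap C(X_0,t_0,r)$ at scales $r\lesssim \ell(Q(\sbf^*))$.

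The heart of the argument is a transfer of $A_\infty$ information to $\Gamma_{\sbf^*}$, which I would organize into two scale regimes. Fix $(X_0,t_0)\in\Gamma_{\sbf^*}$ and $r>0$, and let $I_i$ denote the Whitney-type cube containing $\pi(X_0,t_0)$ (or simply note that $(X_0,t_0)\in F_{\sbf^*}$ when $\pi(X_0,t_0)\in\pi(F_{\sbf^*})$, in which case $I_i$ is irrelevant). At \emph{small scales} $r\lesssim \diam(I_i)/\kappa_0$ for a suitable constant $\kappa_0$, Lemma \ref{localcoincwncgrf.lem} furnishes a globally defined regular Lip(1,1/2) function $\psi_I$, with constants uniformly small, agreeing with $\psi_{\sbf^*}$ on $100 I\supset \pi(\Delta'_r)$ for an appropriate sub-cube $I$ of $2I_i$. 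The sufficiency direction of \cite{LewMur} supplies $A_\infty$ for caloric measure on the graph domain of $\psi_I$; since $\psi_I$ and $\psi_{\sbf^*}$ agree on $100I$, a maximum-principle comparison applied in the common subdomain where both graphs coincide yields the desired local weak reverse H\"older inequality for $k_{\sbf^*}$. At \emph{large scales} $r\gtrsim \diam(I_i)$ (or $(X_0,t_0)\in F_{\sbf^*}$), estimate \eqref{closeIiforpsi.eq} and the definition of $F_{\sbf^*}$ force the portion of $\Gamma_{\sbf^*}$ inside $C(X_0,t_0,r)$ to lie within parabolic distance $\eps^{1/2}r$ of $\Sigma$, so a naturally associated surface cylinder $\Delta^\Sigma\subset\Sigma$ of comparable radius satisfies $\sigma(\Delta^\Sigma)\approx\sigma_{\sbf^*}(\Delta'_r)$. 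Using Lemma \ref{intgraphdomlem.lem}, one can then choose a pole $(Y_0,s_0)\in\Omega_{\sbf^*}\subset\Omega$ that serves simultaneously as a pole for caloric measure on $\Omega_{\sbf^*}$ and on $\Omega$; the maximum principle, applied to the Perron solution on $\Omega_{\sbf^*}$ with data supported in $\Delta'_r$, lets one dominate $\omega_{\sbf^*}^{Y_0,s_0}|_{\Delta'_r}$ in terms of $\omega^{Y_0,s_0}|_{\Delta^\Sigma}$. Pulling back along this comparison the weak reverse H\"older inequality for $k$ on $\Omega$ (available by hypothesis, with exponent $q_0$ as in Remark \ref{remarkq0}) produces a weak reverse H\"older inequality for $k_{\sbf^*}$, possibly with a smaller exponent $q_1>1$ and on a slightly enlarged surface cylinder.

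Combining the two regimes gives weak $A_\infty$ for $\omega_{\sbf^*}$ at all admissible scales; doubling then upgrades this to full $A_\infty$, and \cite{BHMN} produces $\|D_{1/2}^t\psi_{\sbf^*}\|_* \leq b_2$, with $b_2$ depending only on the allowable parameters and $K_0$, concluding the proof. The main obstacle I anticipate lies in the large-scale transfer: although Hausdorff-close, $\Gamma_{\sbf^*}$ and $\Sigma$ are genuinely different surfaces and neither is a graph over the other, so building an honest comparison of the two caloric measures requires careful use of the containment $\Omega'_{\sbf^*}\subset\bigcup_{Q\in\sbf^*}\tilde U_Q^i$ to propagate the maximum principle through the chain regions. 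In particular, one must be sure that global connectivity of $\Omega$, which is \emph{not} assumed here, never enters the argument, only local connectivity within $\bigcup_Q\tilde U_Q^i$ as built into Definition \ref{def2.11a}.
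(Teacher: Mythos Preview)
Your overall architecture is exactly the paper's: reduce to the main theorem of \cite{BHMN} by establishing a reverse H\"older/weak-$A_\infty$ condition for $\omega_{\sbf^*}$ on $\Gamma_{\sbf^*}$, and split according to whether the scale is small relative to the local Whitney cube (where Lemma~\ref{localcoincwncgrf.lem} plus \cite{LewMur} apply) or large. Your small-scale regime is fine and matches the paper's Case~1.

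The gap is in your large-scale transfer. You write that the maximum principle ``lets one dominate $\omega_{\sbf^*}^{Y_0,s_0}|_{\Delta'_r}$ in terms of $\omega^{Y_0,s_0}|_{\Delta^\Sigma}$'' and then ``pull back'' the weak reverse H\"older inequality. But the two measures live on different hypersurfaces, and the maximum principle gives no direct comparison of Radon--Nikodym derivatives, nor of the measures of \emph{arbitrary} Borel subsets, across $\Gamma_{\sbf^*}$ and $\Sigma$. What the maximum principle (combined with Lemma~\ref{Bourgain} and the containment $\Omega'_{\sbf^*}\subset\Omega$) \emph{does} give is only a mass comparison at a single scale, namely $\omega_{\sbf^*}^{A_\Delta^+}(\Delta_i)\lesssim \omega^{A_\Delta^+}(\widetilde\Delta_i)$ for a suitably chosen surface ball $\widetilde\Delta_i\subset\Sigma$ close to $\Delta_i$; this is the paper's \eqref{cmpofomsomdj.eq}. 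That alone is far from a reverse H\"older inequality.

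What actually closes the argument (the paper's Case~2) is a \emph{hybrid} of your two regimes, not a pure transfer. One decomposes $\Delta$ into the Whitney pieces $\{\Delta_i\}_{i\in\mathcal I_\Delta}$ plus $\Delta\cap F_{\sbf^*}$; on each $\Delta_i$ one already has the small-scale $RH_{q_1}$ estimate with pole $A_i^+$; the change-of-pole formula \eqref{comp2pre+++} in $\Omega_{\sbf^*}$ converts this to an estimate with pole $A_\Delta^+$, at the price of the factor $\omega_{\sbf^*}^{A_\Delta^+}(\Delta_i)^{q}$; that factor is then controlled by $\omega^{A_\Delta^+}(\widetilde\Delta_i)^q$ via the mass comparison above; and finally one sums, using that the $\widetilde\Delta_i$ are \emph{pairwise disjoint} (their projections land in the interiors of distinct $I_i$), and applies the assumed weak $RH_{q_0}$ for $k^{A_\Delta^+}$ on $\Sigma$. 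The portion $\Delta\cap F_{\sbf^*}$ is handled separately by the pointwise bound $k_{\sbf^*}^{A_\Delta^+}\lesssim k^{A_\Delta^+}$ valid on the contact set (your Case~0, the paper's Case~0). None of these steps---the decomposition, the change of pole, the disjointness of the $\widetilde\Delta_i$---appear in your sketch, and without them the transfer does not go through. Your final paragraph correctly anticipates that something delicate is hiding here, but the missing idea is this specific interleaving of the local \cite{LewMur} estimate with a one-scale mass comparison, not a further connectivity argument.

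A secondary point: restricting to $r\lesssim\ell(Q(\sbf^*))$ and to centers near $Q(\sbf^*)$ is not automatic from the compact support of $\psi_{\sbf^*}$; the paper spends a nontrivial Case~3 patching the estimate across $\partial C'_{10R_{\sbf^*}}$ and out to the half-space region, and you should not expect this to be entirely free.
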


Note that the bound $b_1\lesssim \eps^{1/2}$
has already been established in Lemma \ref{graphlip.lem},
thus, it remains only to verify
that $\psi_{{\sbf^*}}$ is {\em regular} Lip(1,1/2), with bound $b_2$ as stated in the lemma.

Recall the region above $\psi_{{\sbf^*}}$, $\Omega_{{\sbf^*}}$, and its localized version $\Omega'_{{\sbf^*}}$,
introduced in \eqref{ddom} and \eqref{ddom+}, respectively. Also,  Lemma \ref{intgraphdomlem.lem} states that
\begin{equation}\label{contain}
\Omega'_{\sbf^*}  \subseteq \bigcup_{Q \in \sbf^*} \tilde{U}_Q^i.
\end{equation}
In the following we will use the notation that
\begin{align}
&\mbox{$\omega_{{\sbf^*}}$ is the caloric measure for $\Omega_{\sbf^*}$},
\notag\\
&\mbox{$\sigma_{{\sbf^*}}$  is the restriction of $\cH_{\text{par}}^{n+1}$ to $\Gamma_{\sbf^*}$, and} \label{notationsec10}
\\
&\mbox{$\tilde \omega_{{\sbf^*}}$ is the caloric measure for $\Omega'_{\sbf^*}$.}
\notag
\end{align}
 Given a surface ball/cylinder on $\Gamma_{\sbf^*}$ of the form
\begin{align}\label{surfball}
\Delta=\Delta_r = \{(\psi_{\sbf^*}(x,t), x,t): (x,t) \in C'_r(y,s)\},
\end{align}
we let $A_\Delta^+$ and $A_\Delta^-$ denote time-forward and time-backward corkscrew points, respectively, for the domain $\Omega_{{\sbf^*}}$, relative to $\Delta$ and defined as
\begin{align} \label{AD+def}
A_\Delta^+ &= (\psi_{\sbf^*}(y,s) + 200r, y, s + (8r)^2),\\
A_\Delta^- &= (\psi_{\sbf^*}(y,s) + 200r, y, s - (8r)^2). \notag
\end{align}

With $\sbf^*$, $\Gamma_{\sbf^*}$, and $\psi_{\sbf^*}$ fixed, the main theorem of \cite{BHMN} implies that to prove Proposition \ref{graphregisreg.prop} it suffices to prove the following proposition.

\begin{proposition}\label{pushprop.prop}
There exist constants $q> 1$ and $A_* \ge 1$, depending only on the allowable parameters and $K_0$, such that the following holds. For every surface ball
$\Delta$ on $\Gamma_{\sbf^*}$ as in \eqref{surfball} it holds that $\omega_{\sbf^*}^{A_\Delta^+} \ll \sigma_{\sbf^*}$ on $\Delta$, and  $k^{A_\Delta^+}_{{\sbf^*}}(Z,\tau) := {\d\omega^{A_\Delta^+}_{{\sbf^*}}}/{\d\sigma_{{\sbf^*}}}(Z,\tau)$ satisfies
\begin{equation}\label{RHsbourg.eq}
\iint_{\Delta} (k^{A_\Delta^+}_{{\sbf^*}}(Z,\tau))^q\, \d\sigma_{{\sbf^*}}(Z,\tau) \leq A_\ast (\sigma_{{\sbf^*}}(\Delta))^{1-q},
\end{equation}
\end{proposition}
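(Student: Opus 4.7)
The plan is to deduce Proposition \ref{pushprop.prop} by ``pushing'' the hypothesized weak reverse Hölder property of the caloric measure $\omega$ of $\Omega = \ree\setminus\Sigma$ (see Remark \ref{remarkq0}) onto the graph domain $\Omega_{\sbf^*}$, via a dichotomy between the ``local'' scale, where $\psi_{\sbf^*}$ already agrees with a global regular Lip$(1,1/2)$ function, and the ``global'' scale, where $\Gamma_{\sbf^*}$ is close enough to $\Sigma$ that one can directly invoke the weak reverse Hölder on $\Sigma$. Throughout, fix a surface ball $\Delta = \Delta_r$ on $\Gamma_{\sbf^*}$ centered at $P_0 = (\psi_{\sbf^*}(y_0, s_0), y_0, s_0)$, let $A := A_\Delta^+$ be the time-forward corkscrew from \eqref{AD+def}, and compare $r$ with the stopping distance $D(y_0, s_0)$ of \eqref{DSDfn.eq}.

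\textbf{Local regime} ($r \lesssim D(y_0, s_0)$). Here $(y_0, s_0)$ sits in a Whitney cube $I_i$ of the decomposition of $\rn \setminus \pi(F_{\sbf^*})$ with $r_i \approx D(y_0, s_0)$, and descending $k_0$ generations as in Lemma \ref{localcoincwncgrf.lem} produces a cube $I \in \dd_{k_0}(2I_i)$ containing $(y_0, s_0)$ with $\diam(I) \approx r$ and $100I \subset 3I_i$. The lemma supplies a \emph{global} regular Lip$(1,1/2)$ function $\psi_I$ of controlled constants, coinciding with $\psi_{\sbf^*}$ on $100I$; its graph domain $\widetilde{\Omega}_I := \{y_0 > \psi_I(y,s)\}$ is then a classical regular Lip$(1,1/2)$ graph domain, so \cite{LewMur} yields a scale-invariant reverse Hölder bound for its Poisson kernel at every corkscrew pole. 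Since $\Omega_{\sbf^*}$ and $\widetilde{\Omega}_I$ coincide within a cylinder of parabolic size comparable to $r$ containing $\Delta \cup \{A\}$, a maximum-principle comparison, together with Lemma \ref{continuity} to control boundary decay outside this coincidence region, gives $\omega_{\sbf^*}^A \approx \omega_{\widetilde{\Omega}_I}^A$ on $\Delta$ with absolute constants, and the reverse Hölder inequality transfers to $k_{\sbf^*}^A$.

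\textbf{Global regime} ($r \gtrsim D(y_0, s_0)$). Here $(y_0, s_0)$ is within distance $\lesssim r$ of $\pi(F_{\sbf^*})$, and we use the containment $\Omega'_{\sbf^*} \subset \Omega$ (Lemma \ref{intgraphdomlem.lem}) together with the closeness of $\Gamma_{\sbf^*}$ to $\Sigma$ near contact points (see \eqref{closeIiforpsi.eq} and Lemma \ref{bqbqpclosediam.lem}) to locate a surface ball $\Delta^\sharp \subset \Sigma$ of radius comparable to $r$ for which $A$ doubles as a time-forward corkscrew in $\Omega$. For any Borel $E \subseteq \Delta$ we decompose $E = (E \cap F_{\sbf^*}) \sqcup (E \setminus F_{\sbf^*})$: on the contact piece $\sigma_{\sbf^*} = \sigma|_{\Sigma}$ and the maximum principle controls $\omega_{\sbf^*}^A(E \cap F_{\sbf^*})$ by $\omega^A$ on a comparable subset of $\Sigma$; on the non-contact piece we decompose according to the Whitney cubes $\{I_j\}$ and invoke the local regime on each, comparing $\omega_{\sbf^*}^A$ to $\omega^A$ on a nearby surface ball in $\Sigma$. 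Summing the two contributions yields
\begin{equation*}
\omega_{\sbf^*}^A\big|_{\Delta} \,\lesssim\, T_*\bigl(\omega^A\big|_{2\Delta^\sharp}\bigr),
\end{equation*}
for a bounded push-forward operator $T_*$ preserving averages up to controlled factors. The hypothesized weak reverse Hölder \eqref{wrhpdefeq.eq} for $k^A$ on $\Delta^\sharp$, combined with doubling and Bourgain-type non-degeneracy of $\omega_{\sbf^*}^A$ (automatic because $\Gamma_{\sbf^*}$ is a Lip$(1,1/2)$ graph with small constant by Lemma \ref{graphlip.lem}), then delivers \eqref{RHsbourg.eq} with exponent $q$ comparable to the original reverse Hölder exponent $q_0$.

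\textbf{Main obstacle.} The core difficulty lies in the global regime, where the reverse Hölder argument demands \emph{two-sided} control of the Radon--Nikodym derivative $d\omega_{\sbf^*}^A / d(T_*\omega^A)$. The upper bound is straightforward from $\Omega'_{\sbf^*} \subset \Omega$ via the maximum principle, but the matching lower bound is delicate: it requires combining Bourgain-type non-degeneracy on $\Gamma_{\sbf^*}$ (supplied by the small Lip$(1,1/2)$ constant of $\psi_{\sbf^*}$) with the fact that, on each Whitney cube $I_i$, the local regime already delivers a matching lower bound via Lewis--Murray applied to $\psi_I$. A secondary technical point is uniformity across the (possibly infinite) Whitney sum in $E \setminus F_{\sbf^*}$: the constants in the Whitney-piece-by-Whitney-piece comparisons must not degrade under aggregation, a property enforced by the bounded overlap of the $\{I_i\}$ (Lemma \ref{10-60lemma}) and the thin-boundary property of the parabolic dyadic cubes. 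Verifying that these constants combine to yield a single pair $(q, A_*)$ depending only on the allowable parameters and $K_0$ will be the central quantitative task.
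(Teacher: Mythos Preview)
Your overall architecture matches the paper's: a local/global dichotomy, with Lewis--Murray \cite{LewMur} handling the local regime via Lemma \ref{localcoincwncgrf.lem}, and a maximum-principle push from $\Omega$ to $\Omega_{\sbf^*}$ handling the global regime. The paper carries this out through four cases (Case 0: $C_r'\subset\pi(F_{\sbf^*})$; Case 1: $C_r'\subset 2I_i$; Case 2: intermediate scale; Case 3: $r$ large relative to $R_{\sbf^*}$), but these correspond recognizably to your two regimes. That said, there are two genuine gaps and one misdiagnosis.

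\textbf{Comparison machinery.} In the local regime you assert $\omega_{\sbf^*}^A\approx\omega_{\widetilde\Omega_I}^A$ on $\Delta$ via ``maximum-principle comparison, together with Lemma \ref{continuity}''. This is not enough: two graph domains agreeing in a cylinder containing $\Delta$ and $A$ need not have comparable caloric measures on $\Delta$, since mass can escape through the non-coincident portions. What is actually required is the Fabes--Garofalo--Salsa package for Lip$(1,1/2)$ domains (doubling of caloric measure, the Green-function/caloric-measure relation, the backward Harnack inequality, and the boundary Harnack comparison principle), which the paper invokes as \eqref{comp2pre}--\eqref{comp2pre++}. These give $G_{\sbf^*}(A,\cdot)\approx G_{\widetilde\Omega_I}(A,\cdot)$ near $\Delta$ (both being adjoint-caloric, vanishing on the same boundary piece, and comparable near the pole), whence the measures compare. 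The same machinery is needed to pass between $\omega_{\sbf^*}$ and $\tilde\omega_{\sbf^*}$ (the latter being caloric measure for the localized domain $\Omega_{\sbf^*}'$), which is where the containment $\Omega_{\sbf^*}'\subset\Omega$ actually enters.

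\textbf{The large-$r$ case.} Your global regime implicitly assumes $A_\Delta^+\in\Omega_{\sbf^*}'$ (so that Lemma \ref{intgraphdomlem.lem} applies and $A_\Delta^+\in\Omega$). But once $r\gtrsim R_{\sbf^*}$, the pole $A_\Delta^+=(\psi_{\sbf^*}(y,s)+200r,\,y,\,s+(8r)^2)$ sits far above the truncation height $\eps^{-1/4}\ell(Q(\sbf^*))$ in \eqref{ddom+}, and there is no reason for it to lie in $\Omega$ at all; the push-to-$\Sigma$ argument collapses. The paper's Case 3 handles this by exploiting \eqref{supportpsi}: away from $C'_{4\kappa R_{\sbf^*}}$ one has $\psi_{\sbf^*}\equiv 0$, so one can compare $\omega_{\sbf^*}$ to caloric measure for a half-space (Case 3a), then cover and reduce to earlier cases (Cases 3b, 3c).

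\textbf{The ``main obstacle'' is not one.} You identify two-sided control of $d\omega_{\sbf^*}^A/d(T_*\omega^A)$ as the core difficulty. In fact only the upper bound is ever used. On the contact set one has $k_{\sbf^*}^A\lesssim k^A$ pointwise (via the maximum principle and \eqref{comp2}), and integrating $(\cdot)^q$ against $\sigma$ gives \eqref{RHsbourg.eq} directly from the weak-$RH_{q_0}$ hypothesis on $\Sigma$. On each Whitney piece $\Delta_i$ one uses the change-of-pole formula \eqref{comp2pre+++} to write $k_{\sbf^*}^A\approx\omega_{\sbf^*}^A(\Delta_i)\,k_{\sbf^*}^{A_i^+}$, then bounds $\int_{\Delta_i}(k_{\sbf^*}^{A_i^+})^{q_1}$ by Lewis--Murray, and bounds $\omega_{\sbf^*}^A(\Delta_i)\lesssim\omega^A(\widetilde\Delta_i)$ for a nearby surface ball $\widetilde\Delta_i\subset\Sigma$ (this last step uses Bourgain on $\Sigma$, the maximum principle in $\Omega_{\sbf^*}'\subset\Omega$, and doubling of $\omega_{\sbf^*}$). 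Summing over the disjoint $\widetilde\Delta_i$ and applying weak-$RH_{q_0}$ on $\Sigma$ closes the estimate with $q=\min(q_0,q_1)$. No lower bound on $k_{\sbf^*}^A$ in terms of $k^A$ is needed anywhere.
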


 We remark that by a well-known argument based on the
change of pole formula (see \eqref{comp2pre+++} below)
for caloric measure in Lip(1,1/2) domains,
\eqref{RHsbourg.eq} yields the reverse H\"older estimate
 \begin{equation}\label{RHq}
\left(\bariint_{\Delta_s} (k^{A_\Delta^+}_{{\sbf^*}}(Z,\tau))^q\,
\d\sigma_{{\sbf^*}}(Z,\tau) \right)^{1/q} \lesssim
 A_*\bariint_{\Delta_s} k^{A_\Delta^+}_{{\sbf^*}}(Z,\tau)\, \d\sigma_{{\sbf^*}}(Z,\tau)\,,
 \end{equation}
 for all $\Delta_s := \{(\psi_{\sbf^*}(z,\tau), z,\tau): (z,\tau) \in C'_s(x,t)\}$,
 with $C_s'(x,t)\subset C'_r(y,s)$ (thus $\Delta_s\subset \Delta=\Delta_r$).
 Conversely, we note that in the special case $\Delta_s =\Delta$, the
 reverse H\"older estimate directly implies \eqref{RHsbourg.eq},
 since $\omega^{A_\Delta^+}_{{\sbf^*}}(\Delta)\approx 1$, by Lemma \ref{Bourgain}
 and Harnack's inequality in $\Omega_{\sbf^*}$.

To start the proof of Proposition \ref{pushprop.prop},
we divide the verification of \eqref{RHsbourg.eq} into four cases:
\begin{align*}
\mbox{{\bf Case 0: }}&C'_r(y,s) \subseteq F,\\
\mbox{{\bf Case 1: }}&\mbox{$C'_r(y,s) \subseteq 2I_{i}$ for some $I_i$},\\
\mbox{{\bf Case 2: }}&\mbox{$C'_r(y,s) \subseteq C'_{10R_{\sbf^*}}(x_{Q({\sbf^*})}, t_{Q({\sbf^*})})$ and Case 0 and 1 do not hold},\\
\mbox{{\bf Case 3: }}&\mbox{$C'_r(y,s) \not\subseteq C'_{10R_{\sbf^*}}(x_{Q({\sbf^*})}, t_{Q({\sbf^*})})$ and Case 0 and 1 do not hold.}
\end{align*}
Below we present the proof of \eqref{RHsbourg.eq} for each case in separate sections. However, the arguments  assume (significant) familiarity with boundary estimates for non-negative solutions,  caloric measure and the Green function, and we below first briefly review the estimates needed in the context of the Lip(1,1/2) domains under consideration.

\subsection{Boundary estimates for non-negative solutions in $\Omega_{{\sbf^*}}$ and $\Omega_{{\sbf^*}}'$} In general, we note that the boundary behavior of non-negative solutions,  for the heat equation but also for more general linear uniformly parabolic equations with space and time dependent coefficients,  have been studied intensively in Lipschitz cylinders and in Lip(1,1/2) domains over the years, see \cite{FGS,FS,FSY,LewMur,N1997}.
Results include Carleson type estimates, the relation between the associate parabolic measure and the Green function, the backward in time Harnack inequality, the doubling of parabolic measure, boundary Harnack principles (local and global), and  Hölder continuity up to the boundary of quotients of non-negative solutions vanishing on the lateral boundary.

When  verifying \eqref{RHsbourg.eq} in the
subsequent sections, the regions present in the arguments are
our original open set
$ \Omega\subset \mathbb{R}^{n+1}$ with boundary $\Sigma=\pom$,
the (global) Lip(1,1/2) graph domain $\Omega_{{\sbf^*}}$, its (local) counterpart $\Omega_{{\sbf^*}}'$, and some additional auxiliary Lip(1,1/2) graph domains. The Lip(1,1/2) constants,
for  the Lip(1,1/2) graph domains considered,
will be bounded by a constant that depends only
on the allowable parameters and $K_0$. Therefore,
in the following all implicit constants will depend only
on these constants unless otherwise stated. By definition,
$\omega_{{\sbf^*}}$ and $\tilde \omega_{{\sbf^*}}$  are the
caloric measures for $\Omega_{\sbf^*}$ and $\Omega'_{\sbf^*}$,
respectively, and we let $G_{{\sbf^*}}(\cdot,\cdot)$ and
$\tilde G_{{\sbf^*}}(\cdot,\cdot)$ be the Green's functions
for $\Omega_{\sbf^*}$ and $\Omega'_{\sbf^*}$, respectively.
In the following outline we consistently assume that
$$
\tilde\Delta_{\tilde r}=:\tilde\Delta\subseteq\Delta:=\Delta_r \subset \partial\Omega'_{{\sbf^*}},
$$
with $\pi(\Delta)\subseteq C'_{100R_{\sbf^*}}(x_{Q(\sbf^*)}, t_{Q(\sbf^*)})$ and
$\tilde r\leq r/10$, that is, $\tilde\Delta$ is a smaller surface ball/cylinder centered on
$\partial \Omega'_{{\sbf^*}}$, and contained in the large surface ball/cylinder $\Delta$.  Observe that by definition of $\Delta$, this means that we are restricting our attention to the portion of $\partial \Omega'_{{\sbf^*}}$ that coincides with
$\Gamma_{\sbf^*}$, and that even
a large (roughly $K_0$-fold) dilate of $\Delta$ is contained in
$\partial \Omega'_{{\sbf^*}}\cap \Gamma_{\sbf^*}$ (see \eqref{ddom+}).
Note that by construction,
$A_\Delta^\pm \in \Omega_{\sbf^*}'\subset \Omega$
(see \eqref{AD+def} and \eqref{contain}).

We then first note that
\begin{align}\label{comp2pre}
\omega^{A_\Delta^+}_{{\sbf^*}}(2\tilde\Delta)\approx\omega^{A_\Delta^+}_{{\sbf^*}}(\tilde\Delta)\approx \, \tilde r^{\,n}\,
G_{{\sbf^*}}(A_\Delta^+,A_{\tilde\Delta}^+)\approx \, \tilde r^{\,n}\,
G_{{\sbf^*}}(A_\Delta^+,A_{\tilde\Delta}^-),
\end{align}
and that
\begin{align}\label{comp2pre+}
\tilde\omega^{A_\Delta^+}_{{\sbf^*}}(2\tilde\Delta)\approx\tilde\omega^{A_\Delta^+}_{{\sbf^*}}(\tilde\Delta)\approx\, \tilde r^{\,n}\,
\tilde G_{{\sbf^*}}(A_\Delta^+,A_{\tilde\Delta}^+)\approx \, \tilde r^{\,n}\,
\tilde G_{{\sbf^*}}(A_\Delta^+,A_{\tilde\Delta}^-).
\end{align}
Furthermore,
\begin{align}\label{comp2pre++}
\frac{G_{{\sbf^*}}(A_\Delta^+,A_{\tilde\Delta}^-)}{\tilde G_{{\sbf^*}}(A_\Delta^+,A_{\tilde\Delta}^-)}\approx\frac{G_{{\sbf^*}}(A_\Delta^+,A_{\tilde\Delta}^+)}{\tilde G_{{\sbf^*}}(A_\Delta^+,A_{\tilde\Delta}^+)}
\approx\frac{G_{{\sbf^*}}(A_\Delta^+,A_{\frac 14\Delta}^+)}{\tilde G_{{\sbf^*}}(A_\Delta^+,A_{\frac 14\Delta}^+)}\approx 1,
\end{align}
\begin{align}\label{comp2pre+++}
\frac{\omega^{A_\Delta^+}_{{\sbf^*}}(\widehat\Delta)}{\omega^{A_{\tilde\Delta}^+}_{{\sbf^*}}(\widehat\Delta)}\approx \omega^{A_\Delta^+}_{{\sbf^*}}(\tilde\Delta), \mbox{ and } \frac{\tilde\omega^{A_\Delta^+}_{{\sbf^*}}(\widehat\Delta)}{\tilde\omega^{A_{\tilde\Delta}^+}_{{\sbf^*}}(\widehat\Delta)}\approx \tilde\omega^{A_\Delta^+}_{{\sbf^*}}(\tilde\Delta)\mbox{ whenever }\widehat\Delta\subseteq\tilde\Delta.
\end{align}
Observe that \eqref{comp2pre} and \eqref{comp2pre+} summarize the doubling property of the caloric measures, their relations to the Green functions, and the
time reversed Harnack inequality for $G_{{\sbf^*}}(A_\Delta^+,\cdot)$ and $\tilde G_{{\sbf^*}}(A_\Delta^+,\cdot)$.
Furthermore, \eqref{comp2pre++} is a
consequence of \eqref{comp2pre}, \eqref{comp2pre+},
the comparison/boundary Harnack principle for non-negative solutions,
and bounds for the Green function near the pole.
For the heat equation in Lip(1,1/2) domains, these ingredients (except for the
Green function bounds, which are standard)
are all established in \cite{FGS}.
Display \eqref{comp2pre+++} is
 sometimes referred to as the ``change of pole formula",
 and may be gleaned from
\eqref{comp2pre} (or \eqref{comp2pre+}), and the
the comparison/boundary Harnack principle.
The estimates in \eqref{comp2pre}-\eqref{comp2pre+++}
have been extended to the case of divergence form
parabolic operators with bounded measurable, time-dependent
variable coefficients in \cite{FS,FSY} (in Lipschitz cylinders)
and in \cite{N1997} (in Lip(1,1/2) domains).
We remark that these estimates remain valid in certain more
general classes of domains, e.g., see \cite[Section 3]{HLN2} in the case of the heat equation. 

\subsection{Pushing the (weak)-$A_\infty$ property of caloric measure to the graph: Case 0} In this case $\Delta \subset \partial\Omega'_{{\sbf^*}} \cap \Sigma$.
By \eqref{contain}, $\Omega'_{{\sbf^*}} \subset \Omega$.
The maximum principle therefore implies that
 \begin{align}\label{comp1-}\tilde\omega^{A_\Delta^+}_{{\sbf^*}}(\tilde\Delta)\leq
\omega^{A_\Delta^+}(\tilde\Delta)\mbox{ for all }\tilde\Delta\subseteq\Delta\,.
\end{align}
Hence, since
 $\omega^{A_\Delta^+} \ll \sigma$ on $\Delta$, we see that
 $\tilde\omega^{A_\Delta^+}_{{\sbf^*}}\ll\sigma_{\sbf^*}$ on $\Delta$,
 so $\tilde k^{A_\Delta^+}_{{\sbf^*}}(Z,\tau)
 :={\d\tilde\omega^{A_\Delta^+}_{{\sbf^*}}}/{\d\sigma_{{\sbf^*}}}(Z,\tau)$ exists
 $\sigma_{{\sbf^*}}$-a.e. on $\Delta$.
 Furthermore, \eqref{comp1-} implies that
\begin{align}\label{comp1}
\tilde k^{A_\Delta^+}_{{\sbf^*}}(Z,\tau)\leq
k^{A_\Delta^+}(Z,\tau),
\end{align}
for $\sigma_{{\sbf^*}}$-a.e $(Z,\tau)\in\Delta$. Let $\tilde\Delta\subseteq\Delta$,
with $\tilde{r} \leq r/10$.
Using \eqref{comp2pre}-\eqref{comp2pre++} we see that
\begin{align}\label{comp2}\frac{\omega^{A_\Delta^+}_{{\sbf^*}}(\tilde\Delta)}{\tilde\omega^{A_\Delta^+}_{{\sbf^*}}(\tilde\Delta)}\approx\frac
{G_{{\sbf^*}}(A_\Delta^+,A_{\tilde\Delta}^+)}{\tilde G_{{\sbf^*}}(A_\Delta^+,A_{\tilde\Delta}^+)}\approx1,
\end{align}
and as a consequence, $\omega^{A_\Delta^+}_{{\sbf^*}}\ll\sigma_{\sbf^*}$ on $\Delta$, and $k^{A_\Delta^+}_{{\sbf^*}}(Z,\tau)={\d\omega^{A_\Delta^+}_{{\sbf^*}}}/{\d\sigma_{{\sbf^*}}}(Z,\tau)$ exists $\sigma_{{\sbf^*}}$-a.e. on $\Delta$. Combining these facts we deduce that
\begin{align}\label{comp3}k^{A_\Delta^+}_{{\sbf^*}}(Z,\tau)\approx
\tilde k^{A_\Delta^+}_{{\sbf^*}}(Z,\tau)\leq k^{A_\Delta^+}(Z,\tau),
\end{align} for $\sigma_{{\sbf^*}}$-a.e $(Z,\tau)\in\Delta$.
Hence, recalling that $q_0>1$ is the weak reverse H\"older exponent for
$k^{A_\Delta^+}$, and using that $\Delta \in \partial\Omega_{\sbf^*}\cap\Sigma$ in the present scenario, we have, with $q=q_0$,
\begin{equation}\label{RHsbourg.eq+}
\iint_{\Delta} (k^{A_\Delta^+}_{{\sbf^*}}(Z,\tau))^q\, \d\sigma_{{\sbf^*}}(Z,\tau) \lesssim \iint_{\Delta} (k^{A_\Delta^+}(Z,\tau))^q\, \d\sigma_{{\sbf^*}}(Z,\tau)\leq A_\ast (\sigma_{{\sbf^*}}(\Delta))^{1-q},
\end{equation}
by the assumption on $k^{A_\Delta^+}$. In particular, this proves \eqref{RHsbourg.eq} with $q=q_0$.

\subsection{Pushing the (weak)-$A_\infty$ property of caloric measure to the graph: Case 1} In this case $C'_r(y,s) \subseteq 2I_{i}$ for some $I_i$. Given $I_i$ as above, we let $k_0$ be as in  Lemma \ref{localcoincwncgrf.lem}, and we
denote by $\{I_{i,j}\}$ the $2^{(n+1)k_0}$ cubes in $\dd_{k_0}(I_i)$.
Applying Lemma \ref{localcoincwncgrf.lem} we can construct functions  $\{\psi_{I_{i,j}}\}$, which for simplicity we denote  $\{\psi_{i,j}\}$.
We further split Case 1 into 2 sub-cases:  either  $r \ll \ell(I_i)$ so that $C'_r(y,s) \subseteq 10I_{i,j}$ for some $I_{i,j}$, or $r \gtrsim \ell(I_i)$.

 In this case $C'_r(y,s) \subseteq 2I_{i}$ for some $I_i$.
Let $k_0$ be as in  Lemma \ref{localcoincwncgrf.lem}, and
denote by $\{I_{i,j}\}$ the roughly $2^{(n+1)k_0}$
cubes of side length $2^{-k_0}\ell(I_i)$
in $\dd_{k_0}(2I_i)$. Applying Lemma \ref{localcoincwncgrf.lem} with $I=I_{i,j}$,
we construct the associated regular Lip(1,1/2) functions
$\{\psi_{I_{i,j}}\}$, which for
simplicity we denote  $\{\psi_{i,j}\}$.
We further split Case 1 into 2 subcases:  either  $r \ll \ell(I_i)$ so that $C'_r(y,s) \subseteq 10I_{i,j}$ for some $I_{i,j}$, or $r \gtrsim \ell(I_i)$.

First,  we assume that $r \ll \ell(I_i)$, with
\begin{equation}\label{ddom++a}
\mbox{$C'_r(y,s) \subseteq 10 I_{i,j}$ for some $I_{i,j}$}.
\end{equation}

\begin{remark}\label{remark-general-cyl}
Although we have previously
fixed $C'_r(y,s)$, and $\Delta =\Delta_r$ as in \eqref{surfball},
it will be convenient to allow
$C'_r(y,s)$ to be, for the moment, {\em any} cylinder in $\rn$
satisfying \eqref{ddom++a}, with $I_{i,j}\in\dd_{k_0}(2I_i)$ for any $I_i$, and with
$\Delta =\Delta_r$ defined relative to $C'_r(y,s)$ as
in \eqref{surfball}.
\end{remark}

By Lemma \ref{localcoincwncgrf.lem},
we have that $\psi_{{\sbf^*}} = \psi_{i,j}$ on $100I_{i,j}$, where $\psi_{i,j}$ is a {regular} Lip(1,1/2) function with uniformly bounded constants (independent of $i,j$ and ${\sbf^*}$). Set
\begin{equation}\label{ddom++}
\Omega_{i,j} := \{(y_0,y,s): y_0 > \psi_{i,j}(y,s), (y,s) \in \rn\},\quad
 \Gamma_{i,j}:=\partial \Omega_{i,j}\,,
\end{equation}
and note that by construction (using
\eqref{surfball}, \eqref{AD+def}, \eqref{ddom++a},
and the fact that $\psi_{{\sbf^*}} = \psi_{i,j}$ on $100I_{i,j}$),  we see that
$5\Delta:=\Delta_{5r}\subset \Gamma_{i,j}\cap \Gamma_{\sbf^*}$, and that
$A_\Delta^+$ is a time forward corkscrew
point relative to $\Delta$, in $\Omega_{i,j}$.
Let $\omega_{i,j}$ and $G_{i,j}$ denote the caloric measure and Green function
for $\Omega_{i,j}$, and let $\sigma_{i,j}$ be the restriction of $\cH_{\text{par}}^{n+1}$ to $\Gamma_{i,j}$.
Then by the main result of
\cite{LewMur}, we have $\omega_{i,j}\ll\sigma_{i,j}$, and in addition,
the associated Radon-Nikodym derivative
$k_{i,j}^{A_\Delta^+}:=\d \omega_{i,j}^{A_\Delta^+}/\d \sigma_{i,j}$ satisfies
(in particular), the analogue of \eqref{RHsbourg.eq}, i.e.,
\begin{equation}\label{RHsbourg.eq++}
\iint_{\Delta} (k_{i,j}^{A_\Delta^+}(Z,\tau))^{q_1}\, \d\sigma_{i,j}(Z,\tau) \lesssim (\sigma_{i,j}(\Delta))^{1-q_1}\,,
\end{equation}
  for some $q_1 > 1$ which is
  independent of $\Delta $ and $i,j$.


Recall that, as noted above,
$5\Delta \subset \Gamma_{i,j}\cap\Gamma_{\sbf^*}$, and
$A_\Delta^+$ is a time forward corkscrew
point relative to $\Delta$, in $\Omega_{i,j}$.  Hence,
\eqref{comp2pre}-\eqref{comp2pre++} continue to hold with
$\omega_{i,j}, G_{i,j}$ in place of either $\hm_{\sbf^*}, G_{\sbf^*}$
or $\tilde\hm_{\sbf^*}, \tilde{G}_{\sbf^*}$, for
$\tilde{\Delta}\subset \Delta$.  Consequently, the analogue of \eqref{comp2}, with
 $\omega_{i,j}, G_{i,j}$ in place of
$\tilde\hm_{\sbf^*}, \tilde{G}_{\sbf^*}$, also holds, so letting $\tilde{\Delta}$
shrink to a point, we find that
$\omega^{A_\Delta^+}_{{\sbf^*}}\ll\sigma_{\sbf^*}$ on $\Delta$, with
 \begin{align}\label{comp4}
 k^{A_\Delta^+}_{{\sbf^*}}(Z,\tau)
 \approx k_{i,j}^{A_\Delta^+}(Z,\tau)\,,\quad
 \sigma_{{\sbf^*}}\,\text{- a.e.~}(Z,\tau)\in\Delta\,.
\end{align}
Thus, using Remark \ref{remark-general-cyl},
we conclude that \eqref{RHsbourg.eq}
holds, with $q=q_1$ as in \eqref{RHsbourg.eq++}, for any cylinder satisfying
\eqref{ddom++a}.   In particular, we obtain the desired conclusion in the first subcase.

We now consider the second subcase, and we return
to our original assumption that $C_r'(y,s)$ and the corresponding
surface ball $\Delta=\Delta_r$ have been fixed.
In the present subcase, we have
$r \gtrsim \ell(I_i)$ (so in fact $r \approx \ell(I_i)$, since
$C_r'(y,s) \subset 2I_i$ in the scenario of Case 1).  In essence, we will reduce matters to the first subcase, via  Remark \ref{remark-general-cyl}.
We first establish an estimate (see \eqref{deltairhq.eq} below), that holds in general for any $I_i$.
Let
\begin{equation} \label{Aidef}
A_i^+ := \left(\psi_{\sbf^*}(x_i,t_i) + 20\diam(I_i), x_i, t_i + \big(20\diam(I_i)\big)^2\right),
\end{equation}
where $(x_i,t_i)$ is the center of $I_i$, and we let
\[
2\Delta_i := \{(\psi_{\sbf^*}(x,t),x,t): (x,t) \in 2I_i\}.
\]
As above, let $k_0=k_0(n)$
be as in Lemma \ref{localcoincwncgrf.lem}, and again let
$\{I_{i,j}\}$ denote the roughly $2^{(n+1)k_0}$ cubes in $\dd_{k_0}(2I_i)$.
We cover $2I_i$ by a collection of cylinders $\{C'_{i,k}\}_k$ of size
$2^{-N-k_0}\ell(I_i)$,
where $N$ is a purely dimensional constant chosen large enough that for each $k$,
$C'_{i,k}\subset 10I_{i,j}$ for some $j$, and where
the cardinality of the collection also depends only on dimension.  Let
$$
\Delta'_{i,k}:= \{(\psi_{\sbf^*}(x,t),x,t): (x,t) \in C'_{i,k}\}
$$
be the surface ball on $\Gamma_{{\sbf^*}}$ above $C'_{i,k}$,
and note that the point $A_i^+$ is forward in time
from $\Delta'_{i,k}$, at a parabolic distance roughly
$\diam(I_i)\approx 2^{N+k_0}\diam(C'_{i,k})$.
Let $A_{\Delta'_{i,k}}^+$ be a time forward
corkscrew point in $\Omega_{\sbf^*}$,
relative to $\Delta'_{i,k}$, and observe that by choosing $N$ a bit larger,
if need be, we may assume that $A_i^+$ is forward in time from
$A_{\Delta'_{i,k}}^+$, also by a parabolic distance on the order of
$\diam(I_i)\approx 2^{N+k_0}\diam(C'_{i,k})$.
Consequently, using the change of pole inequality in
\eqref{comp2pre+++}, with
$(A_\Delta^+,A_{\tilde \Delta}^+)$
replaced by
$(A_i^+,A_{\Delta'_{i,k}}^+)$, and with $\widehat\Delta\subseteq \Delta'_{i,k}$,
we find that for each $k$,
\begin{align}\label{comp5}
k^{A_i^+}_{{\sbf^*}}(Z,\tau) \, \approx\,
\omega_{{\sbf^*}}^{{A_i^+}}(\Delta'_{i,k}) \,
k^{A_{\Delta'_{i,k}}^+}_{{\sbf^*}}(Z,\tau) \,\approx\,
k^{A_{\Delta'_{i,k}}^+}_{{\sbf^*}}(Z,\tau)\,,
\quad \sigma_{{\sbf^*}}\text{-a.e.~}(Z,\tau)\in \Delta'_{i,k}\,.
\end{align}
where in the second step we have used \eqref{Bourgain} and Harnack's inequality in
$\Omega_{\sbf^*}$.
Thus, since $2\Delta_i \subset \cup_k \Delta'_{i,k}$, we have,
with $q_1>1$ based on the result of
\cite{LewMur} as in the first subcase,
\begin{multline}\label{deltairhq.equu}
\iint_{2\Delta_i} \big(k^{A_i^+}_{{\sbf^*}}(Z,\tau)\big)^{q_1}\,
\d\sigma_{{\sbf^*}}(Z,\tau) \lesssim
\sum_k \iint_{\Delta'_{i,k}} \big(k^{A_i^+}_{{\sbf^*}}(Z,\tau)\big)^{q_1}\,
\d\sigma_{{\sbf^*}}(Z,\tau)
\\
\approx\sum_k \iint_{\Delta'_{i,k}}
\big(k^{A_{\Delta'_{i,k}}^+}_{{\sbf^*}}(Z,\tau)\big)^{q_1}\,
\d\sigma_{{\sbf^*}}(Z,\tau).
\end{multline}
 Recall that $C'_{i,k}\subset 10I_{i,j}$ for some $j$, by construction.  Hence, by
 Remark \ref{remark-general-cyl} and the argument in the first subcase,
 we deduce that \eqref{RHsbourg.eq} holds with
 $\Delta_{i,k}'$ in place of $\Delta$, and with $q=q_1$.
 Since the constants $N$ and $k_0$, as well as
the cardinality of the set of indices $k$, depend only on dimension, and since
 $\diam(\Delta_{i,k}')\approx_{k_0,N} \diam(2\Delta_i)$, we may sum
 over $k$ in \eqref{deltairhq.equu} to conclude that
\begin{equation}\label{deltairhq.eq}
\iint_{2\Delta_i} (k^{A_i^+}_{{\sbf^*}}(Z,\tau))^{q_1}\, \d\sigma_{{\sbf^*}}(Z,\tau) \lesssim (\sigma_{{\sbf^*}}(\Delta_i))^{1-{q_1}}.
\end{equation}

\begin{remark}\label{Deltaiest}
Note for future reference that the preceding argument shows that
\eqref{deltairhq.eq} holds for every $I_i$, with $q_1>1$ arising
from the result of
\cite{LewMur}, as in the first subcase.
\end{remark}

Recall that in the present scenario, $C_r'(y,s)\subset 2I_i$ with
$r\approx \diam(I_i)$.  Hence $\Delta=\Delta_r\subset 2\Delta_i$, with
$\sigma_{\sbf^*}(\Delta)\approx \sigma_{\sbf^*}(2\Delta_i)$, and
by Lemma \ref{Bourgain} and Harnack's inequality
\[
\hm_{\sbf^*}^{A_\Delta^+}(\Delta)\approx 1\approx \hm_{\sbf^*}^{A_i^+}(\Delta)\,.
\]
Hence, by the change of pole formula in \eqref{comp2pre+++}
we also have
\begin{align*} 
k^{A_\Delta^+}_{{\sbf^*}}(Z,\tau)  \approx  k^{A_{i}^+}_{{\sbf^*}}(Z,\tau)\,,
\qquad \sigma_{{\sbf^*}}\text{-a.e.} \, (Z,\tau)\in \Delta\,.
\end{align*}
Combining these observations with \eqref{deltairhq.eq}, we obtain
\begin{multline*}
\iint_{\Delta} (k^{A_\Delta^+}_{{\sbf^*}}(Z,\tau))^{q_1}\,
\d\sigma_{{\sbf^*}}(Z,\tau) \approx
\iint_{\Delta} (k^{A_i^+}_{{\sbf^*}}(Z,\tau))^{q_1}\, \d\sigma_{{\sbf^*}}(Z,\tau)
\\[4pt] \leq
\iint_{2\Delta_i} (k^{A_i^+}_{{\sbf^*}}(Z,\tau))^{q_1}\, \d\sigma_{{\sbf^*}}(Z,\tau)
 \lesssim (\sigma_{{\sbf^*}}(\Delta_i))^{1-q} \approx
 (\sigma_{{\sbf^*}}(\Delta))^{1-q_1}\,,
\end{multline*}
thus establishing \eqref{RHsbourg.eq}, in this case with $q=q_1$, as above, arising from the application of the result in \cite{LewMur}.

\subsection{Pushing the (weak)-$A_\infty$ property of caloric measure to the graph: Case 2} Given $\Delta= \{(\psi_{\sbf^*}(x,t), x,t): (x,t) \in C'_r(y,s)\}$, in this case we have $C'_r(y,s) \not\subseteq 2I_{i}$ for all $I_i$. Let
\[\mathcal{I}_\Delta = \{i:  C'_r(y,s)\cap I_i \neq \emptyset\}.  \]
A bit of geometry shows that if $C'_r(y,s)\cap I_i\neq\emptyset$, and
$4r < \ell(I_i)$, then  $C'_r(y,s) \subset 2I_i$. Thus,
if  $i \in \mathcal{I}_\Delta$, then $4r \ge \ell(I_i)$.
Let $i \in \mathcal{I}_\Delta$, and consider again
$$
\Delta_i =  \{(\psi_{\sbf^*}(x,t),x,t): (x,t) \in I_i\}.
$$
The time coordinate of every
point in $\Delta_i$ lies in the interval
$[t_i - \ell(I_i)^2/2, t_i + \ell(I_i)^2/2]$.  Thus, since
$C'_r(y,s)\cap I_i\neq\emptyset$ it must be the case that $s \ge t_i -  \ell(I_i)^2/2 - r^2$. Hence,  using also that $r \ge \ell(I_i)/4$, we have
\begin{equation*}
s + (8r)^2 \ge t_i + 63r^2 - \frac12\ell(I_i)^2 \ge \big(t_i +  \frac12\ell(I_i)^2\big) +
 63r^2 - \ell(I_i)^2\ge
 \big(t_i +  \frac12\ell(I_i)^2\big) +
 47r^2 \,.
\end{equation*}
Thus, $A^+_\Delta$ is forward in time from each $\Delta_i$,
at a parabolic distance of the scale $r\gtrsim \ell(I_i)$.
Therefore, by the change of pole formula \eqref{comp2pre+++},
\begin{equation}\label{djpolechange.eq}
k^{A^+_\Delta}_{{\sbf^*}}(Z,\tau)  \approx \omega_{{\sbf^*}}^{A^+_\Delta}(\Delta_i) k^{A_{i}^+}_{{\sbf^*}}(Z,\tau)\,,\quad \sigma_{{\sbf^*}}\text{-a.e. } (Z,\tau)\in \Delta_i\,,
\end{equation}
where $A_i^+$ is the time-forward corkscrew point relative to $\Delta_i$ (or $2\Delta_i$) defined in \eqref{Aidef}.  To proceed, we note that
\begin{multline*}
\iint_{\Delta} (k^{A_\Delta^+}_{{\sbf^*}}(Z,\tau))^q\, \d\sigma_{{\sbf^*}}(Z,\tau)
\\[4pt]
\le \, \sum_{i \in \mathcal{I}_\Delta}\iint_{\Delta_i} (k^{A_\Delta^+}_{{\sbf^*}}(Z,\tau))^q\, \d\sigma_{{\sbf^*}}(Z,\tau)\,+\,
\iint_{\Delta \cap F} (k^{A_\Delta^+}_{{\sbf^*}}(Z,\tau))^q\, \d\sigma_{{\sbf^*}}(Z,\tau)
\,=:\,I + II\,.
\end{multline*}
Recalling that $F$ is the contact set
between $\Sigma$ and  $\Gamma_{{\sbf^*}}$, and
arguing as in Case 0,
we can use the maximum principle and \eqref{comp2}
to deduce that \eqref{comp3} holds $\sigma_{\sbf^*}$-a.e.~ on $\Delta\cap F$, whence by the weak-$A_\infty$ property of $\omega$ (see Remark \ref{remarkq0}),
and the ADR property of $\Sigma$, we see that
$$
II \lesssim (\sigma_{{\sbf^*}}(\Delta))^{1-q}\,,\,\,\mbox{ with }q=q_0\,.
$$
Thus, to verify \eqref{RHsbourg.eq}, it suffices to prove that also
$$
I \lesssim (\sigma_{{\sbf^*}}(\Delta))^{1-q}\,,\,\,\mbox{ with } q=\min(q_0,q_1)\,,
$$
where $q_1>1$ is the exponent in \eqref{deltairhq.eq}
(see also Remark \ref{Deltaiest}).

To this end, let $(X_i,t_i) = (\psi_{{\sbf^*}}(x_i,t_i), x_i,t_i)$ be the center of
 the ``surface cube" $\Delta_i$.
 Using \eqref{closeIiforpsi.eq} we see that
 there exists $(Z_i, \tau_i) \in \Sigma$ such
 that $\|(X_i, t_i) - (Z_i,\tau_i)\| \lesssim \eps^{1/2} \ell(I_i)$.
 Given a small constant $\delta > 0$ to be chosen, we let
$$\widetilde{\Delta}_i :=  C(Z_i, \tau_i,{\ell(I_i)/100}) \cap \Sigma,$$
and $$\Delta'_i: = \{(\psi_{{\sbf^*}}(x,t),x,t): (x,t) \in C_{\delta\ell(I_i)}'(x_i,t_i)\}.$$
 We emphasize that
$\widetilde{\Delta}_i$ is a surface ball on $\Sigma$, while $\Delta'_i$ is a surface ball on $\Gamma_{{\sbf^*}}$. Furthermore, if $\delta\ll 1/100$, then $\widetilde{\Delta}_i$ is much larger than $\Delta'_i$.  Recalling that $\psi_{{\sbf^*}}$ is Lip(1,1/2) with small constant, of the order $\eps^{1/2}$ (see Lemma \ref{graphlip.lem}), we now choose $\delta=\delta(\eps)$ small enough that
 \begin{equation}\label{xtztclose}
\|(X, t) - (Z_i,\tau_i)\| \lesssim \eps^{1/2} \ell(I_i), \quad \forall (X, t) \in \Delta'_i.
 \end{equation}
Consequently, for $\eps$ sufficiently small, recalling that
$\Omega_{\sbf^*}'\subset \Omega$ (see \eqref{contain}),
we find that
\[
\omega^{(X,t)}(\widetilde{\Delta}_i) \gtrsim 1, \quad \forall (X, t) \in \Delta'_i\,,
\]
by Lemma \ref{Bourgain}, and
therefore, by the maximum principle,
\[
\tilde{\omega}_{{\sbf^*}}^{(X,t)}(\Delta'_i) \lesssim \omega^{(X,t)}(\widetilde{\Delta}_i), \quad \forall (X, t) \in \Omega'_{{\sbf^*}}\,,
\]
where we recall that $\tilde{\omega}_{{\sbf^*}}$ is the
caloric measure for $\Omega'_{{\sbf^*}}$, the localizeded
version of $\Omega_{{\sbf^*}}'$. Moreover,
as $\Omega'_{{\sbf^*}}$ coincides with
$\Omega_{{\sbf^*}}$ in a large neighborhood of
$\Delta_i$, we can use \eqref{comp2pre}-\eqref{comp2pre++} to obtain that
$$
\tilde{\omega}_{{\sbf^*}}^{A_\Delta^+}(\Delta'_i) \approx
\omega_{{\sbf^*}}^{A_\Delta^+}(\Delta'_i).
$$
 The doubling property of $\omega_{\sbf^*}$ implies that
 $\omega_{{\sbf^*}}^{A_\Delta^+}(\Delta_i)\lesssim_\delta
 \omega_{{\sbf^*}}^{A_\Delta^+}(\Delta'_i)$,
so combining the inequalities in the last two displays, we deduce that
\begin{equation}\label{cmpofomsomdj.eq}
\omega_{{\sbf^*}}^{A_\Delta^+}(\Delta_i) \,\lesssim_\eps\, \omega^{A_\Delta^+}(\widetilde{\Delta}_i)\,,
\end{equation}
since $\delta=\delta(\eps)$.
Recall that the cubes in the collection $\{I_i\}$ have disjoint interiors,
and note that by construction,
the projection of $\widetilde{\Delta}_i$ is contained in the interior of
$I_i$.  Thus, the elements of the collection $\{\widetilde{\Delta}_i\}$ are disjoint.

We are now ready to estimate term $I$.
Using \eqref{djpolechange.eq}, \eqref{cmpofomsomdj.eq}, \eqref{deltairhq.eq}
(and Remark \ref{Deltaiest}), the ADR property of $\Sigma$,
and our choice of $q=\min(q_0,q_1)$, we have
\begin{multline*}
I = \sum_{i \in \mathcal{I}_\Delta} \iint_{\Delta_i}(k^{A_\Delta^+}_{{\sbf^*}}(Z,\tau))^q\, \d\sigma_{{\sbf^*}}(Z,\tau)
\\ \lesssim \sum_{i \in \mathcal{I}_\Delta} (\omega_{{\sbf^*}}^{A^+_\Delta}(\Delta_i))^q \iint_{\Delta_i} (k^{A_i^+}_{{\sbf^*}}(Z,\tau))^q\, \d\sigma_{{\sbf^*}}(Z,\tau)
 \lesssim
 \sum_{i \in \mathcal{I}_\Delta} \omega^{A_\Delta^+}(\widetilde{\Delta}_i)^q (\sigma_{{\sbf^*}}(\Delta_i))^{1-q}
\\
 \approx \sum_{i \in \mathcal{I}_\Delta} \sigma(\widetilde{\Delta}_i) \left(\bariint_{\widetilde{\Delta}_i}k^{A_\Delta^+}(Z, \tau) \, \d\sigma(Z,\tau) \right)^q
  \lesssim\iint_{\cup_{i\in\mathcal{I}_\Delta} \widetilde{\Delta}_i}(k^{A_\Delta^+}(Z, \tau))^q
  \, \d\sigma(Z,\tau),
\end{multline*}
where we have used that the surface balls $\widetilde{\Delta}_i$ are disjoint,
and where our use of \eqref{cmpofomsomdj.eq} means that
some implicit constants may depend on $\eps$.
We now claim that $\cup_{i\in \mathcal{I}_\Delta}\widetilde{\Delta}_i$
is contained in a surface ball
$\Delta_{Nr}(Z^*,\tau^*):=C_{Nr}(Z^*,\tau^*)\cap \Sigma$, where $N$ is a purely dimensional constant.  To see this, note first that
$\cup_{i\in \mathcal{I}_\Delta}I_i \subset C_{N'r}'(y,s)$,
for a dimensional constant $N'$,
by the definition of $\mathcal{I}_\Delta$ and the fact that
$4r\geq \ell(I_i)$.  Hence,
$$
\cup_{i\in \mathcal{I}_\Delta}\Delta_i
\subset \Delta_{N'r}:=
\Delta_{N'r}\big((\psi_{\sbf^*}(y,s),y,s\big)
:=  \{(\psi_{\sbf^*}(x,t), x,t): (x,t) \in C'_{N'r}(y,s)\}\,.
$$
For any $i\in \mathcal{I}_\Delta$, let $(Z_{i},\tau_{i})\in\Sigma$
be the point selected in \eqref{xtztclose}, for the cube $I_{i}$.  Then
$$
\dist\big((Z_{i},\tau_{i}),\Delta_{N'r}\big)\leq
\dist\big((Z_{i},\tau_{i}),\Delta_i\big)\lesssim \eps^{1/2} \ell(I_i)\lesssim
\eps^{1/2} r\,,
$$
and therefore, fixing $(Z^*,\tau^*):=(Z_i,\tau_i)$ for an
arbitrary $i\in\mathcal{I}_\Delta$, using the definition of
$\widetilde{\Delta}_i$, and choosing $N$ somewhat larger than $N'$, we obtain the claim.

Hence, by the weak reverse Hölder inequality for $k^{A_\Delta^+}$, and the ADR property of $\Sigma$, we see that
\[\iint_{\cup_i \widetilde{\Delta}_i}(k^{A_\Delta^+}(Z, \tau))^q
\, \d\sigma(Z,\tau) \lesssim (\sigma_{\sbf^*}(\Delta))^{1-q}.\]
This concludes the proof in Case 2 of \eqref{RHsbourg.eq} with
$q=\min(q_0,q_1)$.  We remark that our
use of \eqref{cmpofomsomdj.eq} means that the constant $A_*$ in
\eqref{RHsbourg.eq}, and hence the constant $b_2$ in \eqref{1.7}, for the function
$\psi=\psi_{\sbf^*}$, may depend on $\eps$. This dependence is harmless,
since $\eps$ is a sufficiently small degree of freedom, whose value may be fixed
depending only on allowable parameters and $K_0$.

\subsection{Pushing the (weak)-$A_\infty$ property of caloric measure to the graph: Case 3} In this case $C'_r(y,s) \not\subseteq C'_{10R_{\sbf^*}}(x_{Q({\sbf^*})}, t_{Q({\sbf^*})})$. We divide Case 3 into three subcases.

\noindent
{\bf Case 3a:} $C'_{4r}(y,s) \cap C'_{4\kappa R_{\sbf^*}}(x_{Q({\sbf^*})}, t_{Q({\sbf^*})}) = \emptyset$. In this case, $\psi_{\sbf^*}\equiv 0$ on
$C'_{4r}(y,s)$ since $\psi_{\sbf^*}$ is zero outside
$C'_{4\kappa R_{\sbf^*}}(x_{Q({\sbf^*})}, t_{Q({\sbf^*})})$.
Set $\Omega_0:= \{(x_0,x,t): x_0>0,\, (x,t)\in \re^{n-1}\times \re\}$, for which solvability is classical.
We can then use \eqref{comp2pre}-\eqref{comp2pre++}, with $\Omega_0$ and its associated caloric measure and Green function replacing
$\Omega_{\sbf^*}'$, $\tilde{\hm}_{\sbf^*}$ and $\tilde{G}_{\sbf^*}$, to compare
$k_{\sbf^*}$ to the Poisson kernel for $\Omega_0$.
We omit the remaining details, which are by now familiar.

\noindent
{\bf Case 3b:} $C'_{4r}(y,s) \cap C'_{4\kappa R_{\sbf^*}}(x_{Q({\sbf^*})}, t_{Q({\sbf^*})}) \neq \emptyset$ and $r \le 10^5\kappa R_{\sbf^*}$.
Let $R_{\sbf^*}' = c_n\min\{r,R_{\sbf^*}\}$
where $c_n \in (0,1/50)$ is a constant only depending on $n$ and to be chosen below. Note that $50R_{\sbf^*}' \le r \lesssim_{n,\kappa} R_{\sbf^*}'$ and $R_{\sbf^*}' \le c_n R_{\sbf^*}$.   We now cover $C'_{r}(y,s)$ by a finite collection of cylinders
$\{C'_k\}_{k \in \mathcal{K}} := \{C'_{R_{\sbf^*}'}(z_k,\tau_k)\}_{k \in \mathcal{K}}$, of uniformly bounded cardinality depending,
at most, only on $n$ and $\kappa$, such that
 \begin{equation*} 
 C_k' \subset C'_{2r}(y,s)\,,\qquad \forall \, k \in \mathcal{K}\,.
 \end{equation*}
We divide the index set $\mathcal{K}$
into two disjoint sets $\mathcal{K}_1$ and $\mathcal{K}_2$.
We say that
 $$
\mbox{$k \in \mathcal{K}_1$, if $C'_k \subset C'_{10R_{\sbf^*}}(x_{Q({\sbf^*})}, t_{Q({\sbf^*})})$,}
$$
and we say that
$$
\mbox{$k \in \mathcal{K}_2,$
if $C'_k \cap \left(\rn\setminus
C'_{10R_{\sbf^*}}(x_{Q({\sbf^*})}, t_{Q({\sbf^*})})\right) \neq \emptyset$}.
$$
Note that, since $R_{\sbf^*}'\leq (1/50)R_{\sbf^*}$, it follows by the definition of
$C_k'$ that
$$
\mbox{$k \in \mathcal{K}_2 \implies$
$C'_k \cap C'_{5R_{\sbf^*}}(x_{Q({\sbf^*})}, t_{Q({\sbf^*})}) = \emptyset$}.
$$
We now let $\Delta'_k := \{(\psi_{\sbf^*}(x,t),x,t): (x,t) \in C'_k\}$.
Observe that if $k \in \mathcal{K}_1$, then
we can apply either Case 0, Case 1, or Case 2 to
$\Delta'_k$. Consequently, for $k \in \mathcal{K}_1$,
we have  \eqref{RHsbourg.eq} with
$\Delta = \Delta'_k$.

\noindent {\bf Claim:} If $k \in \mathcal{K}_2$, then $C'_k$ is contained in $2I_i$, for some $I_i$.

\noindent Assuming this claim, we see that Case 1 applies to each
$C_k'$ with $k\in\mathcal{K}_2$,
so that \eqref{RHsbourg.eq} now holds with $\Delta = \Delta'_k$,
for every $k\in \mathcal{K}$.
Moreover, by construction, $r\lesssim_{n,\kappa} \diam(\Delta_k')< r/25$, and furthermore,
$\Delta\subset \cup_{k\in\mathcal{K}}\Delta_k'\subset 2\Delta =\Delta_{2r}$.
Thus, by a now familiar argument using
the change of pole formula \eqref{comp2pre+++} and Lemma \ref{Bourgain},
we see that
\[
k^{A_\Delta^+}_{{\sbf^*}}(Z,\tau)\,\approx\, k^{A^+_{\Delta_k'}}_{{\sbf^*}}(Z,\tau)\,,
\qquad \sigma\text{-a.e.~}(Z,\tau)\in \Delta_k'\,.
\]
Therefore, assuming the claim, and using the fact that
$\#\mathcal{K}\lesssim_{n,\kappa}1$, we conclude that
\begin{multline*}
\iint_{\Delta} \big(k^{A_\Delta^+}_{{\sbf^*}}(Z,\tau)\big)^q\,
\d\sigma_{{\sbf^*}}(Z,\tau) \,
 \leq\,
\sum_{k\in\mathcal{K}}
\iint_{\Delta'_k} \big(k^{A_{\Delta}^+}_{{\sbf^*}}(Z,\tau)\big)^q\,
\d\sigma_{{\sbf^*}}(Z,\tau)
\\
 \approx\,
\sum_{k\in\mathcal{K}}
\iint_{\Delta'_k} \big(k^{A_{\Delta_k'}^+}_{{\sbf^*}}(Z,\tau)\big)^q\,
\d\sigma_{{\sbf^*}}(Z,\tau)\, \lesssim \,
\sum_{k\in\mathcal{K}} \big(\sigma_{{\sbf^*}}(\Delta'_k)\big)^{1-q} \,\approx \,
\big(\sigma_{{\sbf^*}}(\Delta)\big)^{1-q}\,.
\end{multline*}

Thus, to conclude our analysis in subcase 3b, it remains only to prove the claim, i.e., it suffices to prove that if $c_n$ is sufficiently small, and $k \in \mathcal{K}_2$, then $C'_k$ is contained in $2I_i$ for some $I_i$.
To this end, note that
$\pi(Q({\sbf^*})) \subset C'_{3R_{\sbf^*}}(x_{Q({\sbf^*})}, t_{Q({\sbf^*})})$,
and that, since
$k \in \mathcal{K}_2$, 
$$(z_k,\tau_k) \in \mathbb R^{n}\setminus
C'_{5R_{\sbf^*}}(x_{Q({\sbf^*})}, t_{Q({\sbf^*})})\,.
$$
Therefore, by definition of $D(x,t)$, we see that $D(z_k,\tau_k) \ge R_{\sbf^*}$.
Thus, $(z_k,\tau_k) \in I_i$ for some $I_i$. Furthermore, by \eqref{10-60.eq},
it follows that
$$60c_n\diam(I_i) \ge c_nD(z_k,\tau_k)\ge c_nR_{\sbf^*}
\ge R_{\sbf^*}'\,.$$
Therefore, since  $C'_k = C'_{R_{\sbf^*}'}(z_k,\tau_k)$, we conclude that
 $C'_k \subset 2I_i$,
for $c_n$ sufficiently small (depending on $n$). This proves the claim.

\noindent
{\bf Case 3c:} $C'_{4r}(y,s) \cap C'_{4\kappa R_{\sbf^*}}(x_{Q({\sbf^*})}, t_{Q({\sbf^*})}) \neq \emptyset$ and $r > 10^5\kappa R_{\sbf^*}$. In this case we use a well-known characterization of $A_\infty$ weights, based on which it is enough to prove there exist two uniform constants $\upsilon, \upsilon' \in (0,1)$ such that
\begin{equation}\label{ainftildebb.eq}
\sigma_{{\sbf^*}}(E) \ge \upsilon\sigma_{{\sbf^*}}(\Delta') \implies \omega_{\sbf^*}^{A_\Delta^+}(E) \ge \upsilon'\omega_{\sbf^*}^{A_\Delta^+}(\Delta'), \quad \forall E \subset \Delta', \Delta' \subset \Delta.
\end{equation}
To this end, we first note that if $C_r'(y,s)$ and its corresponding
$\Delta=\Delta_r$ satisfy
any of the previous cases or subcases (Cases 0, 1, 2, 3a, or 3b), then every
$C_{s}'(x,t)\subset C_r'(y,s)$ and corresponding
$\Delta_{s}\subset \Delta_r$ will also satisfy one of those cases or subcases
(not necessarily the same one), as may be verified by inspection of the case and subcase definitions.   Hence, by the analysis in the previous cases,
we see that \eqref{RHsbourg.eq} holds with $\Delta$ replaced by $\Delta_s$,
for every $\Delta_s \subset \Delta_r$, whenever
any of those previous cases applies to $\Delta_r$ itself.
Therefore, as noted earlier,
we may use a change of pole argument to deduce that the reverse H\"older inequality \eqref{RHq} holds for all $\Delta_s$ contained
in any such $\Delta=\Delta_r$.
Moreover, if for  given $\Delta',\Delta''$, with
$\Delta''\subseteq \Delta'$,
we have the reverse H\"older inequality
 \begin{equation*} 
\left(\bariint_{\Delta''} (k^{A_{\Delta'}^+}_{{\sbf^*}}(Z,\tau))^q\,
\d\sigma_{{\sbf^*}}(Z,\tau) \right)^{1/q} \lesssim \,
 \bariint_{\Delta''} k^{A_{\Delta'}^+}_{{\sbf^*}}(Z,\tau)\, \d\sigma_{{\sbf^*}}(Z,\tau)\,,
 \end{equation*}
 then for any $\Delta\supset \Delta'$,
 we may immediately obtain the analogous inequality with
$k_{\sbf^*}^{A_{\Delta'}^+}$ replaced by $k_{\sbf^*}^{A_{\Delta}^+}$,
by a now familiar use of the pole change formula \eqref{comp2pre+++}.
In particular, given $\Delta,\Delta'$, with $\Delta'\subset \Delta$, we have that
\eqref{RHq} holds with $\Delta_s=\Delta'$ if the latter satisfies
any of the previous cases or subcases, even if $\Delta$ itself does not.
In turn, it then holds that for each
$\Delta'$ falling into one of the previous cases,
there exist uniform constants
$\upsilon_0,\upsilon_0' \in (0,1)$
such that
\begin{equation}\label{ainftildebb0.eq}
\sigma_{{\sbf^*}}(E) \ge \upsilon_0\,\sigma_{{\sbf^*}}(\Delta') \implies \omega_{\sbf^*}^{A_{\Delta'}^+}(E) \ge \upsilon_0' \,\omega_{\sbf^*}^{A_{\Delta'}^+}(\Delta'), \quad \forall E \subset \Delta', \Delta' \subset \Delta\,,
\end{equation}
as one may verify by a well known argument, taking complements and using the reverse H\"older estimate \eqref{RHq}.

This means that we need only
 consider the case when $\Delta'$ itself is in Case 3c. Our
 strategy will be to reduce matters to Case 3a, to complete the argument.
Writing
$$
\Delta' = \{(\psi_{\sbf^*}(x,t), x,t): (x,t) \in C'_{r'}(z,\tau)\}\,,
$$
and noting that we have $r' > 10^5\kappa R_{\sbf^*}$, we
see that in this case $C'_{r'}(z,\tau)$
contains a cylinder $C'_{r''}(z_1,t_1)$
with $r'' = 10^{-4}r'$  so that $C'_{4r''}(z_1,t_1)$
does not meet $C'_{4\kappa R_{\sbf^*}}(x_{Q({\sbf^*})}, t_{Q({\sbf^*})})$. Setting
$$
\Delta'' =\{(\psi_{\sbf^*}(x,t), x,t): (x,t) \in C'_{r''}(z_1,t_1)\}\,,
$$
we have  $\sigma_{{\sbf^*}}(\Delta'') \approx \sigma_{{\sbf^*}}(\Delta')$, and therefore, if $\upsilon$ is close enough to $1$,
\[
\sigma_{{\sbf^*}}(E) \ge \upsilon\sigma_{{\sbf^*}}(\Delta') \implies \sigma_{{\sbf^*}}(E\cap \Delta'') \ge \upsilon_0\,\sigma_{{\sbf^*}}(\Delta'') \implies \omega_{\sbf^*}^{A_{\Delta''}^+}(E\cap\Delta'') \ge \upsilon'_0 \, \omega_{\sbf^*}^{A_{\Delta''}^+}(\Delta''),
\]
where we have used Case 3a for $\Delta''$ and \eqref{ainftildebb0.eq}.
Finally, a change of pole argument, combined with the local doubling property of $\omega_{{\sbf^*}}$, gives that for some uniform $\upsilon'$,
\[\omega_{\sbf^*}^{A_{\Delta''}^+}(E\cap\Delta'') \ge \upsilon_0' \,\omega_{\sbf^*}^{A_{\Delta''}^+}(\Delta'') \implies \omega_{\sbf^*}^{A_{\Delta}^+}(E) \ge \upsilon'\omega_{\sbf^*}^{A_{\Delta}^+}(\Delta').\]
This concludes the proof of the last subcase, and hence the proof of Case 3 and the proposition.

\section{Proof of Lemma \ref{keylemma1}: packing the maximal cubes}\label{sec:pack}

In this section, we prove Lemma \ref{keylemma1}.  Once this step has been completed, we will have concluded the proof of
Proposition \ref{finalprop} (see Remark \ref{prop8.2proofoutline}),
and hence also that of Theorem \ref{main.thrm}.
 In the sequel, for $Q\in\dd(\Sigma)$,
we shall use the notation $\ch(Q)$ to denote the
collection of dyadic children of $Q$.
A key ingredient is the following.

\begin{lemma}\label{keylemma2}
There exists a constant $\zeta_0 \in (0,1)$, depending only on $K_0, \eps$,
and the allowable parameters,
 such that for any given caloric measure tree $\sbf'$, and for each
 graph tree $\sbf^*\subset \sbf'$, 
 there exists a set $A_{\sbf^*} \subset Q(\sbf^*)$ satisfying the following properties:
\begin{equation}\label{ASsetamp.eq}
\sigma(A_{\sbf^*}) \,\ge \,\zeta_0\, \sigma(Q(\sbf^*))\,,
\end{equation}
and if $Q\in\sbf^*$ with $Q \cap A_{\sbf^*} \neq \emptyset$, then
\begin{equation}\label{childcap.eq}
\big(\psi_{\sbf^*}(x_{Q'}, t_{Q'}) + \ell(Q'), x_{Q'}, t_{Q'}\big) \in U_{Q'}\,,
\qquad \forall \, Q' \in \ch(Q)\,,
\end{equation}
where $(X_{Q'}, t_{Q'}) = (x_{Q'}^0, x_{Q'}, t_{Q'})$ is the center of $Q'$ in the coordinates generated by $P_{Q(\sbf^*)}$.
\end{lemma}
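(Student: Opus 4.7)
\textbf{Proof plan for Lemma \ref{keylemma2}.}

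The plan is to construct $A_{\sbf^*}$ as a set inside $Q(\sbf^*)$ on which the contact between $\Sigma$ and the regular graph $\Gamma_{\sbf^*}$ is quantitatively good. A natural candidate is a sub-level set of a dyadic maximal function measuring the ``defect'' of the graph:
\[
A_{\sbf^*}\,:=\,\bigl\{(X,t)\in Q(\sbf^*):\ \mathcal{M}_{\mathrm{dy}}\bigl(1_{Q(\sbf^*)}\,|\psi_{\sbf^*}-\pi^\perp\!\circ\mathrm{id}_{\Sigma}|/\ell(\cdot)\bigr)(X,t)\le \lambda\bigr\}\,,
\]
where $\lambda$ is a small, uniform constant depending on allowable parameters and $K_0$, to be chosen later. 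Equivalently, one can cut out, by a Carleson stopping procedure, all dyadic subcubes of $Q(\sbf^*)$ on which the average defect exceeds $\lambda$, and take $A_{\sbf^*}$ to be the residual ``good'' set.

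First, I would establish the measure bound \eqref{ASsetamp.eq}. The starting point is that $\Gamma_{\sbf^*}$ is a regular Lip(1,1/2) graph (Proposition \ref{graphregisreg.prop}), so the caloric measure $\omega_{\sbf^*}$ for $\Omega_{\sbf^*}$ is $A_\infty$ with respect to $\sigma_{\sbf^*}$ (Proposition \ref{pushprop.prop}). Combined with the hypothesized weak-$A_\infty$ of the original caloric measure $\omega$ with respect to $\sigma$ on $\Sigma$, and the inclusion $\Omega'_{\sbf^*}\subset\Omega$ from Lemma \ref{intgraphdomlem.lem}, one can set up a comparison between $\omega$ and $\omega_{\sbf^*}$ on the contact set $F\cap Q(\sbf^*)$ via the change-of-pole formula \eqref{comp2pre+++}. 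This comparison yields that the set
\[
\bigl\{(X,t)\in Q(\sbf^*):\ \dist(X,t,\Gamma_{\sbf^*})\le \eps\,\ell(Q)\ \text{for all dyadic ancestors $Q\subset Q(\sbf^*)$}\bigr\}
\]
carries a uniformly large fraction of $\sigma(Q(\sbf^*))$. The $L^p$-boundedness of $\mathcal{M}_{\mathrm{dy}}$ on the ADR space $(\Sigma,\sigma)$ then upgrades this to a genuine density statement, yielding $\sigma(A_{\sbf^*})\geq \zeta_0\,\sigma(Q(\sbf^*))$ with $\zeta_0=\zeta_0(n,M,\gamma,p,C_1,K_0,\eps)$.

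Next, I would verify property \eqref{childcap.eq}. Fix $Q\in\sbf^*$ with $(X_0,t_0)\in Q\cap A_{\sbf^*}$, let $Q'\in\ch(Q)$, and write $P':=(\psi_{\sbf^*}(x_{Q'},t_{Q'})+\ell(Q'),x_{Q'},t_{Q'})$. The horizontal/temporal projection of $P'$ agrees with the center of $Q'$, so I need only control the vertical component. Since $(X_0,t_0)\in A_{\sbf^*}$, at scale $\ell(Q')\approx\ell(Q)/2$ the graph $\Gamma_{\sbf^*}$ coincides with $\Sigma$ up to vertical error $\lambda\ell(Q')$; combined with the Lip(1,1/2) regularity of $\psi_{\sbf^*}$ (constant $\lesssim\eps^{1/2}$ by Lemma \ref{graphlip.lem}) and the estimate $|x_{Q'}^0-\psi_{\sbf^*}(x_{Q'},t_{Q'})|\lesssim \lambda\ell(Q')$, one gets that $P'$ lies vertically at height between $(1-c\lambda)\ell(Q')$ and $(1+c\lambda)\ell(Q')$ above $\Sigma$. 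For the lower bound $\delta(P')\gtrsim\ell(Q')$, I would invoke the $\teps$-WHSA for $Q$ (Remark \ref{*WHSA} $(iii)$): $P'\in \widetilde H_Q\cap C_{Q,\teps}^*$, so $P'\notin\Sigma$, and the plane $\widetilde P_Q$ separates $P'$ from $\Sigma$ at distance $\gtrsim \ell(Q')$. For the upper bound, $\dist(P',\Sigma)\le \dist(P',\Gamma_{\sbf^*})+\dist(\Gamma_{\sbf^*},\Sigma)\le\ell(Q')+\lambda\ell(Q')$. Therefore $\delta(P')\approx\ell(Q')$ and $\dist(P',Q')\lesssim\ell(Q')$, so $P'\in I^{*}$ for some Whitney cube $I\in\mathcal{W}_{Q'}$ (provided $K_0$ is taken sufficiently large depending only on the implicit constants), yielding \eqref{childcap.eq}.

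The principal obstacle is the first step: the measure bound. One must combine the two different $A_\infty$-type conditions (on the rough $\Sigma$ and on the regular $\Gamma_{\sbf^*}$) \emph{without} assuming that their densities are pointwise comparable on $F$---only comparability in an averaged, scale-invariant sense is available. Handling this requires carefully tracking the change-of-pole estimates \eqref{comp2pre}--\eqref{comp2pre+++} in both $\Omega$ and $\Omega_{\sbf^*}$, using $A_\infty$-characterizations (in the guise of the implication \eqref{ainftildebb.eq}) to convert reverse-Hölder information into density bounds for the contact set. Everything else (the WHSA verification and the Whitney-region identification in the second step) is a bookkeeping exercise using material already developed in Sections \ref{Sec5}--\ref{sec: graph}.
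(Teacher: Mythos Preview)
Your proposal has a genuine gap in the measure bound step, and it also misidentifies the nature of the argument.

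The paper's proof of this lemma is \emph{entirely geometric} and uses no caloric-measure input whatsoever; in particular it does not use the regularity of $\Gamma_{\sbf^*}$ (Proposition~\ref{graphregisreg.prop}) or any $A_\infty$ comparison. The set $A_{\sbf^*}$ is built directly from the Whitney-type cubes $\{I_i\}$ used to construct $\psi_{\sbf^*}$: for each $I_i$ (with $i\in\Lambda$) meeting a small cylinder $C_1'$ about the projected center of $Q(\sbf^*)$, estimate \eqref{closeIiforpsi.eq} gives a point $(\hat X_I,\hat t_I)\in\Sigma$ within $\eps^{1/2}\diam(I_i)$ of the graph, and one takes $A_{\sbf^*}$ to be the union of small surface balls $\Delta_I\subset\Sigma$ about these points (plus the piece of the contact set $F$ over $C_1'$). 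Ampleness \eqref{ASsetamp.eq} is then just ADR combined with the covering $C_1'\subset\bigl(\bigcup_I I\bigr)\cup\pi(F)$; the case analysis (all $I_i$ small vs.\ some $I_i$ large) is only needed to ensure the $\Delta_I$ land inside $\Delta_{Q(\sbf^*)}$, and crucially uses that the reference point $(Y^*_{Q(\sbf^*)},s^*_{Q(\sbf^*)})$ was chosen within $10^{-100}r_{Q(\sbf^*)}$ of the center (Lemma~\ref{refp++}, estimate \eqref{Y*Qest}).

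Your proposed route---controlling a dyadic maximal function of the ``defect'' $|x_0-\psi_{\sbf^*}(x,t)|$ via an $A_\infty$ comparison between $\omega$ and $\omega_{\sbf^*}$ on the contact set---does not work as written. First, the contact set $F$ may be empty, so there is nothing to compare on. Second, even when $F$ is ample, an $A_\infty$ relation between the two caloric measures gives no direct control on the \emph{geometric} defect function you introduce; the logical link you assert (``this comparison yields that the set $\{\dist(X,t,\Gamma_{\sbf^*})\le\eps\ell(Q)\text{ for all ancestors}\}$ has large measure'') is simply missing. Third, the only a priori bound on the defect is $\lesssim K_0\ell(Q)$ from Lemma~\ref{graphclosecrna.lem}, so the maximal function is bounded by $K_0$, not by a small constant; there is no Carleson or $L^p$ smallness available to run the stopping-time argument you sketch. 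For the second property \eqref{childcap.eq}, your argument is closer in spirit to the paper's Lemma~\ref{lemma.proximity}, but it hinges on the defect being $\lesssim\lambda\ell(Q')$ at the center of $Q'$, which you have not established; the paper obtains this (with constant independent of $K_0$) precisely because $A_{\sbf^*}$ is built from points already known to be $\eps^{1/2}$-close to the graph via \eqref{closeIiforpsi.eq}.
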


\begin{proof}
We may assume (indeed, we have done so already)
 that $P_{Q(\sbf^*)} = \{0\} \times \mathbb{R}^n$; see \eqref{ffina} and \eqref{PHdef}.
 Let $(X_{Q(\sbf^*)}, t_{Q(\sbf^*)}) = (x^0_{Q(\sbf^*)}, x_{Q(\sbf^*)},t_{Q(\sbf^*)})$ be the center of $Q(\sbf^*)$.
We remind the reader that $r_Q\approx \ell(Q)$
is the radius of the surface cylinder
$\Delta_Q=\Delta(X_Q,t_Q,r_Q)\subset Q$
defined in \eqref{cube-ball}, \eqref{cube-ball2}.
For convenience of notation, set $r_{\sbf^*}:= r_{Q(\sbf^*)}$,
so in particular,
\[
\Delta_{Q(\sbf^*)}=\Delta(X_{Q(\sbf^*)},t_{Q(\sbf^*)},r_{\sbf^*})\,.
\]
 We define two cylinders in $P_{Q(\sbf^*)}$:
\[C'_1:= C'_{10^{-10} r_{\sbf^*}}(x_{Q(\sbf^*)},t_{Q(\sbf^*)})\]
and
\[C'_2: = C'_{10^{-5} r_{\sbf^*}}(x_{Q(\sbf^*)},t_{Q(\sbf^*)})\]

Recall that \eqref{Y*Qest}
holds, in particular, with $Q= Q(\sbf^*)$, and therefore
by the $\eps$-WHSA plane construction it follows that
\begin{equation}\label{PQcls2cent.eq}
\dist\big((x^0_{Q(\sbf^*)}, x_{Q(\sbf^*)},t_{Q(\sbf^*)}), P_{Q(\sbf^*)}\big) \le
10^{-100} r_{\sbf^*}\,.
\end{equation}
For notational convenience, set $\psi:=\psi_{\sbf^*}$.
By \eqref{psiglobbd.eq}, for $\eps$ sufficiently small we have
\[
\sup_{(y,s)\in\rn}|\psi(y,s)| \ll 10^{-100} r_{\sbf^*}\,.\]
Combining the latter bound with
\eqref{PQcls2cent.eq}, we see that
if $(X,t), (Y,s) \in \Sigma$, and $(z,\tau) \in C'_2$, are a triple of points satisfying
\begin{equation}\label{xtysztaurelate.eq}
\dist((Y,s),(X,t)) \lesssim \eps^{1/4}\ell(Q(\sbf^*)),
\, \text{ and } \,\dist((X,t), (\psi(z,\tau), z,\tau)) \lesssim \eps^{1/4}\ell(Q(\sbf^*))\,,
\end{equation}
 then it holds that $(Y,s) \in \Delta_{Q(\sbf^*)}$
 (since $\ell(Q(\sbf^*)) \approx r_{\sbf^*}$),
 provided that $\eps$ is small enough depending on the implicit constants
 in \eqref{xtysztaurelate.eq}.
When we apply \eqref{xtysztaurelate.eq} in the sequel, these implicit constants
may depend on the allowable parameters and $K_0$, but of course not
on $\eps$.

We recall that $\psi$ is constructed off the contact set $F_{\sbf^*}$
(see \eqref{DSDfn.F}) using a Whitney-type construction involving a collection
$\{I\}$ of non-overlapping parabolic cubes in $P_{Q(\sbf^*)}\cong \rn$, that cover
$P_{Q(\sbf^*)}\setminus \pi(F_{\sbf^*})$.  Throughout the present proof, the letter ``$I$" will always be used to denote a cube in this collection.

Let us make a simple observation that will be useful in the sequel.
We shall assume henceforth that $K_0\gg 100$.
Let $Q\in \sbf^*$. 
Then for $\eps$ small enough,
  \begin{equation}\label{eq.proximity4}
  \delta\big(\psi(x_{Q'},t_{Q'})+\ell(Q'), x_{Q'}, t_{Q'}\big)
\, \geq\,  \ell(Q')/2\,, \qquad \forall \, Q'\in\ch(Q)\,,
   \end{equation}
since by construction $100 Q(\sbf^*)$ 
lies below the graph of $\psi$ (see, e.g., Lemma
\ref{intgraphdomlem.lem}), and $\psi$ is parabolically Lipschitz of order
roughly $\eps^{1/2}$, by Lemma \ref{graphlip.lem}.

We record the following pair of lemmata, for future reference.

\begin{lemma}\label{lemma.I}
Suppose that $I$ meets $C'_1$,
and consider any point $(x_I,t_I)\in I$. Then
\begin{equation}\label{gc9.53}
\dist((\psi(x_I,t_I), x_I, t_I), \Sigma) \leq \eps^{1/2} \diam(I)\,.
\end{equation}
Let $(\hat{X}_I,\hat{t}_I)$ be the
nearest point in $\Sigma$ to $(\psi(x_I,t_I), x_I, t_I)$, and define
\[
\Delta_I := C\big((\hat{X}_I,\hat{t}_I), \eps^{1/4}\diam(I)\big) \cap \Sigma.
\]
If $\eps$ is small enough, depending only on allowable parameters, then
$\pi(\Delta_I) \subset 2I$, and in the special case
that $(x_I,t_I)$ is the {\em center} of $I$, we have
$\pi(\Delta_I) \subset \interior(I)$.
\end{lemma}

\begin{proof}[Proof of Lemma \ref{lemma.I}]
Note first that $I=I_i$ for some $i\in \Lambda$ (see \eqref{la-def}),
since $I$ meets $C_1'$.  The estimate \eqref{gc9.53} then holds immediately, by
\eqref{closeIiforpsi.eq}.  Combining \eqref{gc9.53} with
the definitions of $(\hat{X}_I,\hat{t}_I)$ and $\Delta_I$,
we have in particular that
\[
\dist\big((x,t),(x_I,t_I)\big) \leq 2 \eps^{1/4} \diam(I)\,,\qquad \forall\,
(X,t)=(x^0,x,t) \in \Delta_I\,.
\]
Consequently, $\pi(\Delta_I)\subset \interior(I)$ if $(x_I,t_I)$ is
the center of $I$, and $\pi(\Delta_I) \subset 2I$ when
$(x_I,t_I)$ is an arbitrary point in $I$,
provided in each case that $\eps$ is sufficiently small.
\end{proof}

\begin{lemma}\label{lemma.proximity} Let $Q\in\sbf^*$, and fix
$1\leq N<\infty$.  Suppose that there is a cube $I$ with $\diam(I)\leq N\diam(Q)$,
along with a triple of points $(X,t)=(x^0,x,t)\in Q$, $(x_I,t_I)\in I$, and
 $(\hat{X}_I,\hat{t}_I)\in\Sigma$, such that
 \begin{equation}\label{eq.proximity1}
 \dist\big((\hat{X}_I,\hat{t}_I),(\psi(x_I,t_I), x_I, t_I)\big) \leq \eps^{1/4} \diam(I)\,,
 \end{equation}
 and
  \begin{equation}\label{eq.proximity2}
   \dist\big((X,t),(\hat{X}_I,\hat{t}_I)\big) \leq \eps^{1/4} \diam(I)\,.
 \end{equation}
 Then \eqref{childcap.eq} holds for every $Q'\in \ch(Q)$, provided that
 $\eps$ is chosen small enough, depending on $N$, and $K_0$ is
 large enough, depending on allowable parameters.
\end{lemma}

\begin{proof}
Combining \eqref{eq.proximity1}-\eqref{eq.proximity2}, we have
 \[
 \dist\big((X,t),(\psi(x_I,t_I), x_I, t_I)\big) \leq 2\eps^{1/4} \diam(I)\,,
 \]
thus in particular, $\dist((x,t),(x_I, t_I)) \leq 2\eps^{1/4} \diam(I)$, so
$|\psi(x,t)-\psi(x_I,t_I)| \lesssim \eps^{3/4} \diam(I)$,
since $\psi$ is parabolically Lipschitz of order
roughly $\eps^{1/2}$, by Lemma \ref{graphlip.lem}.  Consequently,
 \[
 \dist\big((X,t),(\psi(x,t), x, t)\big) \leq 3\eps^{1/4} \diam(I)
 \leq 3N \eps^{1/4} \diam(Q)\leq \eps^{1/8} \diam(Q)\,,
 \]
 for $\eps\leq \eps(N)$ sufficiently small.  Since $(X,t) \in Q$,
 it then follows that
   \begin{equation}\label{eq.proximity3}
    \dist\big((X_{Q'},t_{Q'}),(\psi(x_{Q'},t_{Q'})+\ell(Q'), x_{Q'}, t_{Q'})\big)
    \lesssim  \ell(Q')\,, \qquad \forall \, Q'\in\ch(Q)\,,
   \end{equation}
 where as usual $(X_{Q'},t_{Q'})=(x^0_{Q'},x_{Q'},t_{Q'})$ is the ``center" of
 $Q'$ as in \eqref{cube-ball}-\eqref{cube-ball2}, and where the implicit constant depends only on $n$ and $ADR$, and in particular is independent of $N$, $K_0$ and $\eps$.
For $K_0$ large enough, depending
on $n$ and $ADR$, \eqref{eq.proximity3} and \eqref{eq.proximity4} imply
\eqref{childcap.eq}.
\end{proof}

We now return to the proof of Lemma \ref{keylemma2}.
There are two cases to consider.

\medskip

\noindent{\bf Case 1:}  $\diam(I) \le 10^{-100}r_{\sbf^*}$ for every $I$ that meets $C'_1$.

Suppose that $I$ meets $C'_1$,
and let $(x_I,t_I)$ be the center of $I$.
Note that $(x_I,t_I) \in C'_2$
since $\diam(I) \le 10^{-100}r_{\sbf^*}$.
Defining $(\hat{X}_I,\hat{t}_I)$ and $\Delta_I$ as in Lemma \ref{lemma.I},
we find that
$\pi(\Delta_I)\subset \interior(I)$
if $\eps$ is sufficiently small. Thus, the surface cylinders
$\{\Delta_I\}_{I\cap C_1'\neq \emptyset}$ are pairwise disjoint, since the
cubes $\{I\}$ are pairwise non-overlapping. Furthermore,
by Lemma \ref{10-60lemma},
\begin{equation}\label{DpiXt}
\diam(I)\leq 10\diam(I)\leq D(x,t)\leq 60 \diam(I)\,,\qquad \forall\, (X,t)=(x^0,x,t) \in \Delta_I\,.
\end{equation}
 Moreover, since $(x_I,t_I) \in C'_2$, and since in Case 1,
$\diam(I) \ll r_{\sbf^*}= r_{Q(\sbf^*)} \approx \ell(Q(\sbf^*)$,
we may use \eqref{gc9.53} and the definition of $\Delta_I$ to
see that \eqref{xtysztaurelate.eq} holds with
$(X,t)= (\hat{X}_I,\hat{t}_I)$ and $(z,\tau)=(x_I,t_I)$, and
for every  $(Y,s)\in \Delta_I$.  Hence,
\[\Delta_I \subseteq \Delta_{Q(\sbf^*)}.\]
Now set
\[A^1_{\sbf^*} := \bigcup_{I \cap C'_1 \neq \emptyset} \Delta_I\]
and note that $\sigma(\Delta_I) \gtrsim_\eps |I|$, since $\Sigma$ is ADR.
We define
\[
A^2_{\sbf^*}:= F_{\sbf^*} \cap \pi^{-1}(C_1') \cap
C(X_{Q(\sbf^*)},t_{Q(\sbf^*)},r_{\sbf^*})\,,
\]
and note that in particular, $A_{\sbf^*}\subset \Delta_{Q(\sbf^*)}\subset
Q(\sbf^*)$ (see \eqref{cube-ball}-\eqref{cube-ball2}).  Set
\[
A_{\sbf^*} = A^1_{\sbf^*} \cup A^2_{\sbf^*}\,,
\]
and observe that
\[
C'_1 \subseteq \left(\bigcup_{I \cap C'_1 \neq \emptyset} I\right) \cup  \pi(A^2_{\sbf^*})\,.
\]
Then either $|\pi(A_{\sbf^*})|\ge |C'_1|/2$, or
\[
\left|\bigcup_{I \cap C'_1 \neq \emptyset} I\right| = \sum_{I \cap C'_1 \neq \emptyset} \left|I\right|\, \ge \,\frac{|C'_1|}{2}\,,
\]
in which case, by pairwise disjointness of the surface cylinders $\{\Delta_I\}_I$,
\[
\sigma(A^1_{\sbf^*})=
\sum_{I \cap C'_1 \neq \emptyset} \sigma(\Delta_I) \gtrsim_\eps \sum_{I \cap C'_1 \neq \emptyset} \left|I\right| \gtrsim_\eps  |C'_1|.
\]
Consequently, since the parabolic Hausdorff
measure does not increase under projections,
it holds that $\sigma(A_{\sbf^*}) \gtrsim_\eps|C'_1|
\approx_\eps \sigma(Q(\sbf^*))$.
In particular, \eqref{ASsetamp.eq} holds in Case 1.

Now let us verify \eqref{childcap.eq} in Case 1. First, suppose that $Q \in \sbf^*$ meets $A^1_S$, i.e., $Q \cap \Delta_I \neq \emptyset$ for some $I$ that meets $C'_1$. Let $(X,t) = (x^0,x,t) \in Q \cap \Delta_I$.  Since
$Q \in \sbf^*$, the definition of $D$ gives
$D(x,t) \leq \diam(Q)$, and thus by
\eqref{DpiXt}, we have $\diam(I) \leq \diam(Q)$. Using Lemma \ref{lemma.I},
we may combine the latter fact with
\eqref{gc9.53} and the definition of $\Delta_I$, to see that
the hypotheses of Lemma \ref{lemma.proximity} are satisfied (with $N=1$).
Hence, \eqref{childcap.eq} holds if $Q$ meets $A^1_{\sbf^*}$.


Suppose now that $Q$ meets $A^2_{\sbf^*}$. Then
$Q$ meets $F_{\sbf^*}$, by definition of
$A^2_{\sbf^*}$, and thus also $Q$ meets $\Gamma_{\sbf^*}$,
the graph of $\psi=\psi_{\sbf^*}$ (indeed, recall that
$F_{\sbf^*}\subset \Sigma \cap \Gamma_{\sbf^*}$, by the construction in
Section \ref{sec: graph}).  Consequently, there is a point
\begin{equation}\label{intersectionpoint}
(Z,\tau) = (z^0,z,\tau) = (\psi(z,\tau),z,\tau) \in \Gamma_{\sbf^*} \cap Q\,,
\end{equation}
and in turn, it follows that
\begin{equation}\label{eq.proximity5}
\dist\big((X_{Q'},t_{Q'}),(Z,\tau)\big) \leq \diam(Q)\,,\qquad \forall\, Q'\in\ch(Q)\,,
\end{equation}
hence also, by Lemma \ref{graphlip.lem}, for $\eps$ chosen small enough,
\[
|\psi(x_{Q'},t_{Q'})-\psi(z,\tau)|\leq  \diam(Q)\,,
\]
where as usual $(X_{Q'},t_{Q'}) = (x^0_{Q'},x_{Q'},t_{Q'})$ denotes the ``center"
of $Q'$.  Combining these observations, we see that for every
$Q'\in\ch(Q)$,
 \begin{equation}\label{eq.proximity6}
    \dist\big((X_{Q'},t_{Q'}),(\psi(x_{Q'},t_{Q'})+\ell(Q'), x_{Q'}, t_{Q'})\big)
    \lesssim \diam(Q) \approx \diam(Q')\,,
   \end{equation}
where the implicit constants depend only on $n$ and $ADR$ (so of course
{\em not} on $K_0$ or $\eps$).
Note that, in addition, \eqref{eq.proximity4} continues to hold.
For $K_0$ large enough, 
\eqref{eq.proximity6} and \eqref{eq.proximity4} together imply
\eqref{childcap.eq}, as desired.

\medskip

\noindent{\bf Case 2:} There exists $I$ meeting $C'_1$ for which
$\diam(I) \ge 10^{-100}r_{\sbf^*}$.

Fix any such $I$, let $(x_I,t_I) \in I\cap C'_1$,
and define $(\hat{X}_I,\hat{t}_I)$ and $\Delta_I$ as in Lemma
\ref{lemma.I}.  Since $I$ meets $C'_1$, it follows that
$I =I_i$ for some $i\in \Lambda$ (see \eqref{la-def}),
hence by Lemma \ref{lemmaQi}, we have
\begin{equation}\label{diamIbound.eq}
\diam(I)  \lesssim \kappa \diam(Q(\sbf^*)) \approx K_0 \diam(Q(\sbf^*))\,.
\end{equation}
Furthermore, by Lemma \ref{lemma.I}, we have
\begin{equation}\label{eq.piDeltaI2I}
\pi(\Delta_I)\subset 2I\,,
\end{equation}
and also
\begin{equation}\label{eq.proximityrepeat}
\dist\big((\psi(x_I,t_I), x_I, t_I),(\hat{X}_I,\hat{t}_I)\big) \leq \eps^{1/2} \diam(I)
\leq \eps^{1/4} \diam(Q(\sbf^*))\,,
\end{equation}
provided $\eps$ is small enough, depending on $K_0$.
Combining the latter estimate with
the definition of $\Delta_I$ and \eqref{diamIbound.eq},
we
see that \eqref{xtysztaurelate.eq} holds with
$(X,t)= (\hat{X}_I,\hat{t}_I)$ and $(z,\tau)=(x_I,t_I)$, and
for every  $(Y,s)\in \Delta_I$.  Hence,
since $(x_I,t_I)\in I\cap C'_1\subset C'_2$, we have
\[
\Delta_I \subseteq \Delta_{Q(\sbf^*)}.
\]
We now define
\[
A_{\sbf^*} := \Delta_I\,,
\]
and observe that \eqref{ASsetamp.eq} holds by ADR and the fact that
$\diam(I) \gtrsim r_{\sbf^*} \approx \diam(Q(\sbf^*))$, in the scenario of Case 2.

It remains to prove \eqref{childcap.eq} in Case 2.  Suppose that $Q$ meets
$A_{\sbf^*}$, say $(X,t) \in Q\cap A_{\sbf^*}$.
Since $A_{\sbf^*}=\Delta_I$ in the present case, we see that
 \eqref{eq.proximity2} holds for this $(X,t)$.
Note also that the first inequality in \eqref{eq.proximityrepeat} trivially implies
\eqref{eq.proximity1}.   We claim also that $\diam(I)\leq\diam(Q)$,
in which case the hypotheses of Lemma \ref{lemma.proximity} hold, with $N=1$.
Momentarily taking the claim for granted, we therefore
deduce that \eqref{childcap.eq} holds.  To complete the proof of Lemma \ref{keylemma2}, it now remains only to verify the claim.
To this end, set $(x,t):=\pi(X,t)$, and note that \eqref{DpiXt} continues to hold,
by \eqref{eq.piDeltaI2I} and Lemma \ref{10-60lemma}.  On the other hand,
$D(x,t) \le \diam(Q)$, by definition of $D(x,t)$ (see \eqref{DSDfn.eq}),
since $(x,t) \in \pi(Q)$.
In particular, these observations establish the claim.
\end{proof}

\begin{remark}\label{proximityremark} We record the
following observation for future reference.
Let $Q\in \sbf^*$, and suppose that $Q$ meets $A_{\sbf^*}$. Then
 \begin{equation}\label{eq.proximity7}
    \dist\big((X_{Q'},t_{Q'}),(\psi(x_{Q'},t_{Q'})+\ell(Q'), x_{Q'}, t_{Q'})\big)
    \lesssim  \diam(Q')\,, \qquad \forall \, Q'\in\ch(Q)\,,
   \end{equation}
   where the implicit constants depend only on $n$ and ADR.  Indeed, we have already proved this fact in the previous argument:  in Case 2, or if $Q$ meets $A_{\sbf^*}^1$ in Case 1, then we have observed that the hypotheses of Lemma
   \ref{lemma.proximity} hold with $N=1$,
   in which case the claimed bound is simply \eqref{eq.proximity3}.
   If $Q$ meets $A_{\sbf^*}^2$ in Case 1, then the claimed bound is
   \eqref{eq.proximity6}.
\end{remark}

With Lemma \ref{keylemma2} in hand, we deduce Lemma \ref{keylemma1}.
Given a cube $Q\in\dd(\Sigma)$, let us for the moment use the notation
$\P(Q)$ to denote the dyadic parent of $Q$.  We define
\[
\F_{3,good}^*(\sbf^*):=
\left\{Q\in \F_{3}^*(\sbf^*): \P(Q) \cap A_{\sbf^*} \neq \emptyset\right\}\,,
\]
and set $\F_{3,bad}^*(\sbf^*):=\F_{3}^*(\sbf^*)\setminus \F_{3,good}^*(\sbf^*)$, i.e.,
a cube $Q\in \F_{3}^*(\sbf^*)$ belongs to $\F_{3,bad}^*(\sbf^*)$ if and
only if its parent
$\P(Q)\subset Q(\sbf^*)\setminus A_{\sbf^*}$.  Thus,
\[
\Upsilon^3_{\sbf^*,\,bad}:= \bigcup_{Q\in \F_{3,bad}^*(\sbf^*)}\,Q \subset
Q(\sbf^*)\setminus A_{\sbf^*}\,,
\]
and therefore \eqref{Up3compample} follows directly from \eqref{ASsetamp.eq}.

It now remains only to verify the packing condition \eqref{Fp3pack}.

\begin{proof}[Proof of \eqref{Fp3pack}]
We fix a tree $\sbf$ as in Lemma \ref{initialcorona1.lem},
a caloric measure tree $\sbf'\subset\sbf$ as in Lemma \ref{refp+},
and a cube $R \in \dd(Q(\sbf'))$.
In the sequel, given a graph tree $\sbf^*$, we shall use $Q$ to
denote cubes in $\sbf^*$, and $Q'$ to denote cubes in $\F_{3,good}^*(\sbf^*)$.
We then seek to prove the following estimate, which we reproduce here:
 \begin{equation*}
 \sum_{\sbf^*:\,\sbf^*\subset \sbf' \,,\,Q(\sbf^*)\subset R }\,\,
 \sum_{Q'\in \F_{3,good}^*(\sbf^*)} \sigma(Q') \,
 \leq \, M_0\, \sigma(R)\,,\qquad \forall \, R\in \dd\big(Q(\sbf')\big)\,,
\leqno{\eqref{Fp3pack}}
 \end{equation*}
 where $M_0$ is a uniform constant depending only on $\eps, K_0$ and the allowable parameters.

 \begin{remark}\label{R=QS*0}
 Note that the first generation of
 cubes $Q(\sbf^*)\subset R$ (i.e., those
 that are maximal with respect to containment), are disjoint, so
 we may assume that $R=Q_1=Q(\sbf^*_1)$ for some fixed
 $\sbf^*_1 \subset \sbf'$.
 \end{remark}

 Consider now a graph tree $\sbf^*$ with maximal cube
 $Q(\sbf^*)\subset Q(\sbf^*_1)$, let $Q'\in\F^*_{3,good}(\sbf^*)$,
 and let $Q=\P(Q')$ be the dyadic parent of $Q'$.
By definition of $F^*_{3,good}(\sbf^*)$,
it follows that $Q\in\sbf^*$, $Q$ meets $A_{\sbf^*}$, and
 $Q'$ satisfies Condition \ref{stopping} (3), i.e.,
 \begin{equation*} 
\big|\nabla_X \widehat u(Y,s) -
\nabla_X\widehat u\big(Y^*_{Q(\sbf^*)}, s^*_{Q(\sbf^*)}\big)\big|
> 10\eps^{2L}, \quad \forall (Y,s) \in U_{Q'}.
\end{equation*}
Moreover, by Lemma \ref{keylemma2}, we see that
\eqref{childcap.eq} holds for $Q'$, so in particular,
 \begin{equation}\label{stopparticular.eq}
\big|\nabla_X \widehat u (X^+_{Q'}, t_{Q'}) -
\nabla_X\widehat u\big(Y^*_{Q(\sbf^*)}, s^*_{Q(\sbf^*)}\big)\big|
> 10\eps^{2L}\,,
\end{equation}
where
 \begin{equation}\label{X+def}
(X^+_{Q'}, t_{Q'}) := \big(\psi_{\sbf^*}(x_{Q'}, t_{Q'}) + \ell(Q'), x_{Q'}, t_{Q'}\big)
=: (x^{0,+}_{Q'}, x_{Q'},t_{Q'})\,.
\end{equation}
For future reference, let us also define
 \begin{equation}\label{Xstardef}
(X^\star_{Q'}, t_{Q'}) := \big(\psi_{\sbf^*}(x_{Q'}, t_{Q'}), x_{Q'}, t_{Q'}\big)
=: (x^{0,\star}_{Q'}, x_{Q'},t_{Q'})\,,
\end{equation}
and note that of course, $(X^\star_{Q'}, t_{Q'})\in\Gamma_{\sbf^*}$
(the graph of $\psi_{\sbf^*}$).

Let $\Omega'_{\sbf^*}$ be the localized graph domain
introduced in \eqref{ddom+}.  For $(X,t)= (x_0,x,t) \in \Gamma_{{\sbf^*}}$, we introduce the narrow truncated (parabolic) cone
\[
\tilde{\gamma}_{X,t} := \tilde{\gamma}_{X,t,{\sbf^*}} := \{(y_0,y,s): y_0 - \psi_{\sbf^*}(x,t) > \kappa^{10}\dist(y,s,x,t),\  y_0 < \kappa^5 \ell(Q(\sbf^*))\}.
\]
\begin{remark}\label{remark.narrowcone}
The factor $\kappa^{10}$ ensures that if $\pi(X,t) \in
C_{\sqrt{\kappa}R_{\sbf^*}}'(x_{Q(\sbf^*)},t_{Q(\sbf^*)})$, then
$\tilde{\gamma}_{X,t}$ is (well) contained in $\Omega'_{\sbf^*}$.
Here, we recall that $R_{\sbf^*}:=\diam(Q(\sbf^*))$.
\end{remark}

We then define the localized non-tangential
maximal function at $(X,t)\in \Gamma_{\sbf^*}$,
for a (possibly $\rn$-valued) function
$F$ defined on $\tilde{\gamma}_{X,t}$, by
\[
\nn_{\sbf^*}(F)(X,t) := \sup_{(Y,s)\in \tilde{\gamma}_{X,t}} |F(Y,s)|.
\]
We define
\[
\Delta_{\star,\eps}(Q') := 
C\big(X^\star_{Q'}, t_{Q'}, \eps\diam(Q')\big)
\cap \Gamma_{\sbf^*}\,,
\]
and observe that by ADR,
\begin{equation}\label{eq.comparablemeasure}
\sigma(Q') \approx_\eps \sigma_{\Gamma_{\sbf^*}}\big(\Delta_{\star,\eps}(Q')\big)\,,
\end{equation}
where $\sigma_{\Gamma_{\sbf^*}}:=  \cH^{n+1}_{\text{par}}|_{\Gamma_{\sbf^*}}$
is the parabolic ``surface measure" on $\Gamma_{\sbf^*}$.
Note also that, since $\psi_{\sbf^*}$ is Lip(1/1/2) with small constant
(see Lemma \ref{graphlip.lem}),
using \eqref{stopparticular.eq}-\eqref{X+def} we obtain
\[
10\eps^{2L}\, \leq \,
\nn_{\sbf^*}\!\left(\nabla_X \widehat u (\cdot,\cdot) -
\nabla_X\widehat u\big(Y^*_{Q(\sbf^*)}, s^*_{Q(\sbf^*)}\big)\right)(X,t)\,,
\qquad \forall \, (X,t) \in \Delta_{\star,\eps}(Q')\,,
\]
provided that $\eps$ is small enough, depending on $K_0$.
Combining the latter estimate with \eqref{eq.comparablemeasure}, we find that
\begin{equation}\label{eq.Q'lowerbound}
\sigma(Q') \lesssim_\eps \iint_{\Delta_{\star,\eps}(Q')}
\Big(\nn_{\sbf^*}\!\left(\nabla_X \widehat u (\cdot,\cdot) -
\nabla_X\widehat u\big(Y^*_{Q(\sbf^*)}, s^*_{Q(\sbf^*)}\big)\right)\Big)^2
d\sigma_{\Gamma_{\sbf^*}}\,.
\end{equation}
Since $Q\in\sbf^*$, and $Q\cap A_{\sbf^*}\neq\emptyset$, we can apply Remark
\ref{proximityremark} to deduce that
\begin{equation}\label{eq.proximity8}
\dist(\Delta_{\star,\eps}(Q'), Q') \lesssim \diam(Q')\,,
\end{equation}
where the implicit constant depends only on $n$ and $ADR$.  Hence, by a standard covering lemma argument, we may extract a refinement of $\F^*_{3,good}$, call it
$\F^*_{3,gs}$ (``gs" = ``good, separated"), such that
the collection $\{\Delta_{\star,\eps}(Q')\}_{Q'\in \F^*_{3,gs}}$ is pairwise disjoint, and
\[
 \sum_{Q'\in \F_{3,good}^*(\sbf^*)} \sigma(Q')  \lesssim
  \sum_{Q'\in \F_{3,gs}^*(\sbf^*)} \sigma(Q')\,,
\]
where the implicit constant depends only on $n$ and $ADR$. Combining the latter
fact with \eqref{eq.Q'lowerbound}, we obtain
\begin{multline}\label{eq.Q'bound1}
 \sum_{Q'\in \F_{3,good}^*(\sbf^*)} \sigma(Q')
 \\[4pt] \lesssim_\eps \,
   \sum_{Q'\in \F_{3,gs}^*(\sbf^*)}
   \iint_{\Delta_{\star,\eps}(Q')}
\Big(\nn_{\sbf^*}\!\left(\nabla_X \widehat u (\cdot,\cdot) -
\nabla_X\widehat u\big(Y^*_{Q(\sbf^*)}, s^*_{Q(\sbf^*)}\big)\right)\Big)^2
d\sigma_{\Gamma_{\sbf^*}} \\[4pt]
\lesssim_\eps  \,  \iint_{\partial\Omega'_{\sbf^*} \cap\Gamma'_{\sbf^*}}
\Big(\nn_{\sbf^*}\!\left(\nabla_X \widehat u (\cdot,\cdot) -
\nabla_X\widehat u\big(Y^*_{Q(\sbf^*)}, s^*_{Q(\sbf^*)}\big)\right)\Big)^2
d\sigma_{\Gamma'_{\sbf^*}}
 \\[4pt]
\lesssim_\eps  \,
\iint_{\Omega_{\sbf^*}'} |\nabla^2_X \widehat{u} (X,t)|^2 \,
\delta_{\Omega_{\sbf^*}'}(X,t)\,dXdt\,,
\end{multline}
where $\delta_{\Omega_{\sbf^*}'}(X,t):= \dist((X,t),\partial\Omega'_{\sbf^*})$.
Here, in the next-to-last step, we have introduced the localized graph:
\[
\Gamma'_{\sbf^*}:=
\big\{\big(\psi_{\sbf^*}(y,s),y,s\big):
(y,s) \in C'_{\sqrt{\kappa} R_{\sbf^*}} \big(x_{Q(\sbf^*)}, t_{Q(\sbf^*)}\big)\big\}\,.
\]
and in the last step, we have used
Remark \ref{remark.narrowcone},
and the results of R. Brown \cite{B89}
combined with \eqref{RHq}.
Note that trivially, $\delta_{\Omega_{\sbf^*}'}(X,t)\leq \delta(X,t)$, hence by
Corollary \ref{cor.deltahatu} and Lemma \ref{intgraphdomlem.lem}, we deduce from
\eqref{eq.Q'bound1} that
\begin{equation}\label{eq.3goodbound}
 \sum_{Q'\in \F_{3,good}^*(\sbf^*)} \sigma(Q')
 \,\lesssim_\eps \,\sum_{Q \in \sbf^*}
 \iint_{\tilde{U}_Q^i} |\nabla^2_X \widehat{u} (X,t)|^2 \,
\widehat{u}(X,t)\,dXdt\,.
\end{equation}
Note further that $\sbf^*\subset\sbf'\subset\sbf$, for every
$\sbf^*$ currently under consideration, and therefore, with $Q_1=Q(\sbf^*_1)$
(see Remark \ref{R=QS*0}), setting $\sbf_1:= \sbf\cap \dd(Q_1)$, and
using the containment \eqref{UQcontain}, along with
\eqref{sawtooth-appen-HMM--},
\eqref{doma++}, and the disjointness of distinct trees in the collection
$\{\sbf^*\}$, we see that
\begin{equation}\label{eq.treecontain}
\interior\left(\bigcup_{\sbf^*:\,\sbf^*\subset \sbf' \,,\,Q(\sbf^*)\subset Q_1} \,\,
\bigcup_{Q\in\sbf^*} \tilde{U}_Q \right)\,\subset \, \Omega_{\sbf_1}^{**}\,.
\end{equation}
Since the fattened Whitney regions $\tilde{U}_Q$ have bounded overlaps
(depending on $K_0$ and $\eps$), and have boundaries with Lebesgue measure
zero in space-time $\ree$, we may sum \eqref{eq.3goodbound}
 over $\sbf^*$ with $Q(\sbf^*)\subset Q_1$, and use \eqref{eq.treecontain} and then
Lemma \ref{squarefunction},  to obtain
\begin{multline*}
 \sum_{\sbf^*:\,\sbf^*\subset \sbf' \,,\,Q(\sbf^*)\subset Q_1}\,\,
 \sum_{Q'\in \F_{3,good}^*(\sbf^*)} \sigma(Q') \\
 \lesssim\,\,
  \sum_{\sbf^*:\,\sbf^*\subset \sbf' \,,\,Q(\sbf^*)\subset Q_1}\,\,
  \sum_{Q \in \sbf^*}
 \iint_{\tilde{U}_Q^i} |\nabla^2_X \widehat{u} (X,t)|^2 \,
\widehat{u}(X,t)\,dXdt
 \\ \lesssim\, \iint_{\Omega^{**}_{\sbf_1}} |\nabla^2_X \widehat{u} (X,t)|^2 \,
\widehat{u}(X,t)\,dXdt
\, \lesssim\, \sigma\big(Q_1\big)\,,
\end{multline*}
where the implicit constants depend on
$\eps$, $K_0$, and the allowable parameters.
As observed in Remark \ref{R=QS*0}, this concludes the proof of
\eqref{Fp3pack}, and hence also the proofs of Lemma \ref{keylemma1},
Proposition \ref{finalprop}, and ultimately Theorem \ref{main.thrm}.
\end{proof}

\newcommand{\etalchar}[1]{$^{#1}$}

\end{document}